\date{}
\newtheorem{theorem}{Theorem}[section]
\newtheorem{claim}[theorem]{Claim}%[section]
\newtheorem{remark}[theorem]{Remark}%[section]
\newtheorem{corollary}[theorem]{Corollary}%[section]
\newtheorem{proposition}[theorem]{Proposition}%[section]
\newtheorem{example}[theorem]{Example}
\newtheorem{definition}[theorem]{Definition}
\newtheorem{question}[theorem]{Question}
\newtheorem{problem}[theorem]{Problem}
\newcommand{\closed}{cl${}_1\kern-1pt$-osed}
\newcommand{\Nw}{\mathrm{Nw}}
\newcommand{\Hom}{\mathrm{Hom}}
\newcommand{\so}{\mathrm{so}}
\newcommand{\cs}{\mathrm{cs}}
\newcommand{\wcs}{\mathrm{cs}}
\newcommand{\cl}{\mathrm{cl}}
\newcommand{\scl}{\mathrm{cl}_1}
\newcommand{\C}{\mathcal{C}}
\newcommand{\I}{\mathcal{I}}
\newcommand{\pr}{\mathrm{pr}}
\newcommand{\supp}{\mathrm{supp}}
\newcommand{\weak}{\mathrm{weak}}
\newcommand{\A}{{\mathcal A}}
\newcommand{\BB}{{\mathcal B}}
\newcommand{\FF}{{\mathcal F}}
\newcommand{\MM}{{\mathcal M}}
\newcommand{\LL}{{\mathcal L}}
\newcommand{\KK}{{\mathcal K}}
\newcommand{\IR}{\mathbb{R}}
\newcommand{\IN}{\mathbb{N}}
\newcommand{\IQ}{\mathbb{Q}}
\newcommand{\U}{{\mathcal U}}
\newcommand{\V}{{\mathcal V}}
\newcommand{\W}{{\mathcal W}}
\newcommand{\Haus}{{\mathcal Haus}}
\newcommand{\SG}{{\mathcal S}}
\newcommand{\ind}{\mathrm{ind}}
\newcommand{\id}{\mathrm{id}}
\newcommand{\DD}{{\mathcal D}}
\newcommand{\e}{\varepsilon}
\newcommand{\w}{\omega}
\newcommand{\Ra}{\Rightarrow}
\newcommand{\uplim}{\overline{\lim}}
\newcommand{\ext}{ext}
\newcommand{\ord}{\mathrm{ord}}
\newcommand{\Ord}{\mathrm{Ord}}
\begin{document}

\title{$k^*$-Metrizable Spaces and their
Applications}

\author{T.O.~Banakh, V.I.~Bogachev, A.V.~Kolesnikov}

\begin{abstract}
In this paper we introduce and study so-called {\em $k^*$-metrizable spaces} forming a  new class of generalized metric spaces, and display various applications of such spaces in topological algebra, functional analysis, and measure theory.

By definition, a Hausdorff topological space $X$ is {\em $k^*$-metrizable} if $X$ is the image of a metrizable space $M$ under a continuous map $f:M\to X$ having a section $s:X\to M$ that preserves precompact sets in the sense that the image $s(K)$ of any compact set $K\subset X$ has compact closure in $X$.
\end{abstract}
\maketitle

\tableofcontents
\newpage

\section*{Introduction}

In this paper we introduce and study so-called {\em $k^*$-metrizable spaces} forming a  new class of generalized metric spaces, and display various applications of such spaces in topological algebra, functional analysis, and measure theory.

By definition, a Hausdorff topological space $X$ is {\em $k^*$-metrizable} if $X$ is the image of a metrizable space $M$ under a subproper map $\pi:M\to X$. A map $\pi:M\to X$ is called {\em subproper} if it admits a section $s:X\to M$ that preserves precompact sets in the sense that the image $s(K)$ of any compact set $K\subset X$ has compact closure in $M$. Since each subproper map is compact-covering, all compact subsets of a $k^*$-metrizable space are metrizable. Conversely, if all compact subsets of a space $X$ are metrizable, then $X$ is $k^*$-metrizable if and only if $X$ is $\cs^*$-metrizable in the sense that $X$ is the image of a metrizable space $M$ under a continuous map $\pi:M\to X$ having a section $s:X\to M$ that is $\cs^*$-continuous in the sense that for any convergent sequence $(x_n)$ in $X$ the sequence $(s(x_n))$ has a convergent subsequence. Thus the $k^*$-metrizability decomposes into  two weaker properties: the $\cs^*$-metrizability and the metrizability of all compact subsets.

The class of $k^*$-metrizable spaces is closed under many countable (and some uncountable) topological operations. 
Regular $k^*$-metrizable spaces can be characterized as spaces with $\sigma$-compact-finite $k$-network, see Theorem~\ref{n7.4}. 
 This characterization shows that the class of $k^*$-metrizable spaces is sufficiently wide and contains all La\v snev spaces (closed images of metrizable spaces), all $\aleph_0$-spaces (images of metrizable separable spaces under compact-covering maps) and all $\aleph$-spaces (regular spaces possessing a $\sigma$-locally finite $k$-network). Many results known for latter classes of spaces (like metrizability criteria) still hold in a more general framework of $k^*$-metrizable spaces. The choice of the term ``$k^*$-metrizable space'' was motivated by a characterization of regular $k^*$-metrizable spaces as spaces $X$ possessing a metric $\rho$ such that (i) each $\rho$-convergent sequence converges in $X$; (ii) a $\rho$-Cauchy sequence converges in $X$ if and only if it contains a subsequence convergent in $X$ and (iii) each compact subset $K\subset X$ is totally bounded  with respect to the metric $\rho$. Replacing the last condition with (iii)'{\em each convergent sequence  in $X$ contains a $\rho$-Cauchy subsequence}, we obtain a metric characterization of $\cs^*$-metrizable spaces.

Luckily, our motivation for a thorough study of $k^*$-metrizable spaces was outside the theory of generalized metric spaces and came
from probability theory.
According to a celebrated result of A.V.~Skorohod~\cite{Sk},
for every  sequence of Borel probability
measures $\mu_n$ on a complete separable metric space $X$ that
is weakly convergent to a Borel probability measure~$\mu_0$,
one can find Borel functions $\xi_n\colon\, [0,1]\to X$, $n=0,1,\ldots$,
such that $\lim\limits_{n\to\infty} \xi_n(t)=\xi_0(t)$ for almost all
$t\in [0,1]$ and  the image of Lebesgue measure $\lambda$ under
$\xi_n$ is $\mu_n$ for every $n\ge 0$. Various extensions of this result
have been found since then (see, e.g., \cite{BD},
\cite{BK}, \cite{CM},
\cite{Dudley}, \cite{F}, \cite{Jak0}, \cite{Schief},
and the references therein).
The most important for us is the extension discovered
independently by Blackwell and Dubbins \cite{BD} and Fernique \cite{F},
according to which all Borel probability measures on $X$ can be
parametrized simultaneously by mappings from $[0,1]$ with the preservation
of the above correspondence.
 It was shown in \cite{BK}
 that this result can be derived from its
 simple 1-dimensional case and certain deep topological selection theorems.

 More precisely, it was shown in \cite{BK} that to every Borel probability
Radon measure $\mu$ on a metrizable $X$ one can associate a Borel function
 $\xi_\mu\colon \, [0,1]\to X$ such that $\mu$ is the image  of
Lebesgue measure $\lambda$ under $\xi_\mu$ and if measures $\mu_n$ on $X$
converge weakly
 to $\mu$, then $\lim\limits_{n\to\infty}\xi_{\mu_n}(t)=\xi_\mu(t)$
for almost all $t\in [0,1]$. This property of the space $X$
was called the strong Skorohod property in \cite{BK}. Thus, each
metrizable space has the strong Skorohod property. The situation changes
beyond the class of metrizable spaces, see \cite{BBK1}, \cite{BBK1a}, \cite{BBK2}. It was observed in \cite[4.1]{BK}
that the inductive limit $\IR^\infty=\underset{\longrightarrow}{\lim}\,\IR^n$ of finite-dimensional Euclidean
spaces (in some sense, the simplest non-metrizable locally convex space)
fails to have the strong Skorohod property. 

On the other hand, it was noticed in \cite[4.4]{BK} that the space
$\IR^\infty$ possesses a weakened version of the strong Skorohod
property which was called the weak Skorohod property. Namely,
a topological space $X$ is defined to have the weak Skorohod property if
to each probability Radon measure $\mu$ on $X$ one can assign a Borel
function $\xi_\mu:[0,1]\to X$ such that $\mu$ is the image of the Lebesgue
measure $\lambda$ under $\xi_\mu$ and for every uniformly tight sequence
$(\mu_n)$ of probability Radon measures on $X$ the function sequence
$(\xi_{\mu_n})$ contains a subsequence that  converges almost surely.

In Lemma 4.2(ii) of \cite{BK} it was shown that proper maps preserve the weak Skorohod property. In Theorem~\ref{2.2} we make one step further and show that this property is preserved by subproper maps, thus generalizing
the aforementioned lemma from \cite{BK}. Since metrizable spaces have the weak
Skorohod property we conclude that any $k^*$-metrizable space also has
that property.

In light of this result it was natural to study subproper
maps more deeply. This will be done in Section~\ref{s1}. In Section~\ref{s2} we show that subproper maps are preserved by many topological constructions.
In Section~\ref{s3} we introduce $k^*$-metrizable spaces and present their metric characterization in Theorem~\ref{n3.4}. In this section we also show that $k^*$-metrizability is preserved by many topological operations.

In Section~\ref{s4} we decompose the $k^*$-metrizability into the $\cs^*$-metrizability plus the sequential compactness of all compact subsets, and study $\cs^*$-metrizable spaces in more details. Section~\ref{s5} is devoted to cardinal characteristics of $k^*$-metrizable spaces. The main result here is Theorem~\ref{n5.3} asserting that many cardinal characteristics of $k^*$-metrizable $k$-spaces (intermediate between  extent and  $k$-network weight) coincide. %In Corollary~\ref{nn5.9} we show that under (CH) the $k$-network weight of a regular $k^*$-metrizable $k$-space coincides with its density.
% In Section~\ref{s6} we treat local cardinal characteristics of $k^*$-metrizable spaces. The principal such a characteristic is $\alpha_4(x;X)$ quantifying the Arkhangelski's property $(\alpha_4)$ responsible for the absence of a copy of a sequential fan $S_\w$ in the space. For $k^*$-metrizable spaces, the local cardinal characteristic $\alpha_4(x;X)$ gives a lower bound for global cardinal invariants (like extent), considered in the preceding section.

In Section~\ref{s7} we present a characterization of $\cs^*$-metrizable spaces in terms of $\sigma$-cs-finite \closed\  $\wcs^*$-networks and derive from this characterization a characterization of $k^*$-metrizable spaces as topological spaces having a $\sigma$-compact-finite \closed\  $k$-network. In Section~\ref{s8} we apply the characterizations of $k^*$-metrizable spaces to study the interplay between $k^*$-metrizable spaces and other  generalized metric spaces such as metrizable, stratifiable, semi-stratifiable, monotonically normal, La\v snev, $\aleph_0$- or $\aleph$-spaces. In particlar, we show that the class of $\aleph_0$-spaces coincides with the class of regular $k^*$-metrizable spaces having countable network.

In Section~\ref{s9} we apply the $k$-network characterization of $k^*$-metrizable spaces to detect  such spaces among function spaces $C_{\mathcal K}(X,Y)$.
In Sections~\ref{s10} and \ref{s11}  we pay tribute to our initial motivation and apply subproper maps and $k^*$-metrizable spaces to spaces of measures. In particular, we show that under some mild restrictions the functor of probability Radon measures preserves subproper maps as well as $k^*$-metrizable spaces. In Theorem~\ref{2.2} we notice that the weak Skorohod property is preserved by subproper maps and derive from this that submetrizable $k^*$-metrizable spaces have the weak Skorohod property, thus giving many natural examples of non-metrizable spaces with that property.

In Section~\ref{s12} we observe that $k^*$-metrizable spaces naturally appear in the theory of locally convex spaces as results of application of some operations to metrizable locally convex spaces. In particular we show that often operator spaces lead to $k^*$-metrizable spaces. Section~\ref{s13} is devoted to very special operator spaces, namely Banach spaces endowed with the weak topology. We observe that such a  space $(X,\weak)$ is $k^*$-metrizable if either the dual Banach  space $X^*$ is separable or else $X$ has the Shur property (= weakly convergent sequences are norm convergent). These two opposite cases are near to exhaust all $k^*$-metrizable spaces of the form $(X,\weak)$ where $X$ is a separable Banach space: if $(X,\weak)$ is $k^*$-metrizable, then either the dual $X^*$ is separable or else $X$ contains a closed infinite-dimensional subspace with the Shur property, see Theorem~\ref{4.16}. This result allows us to construct an open linear operator $T:X\to Y$ between separable Banach spaces such that $(X,\weak)$ is $k^*$-metrizable while $(Y,\weak)$ is not. This example shows that the $k^*$-metrizability is not preserved by open maps.

In the final Section~\ref{s14} we penetrate into the structure of sequential $k^*$-metrizable groups. In Theorem~\ref{n7.3} we show that any such a group $G$ either is metrizable or else contains an open $k_\w$-subgroup.  Applying this classification to locally convex spaces, we show in Theorem~\ref{n15.3} that there are only two topological types of non-metrizable sequential $k^*$-metrizable locally convex spaces: $\IR^\infty=\underset{\longrightarrow}{\lim}\,\IR^n$ and $[0,1]^\w\times \IR^\infty$.
A similar characterization holds also for non-metrizable zero-dimensional sequential $k^*$-metrizable groups, see Theorem~\ref{n15.2}.

\section{Subproper maps}\label{s1}

In this section we define subproper maps and study their relationship with other known classes of maps.

 Throughout, the term ``map'' always means a continuous map
 unlike the term ``function''. We consider only
 Hausdorff topological spaces.  As usual,
 $\bar A$ or $\cl(A)$ stands for the closure of a subset $A$
of a topological space~$X$.

A subset $A$ of a topological space $X$ is defined to be {\em precompact} if
it has compact closure in $X$. Observe that a map $f\colon\, X\to Y$ between
topological spaces is {\em proper} if and only if the preimage $f^{-1}(K)$ of any
precompact set $K\subset Y$ is a precompact subset of $X$.

\begin{definition} A map $f\colon\, X\to Y$ is defined to be {\em subproper} if there exists a subset
$Z\subset X$ such that $f(Z)=Y$ and for any precompact subset $K\subset Y$
the set $Z\cap f^{-1}(K)$ is precompact in $X$.
\end{definition}

There is a simple characterization of subproper maps in terms of sections preserving precompact sets.

A (possibly discontinuous) function $s\colon\, Y\to X$ is called a {\em section} of a
map $f\colon\, X\to Y$ if $f\circ s(y)=y$ for each point $y\in Y$. We say that a
function $f:X\to Y$ between topological spaces {\em preserves precompact sets} (or else is {\em precompact-preserving}) if the image $s(K)$ of any
precompact set $K\subset Y$ is precompact in $X$.

A map $f:X\to Y$ is called {\em compact-covering} if each compact subset $K\subset Y$ is the image of some compact set $C\subset X$ under $f$. Equivalently, $f$ is compact-covering if it induces a surjective function $\KK(f):\KK(X)\to\KK(Y)$, $\KK(f):K\mapsto f(K)$, between the families of compact subsets of $X$ and $Y$.

\begin{theorem}\label{n1.2}
For a surjective map $f\colon\, X\to Y$ between Hausdorff topological spaces the
following conditions are equivalent:

{\rm1)} $f$ is subproper;

{\rm2)} $f$ has a section preserving precompact sets;

{\rm3)} the induced function $\KK(f):\KK(X)\to\KK(Y)$ admits a section $s:\KK(Y)\to\KK(X)$ which is monotone in the sense that $s(K)\subset s(K')$ for any compact subsets $K\subset K'$ of $Y$;

{\rm4)} the induced function $\KK(f):\KK(X)\to\KK(Y)$ admits a section $s:\KK(Y)\to\KK(X)$ which is additive in the sense that $s(K\cup K')=s(K)\cup s(K')$ for any compact subsets $K,K'\subset Y$.
\end{theorem}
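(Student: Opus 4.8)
The plan is to treat the equivalence $1)\Leftrightarrow 2)$ as a warm-up and then to organize the remaining conditions around sections of the induced map $\KK(f)$, establishing $2)\Rightarrow 3)\Rightarrow 4)$ together with the trivial implication $4)\Rightarrow 3)$ and the return trip $3)\Rightarrow 2)$. Since every additive section is automatically monotone (if $K\subset K'$ then $K\cup K'=K'$, so $s(K')=s(K)\cup s(K')\supset s(K)$), the implication $4)\Rightarrow 3)$ costs nothing, and the real content lies in producing a monotone section from a precompact-preserving point-section and then upgrading monotonicity to additivity.

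For $1)\Leftrightarrow 2)$ I would argue as follows. Given a precompact-preserving section $s\colon Y\to X$, set $Z=s(Y)$; then $f(Z)=Y$, and for precompact $K\subset Y$ one checks $Z\cap f^{-1}(K)=s(K)$, which is precompact, so $f$ is subproper. Conversely, if $f$ is subproper with witnessing set $Z$, then for each $y$ the fibre $Z\cap f^{-1}(y)$ is nonempty (as $f(Z)=Y$), so a choice function $s(y)\in Z\cap f^{-1}(y)$ defines a section with $s(K)\subset Z\cap f^{-1}(K)$ precompact. For the return implication $3)\Rightarrow 2)$ (hence also $4)\Rightarrow 2)$), given a monotone section $S\colon\KK(Y)\to\KK(X)$ I would pick $s(y)\in S(\{y\})\subset f^{-1}(y)$ for each $y\in Y$; monotonicity then forces $s(y)\in S(\{y\})\subset S(\overline K)$ for every $y$ in a precompact set $K$, so $s(K)\subset S(\overline K)$ is precompact.

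For $2)\Rightarrow 3)$ the natural candidate is $S(L)=\cl(s(L))\cap f^{-1}(L)$: this set is compact, being a closed subset of the compact set $\cl(s(L))$ (here one uses that $s$ preserves precompact sets and that $f^{-1}(L)$ is closed); it satisfies $f(S(L))=L$ because $s(L)\subset S(L)$; and it is visibly monotone in $L$.

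The main obstacle is the passage $3)\Rightarrow 4)$, i.e.\ manufacturing additivity, and here the intersection $\cl(s(L))\cap f^{-1}(L)$ is no help, since closure mixes the fibres over $L$ and $L'$. Instead, starting from a monotone section $S$, I would define $\tilde S(L)=\cl\bigl(\bigcup_{y\in L}S(\{y\})\bigr)$. Monotonicity gives $\bigcup_{y\in L}S(\{y\})\subset S(L)$, so $\tilde S(L)$ is a closed subset of the compact set $S(L)$, hence compact; and since $\bigcup_{y\in L}S(\{y\})$ maps onto $L$ while $\tilde S(L)\subset S(L)\subset f^{-1}(L)$, we get $f(\tilde S(L))=L$, so $\tilde S$ is a section of $\KK(f)$. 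Additivity is now forced by the identity $\cl(A\cup B)=\cl(A)\cup\cl(B)$ for finite unions: from $\bigcup_{y\in L\cup L'}S(\{y\})=\bigl(\bigcup_{y\in L}S(\{y\})\bigr)\cup\bigl(\bigcup_{y\in L'}S(\{y\})\bigr)$, taking closures yields $\tilde S(L\cup L')=\tilde S(L)\cup\tilde S(L')$. The one point deserving care throughout is that a compact $L$ in a Hausdorff space is closed, which is what makes $f^{-1}(L)$ closed and $f(\tilde S(L))\subset\overline L=L$; this is where the Hausdorff hypothesis enters.
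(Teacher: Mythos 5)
Your argument is correct, and every construction in it does what you claim: $S(L)=\cl(s(L))\cap f^{-1}(L)$ is a compact monotone section of $\KK(f)$, and $\tilde S(L)=\cl\bigl(\bigcup_{y\in L}S(\{y\})\bigr)$ is additive because finite unions commute with closure. The route differs from the paper's, which runs the cycle $(1)\Rightarrow(4)\Rightarrow(3)\Rightarrow(2)\Rightarrow(1)$: there the additive section is manufactured in one step directly from the subproper witnessing set $Z$, as $K\mapsto\cl_X(Z\cap f^{-1}(K))$, with additivity again coming from $\cl(A\cup B)=\cl(A)\cup\cl(B)$; monotonicity is then a formal consequence of additivity, and no separate $(3)\Rightarrow(4)$ argument is ever needed. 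Your decomposition instead passes through the point-section first and then upgrades monotone to additive, which costs you one extra construction but yields the standalone implication $(3)\Rightarrow(4)$ for an arbitrary monotone section -- a statement the paper's cycle proves only implicitly. It is worth noticing that in a Hausdorff space $S(\{y\})=\{s(y)\}$, so your composite construction collapses to $\tilde S(L)=\cl(s(L))=\cl(Z\cap f^{-1}(L))$ with $Z=s(Y)$; the two proofs ultimately build the same section, just discovered from opposite ends. Your implications $(1)\Leftrightarrow(2)$ and $(3)\Rightarrow(2)$ coincide with the paper's.
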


\begin{proof}
We shall prove the implications
$(1)\Rightarrow (4)\Rightarrow (3)\Rightarrow (2)\Rightarrow(1)$.

$(1)\Ra(4)$ Assuming that $f:X\to Y$ is subproper, find a subset $Z\subset X$ such that $f(Z)=Y$ and for any compact subset $K\subset Y$ the set $s(K)=\cl_X(Z\cap f^{-1}(K))$ is compact. The continuity of $f$ implies that $\KK(f)\circ s(K)=K$ and hence $s:\KK(Y)\to\KK(X)$ is a section of the induced function $\KK(f):\KK(X)\to\KK(Y)$. It is clear that for any  compact subsets $A,B\subset Y$ we get
$$\begin{aligned}
s(A\cup B)=&\;\cl(Z\cap f^{-1}(A\cup B))=
\cl\big(Z\cap (f^{-1}(A)\cup f^{-1}(B))\big)=\\
=&\;\cl\big((Z\cap (f^{-1}(A))\cup (Z\cap f^{-1}(B))\big)=\\
=&\;\cl\big(Z\cap (f^{-1}(A)\big)\cup \cl\big(Z\cap f^{-1}(B))\big)=s(A)\cup s(B)
\end{aligned}
$$which means that $s:\KK(Y)\to \KK(X)$ is an additive section of $\KK(f)$.
\medskip

The implication $(4)\Ra(3)$ is trivial since for an additive section $s:\KK(Y)\to\KK(X)$ of $\KK(f)$ and two compacta $A\subset B$ in $Y$ we get $s(B)=s(A\cup B)=s(A)\cup s(B)$ which implies $s(A)\subset s(B)$.
\medskip

$(3)\Ra(2)$ Assume that $l\colon\, \KK(Y)\to\KK(X)$ is a monotone section of $\KK(f)$.
Given a point $y\in Y$ pick any point $s(y)$ in the compact set
$l(\{y\})\subset X$. Since $f(l(\{y\}))=\{y\}$, the so-defined function
$s\colon\, Y\to X$ is a section of $f$. Next, assume that $K\subset Y$ is a
precompact subset of $Y$. Then $s(K)\subset\bigcup_{y\in K}l(\{y\})\subset
l(\bar K)$ lies in the compact subset $l(\bar K)$ of $X$ and thus $s(K)$
is a precompact subset of $X$.
\medskip

$(2)\Ra(1)$ Assume that $f$ admits a precompact-preserving section
$s\colon\, Y\to X$.
Let $Z=s(Y)$. Then for any precompact subset $K\subset Y$
the intersection $s(K)=Z\cap f^{-1}(K)$ is a precompact subset in $X$ and
thus $f$ is a subproper map.
\end{proof}

Precompact-preserving functions are tightly connected with $\cs^*$-continuous functions.

A function $s:Y\to X$ between topological spaces is called
\begin{itemize}
\item {\em sequentially continuous} (or else {\em $\cs$-continuous}) if for any convergent sequence $(y_n)$ in $Y$ the sequence $(s(y_n))$  converges in $X$;
\item {\em $\cs^*$-continuous} if for any convergent sequence $(y_n)$ in $Y$ the sequence $(s(y_n))$ has an accumulation point $x_\infty$ in $X$ (the latter means that each neighborhood of $x_\infty$ contains infinitely many points $s(y_n)$).
\end{itemize}

It is clear that each precompact-preserving function $f:X\to Y$ is $\cs^*$-continuous.
The converse is true if compact subsets of $Y$ are sequentially compact and $X$ is $\mu$-complete.
\smallskip

We recall that a topological space $X$ is called
 {\em sequentially compact} if
 each sequence $(x_n)$ in $X$  contains a convergent subsequence.
\smallskip

A topological space $X$ is called {\em $\mu$-complete} if each bounded subset of $X$ has compact closure, where a subset $B\subset X$ is {\em bounded} if for any locally finite collection $\U$ of open sets in $X$ only finitely many sets $U\in\U$ meet $B$, see \cite{Bla}. It is easily seen (and well-known) that a
subset $B$ of a Tychonoff  space is bounded if and only if for any continuous
real-valued function $f\colon\, X\to\IR$ the image $f(B)$ is bounded in $\IR$. According to \cite[8.5.13]{En}, each
Dieudonn\'e-complete space
(in particular, each paracompact space) is $\mu$-complete. On the other hand, the ordinal segment $[0,\omega_1)$
endowed with the natural interval topology is not $\mu$-complete.

\begin{proposition}\label{n1.3} Let $X,Y$ be topological spaces such that $X$ is $\mu$-complete and each compact subset of $Y$ is sequentially compact. A function $s:Y\to X$ is precompact-preserving if and only if $s$ is $\cs^*$-continuous.
\end{proposition}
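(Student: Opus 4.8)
The forward implication is the routine one and is essentially the remark preceding the statement, so I would dispose of it quickly and note that it uses neither hypothesis. If $s$ preserves precompact sets and $(y_n)$ converges to $y_\infty$ in $Y$, then $K=\{y_\infty\}\cup\{y_n:n\in\IN\}$ is compact, hence precompact, so $\cl_X(s(K))$ is compact. Since every sequence in a compact space has an accumulation point, $(s(y_n))$ accumulates at some $x_\infty\in X$, which is exactly $\cs^*$-continuity.

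The substance lies in the converse. Assume $s$ is $\cs^*$-continuous and let $K\subset Y$ be precompact. Because $s(K)\subset s(\cl_Y K)$ and a closed subset of a compact set is compact, it suffices to treat the case that $K$ itself is compact; such a $K$ is then sequentially compact by the hypothesis on $Y$. Since $X$ is $\mu$-complete, it is enough to prove that $s(K)$ is \emph{bounded} in $X$, for then $\cl_X(s(K))$ is compact and we are done.

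To establish boundedness I would argue by contradiction. Suppose some locally finite family $\U$ of open sets in $X$ has infinitely many members meeting $s(K)$. Choosing pairwise distinct $U_n\in\U$ and points $y_n\in K$ with $s(y_n)\in U_n$ for $n\in\IN$, use the sequential compactness of $K$ to pass to a subsequence with $y_{n_k}\to y_\infty\in K$, and then invoke $\cs^*$-continuity to obtain an accumulation point $x_\infty\in X$ of $(s(y_{n_k}))$. By local finiteness there is a neighborhood $W$ of $x_\infty$ meeting only finitely many members of $\U$. But $W$, being a neighborhood of the accumulation point $x_\infty$, contains $s(y_{n_k})$ for infinitely many $k$; since $s(y_{n_k})\in U_{n_k}$ and the $U_{n_k}$ are pairwise distinct, $W$ meets infinitely many members of $\U$ — a contradiction.

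The main obstacle is precisely this last paragraph, where the two hypotheses must be combined just so: sequential compactness of $K$ is what turns the arbitrary indexing set into a convergent sequence on which $\cs^*$-continuity can act, and $\mu$-completeness is what lets a boundedness conclusion upgrade to precompactness. The only delicate bookkeeping point is to keep the chosen sets $U_n$ pairwise distinct, so that ``infinitely many indices $k$ with $s(y_{n_k})\in W$'' genuinely yields infinitely many distinct members of $\U$ meeting $W$, contradicting local finiteness at $x_\infty$.
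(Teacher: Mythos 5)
Your proof is correct and follows essentially the same route as the paper's: the forward direction via the compactness of $\cl_X(s(K))$ for a convergent sequence, and the converse by assuming $s(K)$ unbounded, extracting points $y_n\in K$ with $s(y_n)$ in distinct members of a locally finite family, passing to a convergent subsequence by sequential compactness, and contradicting local finiteness at an accumulation point supplied by $\cs^*$-continuity. The only (harmless) additions are your explicit reduction from precompact to compact $K$ and the remark about keeping the $U_n$ pairwise distinct, both of which the paper leaves implicit.
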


\begin{proof} The ``only if'' part is trivial and holds without any assumptions on $X$ and $Y$. To prove the ``if'' part, assume that $s:Y\to X$ is a $\cs^*$-continuous function from a space $Y$ whose all compact subsets are sequentially compact into a $\mu$-complete space $X$. To show that $s$ is precompact-preserving, take any compact subset $K\subset Y$.  We claim that the image $s(K)$ is precompact. Assuming that it is not so, we get that $s(K)$ is not bounded by the $\mu$-completeness of $X$.
Consequently there is an infinite locally finite family $\U=\{U_n:n\in\w\}$ of open subsets of $X$ such that for every $n\in\w$ the intersection $U_n\cap s(K)$ contains a point $x_n$. Pick any point $y_n\in K$ with $s(y_n)=x_n$ and use the sequentially compactness of $K$ to find a convergent subsequence $(y_{n_k})$ of the sequence $(y_n)$. Since $s$ is $\cs^*$-continuous, the sequence $(s(y_{n_k}))$ has an accumulation point $x_\infty$. Then each neighborhood $W$ of $x_\infty$ contains infinitely many points $s(y_{n_k})=x_{n_k}$ and thus meets infinitely many sets $U_{n_k}\ni x_{n_k}$ which contradicts the local finity of $\U$.
\end{proof}

This proposition implies another characterization of subproper maps.

\begin{theorem}\label{n1.4} A map $f:X\to Y$ from a $\mu$-complete space $X$ into a space $Y$ whose all compact subsets are sequentially compact is subproper if and only if it has a $\cs^*$-continuous section $s:Y\to X$.
\end{theorem}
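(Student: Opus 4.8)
The statement to prove is Theorem~\ref{n1.4}: a map $f:X\to Y$ from a $\mu$-complete space $X$ into a space $Y$ with all compact subsets sequentially compact is subproper if and only if it admits a $\cs^*$-continuous section. The plan is to combine the two results already established just above: the sections-characterization of subproperness in Theorem~\ref{n1.2} and the equivalence of precompact-preserving and $\cs^*$-continuous functions in Proposition~\ref{n1.3}.

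For the ``only if'' direction, I would assume $f$ is subproper. By the equivalence $(1)\Leftrightarrow(2)$ of Theorem~\ref{n1.2}, $f$ then admits a section $s:Y\to X$ that preserves precompact sets. Since every precompact-preserving function is $\cs^*$-continuous (noted explicitly in the text, and holding without any hypotheses on $X$ or $Y$), this $s$ is a $\cs^*$-continuous section of $f$, which is exactly what is required.

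For the ``if'' direction, I would assume $f$ has a $\cs^*$-continuous section $s:Y\to X$. Here is where the two standing hypotheses enter: $X$ is $\mu$-complete and every compact subset of $Y$ is sequentially compact. These are precisely the assumptions of Proposition~\ref{n1.3}, so that proposition upgrades the $\cs^*$-continuous section $s$ to a precompact-preserving section. Invoking the implication $(2)\Rightarrow(1)$ of Theorem~\ref{n1.2} then yields that $f$ is subproper.

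There is essentially no hard step here: the theorem is a clean corollary packaging Theorem~\ref{n1.2} and Proposition~\ref{n1.3} together, with the role of the hypotheses being solely to license the nontrivial (``if'') direction of Proposition~\ref{n1.3}. The only point deserving a word of care is verifying that the hypotheses of the theorem match exactly the hypotheses of Proposition~\ref{n1.3}, so that the conversion between $\cs^*$-continuity and precompact-preservation is valid in both directions; once that bookkeeping is noted, the proof is immediate.
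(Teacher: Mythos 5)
Your proposal is correct and is exactly the argument the paper intends: the theorem is stated as an immediate consequence of Proposition~\ref{n1.3} combined with the section characterization of subproperness in Theorem~\ref{n1.2}, which is precisely how you assemble it. No gaps.
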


It turns out that each closed map has a $\cs^*$-continuous section.

\begin{proposition}\label{n1.5} Any section $s:Y\to X$ of a closed map $f:X\to Y$ is  $\cs^*$-continuous. Moreover if the space $X$ is $\mu$-complete, then $s$ preserves precompact sets.
\end{proposition}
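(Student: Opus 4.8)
The plan is to prove both assertions by contradiction, reducing them to a single combinatorial configuration and exploiting closedness of $f$ throughout. The configuration I want to rule out is the following: a sequence $(x_k)$ of \emph{pairwise distinct} points of $X$, all lying in the range $s(Y)$, having no accumulation point in $X$, and such that the image sequence $(f(x_k))$ has an accumulation (cluster) point $y_\infty$ in $Y$. I claim this cannot occur when $f$ is closed and $s$ is a section of $f$. Both the $\cs^*$-continuity of $s$ and (under $\mu$-completeness) its precompact-preservation will be derived by producing exactly such a configuration and invoking this claim.

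To prove the claim, I would first note that, since $(x_k)$ has no accumulation point and its terms are distinct, the Hausdorffness of $X$ makes the set $C=\{x_k:k\in\w\}$ closed and discrete; hence \emph{every} subset of $C$ is closed in $X$. Because $f$ is a closed map, it follows that $f(\{x_k:k\in I\})=\{f(x_k):k\in I\}$ is closed in $Y$ for \emph{every} index set $I\subseteq\w$. Now $y_\infty$ is a cluster point of $(f(x_k))$, so it lies in $\cl(\{f(x_k):k\in\w\})$, which equals $\{f(x_k):k\in\w\}$; thus $y_\infty=f(x_{k_0})$ for some $k_0$. Removing the single index $k_0$ leaves $y_\infty$ a cluster point of the remaining sequence, whose image $\{f(x_k):k\ne k_0\}$ is again closed, so $y_\infty=f(x_{k_1})$ for some $k_1\ne k_0$. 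Writing $y_k=f(x_k)$ and using $x_k=s(y_k)$ (valid since $x_k\in s(Y)$ forces $x_k=s(f(x_k))$), we obtain $x_{k_0}=s(y_{k_0})=s(y_\infty)=s(y_{k_1})=x_{k_1}$, contradicting distinctness.

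It remains to manufacture the configuration in each case. For the $\cs^*$-continuity, let $(y_n)\to y_\infty$ in $Y$, set $x_n=s(y_n)\in s(Y)$, and suppose $(x_n)$ has no accumulation point. Then no value is attained infinitely often (such a value would be an accumulation point), so I may pass to a subsequence of distinct terms; it still has no accumulation point, and its image subsequence still converges to $y_\infty$, making $y_\infty$ a cluster point. The claim gives a contradiction, so $(x_n)$ has an accumulation point and $s$ is $\cs^*$-continuous. For the second part, assume $X$ is $\mu$-complete, $K\subseteq Y$ is compact, and $s(K)$ is not precompact. By $\mu$-completeness $s(K)$ is not bounded, so there is a locally finite family $\U$ of open subsets of $X$ with infinitely many members meeting $s(K)$; choosing $x_n\in U_n\cap s(K)$ for a countable subfamily $(U_n)$, local finiteness forces $(x_n)$ to have no accumulation point. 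Setting $y_n=f(x_n)\in K$ (so $x_n=s(y_n)$) and using compactness of $K$ to obtain a cluster point $y_\infty\in K$ of $(y_n)$, I pass to distinct $x$'s and again apply the claim. (Note that Proposition~\ref{n1.3} cannot be cited directly here, since the compact subsets of $Y$ are not assumed sequentially compact; this is precisely why the argument must run through closedness rather than through sequential compactness.)

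The main obstacle is the passage from ``the sequence $(x_k)$ has no accumulation point'' to ``the set of its values is closed and discrete with every subset closed'', which is what lets the closed map $f$ output a closed set along every subsequence; this needs both the Hausdorff hypothesis and the reduction to distinct terms. Coupled with it is the remove-one-term device that upgrades a single cluster point of $(f(x_k))$ into two coincident indices $k_0\ne k_1$ with $f(x_{k_0})=f(x_{k_1})$ --- it is exactly this coincidence, fed through the section $s$, that collides with distinctness. The remaining steps (local finiteness forbidding accumulation points, and $\mu$-completeness converting non-precompactness into a locally finite witnessing family) are routine once this core claim has been isolated.
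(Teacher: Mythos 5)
Your proof is correct, and it runs on the same engine as the paper's: a sequence in $s(Y)$ with no accumulation point spans a closed discrete set, the closed map $f$ therefore sends it (and each of its subsets) to a closed set, and this is incompatible with the image sequence clustering somewhere. Where you diverge is in how the contradiction is closed and in the packaging. The paper treats the two assertions separately: for $\cs^*$-continuity it first normalizes to $y_n\ne y_\infty$ so that the limit lies in $\cl(\{y_n\})\setminus\{y_n\}$, and for precompact-preservation it extracts a non-isolated point of the countable closed image inside $\bar K$ and deletes it to produce a closed subset of $X$ with non-closed image. You instead factor both cases through a single claim and finish by observing that the cluster point $y_\infty$ must equal $f(x_{k_0})$ and, after deleting that one index, also $f(x_{k_1})$ for some $k_1\ne k_0$; since $f$ restricted to $s(Y)$ is injective (because $s\circ f$ fixes $s(Y)$ pointwise), this collides with the distinctness of the $x_k$. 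Your route buys a uniform treatment of both parts and avoids the paper's WLOG normalization; the paper's route uses nothing about the section beyond its existence. One small point of hygiene in your second application: you should pass to distinct $x_n$'s \emph{before} invoking compactness of $K$ to produce the cluster point $y_\infty$, since a cluster point of $(y_n)$ need not survive passage to a subsequence. This reordering costs nothing: a point of $X$ lies in only finitely many members of the locally finite family, so each value of $(x_n)$ recurs only finitely often, and the images of the thinned sequence still lie in the compact set $K$. (As written, your argument can also be rescued by noting that $x_n=s(y_n)$ forces the value multiplicities of $(x_n)$ and $(y_n)$ to coincide, but swapping the two steps is cleaner.)
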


\begin{proof} Assume that $f:X\to Y$ is closed and take any section $s:Y\to X$ of $f$. We claim that $s$ is $\cs^*$-continuous. Given a convergent sequence $(y_n)_{n\in\w}$ in $Y$ we should find an accumulation point for its image $(s(x_n))$ in $X$.
Without loss of generality, $y_n\ne y_\infty=\lim_{n\to\infty}y_n$ for all $n$. Assuming that  the sequence $(s(y_n))$ has no accumulation point in $X$, we will get that $\{s(y_n):n\in\w\}$ is a closed discrete subset in $X$. Since $f$ is closed, the set $\{y_n:n\in\w\}=f(\{s(y_n):n\in\w\})$ is closed in $Y$ which is not possible because this set has an accumulation point $y_\infty\notin \{y_n:n\in\w\}$.
\smallskip

Next, assume additionally that the space $X$ is $\mu$-complete. We shall show that the
 the image $s(K)$ of any compact set $K\subset Y$ is precompact in $X$. Assuming the converse, use the $\mu$-completeness of $X$ to find a locally finite countable collection $\U$ of open subsets of $X$ meeting the image $s(K)$.
For each $U\in\U$ pick a point $x_U\in U\cap s(K)$. Then the set
$\{x_U\colon\, U\in\U\}\subset X$ is closed and discrete in $X$. Since the map
$f$ is closed, $\{f(x_U)\colon\, U\in\U\}$ is a closed countable subset of the
compactum $\bar K$. Consequently, this set has a non-isolated point and
there is an infinite subcollection $\W\subset\U$ such that the set
$\{f(x_U)\colon\, U\in\W\}$ is not closed in~$Y$, which contradicts the
closedness of the set $\{x_U\colon\, U\in\W\}$ in $X$ and the closedness of the
map~$f$.
\end{proof}

This proposition can be partly reversed. First we recall some definitions.
\smallskip

A map $f\colon\, X\to Y$ is defined to be {\em inductively closed} (resp. {\em inductively
perfect\/}) if there is a closed subset $Z\subset X$ such that $f(Z)=Y$ and
the restriction $f|Z\colon\, Z\to Y$ of $f$ is a closed (resp. perfect) map.
(A map $f\colon\, X\to Y$ is {\em perfect} if it is closed and the
preimage $f^{-1}(y)$ of each point $y\in Y$ is compact, see \cite[\S3.7]{En}).
It is well-known that each perfect map is proper \cite[3.7.2]{En} and each
proper map into a $k$-space is perfect \cite[3.7.18]{En}.

A topological space $X$ is called a {\em Fr\'echet--Urysohn space} if for each
subset $A\subset X$ and each point $a\in \bar A$ there is a sequence
$(a_n)\subset A$ convergent to $a$. It is clear that each metrizable space is
Fr\'echet--Urysohn. A standard example of a non-metrizable Fr\'echet--Urysohn
space of arbitrarily large character is the {\em sequential fan} $S_\kappa$ where $\kappa$ is an infinite cardinal $\kappa$. By definition, $S_\kappa=\{2^{-n}:n\le\infty\}\times \kappa/\{0\}\times\kappa$ is the quotient space of the discrete sum of $\kappa$ many convergent sequences with their limit point glued together.

\begin{theorem}\label{n1.6} For a surjective map $f:X\to Y$ from a $\mu$-complete space $X$ into a Fr\'echet-Urysohn space $Y$ the following conditions are equivalent:
\begin{enumerate}
\item[1)] $f$ is subproper;
\item[2)] $f$ has a section preserving precompact sets;
\item[3)] $f$ has a $\cs^*$-continuous section;
\item[4)] $f$ is inductively closed.
\end{enumerate}
Moreover, if $Y$ contains no closed subspace homeomorphic to the sequential fan
$S_\w$, then (1)--(4) are equivalent to
\begin{enumerate}
\item[5)] $f$ is inductively perfect.
\end{enumerate}
\end{theorem}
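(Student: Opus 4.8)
The plan is to prove Theorem~\ref{n1.6} by establishing the cycle of implications among (1)--(4) and then handling the extra equivalence with (5) under the stated hypothesis. Let me think through each piece.

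First, the equivalence of (1), (2), and (3). The implications $(1)\Leftrightarrow(2)$ come for free from Theorem~\ref{n1.2}, since that result gives the equivalence of subproperness and the existence of a precompact-preserving section for any surjective map between Hausdorff spaces. For $(2)\Leftrightarrow(3)$ I would invoke Proposition~\ref{n1.3}: since $X$ is $\mu$-complete and every compact subset of the Fréchet--Urysohn space $Y$ is sequentially compact (compact Fréchet--Urysohn spaces are sequentially compact), a section is precompact-preserving if and only if it is $\cs^*$-continuous. So the whole block $(1)\Leftrightarrow(2)\Leftrightarrow(3)$ is essentially assembled from the earlier results, and the hypotheses on $X$ and $Y$ are exactly what those results require.

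Next I would fold in (4). The implication $(4)\Rightarrow(3)$ is immediate from Proposition~\ref{n1.5}: an inductively closed map has, by definition, a closed $Z\subset X$ with $f(Z)=Y$ and $f|Z$ closed, and any section of $f$ taking values in $Z$ is then a section of the closed map $f|Z$, hence $\cs^*$-continuous. The real content is the reverse direction, and I expect $(3)\Rightarrow(4)$ (or equivalently $(2)\Rightarrow(4)$) to be the main obstacle. Starting from a $\cs^*$-continuous (equivalently precompact-preserving) section $s:Y\to X$, I want to produce a closed subset $Z$ on which $f$ restricts to a closed map. The natural candidate is $Z=\cl_X(s(Y))$, and the goal is to show $f|Z$ is closed; here is where the Fréchet--Urysohn property of $Y$ is crucial, since closedness of a map into a Fréchet--Urysohn space can be tested on convergent sequences. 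I would take a closed $F\subset Z$ and a point $y$ in the closure of $f(F)$, pull back a sequence $y_n\in f(F)$ converging to $y$ using Fréchet--Urysohn, lift to points of $F$, and use precompactness/$\cs^*$-continuity of the section together with $\mu$-completeness to extract a convergent subsequence whose limit lies in $F$ (because $F$ is closed) and maps to $y$. Making this lifting argument precise—ensuring the limit point is genuinely in $Z$ and maps correctly—is the delicate step.

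Finally, the extra equivalence $(4)\Leftrightarrow(5)$ under the hypothesis that $Y$ has no closed copy of $S_\w$. Since $(5)\Rightarrow(4)$ is trivial (inductively perfect is inductively closed by definition, as perfect maps are closed), only $(4)\Rightarrow(5)$ needs work. Given a closed $Z$ with $f|Z$ closed, I must upgrade $f|Z$ to a perfect map, i.e.\ arrange that point-preimages $(f|Z)^{-1}(y)$ are compact. A closed map whose fibers fail to be compact typically creates a copy of the sequential fan $S_\w$ in the range: if some fiber is not compact, one can (via $\mu$-completeness) find a closed discrete sequence in that fiber and, combining with a convergent sequence in $Y$ landing at the relevant point, assemble a closed embedded $S_\w$. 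The hypothesis excludes this, forcing all fibers to be compact, so I would argue contrapositively that a non-compact fiber of the closed map $f|Z$ yields a closed $S_\w\subset Y$. The technical care here is in shrinking $Z$ if necessary and in verifying that the constructed fan is genuinely closed in $Y$; this is the second place where the argument demands attention, though it is more standard than the $(3)\Rightarrow(4)$ step.
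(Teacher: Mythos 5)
Your overall architecture matches the paper's: $(1)\Leftrightarrow(2)\Leftrightarrow(3)$ is assembled from Theorem~\ref{n1.2} and Proposition~\ref{n1.3} exactly as you say, $(4)\Rightarrow(3)$ is Proposition~\ref{n1.5}, the candidate $Z=\cl_X(s(Y))$ for $(3)\Rightarrow(4)$ is the right one, and the $(4)\Rightarrow(5)$ strategy (a non-compact fiber of the closed restriction forces a closed copy of $S_\w$ in $Y$) is the paper's. However, your mechanism for $(3)\Rightarrow(4)$ has a genuine gap, and it is not the one you flag. You propose to take a closed $F\subset Z$, a sequence $y_n\in f(F)$ converging to $y$, lift it to points $a_n\in F$, and then ``use precompactness/$\cs^*$-continuity of the section together with $\mu$-completeness to extract a convergent subsequence.'' But $\cs^*$-continuity of $s$ only guarantees an accumulation point for sequences of the form $\big(s(y_n)\big)$, and the points $s(y_n)$ need not lie in $F$; conversely, arbitrary lifts $a_n\in F$ are not in the image of $s$, so nothing constrains them to accumulate. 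If $\{a_n\}$ happens to be closed and discrete, $\mu$-completeness only tells you it is unbounded and meets an infinite locally finite family of open sets --- by itself this is no contradiction, because the section says nothing about these points. So the dichotomy ``lift into $F$ (lose control) versus lift via $s$ (lose membership in $F$)'' is precisely the obstruction, and your plan does not resolve it.

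The paper closes this gap with a two-stage approximation that your sketch omits: assuming $f|Z$ is not closed, it produces a closed discrete $B=\{a_n\}\subset A$ with $f(a_n)\to y_0\notin f(A)$, uses $\mu$-completeness to get a locally finite family $\{U_i\}$ meeting $B$, shrinks each $U_i$ to $W_i=U_i\cap f^{-1}(O(f(b_i)))$ with $\cl_Y(f(W_i))\not\ni y_0$, and then exploits the density of $s(Y)$ in $Z$ to conclude $y_0\in\cl_Y\big(\bigcup_i f(W_i\cap s(Y))\big)$. A second application of the Fr\'echet--Urysohn property to this union yields a convergent sequence $(y_n)$ whose images $s(y_n)$ land in distinct members $W_{i(n)}$ of the locally finite family, hence form a closed discrete set with no accumulation point --- and \emph{this} contradicts the $\cs^*$-continuity of $s$, because now the sequence really is of the form $\big(s(y_n)\big)$. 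You need this detour (or an equivalent one) to make $(3)\Rightarrow(4)$ work; as written, your direct extraction argument would fail. Your $(4)\Rightarrow(5)$ sketch is closer to viable, though note that one convergent sequence in $Y$ does not suffice: you must build one convergent sequence per point of the closed discrete set in the fiber (again via density of $s(Y)$), make them pairwise disjoint using the precompact-preserving property of $s$, and only then verify the union is a closed copy of $S_\w$.
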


\begin{proof} The equivalences $(1)\Leftrightarrow(2)\Leftrightarrow(3)$ follow from Theorem~\ref{n1.4}  and the fact that each Fr\'echet-Urysohn compact space is sequentially compact. The implication $(4)\Ra(3)$ has been proved in Proposition~\ref{n1.5} and $(5)\Ra(4)$ is trivial.

$(3)\Ra(4)$  Assume that $f$ has a $\cs^*$-continuous section $s:Y\to X$ and let $Z=\cl_X(s(Y))$. We claim that the
restriction $f|Z\colon\, Z\to Y$ is a closed map. Assume that this
 is not true. Then we
can find a closed subset $A\subset Z$ whose image $f(A)$ is not closed
in $Y$. Since $Y$ is a Fr\'echet--Urysohn space, there is a sequence
$(a_n)\subset A$ such that the sequence $(f(a_n))$ converges to a point
$y_0\in Y\setminus f(A)$. It can be easily shown that $B=\{a_n\colon\, n\in\IN\}$ is
a closed discrete subset of $A$. Since $B$ is not compact and the space
$X$ is $\mu$-complete, the set $B$ is not bounded in $X$. Consequently, there is a
locally finite countable collection $\U=\{U_n:n\in\w\}$ of open sets of $X$ meeting the set $B$. For every $i\in\IN$ pick a point $b_i\in B\cap
U_i$. Since $\U$ is an infinite locally finite family, the set $\{b_i\colon\, i\in\IN\}\subset B$ is
infinite (and discrete in $X$). Then the sequence $(f(b_i))$ converges to
$y_0$, being a subsequence of $(a_n)$.

For each $i\in\IN$ fix an open neighborhood $O(f(b_i))$ of $f(b_i)$ in $Y$
whose closure does not contain the point $y_0$ and let $W_i=U_i\cap
f^{-1}(O(f(b_i)))$. Then $\{W_i\colon\, i\in\IN\}$ is a locally finite family of open
subsets of $X$ and for every $i\in\IN$ the closure of $f(W_i)$ in $Y$ does
not contain the point $y_0$. Since $b_i\in\cl_X(W_i\cap s(Y))$, by the
continuity of~$f$, we get that $f(b_i)\in\cl_Y(f(W_i\cap s(Y)))$. Then
$y_0$ lies in the closure of the union $\bigcup_{i\in\IN}f(W_i\cap s(Y))$
in~$Y$. Since the space $Y$ is Fr\'echet--Urysohn, there is a sequence
$y_n\in\bigcup_{i\in\IN}f(W_i\cap s(Y))$ convergent to $y_0$. For each
$n\in\IN$ find $i(n)\in\IN$ such that $y_n\in f(W_{i(n)}\cap s(Y))$. Passing to
a subsequence, we can assume that $i(n)\ne i(n')$ for any $n\ne n'$. Then
$s(y_n)\in W_{i(n)}$ and hence  $\{s(y_n)\colon\, n\in\IN\}$ is an infinite closed
discrete subset of $X$ which has no accumulation point in $X$, a contradiction with the $\cs^*$-continuity of $s$ and the convergence of $(y_n)$.
\medskip

$(4)\Ra(5)$ Assume $f$ is inductively closed and $Y$
contains no closed subspace homeomorphic to the sequential fan $S_\w$. Let $Z$ be a closed subset of $X$, such that $f(Z)=Y$ and the restriction $f|Z:Z\to Y$ is a closed map. We shall show that the map $f|Z$ is perfect. Take any section $s:Y\to Z$ of the map $f$. We loose no generality assuming that $s(Y)$ is dense in $Z$. Fix any point $y_0\in Y$ and
assume that the pre-image $f^{-1}(y_0)\cap Z$ is not compact. Since the
space $X$ is $\mu$-complete, the set $f^{-1}(y_0)\cap Z$ is not bounded and thus there
is an infinite locally finite family $\U=\{U_n:n\in\w\}$ of open sets in $X$ meeting $f^{-1}(y_0)\cap Z$. Without
loss of generality, we may assume that
$s(y_0)\notin\cup\U$. For every $n\in\w$ pick a point $z_n\in U_n\cap f^{-1}(y)\cap Z$. Since
$z_n\in\cl_X(U_n\cap s(Y))$, the continuity of $f$ ensures that
$f(z_n)=y_0\in\cl_Y(f(U_n\cap s(Y)))$. Since the space $Y$ is
Fr\'echet--Urysohn, there is a sequence $T_n=\{y_{n,i}\}_{i=1}^\infty\subset
f(U_n\cap s(Y))$ convergent to $y_0$. This sequence is not trivial because
$s(y_0)\notin U_n\cap s(Y)$ and hence $y_0\notin f(U\cap s(Y))$.
Passing to a suitable subsequence we may assume that $y_{n,i}\ne y_{n,j}$ for all $i\ne j$.

We claim that there is an increasing number sequence $(n_k)$ such that the sets $T_{n_k}$, $k\in\w$, are pairwise disjoint. This sequence will be constructed by induction. Let $n_0=0$.
Assume that for some number $k\in\w$ the numbers $n_0<n_1<\dots<n_{k-1}$ have been constructed so that the sets $T_{n_i}$ are pairwise disjoint for $i<k$. It follows from Proposition~\ref{n1.3} that the section $s:Y\to Z$ preserves precompact sets. Consequently the set $K=\bigcup_{i<k}s(T_{n_i})$ is precompact in $X$ and meets only finitely many of sets of the locally finite family $\U$. Then we can find a number $n_k>n_{k-1}$ such that $K\cap U_{n_k}=\emptyset$. For this number $n_k$ the intersection $T_{n_k}\cap T_{n_i}$ is empty for all $i<k$. This completes the inductive construction.

We claim that $T=\{y_0,y_{n_k,i}\colon\, k,i\in\IN\}$ is a closed subset of $Y$ homeomorphic to the sequential fan $S_\w=S_1\times\w/\{0\}\times \w$, where $S_1=\{2^{-i}:i\in\w\}$ is a convergent sequence.
For this consider the map $h\colon\, S_\w\to T$ assigning to the non-isolated point of $S_\w$ the point $y_0$ and to each point $(2^{-i},k)\in S_\w$ the point $y_{n_k,i}$.

First we check that $T$ is closed in $Y$.
Since $Y$ Fr\'echet-Urysohn, it suffices to verify that for each compact subset $K\subset Y$ the intersection $T\cap K$ is compact. Since the section $s:Y\to Z$ preserves precompact sets, the image $s(K)$ of $K$ is precompact in $X$ and hence meets only finitely many sets $U_{n_k}\supset s(T_{n_k})$. Consequently, $K$ meets only finitely many sets $T_{n_k}$ with $k\le l$ for some $l$ and $K\cap T=\bigcup_{k\le l}K\cap\cl(T_{n_k})$ is compact.

The above argument shows also that each convergent sequence in $T$ lies in the union of finitely many sets $\cl(T_{n_k})$, which implies the continuity of the map $h:T\to S_\w$ The analogous property of the sequential fan ensures the continuity of the inverse map $h^{-1}:S_\w\to T$.
\end{proof}

\begin{remark}
{\rm The implication $(4)\Ra(5)$ is due to A.Arkhangelski \cite{Ar1}.
If $X$ and $Y$ are separable metrizable and $X$ is
Polish or $Y$ is $\sigma$-compact, then the five equivalent conditions of
Theorem~\ref{n1.6} hold if and only if the map $f$ is compact-covering,
see \cite{Chr}, \cite{SR}, \cite{Os}, \cite{Os2},
\cite{JW}, \cite{Mi1}. Under certain
additional set-theoretic assumptions (namely, the determinacy of all
analytic games) each compact-covering map $f\colon\, X\to Y$ between coanalytic
spaces is inductively perfect, see \cite{DSR1}. On the other hand, there
is a model of ZFC (namely, G\"odel's Constructive Universe) in which there
exists a compact-covering map $f\colon\, X\to\IN^\omega$ from an $F_\sigma$-subset
$X$ of the Baire space $\IN^\omega$, which is not inductively perfect and
thus is not subproper, see \cite{DSR2}, \cite{DSR3}.
This shows that
the interplay between compact-covering and subproper maps
is highly non-trivial and subtle even in the realm of separable
metrizable spaces.
}\end{remark}

\section{Operations over subproper maps}\label{s2}

In this section we show that subproper maps are preserved by many
topological constructions. We start from three simple observations.

\begin{proposition}\label{1.9} If $f\colon\, X\to Y$ is a subproper map, then for every subspace $Z\subset Y$ the map
$f|f^{-1}(Z)\colon\, f^{-1}(Z)\to Z$ is subproper.
\end{proposition}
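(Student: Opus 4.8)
The plan is to use the section characterization of subproper maps from Theorem~\ref{n1.2}, namely the equivalence $(1)\Leftrightarrow(2)$. Since $f\colon X\to Y$ is subproper, it admits a section $s\colon Y\to X$ that preserves precompact sets. Writing $g=f|f^{-1}(Z)\colon f^{-1}(Z)\to Z$, I first observe that $s$ restricts to a section of $g$: for each $z\in Z$ the point $s(z)$ satisfies $f(s(z))=z\in Z$, so $s(z)\in f^{-1}(Z)$, and hence $s'=s|Z\colon Z\to f^{-1}(Z)$ is well defined with $g\circ s'=\id_Z$. By the implication $(2)\Ra(1)$ of Theorem~\ref{n1.2}, it then suffices to check that $s'$ preserves precompact subsets of $Z$.

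So fix a precompact set $K\subset Z$, i.e. a set whose closure $\cl_Z(K)$ is compact. The first point to settle is that $K$ is also precompact in the larger space $Y$. Since $Y$ is Hausdorff and $\cl_Z(K)$ is compact, $\cl_Z(K)$ is closed in $Y$; as it contains $K$ we get $\cl_Y(K)\subset\cl_Z(K)$, so $\cl_Y(K)$ is a closed subset of a compactum and is therefore compact. Thus $K$ is precompact in $Y$, and the precompact-preservation of $s$ yields that $s(K)=s'(K)$ has compact closure $C=\cl_X(s(K))$ in $X$.

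The main point is to promote this to precompactness inside the subspace $f^{-1}(Z)$, that is, to show that $\cl_{f^{-1}(Z)}(s'(K))$ is compact. The closure of $s'(K)$ in $f^{-1}(Z)$ equals $C\cap f^{-1}(Z)$, so it is enough to prove $C\subset f^{-1}(Z)$. Here I use continuity of $f$: since $f(s(K))=K$, we have $f(C)=f(\cl_X(s(K)))\subset\cl_Y(f(s(K)))=\cl_Y(K)\subset\cl_Z(K)\subset Z$. Hence every point of $C$ is mapped by $f$ into $Z$, i.e. $C\subset f^{-1}(Z)$, and therefore $\cl_{f^{-1}(Z)}(s'(K))=C\cap f^{-1}(Z)=C$ is compact. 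This shows that $s'$ is precompact-preserving and completes the argument.

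The only genuinely delicate point is this last step: a priori the compact closure $C$ produced by the subproperness of $f$ lives in $X$, and the inclusion $C\subset f^{-1}(Z)$---which guarantees that $C$ is still a (compact) closure when recomputed inside the possibly non-closed subspace $f^{-1}(Z)$---relies on combining the Hausdorff reduction of precompactness from $Z$ to $Y$ with the elementary estimate $f(\cl_X(A))\subset\cl_Y(f(A))$. Everything else is bookkeeping. One may equally well argue directly with a witness set $W\subset X$ for the subproperness of $f$, taking $W\cap f^{-1}(Z)$ as the witness for $g$; the two routes differ only cosmetically.
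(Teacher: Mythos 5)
Your argument is correct, and it is exactly the proof the paper leaves implicit when it lists Proposition~\ref{1.9} among its ``simple observations'': restrict the precompact-preserving section $s$ to $Z$ and verify, via the Hausdorff identification $\cl_Y(K)=\cl_Z(K)$ and the inclusion $f(\cl_X(s(K)))\subset\cl_Y(f(s(K)))$, that the compact closure stays inside $f^{-1}(Z)$. Your care with the last step (relativizing the closure to the possibly non-closed subspace $f^{-1}(Z)$) is precisely the one detail worth writing down, and you handle it correctly.
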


\begin{proposition}\label{n2.2} The composition $f\circ g:X\to Z$ of two subproper maps
$g:X\to Y$ and $f:Y\to Z$ is subproper.
\end{proposition}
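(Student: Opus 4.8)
The plan is to reduce the statement to the section characterization of subproperness furnished by Theorem~\ref{n1.2}, and then to compose sections. First I would invoke the equivalence $(1)\Leftrightarrow(2)$ of Theorem~\ref{n1.2} to replace the hypotheses that $g:X\to Y$ and $f:Y\to Z$ are subproper by the existence of (possibly discontinuous) sections $s_g:Y\to X$ of $g$ and $s_f:Z\to Y$ of $f$, each of which preserves precompact sets. This is the only structural input needed; everything that follows is formal.

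Next I would propose the candidate section $s=s_g\circ s_f:Z\to X$ of the composition $f\circ g$ and verify its two defining properties. That $s$ is a section is immediate: since $g\circ s_g=\id_Y$ and $f\circ s_f=\id_Z$, we get $(f\circ g)\circ s=f\circ(g\circ s_g)\circ s_f=f\circ\id_Y\circ s_f=f\circ s_f=\id_Z$.

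The precompact-preserving property follows by feeding a precompact set through the two sections in turn. Given a precompact $K\subset Z$, the set $s_f(K)\subset Y$ is precompact because $s_f$ preserves precompact sets; and then $s(K)=s_g(s_f(K))\subset X$ is precompact because $s_g$ preserves precompact sets. Hence $s$ is a precompact-preserving section of $f\circ g$, and a final appeal to the implication $(2)\Ra(1)$ of Theorem~\ref{n1.2} shows that $f\circ g$ is subproper.

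I do not anticipate a genuine obstacle: the content of the proposition is carried entirely by the equivalence between subproperness and the existence of precompact-preserving sections, after which the argument amounts to the observation that the composite of two precompact-preserving maps is again precompact-preserving. The only point to watch is that the sections must be applied in the correct order ($s_f$ first, then $s_g$), so that the intermediate set $s_f(K)$ lives in $Y$, which is exactly the domain on which $s_g$ is guaranteed to preserve precompactness. If one instead preferred to argue straight from the definition, one could take the witnessing subset of $X$ to be $s(Z)$ and check that its intersection with $(f\circ g)^{-1}(K)$ is precompact, but the section formulation avoids this bookkeeping with preimages and is cleaner.
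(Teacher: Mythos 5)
Your argument is correct and is precisely the routine verification the paper omits: Proposition~\ref{n2.2} is stated there without proof as a "simple observation," and the intended justification is exactly your composition of precompact-preserving sections via Theorem~\ref{n1.2}. The section $s=s_g\circ s_f$ is indeed a section of $f\circ g$, and precompact-preservation composes in the order you indicate, so there is nothing to add.
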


\begin{proposition}\label{1.10} For a family $\{f_i:X_i\to Y_i\}_{i\in\I}$ of subproper maps the induced map
$\prod_{i\in\I}f_i\colon\, \prod_{i\in\I}X_i\to \prod_{i\in\I}Y_i$ between Tychonoff products is  subproper.
\end{proposition}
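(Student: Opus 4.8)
The plan is to assemble precompact-preserving sections of the factor maps into a single coordinatewise section of the product map. By the equivalence $(1)\Leftrightarrow(2)$ of Theorem~\ref{n1.2}, each subproper map $f_i\colon X_i\to Y_i$ admits a section $s_i\colon Y_i\to X_i$ preserving precompact sets. Writing $X=\prod_{i\in\I}X_i$, $Y=\prod_{i\in\I}Y_i$ and $f=\prod_{i\in\I}f_i$, I would define $s\colon Y\to X$ by $s\big((y_i)_{i\in\I}\big)=\big(s_i(y_i)\big)_{i\in\I}$. Since $f_i\circ s_i=\id_{Y_i}$ for every $i$, we immediately get $f\circ s=\id_Y$, so $s$ is a section of $f$; in particular $f$ is surjective, so that Theorem~\ref{n1.2} will apply. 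It then remains only to check that $s$ preserves precompact sets.

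The one genuine ingredient is the standard description of precompact subsets of a Tychonoff product: a subset $A\subset\prod_{i\in\I}Z_i$ is precompact if and only if each projection $\pr_i(A)\subset Z_i$ is precompact. For the ``only if'' direction, if $\cl(A)$ is compact then so is its continuous image $\pr_i(\cl(A))$, which contains $\pr_i(A)$, so $\cl(\pr_i(A))$ is compact. For the ``if'' direction, $\cl(\pr_i(A))$ being compact for each $i$ gives $A\subset\prod_{i\in\I}\cl(\pr_i(A))$, and the latter product is compact by Tychonoff's theorem; hence $\cl(A)$, being a closed subset of a compactum, is compact. This is the only place where compactness of an infinite product enters.

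Granting this, let $K\subset Y$ be precompact. By the criterion each $\pr_i(K)$ is precompact in $Y_i$, and since $s_i$ preserves precompact sets, $s_i(\pr_i(K))$ is precompact in $X_i$. A direct computation of coordinates gives $\pr_i(s(K))=s_i(\pr_i(K))$, so every projection of $s(K)$ is precompact; applying the criterion once more shows that $s(K)$ is precompact in $X$. Thus $s$ is a precompact-preserving section of $f$, and $f$ is subproper by Theorem~\ref{n1.2}. I do not expect any real obstacle here: the argument is a purely coordinatewise reduction, and the single non-formal step is the invocation of Tychonoff's theorem in the product criterion for precompactness.
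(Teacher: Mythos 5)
Your argument is correct: the coordinatewise section $s=\prod_i s_i$ together with the criterion that a subset of a Tychonoff product is precompact iff all its projections are precompact (via Tychonoff's theorem and the fact that a product of closed sets is closed) is exactly the intended reasoning, and the surjectivity needed to invoke Theorem~\ref{n1.2} is correctly noted. The paper states Proposition~\ref{1.10} as a ``simple observation'' without proof, and your write-up supplies precisely the standard argument it has in mind.
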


Subproper maps are also preserved by the construction of
hyperspace. We recall that the {\em hyperspace} $\exp(X)$ of a topological
space $X$ is the space of all non-empty compact subsets of $X$ endowed
with the Vietoris topology generated by the base consisting of the sets
$$\langle U_1,\dots,U_n\rangle=\{K\in\exp(X)\colon\,  K\subset
\bigcup_{i=1}^nU_i,\; K\cap U_i\ne\emptyset\mbox{ for all }i\le n\},$$ where
$U_1,\dots, U_n$ run over all open subsets of $X$. Observe that
$\KK(X)=\exp(X)\cup\{\emptyset\}$. It is well-known \cite[3.12.26]{En}
that for any
compact Hausdorff space $X$ its hyperspace $\exp(X)$ is compact as well. If the
topology of a space $X$ is generated by a metric $d$, then the Vietoris
topology on $\exp(X)$ is generated by the Hausdorff metric
$$
d_H(A,B)=\max\{\max_{a\in A}d(a,B),\;\max_{b\in B}d(A,b)\},\;\;
A,B\in\exp(X).
$$

Given a map $f\colon\, X\to Y$ between topological spaces let
$\exp(f)\colon\, \exp(X)\to\exp(Y)$ be the map between their hyperspaces acting as
$\exp(f)(K)=f(K)$ for $K\in\exp(X)$.

\begin{proposition}\label{1.11} A map $f\colon\, X\to Y$ is subproper if and only if so is the
map $\exp(f)\colon\, \exp(X)\to\exp(Y)$.
\end{proposition}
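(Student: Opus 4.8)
The plan is to prove both directions by translating the subproperness of one map into a precompact-preserving section, and then transporting that section across the hyperspace functor. I will use the characterization from Theorem~\ref{n1.2} that a surjective map is subproper if and only if it admits a section preserving precompact sets, together with the additive/monotone section reformulations. The essential point is that precompact subsets of a hyperspace $\exp(X)$ admit a clean description: a family $\mathcal{K}\subset\exp(X)$ is precompact in the Vietoris topology if and only if the union $\bigcup_{K\in\mathcal{K}}K$ is precompact in $X$, since $\exp(C)$ is compact whenever $C$ is compact (by \cite[3.12.26]{En}) and $\exp$ is monotone with respect to inclusion. I would establish this equivalence first as the key lemma, as it is the bridge between precompactness upstairs and downstairs.

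For the ``only if'' direction, suppose $f\colon X\to Y$ is subproper with additive precompact-preserving section $s\colon Y\to X$ (from Theorem~\ref{n1.2}(4)). I would define a section $\sigma\colon\exp(Y)\to\exp(X)$ of $\exp(f)$ by $\sigma(L)=s(L)$ for $L\in\exp(Y)$; since $s$ preserves precompact sets, $s(L)$ has compact closure, but I actually want $\sigma(L)$ to be compact, so I would instead set $\sigma(L)=\cl_X(s(L))$ and check that $\exp(f)(\sigma(L))=f(\cl_X(s(L)))=L$ using continuity of $f$ and $f\circ s=\mathrm{id}$. Then for a precompact family $\mathcal{L}\subset\exp(Y)$, its union $M=\bigcup_{L\in\mathcal{L}}L$ is precompact in $Y$ by the key lemma, whence $s(M)$ is precompact in $X$; since every $\sigma(L)\subset\cl_X(s(M))$, the family $\sigma(\mathcal{L})$ has union inside a compact set, so it is precompact in $\exp(X)$, again by the key lemma. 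Thus $\sigma$ preserves precompact sets and $\exp(f)$ is subproper.

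For the ``if'' direction, suppose $\exp(f)$ is subproper with precompact-preserving section $\Sigma\colon\exp(Y)\to\exp(X)$. I would build a section $s\colon Y\to X$ of $f$ by restricting attention to singletons: $\Sigma(\{y\})$ is a compact subset of $f^{-1}(y)$, so I pick any point $s(y)\in\Sigma(\{y\})$. To verify $s$ preserves precompact sets, take a compact $K\subset Y$; then $\{\{y\}:y\in K\}$ is the image of $K$ under the natural embedding $y\mapsto\{y\}$ of $Y$ into $\exp(Y)$, hence is compact, so $\Sigma(\{\{y\}:y\in K\})$ is precompact in $\exp(X)$, and by the key lemma its union $\bigcup_{y\in K}\Sigma(\{y\})$ is precompact in $X$; since $s(K)$ lies in this union, $s(K)$ is precompact. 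Here I should first confirm that $f$ itself is surjective whenever $\exp(f)$ is (immediate from singletons) so that Theorem~\ref{n1.2} applies.

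The main obstacle I anticipate is the key lemma on precompactness in the Vietoris topology, specifically the direction that a Vietoris-precompact family $\mathcal{K}$ forces its union to be precompact in $X$. One must argue that the closure of $\mathcal{K}$ in $\exp(X)$ is a compact family whose union is closed and compact; this requires knowing that the union map $\exp(X)\to\{\text{closed sets}\}$ behaves well, or equivalently that $\cl_X(\bigcup\mathcal{K})$ equals the union of the members of $\cl_{\exp(X)}\mathcal{K}$, which is a standard but slightly delicate fact about hyperspaces that I would justify carefully (the reverse inclusion is the substantive one). Everything else is routine once this dictionary between precompactness in $X$ and in $\exp(X)$ is in place.
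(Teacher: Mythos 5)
Your proposal is correct and follows essentially the same route as the paper: define the hyperspace section by $L\mapsto\cl_X(s(L))$, recover a pointwise section in the converse direction by choosing points from the images of singletons, and reduce both directions to the fact that the union of a compact subfamily of $\exp(X)$ is compact in $X$. That last fact is exactly where the paper spends its effort (via an accumulation-point argument over the net of finite subfamilies of an open cover); you correctly isolate it as the key lemma but leave its substantive direction unproved, so that is the one step you would still need to write out.
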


\begin{proof} Assuming that the map $f\colon\, X\to Y$ is subproper, let $s\colon\, Y\to X$ be a section of $f$ preserving precompact
sets. Next, define a section $l\colon\, \exp(Y)\to\exp(X)$ of $\exp(f)$ letting
$l(K)=\cl_X(s(K))$ for $K\in\exp(Y)$. We claim that the section $l$
preserves precompact sets. Let $\KK$ be any compact subset of $\exp(Y)$.

First we show that the union $\cup\KK$ is a compact subset of $Y$. Let $\U$ be a cover of $\cup\KK$ by open subsets of $Y$. Assuming that no finite subfamily $\FF\subset\U$ covers $\cup\KK$, find a compact set $K_\FF\in\KK$ with $K_\FF\not\subset\cup\FF$. We can consider the family $[\U]^{<\w}$ of finite subsets of $\U$ as a partially ordered set ordered by the inclusion relation. The compactness of $\KK$ ensures the existence of an accumulation point $K_\infty\in\KK$ of the net $(K_\FF)_{\FF\in[\U]^{<\w}}$.
The latter means that for any open neighborhood $O(K_\infty)\subset\exp(Y)$ and any finite subset $\FF\subset\U$ there is another finite subset $\mathcal E\subset\U$ containing $\FF$ and such that $K_{\mathcal E}\in O(K_\infty)$. Using the compactness of $K_\infty$ in $\cup\KK\subset\cup\U$ find a finite family $\FF\subset\U$ with $K_\infty\subset\cup\FF$. Then $\langle \cup\FF\rangle=\{K\in\exp(Y):K\subset\cup \FF\}$ is an open neighborhood of $K_\infty$ in $\exp(Y)$. The accumulation property of $K_\infty$ ensures the existence of a finite set $\mathcal E\subset\U$ containing $\FF$ and such that $K_{\mathcal E}\in\langle\cup\FF\rangle$. Then $K_{\mathcal E}\subset\cup\FF\subset\cup\mathcal E$. On the other hand,  $K_{\mathcal E}\not\subset\cup\mathcal E$ by the choice of $K_{\mathcal E}$. This contradiction completes the proof of the compactness of $\cup\KK$.

Since the section $s$ is precompact-preserving, the set $C=\cl_X(s(\cup\KK))$ is compact in
$X$ and $\exp(C)$ is a compact subset of $\exp(X)$. Observing that
$$
l(K)\subset\cl_X(s(K))\subset\cl_X(s(\cup\KK))=C
$$
 for every $K\in\KK$ , we
see that $l(\KK)\subset\exp(C)$ and thus $l(\KK)$ is a precompact subset of
$\exp(X)$.
\smallskip

Next, assume conversely that the map $\exp(f)\colon\, \exp(X)\to\exp(Y)$ is subproper. Then there is a section $l\colon\, \exp(Y)\to\exp(X)$ of
$\exp(f)$ preserving precompact sets. For each $y\in Y$ let $s(y)$ be any
point of the compact subset $l(\{y\})$ of $X$. Since
$\exp(f)(l(\{y\}))=f(l(\{y\}))=\{y\}$, we get that the so-defined function
$s\colon\, Y\to X$ is a section of the map $f$. To verify that $s$ preserves
precompact subsets, fix any compact subset $K$ of $Y$. It follows that
$\KK=\{\{y\}\colon\, y\in K\}$ is a compact subset of $\exp(Y)$ and hence
$l(\KK)=\{l(\{y\})\colon\, y\in K\}$ is a precompact subset of $\exp(X)$ contained
in some compact subset $\C$ of $\exp(X)$. Then $\cup\C$ is a compact
subset of $X$ containing all sets $l(\{y\})$ for $y\in K$. Hence
$s(K)\subset\cup\C$ is a precompact subset of $X$.
\end{proof}

The constructions of the hyperspace $\exp$ is an example of a functorial construction on the
category $\Haus$ of Hausdorff  spaces and their continuous maps. The above
results suggest the following general

\begin{problem} Which functorial topological constructions do preserve
subproper maps?
\end{problem}

We shall  answer  this question for functors with compact continuous
support. For basic concepts of categorial topology we refer the reader to
\cite{TZ}. Let $F\colon\, \Haus\to\Haus$ be a functor on the category $\Haus$ of
Hausdorff spaces and let
$X$ be a Hausdorff space. We say that a point $a\in
F(X)$ is supported by a subset $K\subset X$ if $a\in F(e_K)(F(K))$, where
$e_K\colon\, K\to X$ stands for the natural inclusion. We say that a functor
$F\colon\, \Haus\to\Haus$ has {\em compact continuous support} if there is a natural
transformation $\supp\colon\, F\to\exp$ of the functor $F$ into the hyperspace
functor $\exp$ such that for every Hausdorff space $X$ the component
$\supp_X\colon\, F(X)\to\exp(X)$ is a continuous map such that each element $a\in
F(X)$ is supported by the compact set $\supp_X(a)\subset X$. We say that a
functor $F\colon\, \Haus\to\Haus$ {\em preserves surjective maps between compact
spaces} if for every surjective map $f\colon\, X\to Y$ between compact
Hausdorff spaces the spaces $F(X)$ and $F(Y)$ are compact and the
map $F(f)\colon\, F(X)\to F(Y)$ is surjective.

\begin{proposition}\label{1.20}
Suppose $F\colon\, \Haus\to \Haus$ is a functor
with compact continuous support that preserves
 surjective maps between
compact spaces. Then for every subproper map $f\colon\, X\to Y$
the map $F(f)\colon\, F(X)\to F(Y)$ is subproper.
\end{proposition}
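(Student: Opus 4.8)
The plan is to build a precompact-preserving section of $F(f)$ directly and invoke characterization (3) of Theorem~\ref{n1.2}. Since $f$ is subproper, fix a monotone section $l\colon\KK(Y)\to\KK(X)$ of the induced map $\KK(f)$, so that $f(l(K))=K$ for every compact $K\subset Y$ and $l(K)\subset l(K')$ whenever $K\subset K'$. Given a point $b\in F(Y)$, let $K_b=\supp_Y(b)$ be its compact support in $Y$. By the defining property of the support, $b\in F(e_{K_b})(F(K_b))$, so we may choose $\tilde b\in F(K_b)$ with $F(e_{K_b})(\tilde b)=b$. Put $L_b=l(K_b)$, a compact subset of $X$ with $f(L_b)=K_b$. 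The restriction $f|L_b\colon L_b\to K_b$ is a surjective map between compact spaces, so by hypothesis $F(f|L_b)\colon F(L_b)\to F(K_b)$ is surjective; choose $\hat b\in F(L_b)$ with $F(f|L_b)(\hat b)=\tilde b$, and set $S(b)=F(e_{L_b})(\hat b)\in F(X)$.

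That $S$ is a section of $F(f)$ follows from naturality. The equality $f\circ e_{L_b}=e_{K_b}\circ(f|L_b)$ of maps $L_b\to Y$ yields, after applying the functor $F$, the identity $F(f)\circ F(e_{L_b})=F(e_{K_b})\circ F(f|L_b)$. Evaluating at $\hat b$ gives $F(f)(S(b))=F(e_{K_b})(F(f|L_b)(\hat b))=F(e_{K_b})(\tilde b)=b$, as required.

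It remains to verify that $S$ preserves precompact sets, and this is where the argument of Proposition~\ref{1.11} is reused. Let $\BB\subset F(Y)$ be compact. Since $\supp_Y$ is continuous, $\supp_Y(\BB)$ is a compact subset of $\exp(Y)$, and the accumulation-point computation from the proof of Proposition~\ref{1.11} shows that its union $K_0=\bigcup\supp_Y(\BB)$ is a compact subset of $Y$. For every $b\in\BB$ we then have $K_b=\supp_Y(b)\subset K_0$, so monotonicity of $l$ gives $L_b=l(K_b)\subset l(K_0)=:L_0$. Writing $e_{L_b}$ as $e_{L_0}$ composed with the inclusion $L_b\hookrightarrow L_0$, functoriality yields $S(b)=F(e_{L_b})(\hat b)\in F(e_{L_0})(F(L_0))$. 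Applying the preservation hypothesis to the identity map of the compactum $L_0$ shows that $F(L_0)$ is compact, hence its continuous image $F(e_{L_0})(F(L_0))$ is a compact subset of $F(X)$ containing $S(\BB)$. Thus $S(\BB)$ is precompact, so $S$ preserves precompact sets and $F(f)$ is subproper.

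The main obstacle is the compactness of the union $K_0=\bigcup\supp_Y(\BB)$, since for an arbitrary compact family in $\exp(Y)$ this requires a genuine argument; fortunately the only tool needed is precisely the net/accumulation-point computation already carried out for $\cup\KK$ in Proposition~\ref{1.11}. Everything else is bookkeeping with the naturality squares for the inclusions $e_K$ and the restrictions $f|L$, together with the routine observation that the preservation hypothesis, applied to identity maps, forces $F$ to send compacta to compacta.
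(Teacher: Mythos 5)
Your proof is correct and follows essentially the same route as the paper's: lift $b$ over its compact support $\supp_Y(b)$ via a compact preimage supplied by the subproperness of $f$, use the surjectivity-preservation hypothesis on that compactum, and then bound $S(\BB)$ inside $F(e_{L_0})(F(L_0))$ for the single compactum $L_0$ sitting over $\bigcup\supp_Y(\BB)$. The only cosmetic difference is that you start from the monotone section $l\colon\KK(Y)\to\KK(X)$ of Theorem~\ref{n1.2}(3), whereas the paper starts from a precompact-preserving point-section $s$ and takes $\cl_X(s(\supp_Y(a)))$ in its place; the monotonicity of $l$ plays exactly the role of the inclusion $\cl_X(s(C))\subset\cl_X(s(A))$ there.
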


\begin{proof} Let $f\colon\, X\to Y$ be a subproper map and
let $\supp\colon\, F\to \exp$ be the
 natural transformation such that for each Hausdorff
space $Z$ the component $\supp_Z\colon\, F(Z)\to\exp(Z)$ is a continuous map and
each $a\in F(Z)$ is supported by the compact set $\supp_Z(a)$. Let $s\colon\, Y\to
X$ be a section preserving precompact sets. Define a section $l\colon\, F(Y)\to
F(X)$ of the map $F(f)$ as follows. Given a point $a\in F(Y)$ let
$C=\supp_Y(a)$ and $K=\cl_X(s(C))$. Find a point $c\in F(C)$ such that
$F(e_C)(c)=a$ where $e_C\colon\, C\to Y$ is the identity inclusion. Since the
functor $F$ preserves surjective maps between compacta, there is a point
$b\in F(K)$ such that $F(f|K)(b)=c$, where $f|K\colon\, K\to C$ is the restriction
of $f$. Let finally $l(a)=F(e_K)(b)\in F(X)$ where
$e_K\colon\, K\to X$ is the identity embedding.

It follows that
\begin{multline*}
F(f)(l(a))=F(f)\circ F(e_K)(b)=F(f\circ e_K)(b)=F(e_C\circ
f|K)(b)
\\
=F(e_C)\circ F(f|K)(b)=F(e_C)(c)=a
\end{multline*}
 and thus the so-defined
function $l\colon\, F(Y)\to F(X)$ is a section of the map $F(f)$.

To show that
this section preserves precompact sets, fix any compact subset $\KK\subset
F(Y)$. By the continuity of the map $\supp_Y$, the image $\supp_Y(\KK)$ of
$\KK$ is a compact subset of the hyperspace $\exp(Y)$. Then the union
$A=\cup\supp_Y(\KK)$ is a compact subset of $Y$. Let $B=\cl_X(s(A))$.
Since $F$ preserves surjective maps between compacta, the space $F(B)$ is
compact and so is its image $F(e_B)(F(B))$ in $F(X)$ where $e_B\colon\, B\to X$
stands for the identity inclusion. It follows from the construction that
$l(a)\in F(e_B)(F(B))$ for each $a\in\KK$ which yields that $l(\KK)$ is a
precompact subset of $F(X)$.
\end{proof}

\begin{remark}
{\rm
There are many examples of functors satisfying the
requirements of Proposition~\ref{1.20}, see \cite[VII.\S1]{FF} or \cite[\S2.7]{TZ}. Among them
there are the functor $(\cdot)^n$ of finite power, the functor $SP^n$ of
symmetric power, the subfunctors $\exp_n$ of the hyperspace functor, etc.
}\end{remark}

We finish this section by establishing an ``upper semicontinuity''
property of $\cs^*$-continuous functions. Let us recall that the
upper limit $\overline{\lim}_{n\to\infty}F_n$ of a sequence
$(A_n)_{n\in\w}$ of subsets of a topological space $X$ is the set
of all points $x\in X$ such that any neighborhood $U_x$ of $X$
meets infinitely many sets $A_n$. We shall write that
$(A_n)\nearrow \uplim_{n\to\infty}A_n$ if each neighborhood of
$\uplim_{n\to\infty}A_n$ contains all but finitely many sets $A_n$.

\begin{proposition}\label{n2.8} Let $s:Y\to X$ be a $\cs^*$-continuous function and $(A_n)_{n\in\w}$ be a sequence of subsets of $Y$ such that $K=\uplim_{n\to\infty}A_n$ is metrizable compact and $(A_n)\nearrow K$. Then the upper limit $B=\uplim_{n\to\infty}s(A_n)$ is closed and bounded in $X$ and $\big(s(A_n)\big)\nearrow B$. If the space $X$ is $\mu$-complete, then $B$ is compact.
\end{proposition}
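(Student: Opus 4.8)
The plan is to treat the four assertions in order of increasing difficulty. First, the closedness of $B=\uplim_{n\to\infty}s(A_n)$ is automatic and uses nothing about $s$: for any sequence of sets $(C_n)$ one has $\uplim_{n}C_n=\bigcap_{N}\cl\big(\bigcup_{n\ge N}C_n\big)$, an intersection of closed sets. Applying this with $C_n=s(A_n)$ shows $B$ is closed. The last assertion will then come for free: once $B$ is known to be bounded, $\mu$-completeness of $X$ makes $\cl_X(B)$ compact, and since $B$ is closed this gives $B=\cl_X(B)$ compact. Thus the real content is the boundedness of $B$ together with the relation $\big(s(A_n)\big)\nearrow B$.

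I would prove both of these by contradiction, and in both cases the contradiction is manufactured in the same way: produce points $y_n\in A_n$ whose images $s(y_n)$ are ``spread out'' in $X$, extract a convergent subsequence of $(y_n)$, and feed it into the $\cs^*$-continuity of $s$ to obtain an accumulation point of $(s(y_n))$ that cannot exist. For the relation $\nearrow$: if $\big(s(A_n)\big)\not\nearrow B$ there is an open $W\supseteq B$ with $s(A_n)\not\subseteq W$ for infinitely many $n$; picking $y_n\in A_n$ with $s(y_n)\notin W$ and then a convergent subsequence $y_{n_k}\to q$, the $\cs^*$-continuity of $s$ yields an accumulation point $x_\infty$ of $(s(y_{n_k}))$. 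Since every neighbourhood of $x_\infty$ meets infinitely many $s(A_{n_k})$ we get $x_\infty\in\uplim_{n}s(A_n)=B\subseteq W$, while $x_\infty$ is simultaneously a limit point of the points $s(y_{n_k})\in X\setminus W$ (a closed set), so $x_\infty\notin W$ --- a contradiction. For boundedness: if $B$ is not bounded, choose an infinite locally finite family $\{U_m:m\in\w\}$ of open sets each meeting $B$; since every $b_m\in U_m\cap B$ lies in $\uplim_{n}s(A_n)$, the set $U_m$ meets infinitely many $s(A_n)$, and a diagonal choice produces $y_k\in A_{n_k}$ with $s(y_k)\in U_{m_k}$ for pairwise distinct indices $m_k$. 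Local finiteness then forces $(s(y_k))$ to have no accumulation point in $X$ (a repeated value would already sit in infinitely many $U_{m_k}$), which again contradicts the $\cs^*$-continuity of $s$ once $(y_k)$ is known to have a convergent subsequence.

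The single step on which everything rests --- and the one I expect to be the \emph{main obstacle} --- is the extraction of a convergent subsequence from points $y_n\in A_n$. This is exactly where both hypotheses on $K$ enter. Because $(A_n)\nearrow K$, the chosen sequence $(y_n)$ is eventually contained in every neighbourhood of $K$; separating a putative outside point from the compactum $K$ by disjoint open sets (using only that $Y$ is Hausdorff) shows that every accumulation point of $(y_n)$ lies in $K$, and a finite-subcover argument over $K$ shows that at least one accumulation point $q\in K$ exists. I would then exploit that $K$ is \emph{metrizable compact}, hence sequentially compact, to pass to a subsequence converging to $q$. The delicate point here --- and the reason the condition $(A_n)\nearrow K$ is imposed rather than mere clustering --- is to guarantee that being eventually near $K$ upgrades to genuine convergence in the ambient space $Y$, and not merely to convergence of the traces on $K$; controlling this, via the uniform approach of the $A_n$ to $K$ together with the first-countability furnished by the metric on $K$, is the technical heart of the argument. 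Once such a convergent subsequence is secured, both contradictions above close, establishing the boundedness of $B$ and the relation $\big(s(A_n)\big)\nearrow B$, and with them (through the closedness already noted and the $\mu$-completeness of $X$) the compactness of $B$.
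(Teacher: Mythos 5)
Your overall architecture coincides with the paper's: closedness of $B$ via $\uplim_n C_n=\bigcap_N\cl\bigl(\bigcup_{n\ge N}C_n\bigr)$, compactness of $B$ from closedness plus boundedness plus $\mu$-completeness, and both the boundedness of $B$ and the relation $\bigl(s(A_n)\bigr)\nearrow B$ by the same two contradictions the paper uses (points $y_n\in A_n$ whose images land in a locally finite family, resp.\ outside a neighborhood $W\supset B$, followed by $\cs^*$-continuity). Those parts are fine.

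The problem is the step you yourself single out as the main obstacle: extracting a convergent subsequence from the chosen points $y_n\in A_n$. As written, your argument does not close. You correctly show that $(y_n)$ has an accumulation point $q\in K$ (every neighborhood of $K$ absorbs the tail of the $A_n$'s, so no neighborhood of $K$ can miss cofinitely many $y_n$), but you then invoke the sequential compactness of $K$ to ``pass to a subsequence converging to $q$'' --- and sequential compactness of $K$ says nothing about a sequence whose terms lie outside $K$. The metric on $K$ gives a countable neighborhood base of $q$ only \emph{relative to $K$}; the traces $U\cap K$ of open sets of $Y$ need not generate the neighborhood filter of $q$ in $Y$, so an accumulation point need not be the limit of any subsequence. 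You acknowledge exactly this difficulty (``upgrades to genuine convergence in the ambient space $Y$'') but the proposed remedy --- ``the uniform approach of the $A_n$ to $K$ together with the first-countability furnished by the metric on $K$'' --- is not an argument. The paper resolves this with one short observation that your sketch is missing: the set $L=K\cup\{y_k:k\in\w\}$ is itself compact (any finite subcover of $K$ has union a neighborhood of $K$, which by $(A_n)\nearrow K$ contains all but finitely many $y_k$) and metrizable (it is compact Hausdorff with a countable network, namely a countable network of $K$ together with the singletons $\{y_k\}$), hence sequentially compact; so $(y_k)$ has a subsequence converging in $L$, and convergence in the subspace $L$ is convergence in $Y$. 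With that lemma inserted, both of your contradictions go through and the proof is complete.
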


\begin{proof} It is clear that $B=\uplim_{n\to\infty}s(A_n)$ is closed. Assuming that this set is unbounded, find an infinite locally finite family $\U=\{U_k:k\in\w\}$ of open subsets of $X$ that intersect $B$. By induction construct an increasing number sequence $(n_k)_{k\in\w}$ and a sequence of points $y_k\in A_{n_k}$ such that $s(y_k)\in U_k$. It follows from the compactness and metrizability of $K$ and the convergence $(A_n)\nearrow K$ that the set $L=K\cup\{y_k:k\in\w\}$ is compact and metrizable. Consequently, the sequence $(y_k)$ has a convergent subsequence $(y_{k_i})$. By the $\cs^*$-continuity of $s$ the sequence $\big(s(y_{k_i})\big)$ has an accumulation point $x_\infty$ in $X$. Any neighborhood of this point meets infinitely many sets $U_{k_i}$ which contradicts the local finity of $\U$.

Therefore $B$ is closed and bounded. If $X$ is $\mu$-complete, then $B$ is compact by the definition of $\mu$-completeness.

Next, we show that $\big(s(A_n)\big)\nearrow B$. Assuming the converse, we can find a neighborhood $W$ of $B$ in $X$ such that the set $J=\{n\in\w:s(A_n)\not\subset W\}$ is infinite. Then we can construct a sequence $(y_n)_{n\in J}$  such that each $y_n\in A_n$ and $s(y_n)\notin W$. It follows from $(A_n)\nearrow K$ that the set $K\cup \{y_n:n\in J\}$ is compact and metrizable and the sequence $(y_n)_{n\in J}$ has a convergent subsequence $(y_{n_i})$. By the $\cs^*$-continuity of the section $s$, the sequence $\{s(y_{n_i})\}$ has an accumulating point $x_\infty$ in $X$. By definition, $x_\infty\in \uplim_{n\to\infty}s(A_n)=B$. On the other hand, $x_\infty\in\cl\{s(y_n):n\in J\}\subset X\setminus W\subset X\setminus B$, which is a contradiction.
\end{proof}

\section{$k^*$-Metrizable spaces}\label{s3}

It is well-known that the image of a metrizable space under a perfect map is metrizable \cite[[4.4.15]{En}. The images of metrizable spaces under (sub)proper need not be metrizable, which allows us to introduce two new classes of generalized metric spaces.

\begin{definition} A topological space $X$ is defined to be
{\em $k$-metrizable} \textup{(}resp. {\em $k^*$-metrizable}\textup{)} if $X$ is the image of a metrizable space $M$ under a proper \textup{(}resp. subproper\textup{)} map $\pi:M\to X$.
\end{definition}

In fact, the $k$-metrizability of a space $X$ is equivalent to the metrizability of the $k$-coreflexion of $X$. By the {\em $k$-coreflexion} \ $kX$ of a topological space $X$ we understand the set $X$ endowed with the strongest topology inducing the original topology on each compact subset $K\subset X$. It is clear that the identity map $i:kX\to X$ is continuous. Moreover, for any map $f:Z\to X$ from a $k$-space $Z$ the composition $i^{-1}\circ f:Z\to kX$ still is continuous. Thus $kX$ carries the weakest  $k$-space topology that is stronger than the original topology of $X$.
We recall that a topological space $X$ is a {\em $k$-space} if a subset $F\subset X$ is closed if and only if for any compact set $K\subset X$ the intersection $K\cap F$ is closed in $K$.

$k$-Metrizable spaces admit a simple characterization.

\begin{proposition}\label{n3.2} A topological space $X$ is $k$-metrizable if and only if its $k$-coreflexion $kX$ is metrizable.
\end{proposition}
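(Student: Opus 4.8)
The plan is to prove both implications by exploiting a single structural observation: the spaces $X$ and its $k$-coreflexion $kX$ have exactly the same compact subsets, equipped with identical subspace topologies. Indeed, by the definition of $kX$ its topology induces the original topology on each compact $K\subset X$, so every compact subset of $X$ remains compact in $kX$; conversely, since the identity map $i:kX\to X$ is continuous, every compact subset of $kX$ is carried to a compact subset of $X$. As both spaces are Hausdorff (and metrizable in the relevant cases), compact subsets are closed in either topology, and this is what will let precompactness pass back and forth between $X$ and $kX$.

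For the implication ``$kX$ metrizable $\Rightarrow$ $X$ is $k$-metrizable'', I would take $M=kX$ together with the continuous surjection $i:kX\to X$ and verify that $i$ is proper. Given a precompact set $K\subset X$, its closure $C=\cl_X(K)$ is compact in $X$, hence compact and (by Hausdorffness of $kX$) closed in $kX$; therefore $\cl_{kX}(K)\subseteq C$ is a closed subset of a compactum and so compact. Thus $i^{-1}(K)=K$ is precompact in $kX$, the map $i$ is proper, and $X$ is exhibited as the image of the metrizable space $kX$ under a proper map.

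For the converse, let $\pi:M\to X$ be a proper surjection with $M$ metrizable. Since $M$ is a $k$-space, the composition $i^{-1}\circ\pi$, which is just $\pi$ viewed as a map into $kX$, is continuous. I would then check that $\pi:M\to kX$ is again proper: if $K\subset kX$ is precompact, then $C=\cl_{kX}(K)$ is compact in $kX$, hence compact and closed in $X$, so $\cl_X(K)\subseteq C$ is compact and $K$ is precompact in $X$ as well; properness of $\pi:M\to X$ then yields that $\pi^{-1}(K)$ is precompact in $M$. Finally, since $kX$ is a $k$-space and $\pi:M\to kX$ is a proper map into a $k$-space, it is perfect by the cited fact \cite[3.7.18]{En}; being a perfect image of the metrizable space $M$, the space $kX$ is metrizable by \cite[4.4.15]{En}.

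The only genuinely delicate point is the bookkeeping between the two topologies in the two properness verifications: one must take closures in the correct space and remember that a set closed in the finer topology of $kX$ need not be closed in the coarser topology of $X$, whereas compact sets are closed in both. Once the coincidence of compact subsets is in hand, both directions reduce cleanly to the standard facts that a proper map into a $k$-space is perfect and that perfect images of metrizable spaces are metrizable.
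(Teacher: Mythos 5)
Your proposal is correct and follows essentially the same route as the paper: exhibit $i:kX\to X$ as a proper map for one direction, and for the converse observe that $\pi$ viewed as a map $M\to kX$ is still proper, hence perfect (proper map into a $k$-space), so $kX$ is a perfect image of a metrizable space and therefore metrizable. The only difference is that you spell out the properness verifications and the coincidence of compacta in $X$ and $kX$, which the paper leaves implicit.
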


\begin{proof} Since the identity map $i:kX\to X$ is proper, the space $X$ is $k$-metrizable if $kX$ is metrizable. Now assume conversely that $X$ is $k$-metrizable and find a proper map $\pi:M\to X$ from a metrizable space $M$. It follows that the composition $i^{-1}\circ \pi:M\to kX$ is proper and hence perfect because $kX$ is a $k$-space, see \cite[3.7.17]{En}. Then the space $kX$ is metrizable, being a perfect image of a metrizable space $M$, see \cite[4.4.15]{En}.
\end{proof}

The $k^*$-metrizability also can be reduced to considering the $k$-coreflexion.

\begin{proposition}\label{n3.3} A space $X$ is $k^*$-metrizable if and only if its $k$-coreflexion $kX$ is $k^*$-metrizable.
\end{proposition}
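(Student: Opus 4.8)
The plan is to exploit the fact that $X$ and $kX$ share the same underlying set and, crucially, the same compact subsets and the same precompact subsets, and then to treat the two implications separately, using the subproperness of the identity map $i\colon kX\to X$ in one direction and the universal property of the $k$-coreflexion in the other.

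First I would record the key technical fact: a set $K$ is compact in $X$ if and only if it is compact in $kX$, and in that case the two subspace topologies coincide. Indeed, $kX$ carries a topology stronger than that of $X$ (so $i$ is continuous and $kX$ is Hausdorff), whence every $kX$-compact set is $X$-compact; conversely, by the defining property of $kX$, the $kX$-topology induces the original topology on each $X$-compact set, so each such set is $kX$-compact with the same topology. From the coincidence of compact sets and the Hausdorffness of both topologies it follows that a subset $A\subseteq X=kX$ is precompact in $X$ if and only if it is precompact in $kX$: if $\cl_X(A)$ is $X$-compact then it is $kX$-compact, hence $kX$-closed, so $\cl_{kX}(A)\subseteq\cl_X(A)$ is a closed subset of a compact set, and symmetrically. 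This coincidence of precompact sets is the linchpin of the whole argument.

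Using this, the identity map $i\colon kX\to X$ is seen to be subproper: its set-theoretic inverse $i^{-1}\colon X\to kX$ is a section of $i$ that preserves precompact sets (precisely because precompact sets coincide), so Theorem~\ref{n1.2} applies. For the implication ``$kX$ is $k^*$-metrizable $\Rightarrow$ $X$ is $k^*$-metrizable'', I would take a subproper map $\pi\colon M\to kX$ from a metrizable $M$ and observe that $i\circ\pi\colon M\to X$ is subproper by Proposition~\ref{n2.2}, witnessing the $k^*$-metrizability of $X$.

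For the converse, the naive idea of composing with $i^{-1}$ fails, since $i^{-1}\colon X\to kX$ need not be continuous (it is continuous exactly when $X$ is a $k$-space). Here I would instead use the universal property quoted in the text: given a subproper map $\pi\colon M\to X$ from a metrizable $M$ with precompact-preserving section $s\colon X\to M$, the space $M$ is a $k$-space, so $\hat\pi=i^{-1}\circ\pi\colon M\to kX$ is continuous (and surjective), with $i\circ\hat\pi=\pi$. Then $\sigma=s\circ i\colon kX\to M$ is a section of $\hat\pi$, because $\hat\pi\circ\sigma=i^{-1}\circ\pi\circ s\circ i=i^{-1}\circ i=\id_{kX}$, and it preserves precompact sets: a $kX$-precompact $K$ is $X$-precompact, so $\sigma(K)=s(K)$ is precompact in $M$. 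By Theorem~\ref{n1.2} the map $\hat\pi$ is subproper, so $kX$ is $k^*$-metrizable. The only genuinely delicate point is this converse direction, where one must resist composing with the discontinuous $i^{-1}$ and instead route $\pi$ through $kX$ via the $k$-space property of $M$; everything else reduces to the coincidence of precompact sets established at the outset.
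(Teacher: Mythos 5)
Your proof is correct and follows essentially the same route as the paper: for the direction from $X$ to $kX$ you factor $\pi$ through $kX$ using that $M$ is a $k$-space, and for the converse you compose with the subproper identity $i\colon kX\to X$, both hinging on the coincidence of (pre)compact sets in $X$ and $kX$. Your write-up merely makes explicit the verifications (that precompact sets agree and that $s\circ i$ is a precompact-preserving section) which the paper leaves implicit.
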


\begin{proof} Let $i:kX\to X$ denote the identity map. Assuming that $X$ is $k^*$-metrizable, find a subproper map $\pi:M\to X$ for a metrizable space $M$. Since $M$ is a $k$-space, the composition $i^{-1}\circ \pi:M\to kX$ is continuous and subproper, which implies that $kX$ is $k^*$-metrizable.

Assuming conversely that $kX$ is $k^*$-metrizable, find a subproper map $\pi:M\to kX$ from a metrizable space and consider the composition $i\circ \pi:M\to X$ which is a subproper map because the function $i^{-1}:X\to kX$ preserves compacta.
\end{proof}

It turns out that sequentially regular $k^*$-metrizable spaces carry a nice metric.
We define a topological space to be {\em sequentially regular} if for each point $x\in X$ and a neighborhood $U\subset X$ of $x$ there is another neighborhood $V\subset X$ of $x$ such that $\cl_1(V)\subset U$, where $\cl_1(V)$ is the set of the limit points of sequences $\{x_n\}_{n\in\w}\subset V$, convergent in $X$. It is clear that each regular space is sequentially regular.

\begin{theorem}\label{n3.4}
 A (sequentially regular) space $X$ is $k^*$-metrizable (if and) only if $X$ has a metric $\rho$ such that
\begin{enumerate}
\item[(a)] each compact subset $K\subset X$ is totally bounded with respect to the metric $\rho$;
\item[(b)] each $\rho$-convergent sequence converges in $X$;
\item[(c)] a $\rho$-Cauchy sequence converges in $X$ if and only if it has a convergent subsequence in $X$.
\end{enumerate}
\end{theorem}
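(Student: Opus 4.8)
The plan is to prove both directions using the metric characterization machinery together with the definition of $k^*$-metrizability via subproper maps. For the direction asserting that a $k^*$-metrizable space $X$ carries such a metric $\rho$, I would start from a subproper map $\pi:M\to X$ with $M$ metrizable and a precompact-preserving section $s:X\to M$ (available by Theorem~\ref{n1.2}). Let $d$ be a bounded metric generating the topology of $M$. The natural candidate is to transport $d$ to $X$ via the section, setting $\rho(x,y)=d(s(x),s(y))$. First I would check this is a metric (symmetry and triangle inequality are inherited from $d$; non-degeneracy follows since $s$ is injective because $\pi\circ s=\id_X$). For property~(a), given a compact $K\subset X$, the image $s(K)$ is precompact in $M$, hence totally bounded in $(M,d)$, which immediately gives total boundedness of $K$ in $(X,\rho)$. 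For property~(b), if $(x_n)$ is $\rho$-convergent, then $(s(x_n))$ is $d$-Cauchy; I would need its limit to exist in $M$ and its $\pi$-image to be the desired limit---here completeness of $M$ can be arranged (pass to a completion and use that $\pi$ extends appropriately), and continuity of $\pi$ yields convergence in $X$. Property~(c) is the subtle one: a $\rho$-Cauchy sequence pulls back to a $d$-Cauchy sequence $(s(x_n))$ in $M$, which converges to some $m\in M$; if $(x_n)$ has a subsequence converging in $X$, I would use continuity of $\pi$ to identify $\pi(m)$ as the limit of the whole sequence.

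For the converse direction, I assume $X$ is sequentially regular and carries a metric $\rho$ satisfying (a)--(c), and I must build a subproper map from a metrizable space onto $X$. The natural construction is to let $M=(X,\rho)$, or more precisely to complete it: let $\hat M$ be the completion of $(X,\rho)$ and attempt to define a map $\pi:\hat M\to X$ extending the identity on the $\rho$-convergent Cauchy sequences that converge in $X$. By property~(b), each $\rho$-convergent sequence already converges in $X$, so the identity function $j:(X,\rho)\to X$ is continuous; I would first verify this and then understand which points of the completion admit well-defined $X$-values. The map $\pi$ should send a Cauchy class to the $X$-limit of a subsequence whenever one exists; property~(c) guarantees that when such a subsequence-limit exists it is independent of the choice. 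The section $s:X\to \hat M$ is the inclusion $x\mapsto [x]$ into the completion, and property~(a) ensures $s$ carries compacta to totally bounded, hence precompact, subsets of the complete metric space $\hat M$.

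The main obstacle I expect is the converse direction, specifically making the partial map $\pi$ genuinely continuous and surjective onto all of $X$ while keeping its domain metrizable. The difficulty is that not every $\rho$-Cauchy sequence need converge in $X$ (condition (c) only forces convergence when a convergent subsequence exists), so $\pi$ cannot simply be defined on the full completion $\hat M$; I would instead restrict to the subspace $M\subset\hat M$ consisting of those Cauchy classes that do carry an $X$-limit, namely classes of $\rho$-Cauchy sequences possessing a subsequence convergent in $X$. Verifying that this restricted $M$ is still metrizable (it is, as a subspace of $\hat M$), that $\pi:M\to X$ is continuous, and that the inclusion section is precompact-preserving is where sequential regularity enters: it is needed to pass from the $\cl_1$-behaviour of sequences to genuine topological separation in $X$, ensuring that $\pi$ is well-defined and continuous with respect to the original topology rather than merely the $\rho$-topology. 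I would expect the continuity verification of $\pi$ to require the most care, combining sequential regularity with condition (c) to control limits of sequences whose $\rho$-behaviour and $X$-behaviour must be reconciled.
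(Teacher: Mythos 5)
Your strategy coincides with the paper's on both sides: for ``only if'' you transport the metric of $M$ to $X$ through the precompact-preserving section via $\rho(x,y)=d(s(x),s(y))$, and for the converse you complete $(X,\rho)$, restrict to the subset $M$ of the completion on which an $X$-valued limit is well defined, and take the inclusion as the section. However, two steps as written do not go through. In your verification of (c) you assert that the $d$-Cauchy sequence $(s(x_n))$ ``converges to some $m\in M$.'' This is unjustified: $M$ is merely metrizable, not complete, and you cannot pass to its completion because $\pi$ is defined only on $M$ (indeed, the impossibility of extending $\pi$ to the whole completion is precisely what drives the converse construction). The same unwarranted appeal to completeness appears in your treatment of (b), where it is also unnecessary: a $\rho$-convergent sequence has a limit $x\in X$, so $s(x_n)\to s(x)$ in $M$ already. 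The correct argument for (c) must use the hypothesis: the convergent subsequence $(x_{n_k})$ together with its limit $x$ forms a compact set $K$, so $s(K)$ has compact closure in $M$; hence $(s(x_{n_k}))$ has a subsequence converging to some $m$ in that closure, Cauchyness forces the whole sequence $(s(x_n))$ to converge to $m$, and continuity of $\pi$ gives $x_n\to\pi(m)=x$. This is the one point in the forward direction where precompact-preservation of $s$ (as opposed to mere continuity) is genuinely needed, so it cannot be elided.

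In the converse direction you correctly locate the difficulty but do not resolve it. Condition (a) gives that the closure $\overline{K}$ of a compact $K\subset X$ in the full completion $\tilde Z$ is compact; the missing step is to show $\overline{K}\subset M$, i.e.\ that every point of $\overline{K}$ admits a well-defined $X$-limit --- otherwise the inclusion section is precompact-preserving only into $\tilde Z$, not into $M$, and the construction does not close. The paper's argument here is an interleaving trick you do not mention: if $z\in\overline{K}\setminus M$, choose a sequence $(x_{2n})\subset K$ converging to $z$ in $\tilde Z$ which (using metrizability of $K$) has a subsequence convergent in $X$, and a sequence $(x_{2n+1})\subset Z$ converging to $z$ in $\tilde Z$ whose image under $\pi$ diverges in $X$; the merged sequence $(x_n)$ is $\rho$-Cauchy and has a convergent subsequence, so condition (c) forces it to converge in $X$, contradicting the divergence of the odd-indexed part. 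Some argument of this kind is indispensable, and it is the main content of the converse; flagging it as ``where the most care is needed'' does not substitute for supplying it.
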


\begin{proof} To prove the ``only if'' part, assume that $X$ is the image of a metric space $(M,d)$ under a subproper map $\pi:M\to X$. Find a section $s:X\to M$ of $\pi$ that preserves precompact sets and consider the metric $\rho(x,x')=d(s(x),s(x'))$ on the space $X$, induced by the metric $d$ of $M$. It is easy to check that the so-defined metric $\rho$ on $X$ has the three properties indicated in the theorem.
\smallskip

Now assume conversely that a sequentially regular space $X$ admits metric $\rho$ with properties (a)--(c). Consider the metric space $Z=(X,\rho)$ and its completion $\tilde Z$. The  condition (b) implies that the identity map $\pi:Z\to X$ is continuous. Let $M$ be the set of all points $z\in\tilde Z$ such that for any sequence $(z_n)\subset Z$ convergent to $z$ the sequence $(\pi(z_n))$ converges to some point $x$. Letting $\bar \pi(z)=x$ one defines a map $\bar \pi:M\to X$ extending the identity map $\pi:Z\to X$.
The sequential regularity of $X$ can be used to show that the map $\bar \pi:M\to X$ is continuous. We claim that $\bar \pi$ is subproper. Fix any compact subset $K\subset X$. This set is totally bounded in the metric space $(Z,\rho)$ and thus has compact closure $\overline{K}$ in $\tilde Z$. It remains to check that $\overline{K}\subset M$. Assuming that this is not so, find a point $z\in \overline{K}\setminus M$. The compact space $K$, being the continuous image of a totally bounded (and hence separable) metric space, has countable network and thus is metrizable.
Let $(x_{2n})_{n\in\w}\subset K$ be a sequence convergent to the point $z$ in the metric space $\tilde Z$. The compactness of $K$ implies that this sequence contains a subsequence convergent to some point $x$ in $K$.
The condition $z\notin M$ implies the existence of a sequence $(x_{2n+1})_{n\in\w}\subset Z$ convergent to $z$ in $\tilde Z$ whose image $(\pi(x_{2n+1}))_{n\in\w}$ diverges in $X$. Then the sequence $(x_n)_{n\in\w}$ is $\rho$-Cauchy and contains a subsequence convergent to $x$ in $X$. The property (c) of the metric $\rho$ implies that the sequence $(x_n)_{n\in\w}$ converges to $x$ in the space $X$ which is not possible because $(x_{2n+1})_{n\in\w}$ does not converge to $x$. This contradiction shows that the set $\pi^{-1}(K)=Z\cap \bar \pi^{-1}(K)\subset\overline{K}\subset M$ is precompact in $M$, which means that $X$ is $k^*$-metrizable, being the image of the metrizable space $M$ under the subproper map $\bar \pi:M\to X$.
\end{proof}

Let us remark some obvious properties of $k^*$-metrizable spaces.

\begin{theorem}\label{3.3}
{\rm 1.} A subspace of a $k^*$-metrizable space is $k^*$-metrizable.

{\rm 2.} A countable product of $k^*$-metrizable spaces is $k^*$-metrizable.

{\rm 3.} The topological sum of arbitrary family of
$k^*$-metrizable spaces is $k^*$-metrizable.

{\rm 4.} The image of a $k^*$-metrizable space under a subproper map is $k^*$-metrizable.

{\rm 5.} Each sequentially compact subset of a $k^*$-metrizable space is metrizable.

{\rm 6.} A $k$-metrizable space is sequential if and only if it is a $k$-space.
\end{theorem}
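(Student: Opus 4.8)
The plan is to dispatch items~1--4 directly from the structural results on subproper maps, to handle item~6 via the $k$-coreflection, and to reserve the real work for item~5. Throughout, fix for a $k^*$-metrizable $X$ a subproper map $\pi:M\to X$ with $M$ metrizable. For a subspace $Y\subset X$ (item~1), Proposition~\ref{1.9} shows that $\pi|\pi^{-1}(Y):\pi^{-1}(Y)\to Y$ is subproper, while $\pi^{-1}(Y)$ is metrizable as a subspace of $M$. For the image under a subproper surjection $f:X\to Y$ (item~4), the composition $f\circ\pi:M\to Y$ is subproper by Proposition~\ref{n2.2}. For a countable product $\prod_n X_n$ (item~2) I would take subproper maps $\pi_n:M_n\to X_n$ with $M_n$ metrizable and apply Proposition~\ref{1.10}: the induced map $\prod_n\pi_n$ is subproper, and $\prod_n M_n$ is metrizable precisely because the product is countable --- this is exactly where the countability hypothesis enters.

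Item~3 lacks a ready citation, so I would construct the witnessing section explicitly. For each $i$ choose a precompact-preserving section $s_i:X_i\to M_i$ (Theorem~\ref{n1.2}) of a subproper map $\pi_i:M_i\to X_i$, and form the topological sum $M=\bigoplus_i M_i$, which is metrizable since an arbitrary topological sum of metrizable spaces is metrizable. Then $s=\bigoplus_i s_i$ is a section of $\bigoplus_i\pi_i$, and since any compact $K\subset\bigoplus_i X_i$ meets only finitely many summands, $s(K)$ is a finite union of the precompact sets $s_i(K\cap X_i)$ and hence precompact; Theorem~\ref{n1.2} then gives that $\bigoplus_i\pi_i$ is subproper, so the sum is $k^*$-metrizable.

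The crux is item~5, where the $\cs^*$-continuity of the section carries the argument. Let $S\subset X$ be sequentially compact and let $s:X\to M$ be a precompact-preserving --- hence $\cs^*$-continuous --- section of $\pi$. I first claim that $s(S)$ is relatively compact in $M$. Indeed, given a sequence $(x_j)$ in $S$, sequential compactness furnishes a subsequence $x_{j_k}\to x\in S$, and $\cs^*$-continuity of $s$ forces $\big(s(x_{j_k})\big)$ to have an accumulation point in the metric space $M$, hence a convergent subsequence; thus every sequence in $s(S)$ has a subsequence converging in $M$, so $C:=\cl_M(s(S))$ is compact. Now $\pi(C)$ is a compact subset of $X$ containing $S=\pi(s(S))$, and as a continuous Hausdorff image of the compact metrizable space $C$ it is compact Hausdorff with a countable network, hence metrizable. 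Consequently $S$, a subspace of the metrizable $\pi(C)$, is metrizable. The one delicate point is establishing relative compactness of $s(S)$ in a metric space $M$ that need not be complete, which is why I argue through sequences rather than through total boundedness.

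Finally, for item~6, every sequential space is a $k$-space, giving one implication with no hypothesis on $X$. Conversely, if a $k$-metrizable $X$ is a $k$-space, then its $k$-coreflection $kX$ equals $X$, while $kX$ is metrizable by Proposition~\ref{n3.2}; hence $X$ is metrizable and therefore sequential. In particular, for $k$-metrizable spaces both ``sequential'' and ``$k$-space'' turn out to be equivalent to metrizability.
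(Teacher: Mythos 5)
Your proof is correct, and since the paper states Theorem~\ref{3.3} without proof (introducing it as a list of ``obvious properties''), your derivation of items 1--4 and 6 from Propositions~\ref{1.9}, \ref{n2.2}, \ref{1.10} and \ref{n3.2} is exactly the intended route. Your treatment of item 5 --- passing from a precompact-preserving section to its $\cs^*$-continuity, extracting convergent subsequences in the metric space $M$ to get compactness of $\cl_M(s(S))$, and then metrizing $\pi(\cl_M(s(S)))$ as a compact Hausdorff space with countable network --- correctly supplies the one step that is not an immediate citation.
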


Therefore the $k^*$-metrizability is preserved by subproper maps. In the class of $k$-spaces the same is true for closed maps.

\begin{proposition}\label{n3.6} If $f:X\to Y$ is a closed map from a $k^*$-metrizable $k$-space $X$, then $Y$ is $k^*$-metrizable as well.
\end{proposition}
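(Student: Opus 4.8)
The plan is to reduce the statement to the already‑established fact that subproper maps preserve $k^*$‑metrizability (Theorem~\ref{3.3}(4)). I may assume $f$ is surjective (otherwise replace $Y$ by the closed subspace $f(X)$). It then suffices to prove that $f$ itself is subproper; for then $Y=f(X)$ is the subproper image of the $k^*$‑metrizable space $X$, and Theorem~\ref{3.3}(4) applies. To show $f$ is subproper I will exhibit a section $s\colon Y\to X$ preserving precompact sets and invoke Theorem~\ref{n1.2}. Any section $s$ of the closed map $f$ is $\cs^*$‑continuous by Proposition~\ref{n1.5}, and the second assertion of Proposition~\ref{n1.5} upgrades this to precompact‑preservation \emph{as soon as $X$ is $\mu$‑complete}. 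Thus the entire proof hinges on one lemma.

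\emph{Lemma: every $k^*$‑metrizable $k$‑space $X$ is $\mu$‑complete.} To prove it, let $B\subset X$ be bounded; I must show $\cl(B)$ is compact. Since boundedness passes to closures, it is enough to prove that every bounded set is relatively sequentially compact and then appeal to Theorem~\ref{3.3}(5): a sequentially compact subset of a $k^*$‑metrizable space is metrizable, and a metrizable sequentially compact space is compact. Concretely, take a sequence in $\cl(B)$; if it had no convergent subsequence, its range $D=\{x_n\}$ would be an infinite \emph{closed discrete} subset of $X$ (a putative limit point would, via the $k$‑space property together with the metrizability of compact subsets of $X$, produce a genuinely convergent subsequence, contradicting discreteness), and $D\subset\cl(B)$ would be bounded. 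Hence the Lemma reduces to the \emph{Key Claim}: an infinite closed discrete subset $D$ of a $k^*$‑metrizable $k$‑space cannot be bounded.

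To attack the Key Claim I will pass to a defining subproper map $\pi\colon M\to X$ with $M$ metrizable and a section $\sigma\colon X\to M$ preserving precompact sets. Since $\pi$ is continuous, no subsequence of $(\sigma(x_n))$ can converge in $M$ (its $\pi$‑image would converge in $X$), so $\sigma(D)=\{\sigma(x_n)\}$ is an infinite closed discrete subset of the metric space $M$. Metric spaces are paracompact, hence $\mu$‑complete, so $\sigma(D)$ is \emph{unbounded} in $M$: there is a locally finite family $\{G_n\}$ of open sets in $M$ with $\sigma(x_n)\in G_n$. The remaining step—the part I expect to be the main obstacle—is to convert $\{G_n\}$ into a locally finite family of open subsets of $X$ meeting $D$, which would contradict the boundedness of $D$ in $X$ and finish the proof. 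This transfer is genuinely delicate, since $\sigma$ is discontinuous and $\pi$ is in general neither open nor closed, so neither $\sigma^{-1}(G_n)$ nor $\pi(G_n)$ is automatically open; moreover, in a $k$‑space local finiteness is strictly stronger than compact‑finiteness, so one cannot merely test it on compacta. I plan to carry out the transfer by combining the $k$‑space structure of $X$ with the precompact‑preservation of $\sigma$ (so that each compact subset of $X$ meets only finitely many of the relevant sets) and, where needed, the ambient separation available in the setting, in order to manufacture the required locally finite open family in $X$.

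Granting the Lemma, the argument concludes as outlined: any section $s$ of the closed surjection $f$ preserves precompact sets by Proposition~\ref{n1.5}, hence $f$ is subproper by Theorem~\ref{n1.2}, and therefore $Y$ is $k^*$‑metrizable by Theorem~\ref{3.3}(4). Alternatively, once $X$ is known to be $\mu$‑complete one may run the same conclusion through Proposition~\ref{n1.3}, using that the compact subsets of $Y$ are metrizable and hence sequentially compact.
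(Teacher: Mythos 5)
Your overall strategy is genuinely different from the paper's, and the difference is where the trouble lies. You reduce everything to the claim that a $k^*$-metrizable $k$-space $X$ is $\mu$-complete, so that Proposition~\ref{n1.5} upgrades any section of the closed map $f$ to a precompact-preserving one and $f$ itself becomes subproper. The paper never proves (or needs) $\mu$-completeness of $X$: it shows instead that the composition $f\circ\pi:M\to Y$ is subproper, where $\pi:M\to X$ is a defining subproper map, by checking that $\sigma\circ s$ is a precompact-preserving section of $f\circ\pi$ (here $\sigma$ is a precompact-preserving section of $\pi$ and $s$ is any section of $f$). If $\sigma(s(K))$ is not precompact, one gets an infinite closed discrete set $D$ in the metric space $M$, pushes it forward to $\pi(D)$, which is closed and discrete in $X$ merely because it meets every compactum finitely (this uses only the $k$-space property of $X$ and the precompact-preservation of $\sigma$; no locally finite open families in $X$ are ever produced), and then the closed injective map $f|\pi(D)$ plants an infinite closed discrete set inside the compact set $K$ --- a contradiction obtained inside $K$, not from unboundedness in $X$.

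Your proof stalls exactly where you say it does. The Key Claim (an infinite closed discrete subset of a $k^*$-metrizable $k$-space cannot be bounded, i.e.\ the Lemma that such spaces are $\mu$-complete) carries the entire weight of your argument, and the step you defer --- converting the locally finite family $\{G_n\}$ in $M$ into a locally finite family of open subsets of $X$ meeting $D$ --- is not carried out. As you yourself note, $\sigma^{-1}(G_n)$ is not open (these sets are precisely the $k$-network elements the paper works with, and a $k$-network is far from an open family), $\pi(G_n)$ is not open either, and in a $k$-space compact-finiteness is strictly weaker than local finiteness, so the transfer cannot be done by testing on compacta. The paper only achieves transfers of this kind under extra hypotheses (Proposition~\ref{nn5.5} and Theorem~\ref{nn5.6} use sequentiality together with the extent-character, and the $\aleph$-space result additionally assumes paracompactness), and nowhere does it assert $\mu$-completeness of general --- or even regular --- $k^*$-metrizable $k$-spaces. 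So this is a genuine gap, not a routine verification: either supply a proof of the Lemma or, better, switch to the factorization through $f\circ\pi$, which sidesteps the issue entirely. A secondary point: your fallback via Proposition~\ref{n1.3} requires the compact subsets of $Y$ to be sequentially compact, which you do not know before $Y$ is shown to be $k^*$-metrizable; the route through Proposition~\ref{n1.5} avoids this, but only once $\mu$-completeness of $X$ is in hand.
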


\begin{proof} Let $\pi:M\to X$ be a subproper map of a metrizable space $M$ onto the $k^*$-metrizable $k$-space $X$. The proposition will be proven as soon as we check that the composition $f\circ \pi:M\to Y$ is subproper. Let $s:Y\to X$ be any section of the map $\pi$ and $\sigma:X\to M$ be a precompact-preserving section of $\pi$. We claim that $\sigma\circ s:Y\to M$ also preserves precompact sets. Fix any compact subset $K\subset Y$. Assuming that $\sigma\circ s(K)$ is not precompact in $M$, we may find a subset $D\subset \sigma\circ s(K)$ which is closed and discrete in $M$. Then the image $\pi(D)$ is closed and discrete in the $k$-space $X$ because $\pi(D)$ has finite intersection with each compact subset of $X$. Since $f|\pi(D)$ is a closed injective map, the image $f\circ \pi(D)\subset K$ is closed and discrete in $Y$ which is not possible as $K$ is compact.
\end{proof}

It is clear that each sequentially compact $k^*$-metrizable space $X$ is compact and metrizable.
In fact, a bit more general result is true. We recall that a space $X$ is called {\em countably compact} if each sequence $(x_n)$ in $X$ has an accumulating point $x_\infty\in X$.

\begin{proposition}\label{n3.7} A $k^*$-metrizable space $X$ is compact and metrizable iff the $k$-coreflexion $kX$ of $X$ is countably compact.
\end{proposition}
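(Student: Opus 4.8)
The plan is to establish both implications by exploiting the interplay between $k^*$-metrizability, the $k$-coreflexion, and the metrizability of compact subsets that is already available from the earlier results. The key structural fact I want to use is Proposition~\ref{n3.3}: $X$ is $k^*$-metrizable if and only if $kX$ is $k^*$-metrizable. Since $kX$ is by construction a $k$-space, and since a $k^*$-metrizable $k$-space that is also sequentially compact is compact metrizable (this is essentially item~5 of Theorem~\ref{3.3} together with the observation that a sequentially compact $k$-space is compact), the heart of the matter is to bridge the gap between \emph{countable compactness} of $kX$ and \emph{sequential compactness} of $kX$.

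\emph{The ``if'' direction.} Assume $kX$ is countably compact. First I would reduce to $kX$ itself: by Proposition~\ref{n3.3}, $kX$ is $k^*$-metrizable, so fix a subproper map $\pi\colon M\to kX$ from a metrizable space $M$ with a precompact-preserving section $s\colon kX\to M$. The main step is to upgrade countable compactness of $kX$ to sequential compactness. Given a sequence $(x_n)$ in $kX$, I would apply the section $s$ to obtain a sequence $(s(x_n))$ in the metric space $M$. The obstacle is that $s$ is only precompact-preserving, not continuous, so I cannot directly push a compact set through it; however, the set $\{x_n:n\in\w\}$ together with an accumulation point (provided by countable compactness) forms a set whose closure $K$ is compact in $kX$, and since $K$ is a compact subset of a $k^*$-metrizable space it is metrizable (compact subsets of $k^*$-metrizable spaces are metrizable, as noted in the Introduction). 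Then $s(K)$ is precompact in the metric space $M$, hence has compact metrizable closure, so $(s(x_n))$ has a convergent subsequence in $M$; applying the continuous map $\pi$ and using $\pi\circ s=\id$ yields a convergent subsequence of $(x_n)$ in $kX$. Thus $kX$ is sequentially compact, and being a sequentially compact $k^*$-metrizable $k$-space it is compact and metrizable; since the identity $i\colon kX\to X$ is a continuous bijection and $kX$ is compact, $i$ is a homeomorphism and $X$ is compact metrizable.

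\emph{The ``only if'' direction.} This is the easy direction: if $X$ is compact and metrizable, then $X$ is already a $k$-space, so $kX=X$ is compact, and every compact space is countably compact. I would state this in one line.

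The step I expect to be the genuine obstacle is securing the compact \emph{metrizable} set $K$ from a countably compact sequence and then transporting sequential convergence back through the discontinuous section $s$. The delicate point is that countable compactness only guarantees accumulation points, not convergent subsequences, so I must first build a compact set in $kX$ (using that $kX$ is a $k$-space, a countably compact closed subset behaves well), invoke its metrizability to get sequential compactness \emph{inside} $K$, and only then use the precompact-preserving property of $s$ to close the argument. Once metrizability of $K$ is in hand, the rest is routine transport along $\pi$ and $s$.
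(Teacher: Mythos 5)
Your ``only if'' direction and the final transport steps are fine, but the ``if'' direction has a genuine gap at exactly the point you flag as the obstacle: the claim that the closure $K$ of $\{x_n:n\in\omega\}$ together with an accumulation point is compact in $kX$. Countable compactness of $kX$ only gives that this closed subspace is \emph{countably} compact, and the principle you invoke to close the gap --- ``$kX$ is a $k$-space, [so] a countably compact closed subset behaves well'' --- is not a theorem: $[0,\omega_1)$ is a first countable (hence $k$-) space that is countably compact, closed in itself, and not compact; and Remark~\ref{frolik} exhibits a countably compact $k^*$-metrizable space that is not compact. Passing from countable compactness to compactness for such a closed subset of $kX$ is precisely the content of the proposition being proved, so assuming it for $K$ is circular. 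Your plan of first securing a compact metrizable $K$ containing the sequence and only then pushing it through $s$ cannot be carried out in that order.

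The paper's proof avoids this by arguing on the other side of $\pi$: it shows directly that $Z=\overline{s(X)}$ is compact in $M$. If not, $s(X)$ contains an infinite subset $D$ that is closed and discrete in the metric space $M$; since $s$ preserves precompact sets and $D\subset s(X)$, for every compact $K\subset X$ the set $\{d\in D:\pi(d)\in K\}\subset D\cap s(K)$ is finite, so $\pi(D)$ is an infinite set meeting every compact subset of $X$ in a finite set, hence closed and discrete in the $k$-coreflexion $kX$ --- contradicting the countable compactness of $kX$. Then $X=\pi(Z)$ is compact and metrizable as a continuous image of the compact metrizable space $Z$. Your sequence-by-sequence version can be repaired along the same lines: apply this dichotomy to $s(\{x_n\})$ (either it is precompact in $M$, which yields a convergent subsequence of $(x_n)$, or it contains an infinite closed discrete set whose $\pi$-image violates countable compactness of $kX$). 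The missing ingredient in your write-up is exactly this closed-discrete-set argument; without it the proof does not go through.
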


\begin{proof}
Let
$\pi: M\to X$ be a subproper map from a metrizable space $M$ onto $X$
and $s\colon\, X\to M$ is a section of $\pi$ preserving precompact sets. Suppose that $kX$ is countably compact. We claim
that the closure $Z=\overline{s(X)}$ of $s(X)$ in $M$ is compact. Assuming
that this is not true, we can find an infinite subset $D\subset s(X)$ that is closed and discrete in $X$. Then the image $\pi(D)$ has finite intersection with each compact subset of $X$ and hence closed and discrete in the $k$-coreflexion $kX$, which is not possible as $kX$ is countably compact. Therefore $Z$ is compact metrizable space and $X$, being the continuous image of $Z$, is compact and metrizable too.
\end{proof}

\begin{remark}\label{frolik}
{\rm
Surprisingly, but a countably compact $k^*$-metrizable space
 need not be metrizable.
It was shown by Frolik
\cite{Fr} (see also \cite[3.23]{Gru}) that $\beta \IN$ contains a
countably compact subspace $X\supset\IN$ of size $2^{\mathfrak c}$ and (network) weight $\mathfrak c$ such that
all compact subsets of $X$ and $\beta\IN\setminus X$ are finite. It
follows that the $k$-coreflexion of $X$ is discrete and hence both $kX$ and $X$ are $k^*$-metrizable. On the other hand $X$ is a non-metrizable  separable, countably compact space. %It worth
%mentioning that by Corollary 2.13 of \cite{BBK1}, $X$ is an example of an
%almost discrete
%non-metrizable separable countably compact space with the strong Skorohod
%property.
}\end{remark}

Next, we show that a
space $X$ is $k^*$-metrizable if $X$ admits a special cover by
$k^*$-metrizable spaces.

We define a subset $A$ of a topological space $X$ to be {\em
$k$-closed} if for each compact subset $K\subset X$ the
intersection $A\cap K$ is closed in $K$. This is equivalent to saying that $A$ is closed in the $k$-coreflexion $kX$ of $X$.

 A collection $\mathcal F$
of subsets of a space $X$ is defined to be {\em compact-finite} if
for every compact subset $K\subset X$ the set $\{F\in\FF:F\cap
K\ne\emptyset\}$ is finite. It is clear that each locally finite
collection is compact-finite and each compact-finite collection is
point-finite. In Fr\'echet-Urysohn spaces each compact-finite family is locally-finite.

\begin{theorem}\label{3.3a} Let $X$ be a topological space. Then
\begin{enumerate}
\item[1)] $X$ is $k^*$-metrizable if and only if
there is a compact finite cover $\C$ of $X$ such that each
$C\in\C$ lies in a $k$-closed $k^*$-metrizable subspace of
$X$;
\item[2)] $X$ is $k^*$-metrizable if and only if
there is a countable collection $(X_n)_{n=1}^\infty$ of $k$-closed $k^*$-metrizable subspaces of $X$ such that every
convergent sequence $S\subset X$ is contained in some $X_n$.
\end{enumerate}
\end{theorem}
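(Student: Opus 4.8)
The plan is to dispose of the two ``only if'' implications at once and then reduce both ``if'' implications to a single disjoint-sum construction, the two cases differing only in how one controls which summands a given compact set meets. The ``only if'' parts are trivial: if $X$ itself is $k^*$-metrizable, then in (1) the one-element cover $\C=\{X\}$ is compact-finite and its unique member lies in the $k$-closed $k^*$-metrizable subspace $X\subset X$, while in (2) the constant family $X_n=X$ consists of $k$-closed $k^*$-metrizable subspaces containing every convergent sequence.

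For the ``if'' parts, suppose we are given subspaces $Y_i$, $i\in\I$, of $X$, each $k$-closed and $k^*$-metrizable, witnessing the right-hand side (in (1) the $Y_i$ run over the subspaces attached to the members of $\C$, in (2) they are the $X_n$, so $\I=\IN$). For each $i$ fix a metrizable space $M_i$, a subproper map $\pi_i\colon M_i\to Y_i$ and, by Theorem~\ref{n1.2}, a precompact-preserving section $s_i\colon Y_i\to M_i$. Since a topological sum of metrizable spaces is metrizable, $M=\bigsqcup_{i\in\I}M_i$ is metrizable; let $\pi\colon M\to X$ restrict to $e_i\circ\pi_i$ on each clopen summand $M_i$, where $e_i\colon Y_i\hookrightarrow X$ is the inclusion. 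As these subspaces cover $X$ (in (1) because $\C$ covers $X$ and each $C$ lies in some $Y_i$, in (2) because each point is a trivial convergent sequence and so lies in some $X_n$), $\pi$ is a continuous surjection. Choosing for every $x\in X$ an index $i(x)$ with $x\in Y_{i(x)}$ and setting $s(x)=s_{i(x)}(x)$ defines a section of $\pi$. Everything now rests on verifying that $s$ preserves precompact sets, i.e. that for compact $K\subset X$ the image $s(K)$ has compact closure in $M$. Writing $K_i=\{x\in K:i(x)=i\}\subset K\cap Y_i$ and using that $K\cap Y_i$ is compact (as $Y_i$ is $k$-closed) so that $\overline{s_i(K\cap Y_i)}$ is compact in $M_i$, we obtain $s(K)\subset\bigcup_{i}\overline{s_i(K\cap Y_i)}$; since the summands $M_i$ are clopen, $\overline{s(K)}$ is compact as soon as $s(K)$ meets only finitely many of them, that is, as soon as $K_i\ne\emptyset$ for only finitely many $i$.

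In case (1) this finiteness is immediate: compact-finiteness of $\C$ forces only finitely many $C\in\C$ to meet $K$, and for $x\in K$ the index $i(x)$ is attached to such a $C$, so only finitely many summands occur. Hence $\pi$ is subproper and $X$ is $k^*$-metrizable.

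In case (2), choosing $i(x)$ to be the least $n$ with $x\in X_n$, the finiteness we need is that this least-index function is bounded on $K$, and this is where the real work lies. First I would show that $K$ is metrizable: $K=\bigcup_{n}(K\cap X_n)$ is a countable union of sets $K\cap X_n$ that are closed in $K$ (since $X_n$ is $k$-closed) and are compact subsets of the $k^*$-metrizable space $X_n$, hence metrizable and so of countable network; therefore $K$ is a compact Hausdorff space with countable network, which forces $K$ to be metrizable, and in particular sequentially compact. If the least-index function were unbounded on $K$, one could pick $x_j\in K$ whose least indices tend to $\infty$; sequential compactness yields a convergent subsequence, and the hypothesis places the resulting convergent sequence inside a single $X_N$, contradicting that the least indices of its terms tend to infinity. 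Thus the least-index function is bounded on $K$, $s$ preserves precompact sets, and $X$ is $k^*$-metrizable. The main obstacle is precisely this step: converting the hypothesis about convergent sequences into the required finiteness statement demands first upgrading compactness of $K$ to \emph{sequential} compactness, which is exactly what the countable-network/metrizability argument supplies.
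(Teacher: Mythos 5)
Your proposal is correct and follows essentially the same route as the paper: the topological sum of the witnessing metrizable spaces with a section chosen by picking an index for each point, with compact-finiteness giving the required finiteness in (1), and with (2) handled by showing a compact $K$ is a countable union of metrizable compacta, hence has countable network, hence is metrizable and sequentially compact, so that an unbounded least-index selection would produce a convergent sequence lying in no $X_n$. The only cosmetic difference is that the paper reduces (2) to (1) via the disjointified family $Y_n=X_n\setminus\bigcup_{i<n}X_i$, which is exactly your least-index bookkeeping in another guise.
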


\begin{proof}
1. Let $\C$ be a compact-finite cover of $X$ such that for each
set $C\in\C$ there is a $k$-closed subspace $\tilde C\supset
C$ of $X$ such that $\tilde C$ is $k^*$-metrizable and hence
$\tilde C$ is the image of a metrizable space $M_C$ under a
continuous map $\pi_C\colon\, M_C\to\tilde C$ admitting a section
$s_C\colon\, \tilde C\to M_C$ that preserves precompact sets. It
is clear that the topological sum $M=\oplus_ {C\in\C}M_C$ is a
metrizable space and the map $\pi=\bigcup_{C\in\C}\pi_C\colon\,
M\to\bigcup_{C\in\C}\tilde C=X$ is continuous. Given a point $x\in
X$, find any subset $C(x)\in\C$ containing $x$ and let
$s(x)=s_{C(x)}(x)\in M_{C(x)}\subset M$. We claim that the section $s\colon\,
X\to M$ of $\pi$ preserves precompact sets. Given a compact subset $K\subset X$ observe that the family $\C'=\{C\in\C\colon\, C\cap
K\ne\emptyset\}$ is finite and thus $s(K)$ lies in the precompact
subset $\bigcup_{C\in\C'}{s_C(K\cap\tilde C)}$ of $M$.
\smallskip

2. Suppose that $\{X_n\}_{n\in\IN}$ is a countable collection of
$k$-closed $k^*$-metrizable subspaces of $X$ such that each
convergent sequence $K\subset X$ lies in some $X_n$. For every
$n\in\IN$ let $Y_n=X_n\setminus \bigcup_{i<n}X_i$. The
$k^*$-metrizability of $X$ will follow from the preceding item as
soon as we check that the family $\{Y_n\}_{n\in\IN}$ is
compact-finite. Take any compact subset $K\subset X$. Assuming
that $K\not\subset X_n$ for all $n$, construct a sequence $(x_n)$
in $K$ such that $x_n\notin X_n$ for all $n$. We claim that the
sequence $(x_n)$ has a convergent subsequence. Since each set
$X_n$ is $k$-closed, the intersection $K\cap X_n$, being a
compact subset of the $k^*$-space $X_n$ is metrizable. Then
$K=\bigcup_n K\cap X_n$, being the countable union of metrizable
compacta, has countable network of the topology and hence is
metrizable and sequentially compact. So we can find a convergent
subsequence $(x_{n_k})$ of $(x_n)$ lying in no subset $X_n$, which
contradicts the choice of the sets $X_n$.
\end{proof}

We saw in Theorem~\ref{3.3}(2) that the countable product of
$k^*$-metrizable spaces is $k^*$-metrizable. The same is true for
the box products, that is Cartesian products $\prod_{i\in \I}X_i$
endowed with the box-product topology generated by sets of the
form $\prod_{i\in \I}U_i$ where $U_i\subset X_i$ are open sets.

\begin{theorem}\label{n3.10} Let $\{X_i:i\in\I\}$ be a family of $k^*$-metrizable spaces. Then the box-product $\square_{i\in \I}X_i$ is $k^*$-metrizable.
\end{theorem}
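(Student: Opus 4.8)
The plan is to reduce the assertion to finite products by means of the local-to-global criterion of Theorem~\ref{3.3a}(1). Write $X=\square_{i\in\I}X_i$ and let $p_i\colon X\to X_i$ be the (continuous) coordinate projections. For a point $b\in X$ and a finite set $F\subset\I$ introduce the subspace $\Sigma_F(b)=\{x\in X: x_i=b_i \text{ for all } i\notin F\}$, which is homeomorphic to the finite product $\prod_{i\in F}X_i$ and hence $k^*$-metrizable by Theorem~\ref{3.3}(2). Since $\Sigma_F(b)=\bigcap_{i\notin F}p_i^{-1}(b_i)$ and each singleton $\{b_i\}$ is closed ($X_i$ being Hausdorff), $\Sigma_F(b)$ is closed in $X$, hence $k$-closed. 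Thus the $\Sigma_F(b)$ furnish an ample supply of $k$-closed $k^*$-metrizable subspaces, and everything reduces to arranging the points of $X$ into a compact-finite cover whose members each lie in one such subspace.

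To organise the cover I partition $X$ by ``support type''. Fix a transversal $T\subset X$ of the finite-difference relation $x\sim y\Leftrightarrow |\{i: x_i\ne y_i\}|<\w$, let $b(x)\in T$ denote the representative of the class of $x$, and put $\supp(x)=\{i: x_i\ne b(x)_i\}$, a finite set. The sets $E_{b,F}=\{x\in X: b(x)=b,\ \supp(x)=F\}$ then cover $X$ with $E_{b,F}\subset\Sigma_F(b)$, and by Theorem~\ref{3.3a}(1) the theorem follows once I verify that the family $\C=\{E_{b,F}: b\in T,\ F\subset\I \text{ finite}\}$ is compact-finite.

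The heart of the matter is thus two combinatorial facts about a compact set $K\subset X$, each obtained by extracting from a hypothetical infinite configuration a sequence in $K$ with no cluster point, contradicting the countable compactness of $K$. First, $K$ meets only finitely many $\sim$-classes: if points $x^{(n)}\in K$ lay in pairwise distinct classes, then any candidate cluster point $v\in K$ is $\sim$-equivalent to at most one of them, and for every other $n$ the set $\{i: x^{(n)}_i\ne v_i\}$ is infinite, so I may greedily choose \emph{distinct} coordinates $i_n$ with $x^{(n)}_{i_n}\ne v_{i_n}$ and set $U_{i_n}=X_{i_n}\setminus\{x^{(n)}_{i_n}\}$ (an open set containing $v_{i_n}$, by the $T_1$ property); the box-open neighbourhood $\prod_j U_j$ of $v$ then excludes all but at most one $x^{(n)}$, so $v$ is not a cluster point. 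Second, for each class one shows, by exactly the same device, that the supports of the points of $K$ lying in that class have finite union: the class $\Sigma(b)=\bigcup_F\Sigma_F(b)$ is box-closed in $X$ (if a point differs from $b$ in infinitely many coordinates, deleting $b_i$ from $X_i$ in each of those coordinates separates it from $\Sigma(b)$), so $K\cap\Sigma(b)$ is compact, and an infinite set of coordinates $i_n$ carrying points $x^{(n)}\in K\cap\Sigma(b)$ with $x^{(n)}_{i_n}\ne b_{i_n}$ would again produce, around any finitely supported $v$, a box neighbourhood omitting almost all $x^{(n)}$. Combining the two: $K$ meets finitely many classes, and in each of these the supports lie in one finite set, so only finitely many pairs $(b,F)$ satisfy $E_{b,F}\cap K\ne\emptyset$; that is, $\C$ is compact-finite.

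I expect the two cluster-point constructions to be the only real obstacle. The decisive point is the insistence that the ``witness'' coordinates $i_n$ be pairwise distinct, for it is precisely this that lets the individual single-point deletions in separate coordinates be assembled into one legitimate basic box-open set, thereby turning the seemingly unwieldy box topology to our advantage; this is also the sole place where the Hausdorff (hence $T_1$) hypothesis on the factors enters. The remaining verifications are routine: that $\Sigma_F(b)$ is $k$-closed (immediate, being closed), that $\Sigma(b)$ is box-closed (the separation argument just sketched), and that each $E_{b,F}$ embeds in the $k^*$-metrizable space $\Sigma_F(b)\cong\prod_{i\in F}X_i$.
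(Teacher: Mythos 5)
Your proof is correct and follows essentially the same route as the paper: partition the box-product into finite-difference classes, reduce to finite products, and verify compact-finiteness by choosing pairwise distinct witness coordinates and assembling single-point deletions into one basic box-open neighbourhood of a putative cluster point. The only cosmetic difference is that you apply Theorem~\ref{3.3a}(1) once to the doubly-indexed cover $\{E_{b,F}\}$, whereas the paper applies it twice (first to show each class $\sigma(x)$ is $k^*$-metrizable via the cover $\{\sigma_F(x)\}$, then to the cover of $X$ by the classes).
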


\begin{proof} Elements of the box-product $\square_{i\in \I}X_i$ can be thought as functions
$x:\I\to \bigcup_{i\in \I}X_i$ such that $x(i)\in X_i$ for $i\in\I$. For two functions $x,y\in\square_{i\in\I}X_i$ let $$\{x\ne y\}=\{i\in I:x(i)\ne y(i)\}\mbox{  and }\sigma(x)=\{y\in\square_{i\in \I}X_i:|\{y\ne x\}|<\w\}.$$
Let us show that each set $\sigma(x)$ is closed
in $\square_{i\in \I}X_i$. Indeed,  any point $y\in\square_{i\in\I}X_i\;\setminus \sigma(x) $ can be separated from $\sigma(x)$ by the box-neighborhood $\prod_{i\in\I}U_i$ of $y$, where $U_i=X_i\setminus \{x(i)\}$ if $i\in\{x\ne y\}$ and $U_i=X_i$ otherwise.
Observe also that for two points
$x,x'\in \square_{i\in I}X_i$ the sets $\sigma(x)$ and
$\sigma(x')$ either coincide or are disjoint. Then
$\Sigma=\{\sigma(x):x\in \square_{i\in I}X_i\}$ is a disjoint closed cover
of the box-product $\square_{i\in I}X_i$. We claim that $\Sigma$
is a compact-finite cover of $X$ consisting of closed
$k^*$-metrizable subspaces of $\square_{i\in I}X_i$.
\smallskip

First we check that the cover $\Sigma$ is compact-finite.
 Take any
compact set $K$ and assume that it intersects infinitely many sets
$\sigma(x_n)$, where $x_n\notin\sigma(x_m)$ for all $n\ne m$. By
the compactness of $K$ the sequence $(x_n)$ has an accumulation point
$x_\infty$ in $K$. This point $x_\infty$ can belong to at most one
set $\sigma(x_n)$. Without loss of generality, $x_\infty\notin
\sigma(x_n)$ for all $n$, which means that the sets $\{x_n\ne x_\infty\}$ are infinite. This allows us to construct an injective
sequence $(i_n)$ in $\I$ such that $x_\infty(i_n)\ne x_n(i_n)$ for
all $n$. For every index $i\in \I$ let $$U_i=
\begin{cases} X_{i_n}\setminus \{x_n(i_n)\}&\mbox{ if $i=i_n$ for some $n$}\\
X_i&\mbox{otherwise} \end{cases}$$ Then $\prod_{i\in I}U_i$ is an
open neighborhood of $x_\infty$ in $\square_{i\in I} X_i$ that
contains no point of the sequence $(x_n)$, which is impossible as
$x_\infty$ is an accumulation point of $(x_n)$. Therefore $\Sigma$ is a
compact-finite cover of $\square_{i\in I}X_i$.
\smallskip

Next, we check that each set $\sigma(x)\in\Sigma$ is
$k^*$-metrizable.  Given a finite subset $F\subset \I$ let $$
\sigma_F(x)=\{y\in\sigma(x):\{x\ne y\}=F\}\mbox{ and } \Pi_F(x)=\{y\in\sigma(x):\{x\ne y\}\subset F\}. $$
 It is clear that $\sigma_F(x)\subset\Pi_F(x)$ and $\Pi_F(x)$ is
a closed subspace of $\sigma(x)$ homeomorphic to the finite
product $\prod_{i\in F}X_i$ which $k^*$-metrizable according to
Theorem~\ref{3.3}(2). Repeating the preceding argument, we may show for each compact subset $K\subset\sigma(x)$ the set $F=\bigcup_{y\in K}\{y\ne x\}$ is finite and hence $K\subset\Pi_F(x)$. This implies that the cover $\{\sigma_F(x)\colon\, F\subset\I,\;|F|<\infty\}$ of
$\sigma(x)$ is compact-finite. Since each $\sigma_F(x)$ lies in
the closed $k^*$-metrizable subspace $\Pi_F(x)$ of $\sigma(x)$,
we can apply Theorem \ref{3.3a}(1) to conclude that the space $\sigma(x)$ is $k^*$-metrizable.
\smallskip

By Theorem~\ref{n3.10}, the box-product $\square_{i\in\I}X_i$ is
$k^*$-metrizable being the union of a compact-finite cover
consisting of closed $k^*$-metrizable subspaces.
\end{proof}

\begin{remark}
{\rm The $\sigma$-product $\sigma(x)\subset\prod_{i\in\I}X_i$ of
metrizable spaces $X_i$ endowed with the Tychonoff
product-topology is $k^*$-metrizable if and only if it is
metrizable. This follows from the fact that any nonmetrizable
space $\sigma(x)$ contains a compact subspace homeomorphic to the
one-point compactification of an uncountable discrete space. Then
$\sigma(x)$ cannot be the image of a metrizable space under a
compact-covering map. }\end{remark}

The $k^*$-metrizability is also preserved by the construction of
hyperspace. This follows from Proposition~\ref{1.11}.

\begin{proposition}\label{n3.12} The hyperspace $\exp(X)$ of any
$k^*$-metrizable space is $k^*$-metrizable.
\end{proposition}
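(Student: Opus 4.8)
The plan is to reduce this to results already established in the excerpt. We want to show that $\exp(X)$ is $k^*$-metrizable whenever $X$ is. By Definition, $X$ being $k^*$-metrizable means there is a metrizable space $M$ and a subproper map $\pi:M\to X$. The natural idea is to combine two tools: Proposition~\ref{1.11}, which tells us that $\exp(f)$ is subproper whenever $f$ is, and the fact that the hyperspace of a metrizable space is again metrizable.

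First I would recall that for a metrizable space $(M,d)$ the Vietoris topology on $\exp(M)$ coincides with the topology induced by the Hausdorff metric $d_H$, as noted explicitly in the paragraph preceding Proposition~\ref{1.11}. Hence $\exp(M)$ is metrizable. Next, applying Proposition~\ref{1.11} to the subproper map $\pi:M\to X$ yields that the induced map
$$\exp(\pi):\exp(M)\to\exp(X)$$
is subproper as well. Thus $\exp(X)$ is the image of the metrizable space $\exp(M)$ under the subproper map $\exp(\pi)$, which is exactly the definition of $k^*$-metrizability for $\exp(X)$.

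The only point that needs a word of care is surjectivity: Proposition~\ref{1.11} is stated for a map $f:X\to Y$, and its proof constructs a precompact-preserving section of $\exp(f)$, so implicitly $\exp(\pi)$ should be onto $\exp(X)$. Since $\pi:M\to X$ is surjective (being the defining map of a $k^*$-metrizable space), every compact $K\subset X$ is the image under $\pi$ of its preimage intersected with a suitable compact set; more directly, subproper maps are compact-covering, as remarked in the Introduction, so each $K\in\exp(X)$ equals $\pi(C)=\exp(\pi)(C)$ for some $C\in\exp(M)$. Therefore $\exp(\pi)$ is surjective, and the composition of the above two facts completes the argument.

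I do not expect any genuine obstacle here, since all the hard work has already been done in Proposition~\ref{1.11}; the present statement is essentially a corollary. The one step worth double-checking is that the definition of $k^*$-metrizability only requires the existence of some metrizable domain and some subproper surjection, so that it suffices to exhibit the pair $(\exp(M),\exp(\pi))$ without verifying any additional metric conditions. Consequently the proof is short: metrizability of $\exp(M)$ plus subproperness of $\exp(\pi)$ give the conclusion immediately.
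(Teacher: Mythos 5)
Your proof is correct and is exactly the argument the paper intends: the paper derives Proposition~\ref{n3.12} directly from Proposition~\ref{1.11}, using the metrizability of $\exp(M)$ via the Hausdorff metric and the subproperness (hence compact-covering surjectivity) of $\exp(\pi)$. Nothing further is needed.
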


Applying Theorem~\ref{1.16}, we get a more general

\begin{proposition}\label{n3.13} Let $F\colon\, \Haus\to\Haus$ be a functor with compact
continuous support that preserves metrizable  spaces and
surjective maps between compact spaces. Then for every
$k^*$-metrizable space  the space $F(X)$ is $k^*$-metrizable.
\end{proposition}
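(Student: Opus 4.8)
The plan is to reduce the statement to Proposition~\ref{1.20} together with the extra hypothesis that $F$ preserves metrizable spaces. Fix a $k^*$-metrizable space $X$. By definition there is a metrizable space $M$ and a surjective subproper map $\pi\colon M\to X$. Everything needed is already encoded in the behaviour of $F$ on $\pi$.

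First I would apply Proposition~\ref{1.20} to the subproper map $\pi$. Since $F$ has compact continuous support and preserves surjective maps between compact spaces, that proposition yields that $F(\pi)\colon F(M)\to F(X)$ is subproper. Moreover, the proof of Proposition~\ref{1.20} actually constructs a section $l\colon F(Y)\to F(X)$ of $F(f)$ for an arbitrary subproper $f\colon X\to Y$, built from a precompact-preserving section of $f$ via the support transformation $\supp$. Applied here, this gives a section of $F(\pi)$, and in particular $F(\pi)$ is surjective, since any map admitting a section is onto. Thus $F(X)$ is the continuous image of $F(M)$ under the subproper map $F(\pi)$.

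Second, since $F$ preserves metrizable spaces by hypothesis, $F(M)$ is metrizable. Combining the two observations, $F(X)$ is the image of a metrizable space under a surjective subproper map, which is precisely the definition of $k^*$-metrizability; hence $F(X)$ is $k^*$-metrizable.

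The proof carries essentially no obstacle of its own: all of the genuine work — transporting a precompact-preserving section of $\pi$ to one of $F(\pi)$ through $\supp$ — has already been done in Proposition~\ref{1.20}. The only point deserving a moment's care is the surjectivity of $F(\pi)$, which is not automatic for functorial images of surjections in general; here it is guaranteed because the construction produces an honest section of $F(\pi)$. The single additional hypothesis used beyond Proposition~\ref{1.20}, namely that $F$ sends metrizable spaces to metrizable spaces, is exactly what promotes the conclusion from ``$F(\pi)$ is subproper'' to ``$F(X)$ is $k^*$-metrizable.''
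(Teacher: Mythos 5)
Your proof is correct and is essentially the paper's own argument: the paper derives Proposition~\ref{n3.13} directly from Proposition~\ref{1.20} (which shows $F(\pi)$ is subproper, with surjectivity coming from the constructed section), combined with the hypothesis that $F(M)$ is metrizable. No gaps.
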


\section{$\cs^*$-Metrizable spaces}\label{s4}

In fact, the $k^*$-metrizability decomposes into two weaker properties: the $\cs^*$-metrizability and the sequential compactness of all compact subsets.

\begin{definition} A topological space $X$ is defined to be
{\em $\cs^*$-metrizable} \textup{(}resp. {\em $\cs$-metrizable}\textup{)} if $X$ is the image of a metrizable space $M$ under a map $\pi:M\to X$ having a $\cs^*$-continuous \textup{(}resp. sequentially continuous\textup{)} section.
\end{definition}

The $k$-metrizability (resp. $k^*$-metrizability) can be characterized via $\cs$-metrizability (resp. $\cs^*$-metrizability) as follows.

\begin{proposition}\label{n4.2} A topological space $X$ is $k^*$-metrizable \textup{(}resp. $k$-metrizable\textup{)} if and only if $X$ is $\cs^*$-metrizable \textup{(}resp. $\cs$-metrizable\textup{)} and all compact subsets of $X$ are sequentially compact.
\end{proposition}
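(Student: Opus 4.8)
The plan is to prove the two asserted equivalences separately, handling the $k^*/\cs^*$ version first by direct appeal to Theorem~\ref{n1.2} and Proposition~\ref{n1.3}, and then bootstrapping to the $k/\cs$ version through the $k$-coreflexion. The single observation that makes everything run smoothly is that the metrizable domain $M$ of the defining map is paracompact (Stone's theorem), hence $\mu$-complete by \cite[8.5.13]{En}; thus the $\mu$-completeness hypothesis of Proposition~\ref{n1.3} is automatically available whenever a section takes values in a metrizable space.

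For the $k^*/\cs^*$ equivalence I would argue as follows. If $X$ is $k^*$-metrizable, fix a subproper map $\pi\colon M\to X$ from a metrizable $M$. By Theorem~\ref{n1.2} it has a precompact-preserving section $s\colon X\to M$, and every precompact-preserving function is $\cs^*$-continuous, so $X$ is $\cs^*$-metrizable. Moreover $\pi$ is compact-covering: for compact $K\subset X$ the set $\cl_M(s(K))\cap\pi^{-1}(K)$ is compact and maps onto $K$, so each compact $K$ is a continuous image of a metrizable compactum $C\subset M$, hence has countable network, is metrizable, and in particular sequentially compact. Conversely, if $X$ is $\cs^*$-metrizable via a $\cs^*$-continuous section $s\colon X\to M$ and all compact subsets of $X$ are sequentially compact, then Proposition~\ref{n1.3} (applied with the $\mu$-complete space $M$) shows that $s$ preserves precompact sets, so $\pi$ is subproper by Theorem~\ref{n1.2} and $X$ is $k^*$-metrizable.

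For the $k/\cs$ equivalence the forward direction is easy using Proposition~\ref{n3.2}: if $X$ is $k$-metrizable then $kX$ is metrizable, the identity $i\colon kX\to X$ is proper, and its set-theoretic inverse $i^{-1}\colon X\to kX$ is a sequentially continuous section, since any sequence converging in $X$ lies in a compact set on which the topologies of $X$ and $kX$ agree; hence $X$ is $\cs$-metrizable, and sequential compactness of its compacta follows from the $k^*$-case (a proper map is subproper). The converse is the heart of the matter. Starting from a sequentially, hence $\cs^*$-, continuous section $s\colon X\to M$, the $k^*$-part already gives that $X$ is $k^*$-metrizable and that every compact subset of $X$ is metrizable. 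The key step is to pass to $kX$: since $kX$ is a $k$-space with the same compact sets, carrying the same subspace topologies, as $X$, and since $s$ restricts to a continuous map on each such (metrizable, hence sequential) compactum, the composition $s\circ i\colon kX\to M$ is continuous. This map is a section of the continuous surjection $i^{-1}\circ\pi\colon M\to kX$ (continuous because $M$ is a $k$-space), so it has a continuous left inverse and is therefore a topological embedding of $kX$ into the metrizable space $M$. Consequently $kX$ is metrizable and $X$ is $k$-metrizable by Proposition~\ref{n3.2}.

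I expect the main obstacle to be exactly this last embedding step. One must know that the compact subsets of $X$ are not merely sequentially compact but metrizable, so that the sequential continuity of $s$ upgrades to genuine continuity on each compactum, and one must exploit that $kX$ is a $k$-space carrying the same compacta as $X$ in order to promote continuity on compacta to continuity of $s\circ i$ on all of $kX$. Everything else reduces to the machinery of Section~\ref{s1} together with the paracompactness, hence $\mu$-completeness, of metrizable spaces.
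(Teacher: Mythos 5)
Your proof is correct. The $k^*/\cs^*$ half coincides with the paper's argument: a subproper map from a metrizable (hence $\mu$-complete) space has a precompact-preserving, hence $\cs^*$-continuous, section, compact-covering gives metrizability of compacta, and Proposition~\ref{n1.3} handles the converse. For the $k/\cs$ half the paper offers only ``by analogy''; your explicit route --- sequential continuity of $s$ upgrades to continuity on each (metrizable) compactum, so $s\circ i$ is continuous on the $k$-space $kX$ and, having the continuous left inverse $i^{-1}\circ\pi$, embeds $kX$ into $M$, whence $kX$ is metrizable and Proposition~\ref{n3.2} applies --- is a valid and welcome filling-in of that omitted detail.
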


\begin{proof} Assuming that a space $X$ is $k^*$-metrizable, find a subproper map $\pi:M\to X$ from a metrizable space. The space $M$, being metrizable is $\mu$-complete. Then Proposition~\ref{n1.3} implies that $\pi$ has a $\cs^*$-continuous section, which means that $\pi$ is $\cs^*$-metrizable. Since $\pi$ is compact-covering, all compact subsets of $X$ are metrizable and hence sequentially compact.

Now assume conversely that $X$ is $\cs^*$-metrizable and all compact subsets of $X$ are sequentially compact. Find a map $\pi:M\to X$ possessing a $\cs^*$-continuous section. By Proposition~\ref{n1.3} this map is subproper and thus $X$ is $k^*$-metrizable.

The equivalence of the $k$-metrizability to the $\cs$-metrizability plus the sequential compactness of all compact subsets can be proved by analogy.
\end{proof}

By analogy with the $k$-metrizability (which is equivalent to the metrizability of the $k$-coreflexion), the $\cs$-metrizability of a space $X$ is equivalent to the metrizability of the sequential coreflexion $sX$ of $X$.

We recall that a topological space $X$ is {\em sequential} if each sequentially closed  subset of $X$ is closed. A subset $F\subset X$ is {\em sequentially closed} in $X$ if it contains the limits of all sequences $(x_n)\subset F$ convergent in $X$.

By the {\em sequential coreflexion} $sX$ of a space $X$ we understand $X$ endowed with the topology consisting of sequentially open subsets of $X$. A subset $U\subset X$ is called {\em sequentially open\/} if its complement $X\setminus U$ is sequentially closed  in $X$. It is clear that the identity maps $sX\to kX\to X$ are continuous and all these maps are homeomorphisms if the space $X$ is sequential. In fact, the identity map $sX\to X$ always is a sequential homeomorphism. A bijective function $h\colon\, X\to Y$ between topological spaces is called a
{\em sequential homeomorphism} if both $h$ and $h^{-1}$ are sequentially
continuous. Topological spaces $X,Y$ are {\em sequentially homeomorphic}
if there is a sequential homeomorphism $h\colon\, X\to Y$.

The following theorem is  a counterpart of Propositions~\ref{n3.2}, \ref{n3.3} and Theorem~\ref{n3.4} and can be proved by analogy.

\begin{theorem}\label{n4.3}
\begin{enumerate}
\item
A topological space $X$ is $\cs$-metrizable iff its sequential coreflexion $sX$ is metrizable.
\item A topological space $X$ is $\cs^*$-metrizable iff its sequential coreflexion $sX$ is a $\cs^*$-metrizable iff $sX$ is $k^*$-metrizable.
\item
 A regular space $X$ is $\cs^*$-metrizable if and only if $X$ has a metric $\rho$ such that
\begin{enumerate}
\item each $\rho$-convergent sequence converges in $X$;
\item each convergent sequence in $X$ contains a $\rho$-Cauchy subsequence;
\item a $\rho$-Cauchy sequence converges in $X$ if and only if it has a convergent subsequence in $X$.
\end{enumerate}
\end{enumerate}
\end{theorem}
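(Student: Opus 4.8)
The plan is to lean on the two facts recorded just before the statement: the identity map $j:sX\to X$ is a sequential homeomorphism, so $X$ and $sX$ have exactly the same convergent sequences (with the same limits), and both $sX$ and every metrizable space are sequential. Consequently a set-theoretic function out of $sX$ or out of a metrizable space is continuous as soon as it is sequentially continuous, while $\cs$- and $\cs^*$-continuity of a function are unaffected by composing with $j$ or $j^{-1}$. These observations let me transport sections across $j$, exactly as properness was transported across $i:kX\to X$ in Propositions~\ref{n3.2} and~\ref{n3.3}.

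For part~(1), if $sX$ is metrizable then $j:sX\to X$ is a continuous surjection from a metrizable space whose section $j^{-1}:X\to sX$ is sequentially continuous, so $X$ is $\cs$-metrizable. Conversely, given $\pi:M\to X$ with metrizable $M$ and a sequentially continuous section $s:X\to M$, I would form $p=j^{-1}\circ\pi:M\to sX$; it is sequentially continuous, hence continuous since $M$ is sequential, and $s:sX\to M$ is sequentially continuous, hence continuous since $sX$ is sequential. As $p\circ s=\id_{sX}$, the map $s$ embeds $sX$ into the metrizable space $M$, so $sX$ is metrizable. The first two equivalences of part~(2) are obtained by running the identical argument with ``sequentially continuous'' replaced by ``$\cs^*$-continuous'' throughout (continuity of $p$ does not involve $s$, and $\cs^*$-continuity of $s$ survives passage between $X$ and $sX$ because their convergent sequences coincide).

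The remaining equivalence of part~(2), that $sX$ is $\cs^*$-metrizable iff $sX$ is $k^*$-metrizable, follows from Proposition~\ref{n4.2} once I verify that \emph{every compact subset $K$ of the sequential space $sX$ is sequentially compact}. For this, given a sequence in $K$ with no convergent subsequence I would show that every subset of its (necessarily infinite) range is sequentially closed, hence closed in $sX$; this exhibits an infinite closed discrete subset of the compactum $K$, a contradiction. For the ``only if'' half of part~(3) I follow Theorem~\ref{n3.4}: writing $\pi:M\to X$ for a map with $\cs^*$-continuous section $s$ and putting $\rho(x,x')=d(s(x),s(x'))$, property~(a) is immediate from continuity of $\pi$ and $\pi\circ s=\id$, property~(b) follows because $\cs^*$-continuity yields an accumulation point of $(s(x_n))$ in the metric space $M$ and hence a convergent, and so Cauchy, subsequence, and property~(c) follows because $\cs^*$-continuity turns a convergent subsequence in $X$ into one in $M$, after which the $\rho$-Cauchy sequence, having a convergent image-subsequence, converges in $M$ and hence in $X$.

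The main work is the ``if'' half of part~(3), which I would model on the completion construction of Theorem~\ref{n3.4} but adjusted to produce a $\cs^*$-continuous section rather than a subproper map. Taking $Z=(X,\rho)$ with completion $\tilde Z$, I let $M\subseteq\tilde Z$ (a metrizable subspace) consist of those $z$ such that, for every sequence $(z_n)\subset Z$ converging to $z$ in $\tilde Z$, the points $z_n$ converge in $X$; I set $\bar\pi(z)$ to be this $X$-limit and take the inclusion $X=Z\hookrightarrow M$ as the candidate section $s$, so that property~(a) gives $Z\subseteq M$, $\bar\pi$ restricting to the identity on $Z$, and $\bar\pi\circ s=\id_X$. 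To see $s$ is $\cs^*$-continuous, given $x_n\to x$ in $X$ I use~(b) to extract a $\rho$-Cauchy subsequence $(x_{n_k})$, which converges in $\tilde Z$ to some $z$; the crux is to show $z\in M$, and this is where~(c) enters through an interleaving argument: any sequence $(w_j)\subset Z$ with $w_j\to z$ in $\tilde Z$, interleaved with $(x_{n_k})$, is $\rho$-Cauchy and has a subsequence converging to $x$ in $X$, so by~(c) it converges to $x$, forcing $w_j\to x$ and hence $z\in M$ with $\bar\pi(z)=x$; thus $z$ is an accumulation point of $(s(x_n))$ in $M$. Finally the continuity of $\bar\pi$ is obtained from regularity (which implies sequential regularity) exactly as in Theorem~\ref{n3.4}. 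I expect this verification that the $\rho$-Cauchy limit lands in $M$ to be the principal obstacle, since it is precisely the point where condition~(c) must be converted into convergence in the original topology.
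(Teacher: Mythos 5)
Your proof is correct and is essentially what the paper intends: the paper gives no written proof of Theorem~\ref{n4.3}, stating only that it ``can be proved by analogy'' with Propositions~\ref{n3.2}, \ref{n3.3} and Theorem~\ref{n3.4}, and your argument carries out precisely that analogy (transporting sections across the sequential coreflexion $j:sX\to X$, using sequentiality of $M$ and $sX$ to upgrade sequential continuity to continuity, and repeating the completion construction with $\cs^*$-continuity in place of precompact preservation). The one subtlety worth noting is that in part~(1) your embedding step needs $s(y_n)\to s(y)$ rather than merely convergence of $(s(y_n))$, but this follows from the paper's definition of sequential continuity by interleaving $(y_n)$ with the constant sequence at $y$, so there is no gap.
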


Properties of $\cs^*$-metrizable spaces are analogous to those of
$k^*$-metrizable spaces. We define a family $\mathcal F$ of
subsets {\em cs-finite} if each convergent sequence meets only
finitely many sets $F\in\FF$.

\begin{theorem}\label{n4.4}
{\rm 1.} A subspace of a $\cs^*$-metrizable space is $\cs^*$-metrizable.

{\rm 2.} The topological sum of arbitrary family of
$\cs^*$-metrizable spaces is $\cs^*$-metrizable.

{\rm 3.} A space $X$ is $\cs^*$-metrizable if $X$ is the union of
a cs-finite cover $\C$ such that each $C\in\C$ lies in a
sequentially closed $\cs^*$-metrizable subspace of $X$.

{\rm 4.}  A space $X$ is $\cs^*$-metrizable if $X$ is the union
$X=\bigcup_{n\in\w}X_n$ of a sequence of sequentially closed
$\cs^*$-metrizable subspaces such that each convergent sequence
lies in some $X_n$.

{\rm 5.} The countable product of $\cs^*$-metrizable spaces is
$\cs^*$-metrizable.

{\rm 6.} The box-product $\square_{i\in \I}X_i$ of arbitrary
family of $\cs^*$-metrizable spaces is $\cs^*$-metrizable.

{\rm 7.} Each sequentially compact subset of a $\cs^*$-metrizable
space is metrizable.

{\rm 8.} The image of a sequential $\cs^*$-metrizable space under
a closed or subproper map is $\cs^*$-metrizable.

{\rm 10.} A $\cs^*$-metrizable space $X$ is compact and metrizable
if its sequential coreflexion $sX$ is countably compact.
\end{theorem}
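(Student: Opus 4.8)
The plan is to mirror the argument of Proposition~\ref{n3.7}, but since a $\cs^*$-metrizable space comes only with a $\cs^*$-continuous section (rather than a precompact-preserving one), I first have to upgrade the hypothesis on $sX$ from countable compactness to sequential compactness. So let $\pi\colon M\to X$ be a map from a metrizable space $M$ onto $X$ with a $\cs^*$-continuous section $s\colon X\to M$, and assume that $sX$ is countably compact.

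The first step is to observe that $sX$, being sequential and countably compact, is in fact sequentially compact: if a sequence in $sX$ had no convergent subsequence, then (after passing to distinct values) the set of its terms and all of its subsets would be sequentially closed, hence closed in the sequential space $sX$, producing an infinite closed discrete subset and contradicting countable compactness. Recalling from the discussion preceding Theorem~\ref{n4.3} that the identity map $sX\to X$ is a sequential homeomorphism, a sequence converges in $X$ if and only if it converges in $sX$; hence every sequence in $X$ has a subsequence converging in $X$.

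The heart of the proof is to show that $Z=\overline{s(X)}$ is compact in $M$, and here I would use the standard metric characterization: in a metrizable space $M$ a subset $A$ has compact closure if and only if every sequence in $A$ has a subsequence convergent in $M$. So take an arbitrary sequence $(s(x_n))$ in $s(X)$. By the sequential compactness just established, the sequence $(x_n)$ has a subsequence $(x_{n_k})$ converging in $X$; by the $\cs^*$-continuity of $s$ the image sequence $(s(x_{n_k}))$ then has an accumulation point in $M$, and since $M$ is metrizable (hence first countable) this accumulation point is the limit of a further subsequence. Thus every sequence in $s(X)$ has a subsequence convergent in $M$, and so $Z$ is compact. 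This combination of sequential compactness of $sX$ (to produce the convergent subsequence $(x_{n_k})$ in the domain) with $\cs^*$-continuity (to extract an accumulation point in $M$) is exactly the place where both hypotheses enter, and is the main point of the argument; it is the substitute for the precompact-preservation used in Proposition~\ref{n3.7}.

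Finally, since $\pi\circ s=\id_X$ we have $X=\pi(s(X))\subseteq\pi(Z)\subseteq X$, so $X=\pi(Z)$ is the continuous image of the compact metrizable space $Z$. Consequently $X$ is compact, and the $\pi$-images of a countable network of $Z$ form a countable network of $X$; a compact Hausdorff space with a countable network is metrizable, so $X$ is compact and metrizable, as claimed.
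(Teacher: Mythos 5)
The argument you give is a proof of item~10 only; it says nothing about items 1--8. The statement under review is the whole of Theorem~\ref{n4.4}, which the paper justifies by adapting, one by one, the corresponding facts for $k^*$-metrizable spaces (Theorems~\ref{3.3}, \ref{3.3a}, \ref{n3.10}, Propositions~\ref{n3.6}, \ref{n3.7}), replacing precompact-preserving sections by $\cs^*$-continuous ones and compact-finite covers by cs-finite ones. Your proposal carries out exactly one of these adaptations and leaves the other eight untouched; several of them (notably items 3, 4, 6 and 8, where compact-finiteness must be traded for cs-finiteness and where the closed-map case of item 8 needs Proposition~\ref{n1.5}) require genuine, if routine, reworking rather than a citation. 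As a proof of the stated theorem this is therefore incomplete.

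That said, the part you did write, for item~10, is correct and is the right adaptation of Proposition~\ref{n3.7}. The paper's argument there uses the precompact-preserving section to show that an infinite closed discrete subset $D\subset s(X)$ of $M$ projects to a set meeting every compactum finitely, hence closed discrete in $kX$, contradicting countable compactness. Since a $\cs^*$-continuous section gives no control over compacta, you correctly reroute through sequences: you upgrade countable compactness of the sequential space $sX$ to sequential compactness, transfer this to $X$ via the sequential homeomorphism $sX\to X$, and then combine it with $\cs^*$-continuity of $s$ and first countability of $M$ to show every sequence in $s(X)$ has a convergent subsequence, so $\overline{s(X)}$ is compact. Each step checks out (the extraction of an infinite closed discrete set from a sequence with no convergent subsequence, the relative-compactness criterion in metric spaces, and the final passage from a compact metrizable preimage to compactness and metrizability of $X$ via countable network weight). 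If you supply analogous adaptations for the remaining items, the proof will be complete.
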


Thus the $k$-, $k^*$-, $\cs$-, and $cs^*$-metrizability properties relate as follows:
$$
\begin{CD}
\mbox{metrizable}@ >>>\mbox{$k$-metrizable}@>>>\mbox{$k^*$-metrizable}\\
@. @VVV @VVV\\
@. \mbox{$\cs$-metrizable}@>>>\mbox{$\cs^*$-metrizable}
\end{CD}
$$
\smallskip

\begin{remark} None arrow in this diagram can be reversed:
\begin{itemize}
\item[(a)] the Banach space $(l^1,\mathrm{weak})$ endowed with the weak topology is an example of a $k$-metrizable space that is not metrizable;
\item[(b)] the inductive limit $\IR^\infty=\underset{\longrightarrow}{\lim}\,\IR^n$ is $k^*$-metrizable but not $k$-metrizable, see Theorem~\ref{4.1};
\item[(c)] the Stone-\v Cech compactification $\beta\w$ of $\w$ contains no non-trivial convergent sequences and hence is $\cs$-metrizable but fails to be $k^*$-metrizable. \item[(d)] The product $\beta\w\times\IR^\infty$ is $\cs^*$-metrizable but is neither $\cs$-metrizable nor $k^*$-metrizable.
\end{itemize}
\end{remark}

\section{Cardinal invariants of $k^*$-metrizable spaces}\label{s5}

In this section we calculate some cardinal invariants of $k^*$-metrizable spaces.
We recall that for a topological space $X$
\begin{itemize}
\item $w(X)$, the {\em weight} of $X$, is the smallest size of a base of the topology of $X$;
\item $nw(X)$, the {\em network weight} of $X$, is the smallest size of a network $\mathcal N$ for $X$ ($\mathcal N$ is a {\em network} if for any open set $U\subset X$ and a point $x\in U$ there is $N\in\mathcal N$ with $x\in N\subset U$);
\item $knw(X)$, the {\em $k$-network weight} of $X$, is the smallest size of a $k$-network $\mathcal N$ ($\mathcal N$ is a {\em network} if for any open set $U\subset X$ and a compact set $K\subset U$ there is a finite subfamily $\mathcal F\subset\mathcal N$ with $K\subset\cup\FF\subset U$);
\item $l(X)$, the {\em Lindel\"of number}, is the smallest cardinal $\kappa$ such that each open cover $\U$ of $X$ has a subcover $\V\subset\U$ of size $|\V|\le\kappa$;
\item $ml(X)$, the {\em meta-Lindel\"of number}, is the smallest cardinal $\kappa$ such that each open cover of $X$ has an open refinement $\U$ such that $|\{U\in\U:x\in U\}|\le\kappa$ for every point $x\in X$;
\item $s(X)=\sup\{|D|:D$ is a discrete subspace of $X\}$ is the {\em spread} of $X$;
\item $\ext(X)=\sup\{|D|:D$ is a closed discrete subspace of $X\}$ is the {\em extent} of $X$;
\item $d(X)$, the {\em density} of $X$, is the smallest size of a dense set in $X$.
\end{itemize}

For any metrizable space $X$ the cardinals $w(X)$, $knw(X)$, $nw(X)$, $l(X)$, $s(X)$, $\ext(X)$, $d(X)$ coincide (see \cite[4.1.15]{En}) while for any topological space $X$ we have the inequalities:
$$\ext(X)\le s(X),l(X)\le nw(X)\le knw(X)\le w(X)$$
and $$d(X)\le nw(X)\mbox{ and }ml(X)\le l(X)\le ml(X)\cdot d(X).$$

%The inequality $\ext(X)\le d(X)$ holds for {\em collectively Hausdorff} spaces $X$ (the latter means that any closed discrete subset $D\subset X$ there is a discrete family $\{O_x:x\in D\}$ of open neighborhoods $O_x\ni x$ of points $x\in D$). Each paracompact space is collectively Hausdorff.

In addition to the density $d(X)$ we consider its sequential versions $d_\alpha(X)$ dependent on the ordinal parameter $\alpha$.
By the {\em sequential closure} of a set $A$ in a topological space $X$ we understand the set $\cl_1(A)$ consisting of the limits of all sequences $(a_n)\subset A$ that converge in $X$. Next, for any ordinal $\alpha$ define the {\em $\alpha$-th sequential closure $\cl_\alpha(A)$} of $A$ by transfinite induction: $\cl_{\alpha}(A)=\cl_1(\cl_\beta(A))$ if $\alpha=\beta+1$ is a successor ordinal and $\cl_{\alpha}(A)=\bigcup_{\beta<\alpha}\cl_\beta(A)$ if $\alpha$ is a limit ordinal. It is well-known (and easy to see) that $\cl_\alpha(A)=\cl_{\w_1}(A)$ for all ordinals $\alpha\ge\w_1$ and $\cl_{\w_1}(A)$ coincides with the closure of $A$ in the sequential coreflexion $sX$ of $X$.

The {\em sequential order} $so(X)$ of a topological space $X$ is the smallest ordinal $\alpha$ such that $\cl_\alpha(A)=\cl_{\omega_1}(A)$ for any subset $A\subset X$. Observe that a sequential space $X$ is Fr\'echet-Urysohn if and only if $so(X)\le 1$.

By the {\em $\alpha$-th sequential density} $d_\alpha(X)$ of a space $X$ we understand the smallest size $|D|$ of a subset $D\subset X$ whose $\alpha$-th sequential closure $\cl_\alpha(D)=X$. It is clear that $d_\alpha(X)\ge d_\beta(X)$ for any ordinals $\alpha\le \beta$ and $d_{so(X)}(X)=d_{\w_1}(X)$ equals the density $d(sX)$ of the sequential coreflexion of $X$.

Let also $d_{<\w_1}(X)=\min_{\alpha<\w_1}d_\alpha(X)$. Note that the cardinal $d_{<\w_1}(X)$ can be strictly larger than $d_{\w_1}(X)$. A suitable counterexample yields the space $B(\IR)\subset\IR^\IR$ of all Borel-measurable functions. By the classical Lebesgue-Hausdorff Theorem (see \cite[11.6]{Ke}), $B(\IR)=\cl_{\w_1}(C(\IR))$ where $C(\IR)$ stands for the set of all continuous functions and hence $d_{\w_1}(B(\IR))=\aleph_0$. On the other hand, it can be shown that $d_{<\w_1}(B(X))>d_{\w_1}(B(X))$.

Since a space $X$ and its sequential coreflexion $sX$ have the same convergent sequences, the following equality holds.

\begin{proposition}\label{n5.1} For any space $X$ and any ordinal $\alpha$ we get $d_\alpha(X)=d_\alpha(kX)=d_\alpha(sX)$.
\end{proposition}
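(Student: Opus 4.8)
The plan is to reduce the whole statement to a single observation: the three spaces $X$, $kX$ and $sX$ share the same underlying set and the same convergent sequences, and the operator $\cl_\alpha$ is built from $\cl_1$ by a transfinite recursion that refers only to convergent sequences. Consequently $\cl_\alpha$ computes the same subset in all three spaces, and since $d_\alpha$ is defined by the equation $\cl_\alpha(D)=X$ (an equation between subsets of the common underlying set), the three cardinals must coincide.

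First I would verify that a sequence $(x_n)$ converges to a point $x$ in $X$ if and only if it converges to $x$ in $kX$ if and only if it converges to $x$ in $sX$. The implications from $sX$ to $kX$ to $X$ are immediate, since the identity maps $sX\to kX\to X$ are continuous and continuous maps preserve sequential limits. For the remaining implication, suppose $(x_n)\to x$ in $X$ and let $U$ be a neighborhood of $x$ in $sX$; by definition of the sequential coreflexion $U$ is sequentially open in $X$, so $x_n\in U$ for all but finitely many $n$, which means $(x_n)\to x$ in $sX$. Thus all three spaces have exactly the same convergent sequences, and therefore the sequential closure $\cl_1(A)$ of an arbitrary subset $A$ is the same whether computed in $X$, in $kX$, or in $sX$.

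Next I would push this equality up the transfinite hierarchy by induction on $\alpha$. The base case $\alpha=1$ is the previous step. At a successor ordinal $\alpha=\beta+1$ one has $\cl_\alpha(A)=\cl_1(\cl_\beta(A))$ in each of the three spaces; by the inductive hypothesis the inner set $\cl_\beta(A)$ is common to all three, and by the base case $\cl_1$ of that common set is again common. At a limit ordinal $\alpha$ one has $\cl_\alpha(A)=\bigcup_{\beta<\alpha}\cl_\beta(A)$, and this union is again computed identically in all three spaces by the inductive hypothesis. Hence $\cl_\alpha(A)$ is the same subset whether evaluated in $X$, in $kX$, or in $sX$, for every ordinal $\alpha$ and every subset $A$ of the common underlying set.

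Finally, for a fixed subset $D$ the condition $\cl_\alpha(D)=X$ is literally the same set-theoretic equation in all three spaces, so it holds in $X$ if and only if it holds in $kX$ if and only if it holds in $sX$. Taking the infimum of $|D|$ over all such $D$ yields $d_\alpha(X)=d_\alpha(kX)=d_\alpha(sX)$. There is no real obstacle here: the only point demanding care is the coincidence of convergent sequences in the base step, after which the transfinite induction is entirely formal because the recursion defining $\cl_\alpha$ is identical in the three spaces.
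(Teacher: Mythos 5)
Your proof is correct and follows exactly the route the paper intends: the paper gives no explicit proof but justifies the proposition by the single remark that $X$, $kX$ and $sX$ have the same convergent sequences, which is precisely the base case you verify before running the (formal) transfinite induction. Your closing of the implication chain through $sX\to kX\to X$ is a clean way to handle $kX$ without separately invoking compactness of convergent sequences.
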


It turns out that $\cs^*$-continuous sections of continuous maps do not increase the cardinal invariant $d_\alpha$ for $\alpha<\w_1$.

\begin{proposition}\label{n5.2} Let $\pi:M\to X$ be a map from a metrizable space $M$ and $s:X\to M$ be a $\cs^*$-continuous section of $\pi$. Then for any subset $A\subset X$ and any ordinal $\alpha$ we get
\begin{enumerate}
\item $d\big(s(\cl_1(A))\big)\le d(s(A))$;
\item $d\big(s(\cl_\alpha(A))\big)\le\max\{|\alpha|,|A|\}$;
\item $d(s(X))\le \min\{d_{<\omega_1}(X), \so(X)\cdot d_{\w_1}(X)\}$.
\end{enumerate}
\end{proposition}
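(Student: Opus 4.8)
The plan is to prove the three estimates in sequence, each building on the previous one, with the heart of the argument lying in item (1) concerning the single sequential closure $\cl_1$.

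First I would establish (1), which is the engine of the whole proposition. Let $D\subset s(A)$ be a dense subset of $s(A)$ in the metric space $M$ with $|D|=d(s(A))$, and let $A'=s^{-1}(D)\cap A$, so that $s(A')=D$ and $|A'|\le|D|$. The claim is that $s(\cl_1(A))$ has a dense subset of size at most $|D|$; I would show in fact that $\cl_M\big(s(A')\big)=\cl_M(D)\supseteq s(\cl_1(A))$, which immediately gives $d\big(s(\cl_1(A))\big)\le|\cl_M(D)|$-density $\le|D|$ since $M$ is metrizable and a metric space has density equal to the density of any dense subset. To prove the inclusion, take any point $x\in\cl_1(A)$, so there is a sequence $(a_n)\subset A$ converging to $x$ in $X$. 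By $\cs^*$-continuity of $s$, the sequence $(s(a_n))$ has an accumulation point in $M$; since $M$ is metrizable, passing to a subsequence we get $s(a_{n_k})\to m$ for some $m\in M$, and continuity of $\pi$ forces $\pi(m)=x$, but I must arrange that this limit $m$ equals $s(x)$. The cleanest route avoiding that delicate identification is to bound $d\big(s(\cl_1(A))\big)$ directly: each point $s(x)$ with $x\in\cl_1(A)$ is an accumulation point (in $M$) of $\{s(a_n)\}\subset s(A)$, hence lies in $\cl_M(s(A))$; thus $s(\cl_1(A))\subset\cl_M(s(A))\subset\cl_M(D)$, and the density of this closure is $\le|D|=d(s(A))$.

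Next, for (2) I would argue by transfinite induction on $\alpha$, using (1) at successor steps. The base case $\alpha=0$ gives $d(s(\cl_0(A)))=d(s(A))\le|A|$. At a successor $\alpha=\beta+1$, apply (1) with $\cl_\beta(A)$ in place of $A$: $d\big(s(\cl_{\beta+1}(A))\big)=d\big(s(\cl_1(\cl_\beta(A)))\big)\le d\big(s(\cl_\beta(A))\big)\le\max\{|\beta|,|A|\}\le\max\{|\alpha|,|A|\}$ by the inductive hypothesis. At a limit ordinal $\alpha$, we have $\cl_\alpha(A)=\bigcup_{\beta<\alpha}\cl_\beta(A)$, so $s(\cl_\alpha(A))=\bigcup_{\beta<\alpha}s(\cl_\beta(A))$; taking the union of witnessing dense sets, one of size $\le\max\{|\beta|,|A|\}$ for each $\beta<\alpha$, yields a set whose closure contains all of $s(\cl_\alpha(A))$ and whose cardinality is $\le|\alpha|\cdot\sup_{\beta<\alpha}\max\{|\beta|,|A|\}\le\max\{|\alpha|,|A|\}$.

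Finally, (3) follows by optimizing (2) over the two natural choices of $\alpha$. For the first bound, fix any $\alpha<\w_1$ and a set $A\subset X$ with $\cl_\alpha(A)=X$ and $|A|=d_\alpha(X)$; then (2) gives $d(s(X))=d\big(s(\cl_\alpha(A))\big)\le\max\{|\alpha|,d_\alpha(X)\}=d_\alpha(X)$, using $|\alpha|\le\aleph_0\le d_\alpha(X)$ for infinite $X$; minimizing over $\alpha<\w_1$ yields $d(s(X))\le d_{<\w_1}(X)$. For the second bound, take $A$ with $\cl_{\w_1}(A)=X$ and $|A|=d_{\w_1}(X)$; since the sequential order satisfies $\cl_{\so(X)}(A)=\cl_{\w_1}(A)=X$, apply (2) with $\alpha=\so(X)$ to obtain $d(s(X))\le\max\{\so(X),d_{\w_1}(X)\}\le\so(X)\cdot d_{\w_1}(X)$. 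Combining the two gives the stated minimum. The main obstacle is item (1): the subtlety is that $\cs^*$-continuity only guarantees an accumulation point of $(s(a_n))$ rather than convergence of the whole sequence to $s(x)$, so the argument must work with $\cl_M(s(A))$ and accumulation points rather than naive continuity of $s$, and care is needed that the density of a subset controls the density of its closure in the metric (rather than merely topological) setting.
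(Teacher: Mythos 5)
Your reductions of items (2) and (3) to item (1) are sound and essentially identical to the paper's (transfinite induction with (1) at successor steps, unions at limits, then optimizing over $\alpha$). The problem is item (1), where your argument has a genuine gap. You correctly identify the difficulty --- $\cs^*$-continuity applied to a sequence $(a_n)\subset A$ converging to $x$ only produces \emph{some} accumulation point $m$ of $(s(a_n))$ with $\pi(m)=x$, and there is no reason for $m$ to equal $s(x)$ --- but your ``cleanest route'' then simply reasserts the unjustified claim that $s(x)$ itself is an accumulation point of $\{s(a_n)\}$. The resulting inclusion $s(\cl_1(A))\subset\cl_M(s(A))$ is false in general. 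For a counterexample take $M=\{0,2\}\cup\{1/n:n\ge 1\}\subset\IR$, $X=\{0\}\cup\{1/n:n\ge1\}$, $\pi$ the identity on $\{1/n\}$ with $\pi(0)=\pi(2)=0$, and $s(1/n)=1/n$, $s(0)=2$. This $s$ is a $\cs^*$-continuous section, yet for $A=\{1/n:n\ge1\}$ one has $s(0)=2\notin\cl_M(s(A))=\{0\}\cup\{1/n\}$, so $s(\cl_1(A))\not\subset\cl_M(s(A))$. (The density inequality still holds here, but not via your inclusion.)

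The paper's proof gets around exactly this obstacle by exploiting the coincidence of density and extent in metrizable spaces, arguing by contradiction with a large \emph{closed discrete} set rather than trying to locate $s(x)$ near $s(A)$. Suppose $D\subset s(\cl_1(A))$ is closed, discrete and of size $|D|>d(s(A))$. For each $y\in D$ one applies $\cs^*$-continuity to a sequence in $A$ converging to $x_y=\pi(y)$ to obtain an accumulation point $z_y\in\overline{s(A)}$ with $\pi(z_y)=x_y$ (note: $z_y$ need not be $y$, and that is fine). Since the $x_y$ are distinct, $|\{z_y:y\in D\}|=|D|>d(\overline{s(A)})$, so the set $\{z_y\}$ contains a non-trivial convergent sequence $S$ in $M$. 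Then $\pi(S)$ converges in $X$, and a \emph{second} application of $\cs^*$-continuity to $\pi(S)$ shows that $s(\pi(S))\subset D$ has an accumulation point, contradicting the discreteness of $D$. You would need to replace your inclusion argument with something of this kind for item (1) to stand; as written, the whole proposition rests on a false lemma.
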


\begin{proof} 1. Assuming that $d\big(s(\cl_1(A))\big)>d\big(s(A)\big)$, find a subset $D\subset s(\cl_1(A))$ of size $|D|>d(s(A))$ that is closed and discrete in $M$.

For every point $y\in D$ use the inclusion $s^{-1}(y)\in\cl_1(A)$ to find a sequence $(a^y_n)\subset A$ convergent to the point $x_y=s^{-1}(y)=\pi(y)$. By the $\cs^*$-continuity of $s$ the sequence $(s(a^y_n))$ has a limit point $z_y\in \cl(s(A))$ in $M$. By the continuity of $\pi$ the point $z_y$ projects into $x_y$. The set $Z=\{z_y:y\in D\}$ has size $|Z|=|D|>d(s(A))=d(\overline{s(A)})$ and hence contains a convergent sequence $S\subset Z$ whose image $\pi(S)$ is a convergent sequence in $\pi(Z)=\pi(D)$. Then the $\cs^*$-continuity of $s$ implies that the sequence $s\circ \pi(S)\subset s\circ \pi(D)=D$ has a limit point in $M$ which is not possible as $D$ is closed and discrete.
\medskip

2. The second item can be proved by transfinite induction. For $\alpha=1$ it follows from the first item. Assume that for some ordinal $\beta\le\w_1$ the inequality $d\big(s(\cl_\alpha(A))\big)\le \max\{|\alpha|,|A|\}$ has been proved for all ordinals $\alpha<\beta$. If $\beta=\alpha+1$ is a successor ordinal, then $\cl_\beta(A)=\cl_1(\cl_\alpha(A))$ and by the first item $d(s(\cl_\beta(A)))\le d(s(\cl_\alpha(A)))\le\max\{|\alpha|,|A|\}\le\max\{|\beta|,|A|\}$ by the inductive hypothesis. If $\beta $ is a limit ordinal, then $\cl_\beta(A)=\bigcup_{\alpha<\beta}\cl_\alpha(A)$ and $$d\big(s(\cl_\beta(A))\big)=d\big(\bigcup_{\alpha<\beta}s(\cl_\alpha(A))\big)\le
\sum_{\alpha<\beta}d\big(s(\cl_\alpha(A))\big)\le \sum_{\alpha<\beta}\max\{|\alpha|,|A|\}\le\max\{|\beta|,|A|\}.$$
\medskip

3. The inequality $d(s(X))\le d_{<\w_1}(X)$ follows from the preceding item. To show that $d(s(X))\le  so(X)\cdot d_{\w_1}(X)$, consider two cases. If $so(X)<\w_1$, then $d_{\w_1}(X)=d_{<\w_1}(X)$ and hence $d(s(X))\le d_{<\w_1}(X)\le so(X)\cdot d_{\w_1}(X)$.
If $so(X)=\w_1$, then $d(s(X))\le \max\{\aleph_1,d_{\w_1}(X)\}\le so(X)\cdot d_{\w_1}(X)$ by item (2).
\end{proof}

Cardinal characteristics of $k^*$-metrizable $k$-spaces are evaluated in the following

\begin{theorem}\label{n5.3} If $X$ is a $k^*$-metrizable $k$-space, then $$
d(X)\le knw(X)=nw(X)=s(X)=\ext(X)=l(X)=ml(X)\cdot d(X)=d_1(X)\le \so(X)\cdot d(X).
$$
\end{theorem}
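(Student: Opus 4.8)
The plan is to pin every cardinal in the displayed chain to a single auxiliary quantity, the density $d(s(X))$ of the image of a precompact-preserving section, and then read off all the equalities from the general inequalities already recorded before the theorem. So I would fix a subproper map $\pi\colon M\to X$ from a metrizable $M$ together with a precompact-preserving section $s\colon X\to M$, and pass to the metrizable subspace $N=\cl_M(s(X))$, for which $w(N)=d(N)=d(s(X))$. A preliminary step is to observe that $X$ is sequential: its compact subsets are sequentially compact (Proposition~\ref{n4.2}), hence metrizable (Theorem~\ref{3.3}(5)), hence Fr\'echet--Urysohn; and a $k$-space all of whose compact subsets are sequential is itself sequential (if $F$ is sequentially closed, then $F\cap K$ is sequentially closed, hence closed, in each compact $K$). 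Consequently $sX=X$, so $d_{\w_1}(X)=d(sX)=d(X)$.

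The two substantive inequalities are $knw(X)\le d(s(X))$ and $d(s(X))\le\ext(X)$. For the first I would lift each compact $K\subset X$ to the compact set $C=\cl_M(s(K))\cap\pi^{-1}(K)\subset N$, which satisfies $\pi(C)=K$ because $s(K)\subset C\subset\pi^{-1}(K)$; then, fixing a base $\mathcal B$ of $N$ of size $w(N)$, I would check that $\{\pi(\cl_N B):B\in\mathcal B\}$ is a $k$-network. Indeed, for $K\subset U$ with $U$ open, $C$ lies in the open set $\pi^{-1}(U)\cap N$, so $C$ is covered by finitely many $B_i\in\mathcal B$ with $\cl_N B_i\subset\pi^{-1}(U)$, whence $K=\pi(C)\subset\bigcup_i\pi(\cl_N B_i)\subset U$. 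For the second I would use, for each $n$, a maximal $\tfrac1n$-separated set $A_n\subset s(X)$; their union is dense in $N$, so $\sup_n|A_n|\ge d(s(X))$, while each $A_n$ is closed discrete in $N$. The image $\pi(A_n)$ meets every compact $K\subset X$ in a finite set, since any $a\in A_n$ with $\pi(a)\in K$ lies in $A_n\cap\cl_M(s(K))$ and $\cl_M(s(K))$ is compact. In a $k$-space a set meeting every compactum in a finite set is closed discrete (every subset of it is $k$-closed, hence closed), so $\ext(X)\ge|\pi(A_n)|=|A_n|$, and therefore $\ext(X)\ge d(s(X))$.

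The remaining bounds are then routine. Proposition~\ref{n5.2}(3) gives $d(s(X))\le d_{<\w_1}(X)\le d_1(X)$ and $d(s(X))\le\so(X)\cdot d_{\w_1}(X)=\so(X)\cdot d(X)$, the last equality by sequentiality; conversely, taking $D_0\subset s(X)$ dense in $N$, the metric convergence of a sequence in $D_0$ to $s(x)$ pushes forward under $\pi$ to a sequence in $\pi(D_0)$ converging to $x$, so $\cl_1(\pi(D_0))=X$ and $d_1(X)\le d(s(X))$; hence $d_1(X)=d(s(X))$. Combining $d(s(X))\le\ext(X)\le s(X),l(X)\le nw(X)\le knw(X)\le d(s(X))$ collapses the whole middle of the chain to the common value $d(s(X))$, the leftmost $d(X)\le nw(X)$ is general, and $ml(X)\cdot d(X)=l(X)$ follows from $ml(X)\le l(X)\le ml(X)\cdot d(X)$ together with $d(X)\le nw(X)=l(X)$. (Throughout I treat all cardinal functions as at least $\aleph_0$, so the compact-metrizable case, where $N$ is compact, is covered trivially.)

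I expect the main obstacle to be the inequality $knw(X)\le d(s(X))$. The difficulty is that one must produce an honest $k$-network of size exactly $d(s(X))$, and this forces both the explicit compact lifting $C=\cl_M(s(K))\cap\pi^{-1}(K)$ of each compactum and the passage to $N=\cl_M(s(X))$ rather than to $M$ itself; a base of $M$ would only yield the estimate $knw(X)\le w(M)$, and $w(M)$ may be far larger than $d(s(X))$. The closed-discreteness argument for $d(s(X))\le\ext(X)$ is the second delicate point, but it is handled cleanly by the observation that in a $k$-space ``finite intersection with every compactum'' is equivalent to being closed discrete, which lets the precompact-preserving property of $s$ do all the work.
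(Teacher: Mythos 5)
Your proposal is correct and follows essentially the same route as the paper's proof: fix a subproper map $\pi:M\to X$ with a precompact-preserving section whose image is dense, pin everything to $w(M)=d(s(X))$, obtain $knw(X)\le w(M)$ by pushing forward a base of $M$ to a $k$-network, obtain $w(M)\le\ext(X)$ by transferring discrete subsets of $M$ to closed discrete subsets of the $k$-space $X$ via the precompact-preserving property, and handle $d_1$ and $\so(X)\cdot d(X)$ through Proposition~\ref{n5.2}. The only differences are cosmetic (maximal $\tfrac1n$-separated sets in place of the paper's $\sigma$-discrete base, and closures of basic sets in the $k$-network).
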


\begin{proof} Assuming that $X$ is a $k^*$-metrizable $k$-space, find a subproper map $\pi:M\to X$ of a metrizable space $M$ onto $X$ and take a section $\sigma:X\to M$ of $\pi$ that preserves precompact sets. Without loss of generality, $\sigma(X)$ is dense in $M$ and hence
$$w(M)=d(M)=d(\sigma(X))\le\min\{d_{<\w_1}(X),\so(X)\cdot d_{\w_1}(X)\}$$ by Proposition~\ref{n5.2}. The continuity of the map $\pi$ and the metrizability of $M$ imply the inequalities
$$d(X)=d_{\w_1}(X)\le d_{<\w_1}(X)\le d_1(X)\le d_1(M)=d(M)=w(M).$$ Therefore, $$w(M)=d_{<\w_1}(X)=d_1(X)\le \so(X)\cdot d_{\w_1}(X)=\so(X)\cdot d(X).$$
The latter equality holds because $X$, being a $k^*$-metrizable $k$-space, is sequential, which yields $d(X)=d_{\w_1}(X)$.

Next, we show that $w(M)\le \ext(X)$. Otherwise we may take any $\sigma$-discrete base $\mathcal B$ of the topology of $M$ and find a discrete family $\mathcal U\subset \mathcal B$ of size $|\U|> \ext(X)$. Using the density of $\sigma(X)$ in $M$, for every $U\in\U$ pick a point $x_U\in X$ with $\sigma(x_U)\in U$. Since $\sigma$ preserves precompact sets, the  set $D=\{x_U:U\in\U\}$ has finite intersection with compact subsets of $X$ and thus is closed and discrete in the $k$-space $X$. Consequently $\ext(X)\ge|D|>\ext(X)$, which is a contradiction. This proves the inequality $w(M)\le \ext(X)$.

Then for any base $\mathcal B$ of the topology of $M$ with size $|\mathcal B|=w(M)$ the family $\mathcal N=\{f(B):B\in\mathcal B\}$ is a $k$-network for  $X$ and hence $knw(X)\le |\mathcal N|\le|\mathcal B|=w(M)\le \ext(X)$. This inequality combined with the trivial inequalities $\ext(X)\le knw(X)$ and $ml(X)\le l(X)\le ml(X)\cdot d(X)\le nw(X)$ implies the equalities $\ext(X)=s(X)=nw(X)=knw(X)=l(X)=ml(X)\cdot d(X)$. 
\end{proof}

In light of Theorem~\ref{n5.3} the following problem is natural.

\begin{problem} Is $d(X)=nw(X)$ for any regular $k^*$-metrizable $k$-space?
\end{problem} 

We shall give an affirmative answer to this problem assuming (CH), the Continuum Hypothesis.

We shall need $+$-modifications of the cardinal characteristics $\ext(X)$, $l(X)$, and $ml(X)$.
For a topological space $X$ let 
\begin{itemize}
\item $\ext^+(X)$ be the smallest cardinal $\kappa$ such that no closed discrete subset of $X$ has cardinality $\kappa$;
\item $l^+(X)$ be the smallest cardinal $\kappa$ such that each open cover of $X$ has a subcover having size $<\kappa$;
\item $ml^+(X)$ be the smallest cardinal $\kappa$ such that each open cover $\U$ of $X$ has an open refinement $\V$ such that $|\{V\in V:x\in V\}|<\kappa$ for every point $x\in X$.
\end{itemize}

It is easy to see that the cardinals $\ext^+(X)$, $l(X)$, and $ml^+(X)$ completely determine the values of the related cardinal characteristics $\ext(X)$, $l(X)$, and $ml(X)$:
$$\begin{aligned}
\ext(X)=&\sup\{\kappa:\kappa<\ext^+(X)\},\\
l(X)=&\sup\{\kappa:\kappa<l^+(X)\},\\
ml(X)=&\sup\{\kappa:\kappa<ml^+(X)\}.
\end{aligned}
$$

It turns out that the meta-Lindel\"of number of a regular $k^*$-metrizable $k$-space is bounded by a local version of the extent called the extent-character of a topological space.

For a point $x$ of a topological space $X$  by the {\em extent-character}  $\ext_\chi(x;X)$ (resp. {\em extent$^+$-character} $\ext^+_\chi(x;X)$) of  $X$ at $x$ we shall understand the largest cardinal $\kappa$ such that for any family $\U$ of neighborhoods of $x$ having size $|\U|\le\kappa$ (resp. $|\U|<\kappa$) there is an injective map $f:\U\to X$ such that the image $f(\U)$ is a closed discrete subset of $X$ and $f(U)\in U$ for every set $U\in\U$. 

The value of the cardinal $\ext_\chi(x;X)$ is fully determined by the value of $\ext^+_\chi(x;X)$:
$$\ext_\chi(x;X)=\sup\{\kappa:\kappa<\ext_\chi^+(x;X)\}.$$

Let also $$\ext_\chi(X)=\sup_{x\in X}\ext_\chi(x;X),\quad \ext^+_\chi(X)=\sup_{x\in X}\ext^+_\chi(x;X)$$ and observe that $\ext_\chi(X)\le\ext(X)$ and $\ext^+_\chi(X)\le\ext^+(X)$.

The extent$^+$-character $\ext^+_\chi(x;X)$ is also bounded by the usual {\em character} $\chi(x;X)$ equal to the smallest size of a neighborhood base at $x$. 

\begin{proposition}\label{nn5.4} Let $x$ be a point of a topological space. Then $$\ext_\chi(x;X)\le\ext^+_\chi(x;X)\le\chi(x;X)\mbox{ and }\ext_\chi(X)\le\ext_\chi^+(X)\le\chi(X).$$ 
\end{proposition}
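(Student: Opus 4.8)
The first inequality is purely formal. By the relation $\ext_\chi(x;X)=\sup\{\kappa:\kappa<\ext^+_\chi(x;X)\}$ recorded just before the statement, $\ext_\chi(x;X)$ is a supremum of cardinals strictly below $\ext^+_\chi(x;X)$, so $\ext_\chi(x;X)\le\ext^+_\chi(x;X)$. Taking suprema over $x$ gives $\ext_\chi(X)=\sup_{x\in X}\ext_\chi(x;X)\le\sup_{x\in X}\ext^+_\chi(x;X)=\ext^+_\chi(X)$.

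For the substantive inequality $\ext^+_\chi(x;X)\le\chi(x;X)$ the plan is to exhibit a single family that fails the defining property already at the cardinal $\chi(x;X)^+$. Put $\chi=\chi(x;X)$ and fix a neighborhood base $\mathcal B$ at $x$ with $|\mathcal B|=\chi$. If $\chi$ is finite, then the finite intersection of the members of $\mathcal B$ is a minimal neighborhood of $x$, which in the Hausdorff (hence $T_1$) space $X$ must equal $\{x\}$; thus $x$ is isolated and the inequality is trivial. So I may assume $\chi\ge\aleph_0$ and take $\U=\mathcal B$, a family of size $\chi<\chi^+$. It then suffices to show that there is no injective $f:\mathcal B\to X$ with $f(B)\in B$ for all $B\in\mathcal B$ and with $f(\mathcal B)$ closed and discrete: this makes the defining property of $\ext^+_\chi$ fail at $\kappa=\chi^+$, whence $\ext^+_\chi(x;X)\le\chi$.

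The heart of the argument is the observation that, for an infinite character, every neighborhood $V$ of $x$ contains infinitely many members of $\mathcal B$. Indeed, were $\{B\in\mathcal B:B\subset V\}=\{B_1,\dots,B_n\}$ finite, the neighborhood $\bigcap_{i\le n}B_i$ would contain some $B\in\mathcal B$, forcing $B=B_j$ to be the least basic neighborhood inside $V$; then for every neighborhood $W$ of $x$ the set $W\cap V$ would contain a basic $B'\subset V$, so $B_j\subset B'\subset W$, and using $\bigcap\{W:W\text{ a neighborhood of }x\}=\{x\}$ (valid since $X$ is $T_1$) this would give $B_j=\{x\}$, i.e. $x$ isolated, contradicting $\chi\ge\aleph_0$. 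Granting this, any candidate $f$ sends the infinitely many distinct basic sets contained in a given $V$ to infinitely many distinct points of $V$, so every neighborhood of $x$ meets $f(\mathcal B)$ in an infinite set. Hence $x$ is an accumulation point of $f(\mathcal B)$, which is impossible for a closed discrete set. This contradiction establishes $\ext^+_\chi(x;X)\le\chi(x;X)$, and taking suprema over $x\in X$ together with $\chi(X)=\sup_{x\in X}\chi(x;X)$ yields $\ext^+_\chi(X)\le\chi(X)$.

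I expect the only delicate point to be the purely topological lemma that every neighborhood of a non-isolated point contains infinitely many members of a fixed neighborhood base; the Hausdorff axiom enters here exactly through $\bigcap\{W:W\text{ a neighborhood of }x\}=\{x\}$, and one must take care to separate off the degenerate isolated (finite-character) case, where the statement is vacuous.
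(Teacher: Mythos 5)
The paper states this proposition with no proof at all, so there is nothing to compare against on the paper's side; judged on its own merits, your argument for the substantive inequality $\ext^+_\chi(x;X)\le\chi(x;X)$ at a point of infinite character is correct and is surely the intended one. Taking a neighborhood base $\mathcal B$ of minimal size $\chi(x;X)$ as the test family, showing that every neighborhood of $x$ contains infinitely many members of $\mathcal B$ (your derivation of this from the $T_1$ axiom and the infinitude of the character is sound), and concluding that any injective selection $f(B)\in B$ accumulates at $x$ and so cannot have closed discrete image is exactly the right mechanism. The purely formal inequality $\ext_\chi\le\ext^+_\chi$ and the passage to the global invariants by taking suprema over $x$ are also fine.

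The one genuine flaw is your dismissal of the finite-character case as ``trivial''/``vacuous''. It is neither. If $x$ is isolated then, reading the definitions literally, $\chi(x;X)=1$, while $\ext^+_\chi(x;X)\ge 2$ always: the only families $\U$ with $|\U|<2$ are the empty family and singletons $\{U\}$, and $f(U)=x$ is an injective selection with closed discrete image $\{x\}$. So under the literal reading the stated inequality actually \emph{fails} at every isolated point. What rescues the proposition is the standard convention (used throughout Engelking, the paper's reference for cardinal functions) that cardinal invariants take only infinite values, so that $\chi(x;X)=\aleph_0$ at isolated points; and even then the isolated case is not vacuous but needs its own short argument --- for instance, if $X$ has at least three points $x,y,z$, the four neighborhoods of the isolated point $x$ contained in $\{x,y,z\}$ admit no injective selection at all, whence $\ext^+_\chi(x;X)\le 4\le\aleph_0=\chi(x;X)$. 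You should either invoke that convention explicitly and supply this step, or restrict the statement to non-isolated points, which is the only setting in which the paper ever uses it.
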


Applications of the extent-character to $k^*$-metrizable spaces will rely on the following 

\begin{proposition}\label{nn5.5} Let $\pi:M\to X$ be a map possessing a $\cs^*$-continuous section $s:X\to M$ and let $\U$ be a point-finite open cover of $M$. If the space $X$ is sequential, then each point $x\in X$ has a neighborhood $O_x$ whose image $s(O_x)$ meets less than $\ext^+_\chi(x;X)$ sets $U\in\U$.
\end{proposition}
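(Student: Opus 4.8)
The plan is to argue by contradiction. Fix a point $x\in X$, put $\lambda=\ext^+_\chi(x;X)$, and suppose that every neighborhood $O$ of $x$ has $s(O)$ meeting at least $\lambda$ members of $\U$. The first step is to transfer the problem from $M$ back to $X$: since $\pi\circ s=\id_X$, the point-finiteness of $\U$ in $M$ makes the pullback family $\mathcal A=\{s^{-1}(U):U\in\U\}$ point-finite in $X$; it covers $X$, and by the contradiction hypothesis every neighborhood of $x$ meets at least $\lambda$ of its members. I would also record that $s$ is injective (being a section), so distinct selected points have distinct images in $M$, and that, because $\pi$ is continuous, any accumulation of a set $\{s(y_\xi)\}$ in the metric space $M$ projects to an accumulation of $\{y_\xi\}$ in $X$.

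Next I would build, by transfinite recursion of length $\lambda$, neighborhoods $W_\xi$ of $x$, points $y_\xi\in W_\xi$ and pairwise distinct members $U_\xi\in\U$ with $s(y_\xi)\in U_\xi$, arranging that $D=\{y_\xi:\xi<\lambda\}$ is injective, closed and discrete in $X$, so that $f(W_\xi)=y_\xi$ is a transversal of the family $\{W_\xi:\xi<\lambda\}$. At a stage $\xi$ the partial selection $D_\xi=\{y_\eta:\eta<\xi\}$ has been kept closed and discrete, so by injectivity of $s$ and point-finiteness of $\U$ the image $s(D_\xi)$ meets fewer than $\lambda$ members of $\U$; using the hypothesis on a small enough neighborhood of $x$ I can then choose $y_\xi$ with $s(y_\xi)$ lying in a still-unused member $U_\xi$, and shrink $W_\xi$ to separate $y_\xi$ from $D_\xi$. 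The three hypotheses enter here as follows. Point-finiteness guarantees that each previously chosen image blocks only finitely many members, so a fresh $U_\xi$ is always available among the $\ge\lambda$ members met. The $\cs^*$-continuity of $s$, combined with continuity of $\pi$, is what prevents $D$ from accumulating: any convergent sequence extracted from $D$ would have an accumulation point of its $s$-image in $M$, whose $\pi$-projection pins down the potential limit and can be excluded by the choice of the $W_\xi$. Finally, sequentiality of $X$ reduces the requirement that $D$ be closed to the requirement that it be sequentially closed, so that these sequence-level obstructions are in fact sufficient.

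The outcome is a family $\{W_\xi:\xi<\lambda\}$ of neighborhoods of $x$ admitting the closed discrete transversal $D$ of full size $\lambda$, which shows that $\ext_\chi(x;X)\ge\lambda$, hence $\ext^+_\chi(x;X)>\lambda$, contradicting $\lambda=\ext^+_\chi(x;X)$. I expect the hard part to be precisely the recursion step: one must maintain the global, transfinite condition that the growing set $D$ stay closed and discrete while the only control supplied by $\cs^*$-continuity and sequentiality is local and sequence-level, and while point-finiteness must keep delivering unused members $U_\xi$. The genuinely delicate case is that of a limit stage $\xi$ of uncountable cofinality, where no single convergent sequence witnesses a threatened accumulation of $D$; reconciling the separation of $y_\xi$ from $D_\xi$ with the ``no accumulation'' requirement along cofinal sequences is the core difficulty, and it is exactly there that the reduction of closedness to sequential closedness afforded by the sequentiality of $X$ becomes indispensable.
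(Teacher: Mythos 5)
Your selection engine is essentially the right one, but it is aimed at the wrong target, and the final deduction is a quantifier error. The cardinal $\ext^+_\chi(x;X)$ is defined by a universal statement: it is the largest $\kappa$ such that \emph{every} family of fewer than $\kappa$ neighborhoods of $x$ admits an injective closed discrete transversal. Exhibiting one particular family $\{W_\xi:\xi<\lambda\}$ of your own devising that happens to admit a closed discrete transversal of size $\lambda$ does not show $\ext_\chi(x;X)\ge\lambda$, and so yields no contradiction with $\lambda=\ext^+_\chi(x;X)$. The contradiction has to come from the \emph{negative} content of the definition: since $\lambda$ is the largest cardinal with the transversal property, there exists a family $\mathcal B=\{B_\alpha:\alpha<\lambda\}$ of neighborhoods of $x$ admitting \emph{no} injective closed discrete transversal. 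The correct move (and the paper's) is to run your recursion \emph{inside that given bad family}: choose $x_\alpha\in B_\alpha$ with $s(x_\alpha)\notin\bigcup_{\beta<\alpha}\{U\in\U:s(x_\beta)\in U\}$, which is possible because point-finiteness keeps the blocked collection of size $<\lambda$ while $s(B_\alpha)$ meets $\ge\lambda$ members of $\U$ by the contradiction hypothesis. The resulting $D=\{x_\alpha\}$ is then forced by the choice of $\mathcal B$ to be \emph{not} closed and discrete, whereas the separation of the $s(x_\alpha)$ by $\U$ together with $\cs^*$-continuity and sequentiality forces it to \emph{be} closed and discrete; that is the contradiction.

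This reframing also dissolves the difficulty you flag at limit stages of uncountable cofinality. You do not need to maintain closed-discreteness of the partial selection throughout the recursion at all; the only invariant is the separation condition $s(x_\alpha)\notin\bigcup_{\beta<\alpha}\{U\in\U:s(x_\beta)\in U\}$, which is purely combinatorial and survives every limit stage trivially. Closed-discreteness is verified once, at the end: if $s(D)$ had an accumulation point $z\in M$, some $U\in\U$ containing $z$ would contain two points $s(x_\alpha),s(x_\beta)$ with $\alpha<\beta$, contradicting the separation; hence $s(D)$ is closed discrete in $M$, and by $\cs^*$-continuity no sequence of distinct points of $D$ can converge in $X$, so $D$ is sequentially closed and discrete, hence closed and discrete since $X$ is sequential. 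As written, your argument never invokes the existence of a bad family and ends with an inference that the definition of $\ext_\chi$ does not support, so the proof does not go through.
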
 

\begin{proof} Assume the converse: the image $s(O_x)$ of any neighborhood $O_x$ of some point $x$ meets at least $\ext^+_\chi(x;X)$ sets $U\in\U$.
The definition of the cardinal $\kappa=\ext^+_\chi(x;X)$ yields a family $\mathcal B$
of open neighborhoods of $x$ such that $|\mathcal B|=\kappa$ and for each injective map $f:\mathcal B\to X$ with $f(B)\in B$ for $B\in\mathcal B$ the image $f(\mathcal B)$ is not closed and discrete in $X$.

Enumerate the family $\mathcal B$ as $\mathcal B=\{B_\alpha:\alpha<\kappa\}$. By transfinite induction we shall construct a transfinite sequence of points $\{x_\alpha:\alpha<\kappa\}\subset X$ such that $s_\alpha\in B_\alpha$ and  
$$s(x_\alpha)\notin\bigcup_{\beta<\alpha}\{U\in\U:s(x_\beta)\in U\}$$
for every $\alpha<\kappa$.
Let $x_0=x$. Assuming that for some ordinal $\alpha<\kappa$ the points $x_\beta$, $\beta<\alpha$, have been constructed, we shall find a point $x_\alpha$. Use the point-finity of the cover $\U$ to conclude that the family $\U_{<\alpha}=\bigcup_{\beta<\alpha}\{U\in\U: s(x_\beta)\in U\}$ has size $<\kappa$. By our hypothesis, the image $s(B_\alpha)$ meets at least $\kappa$ sets $U\in\U$. Consequently, there is a point $x_\alpha\in B_\alpha$ with $s(x_\alpha)\notin \cup\U_{<\alpha}$.
This completes the inductive construction.

The choice of the family $\{B_\alpha\}_{\alpha<\kappa}$ implies that the set $D=\{x_\alpha:\alpha<\kappa\}$ is not closed and discrete in the sequential space $X$. The $\cs^*$-continuity of the section $s:X\to M$ implies that the image $s(D)$ is not closed and discrete in $M$. Consequently, there is a point $z\in M$ whose any neighborhood contains infinitely many points $s(x_\alpha)$. In particular, any neighborhood $U\in\U$ of $z$ contains two points $s(x_\alpha)$, $s(x_\beta)$ with $\alpha<\beta$, which is not possible because $s(x_\beta)\notin\{U\in\U:s(x_\alpha)\in U\}$. 
\end{proof}

The proof of the following theorem essentially is due to S.Lin \cite{Lin90a}.

\begin{theorem}\label{nn5.6} If $X$ is a $k^*$-metrizable $k$-space, then $ml(X)\le\ext_\chi(X)$. If the cardinal $\ext^+_\chi(X)$ is regular, then also $ml^+(X)\le \ext_\chi^+(X)$.
\end{theorem}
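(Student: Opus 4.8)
The plan is to reduce the statement to the technical Proposition~\ref{nn5.5} together with the paracompactness of the metrizable domain, and then to assemble from the resulting local data an open refinement whose multiplicity is governed by the extent-character. First I would record the standing reductions. Since $X$ is a $k^*$-metrizable $k$-space it is sequential, and it carries a subproper map $\pi\colon M\to X$ from a metrizable space $M$; by Proposition~\ref{n1.3} the map $\pi$ has a $\cs^*$-continuous section $s\colon X\to M$. Being metrizable, $M$ is paracompact, so every open cover of $M$ has a point-finite (indeed locally finite, even $\sigma$-discrete) open refinement. These three facts are the only structural inputs I would use.

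Next I would fix an arbitrary open cover $\mathcal W$ of $X$ and pull it back: $\{\pi^{-1}(W):W\in\mathcal W\}$ is an open cover of $M$, so I may choose a point-finite open refinement $\U$ of it in $M$, and for each $U\in\U$ fix $W_U\in\mathcal W$ with $\pi(U)\subset W_U$. Applying Proposition~\ref{nn5.5} to the point-finite cover $\U$ (here sequentiality of $X$ is used), every $x\in X$ acquires an open neighborhood $O_x\ni x$ whose image $s(O_x)$ meets fewer than $\ext^+_\chi(x;X)$ members of $\U$. Writing $\U(x)=\{U\in\U:s(x)\in U\}$, a finite set by point-finiteness, I would shrink $O_x$ so that in addition $O_x\subset\bigcap_{U\in\U(x)}W_U$; this is a genuine neighborhood of $x$, because $s(x)\in U\subset\pi^{-1}(W_U)$ forces $x=\pi(s(x))\in W_U$ for each $U\in\U(x)$. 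Thus each $O_x$ lies inside a member of $\mathcal W$, and $\{O_x:x\in X\}$ is an open refinement of $\mathcal W$.

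The core of the proof is to extract from $\{O_x:x\in X\}$ an open refinement of small multiplicity, and this is the step I expect to be the main obstacle. I would run a transfinite selection along a well-ordering $X=\{x_\alpha:\alpha<\tau\}$, placing $x_\alpha$ into an index set $S$ exactly when $x_\alpha\notin\bigcup\{O_{x_\beta}:\beta\in S\cap\alpha\}$, and set $\V=\{O_{x_\alpha}:\alpha\in S\}$. By construction $\V$ still covers $X$ and refines $\mathcal W$, and the kept centers are \emph{spread}: $x_\beta\notin O_{x_\alpha}$ whenever $\alpha<\beta$ both lie in $S$. What remains is to bound the multiplicity of $\V$ by $\ext^+_\chi(X)$, and this is precisely where the regularity hypothesis must enter. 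Supposing some $y$ lay in $\lambda:=\ext^+_\chi(X)$ many sets $O_{x_\alpha}$, $\alpha\in I\subset S$, one has $\lambda$ neighborhoods of $y$ whose spread centers cannot, I expect, coexist: passing to the images $s(O_{x_\alpha})$, each of which meets fewer than $\lambda$ members of the point-finite family $\U$, and invoking the $\cs^*$-continuity of $s$ exactly as in the proof of Proposition~\ref{nn5.5}, should force an accumulation point in $M$ meeting too many members of $\U$. Regularity of $\lambda$ is what guarantees that a $<\lambda$-indexed selection of $<\lambda$-sized traces $\{U:s(O_{x_\alpha})\cap U\neq\emptyset\}$ does not combine into a family of size $\lambda$, so the accumulation is forced strictly below the threshold. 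The genuine difficulty here is the asymmetry that $y\in O_x$ does not entail $x\in O_y$: membership of $y$ in $O_{x_\alpha}$ is witnessed by foreign centers, so the bound cannot be read off pointwise and must be forced globally through the point-finiteness of $\U$ and the $\cs^*$-continuity of $s$. This part of the argument is, in essence, due to S.~Lin~\cite{Lin90a}. Carrying it out yields $ml^+(X)\le\ext^+_\chi(X)$ whenever $\ext^+_\chi(X)$ is regular.

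Finally, the unconditional inequality $ml(X)\le\ext_\chi(X)$ I would deduce by translating the $+$-statement through the displayed relations $ml(X)=\sup\{\kappa:\kappa<ml^+(X)\}$ and $\ext_\chi(X)=\sup\{\kappa:\kappa<\ext^+_\chi(X)\}$, together with Proposition~\ref{nn5.4}. Indeed, running the same selection against any open cover and against each infinite cardinal $\kappa<\ext^+_\chi(X)$ produces, by the regular case applied below $\ext_\chi(X)$, refinements of multiplicity $\le\kappa$, and taking the supremum over such $\kappa$ converts the pointwise strict control into the non-strict bound $ml(X)\le\ext_\chi(X)$. Thus both assertions follow once the multiplicity estimate for $\V$ is established, which I regard as the single hard point of the argument.
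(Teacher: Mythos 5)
Your reductions and the first half of the construction are fine: pulling $\mathcal W$ back to $M$, taking a point-finite open refinement $\U$ there, and using Proposition~\ref{nn5.5} to get open sets $O_x\ni x$ lying in members of $\mathcal W$ with $s(O_x)$ meeting few members of $\U$. The gap is exactly where you locate the ``single hard point'': the multiplicity bound for the greedily selected subfamily $\V$ does not follow from the ingredients you list, and I do not see how to make it follow. If $y$ lies in $\lambda=\ext^+_\chi(X)$ many kept sets $O_{x_\alpha}$, the only information you have is an \emph{upper} bound on how many members of $\U$ each $s(O_{x_\alpha})$ meets; nothing forces distinct $O_{x_\alpha}$'s to hit \emph{new} members of $\U$, so all the images $s(O_{x_\alpha})$ may meet one and the same $U\in\U$ and no spread-out sequence in $M$ (hence no violation of point-finiteness) is produced. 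The regularity of $\lambda$ keeps unions of $<\lambda$ many traces small, but smallness is not what generates a contradiction. Nor can the extent-character at $y$ be invoked: the family $\{O_{x_\alpha}\}_{\alpha\in I}$ consists of neighborhoods of $y$, but the definition of $\ext^+_\chi(y;X)$ only guarantees closed discrete selections from families of size $<\ext^+_\chi(y;X)$ and, at size $\ge\ext^+_\chi(y;X)$, merely the existence of \emph{some} bad family --- not that your particular family is bad. In the proof of Proposition~\ref{nn5.5} that existential witness could be exploited because the contradiction hypothesis there was universal (``\emph{every} neighborhood's image meets at least $\kappa$ members of $\U$''); here you have the opposite inequality and no such leverage. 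Finally, the greedy selection only yields a left-separation of the centers ($x_\beta\notin O_{x_\alpha}$ for $\alpha<\beta$), which in general does not bound the order of the family at a point $y$ distinct from all centers; controlling the order of an open refinement is precisely the part that cannot be done by thinning.

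For contrast, the paper never thins an open cover. It fixes a sequence $(\U_k)$ of locally finite open covers of $M$ of mesh $<2^{-k}$, so that $\bigcup_k s^{-1}(\U_k)$ is a network for $X$ each of whose levels has $\Ord\le\kappa$ by Proposition~\ref{nn5.5}; the refinement of $\mathcal W$ by network elements contained in members of $\mathcal W$ then has order $\le\kappa$ but is not open. The whole content of the theorem is the Claim that any family $\FF$ with $\Ord(\FF)\le\kappa$ admits \emph{open} expansions $O(F)\supset F$ with $\ord\le\kappa$: these are built by an $\w^{<\w}$-indexed iterated swelling along the families $s^{-1}(\U_i)$, adding at each stage only those pieces that meet at most $\kappa$ of the current sets, and the openness of the limit expansion is verified sequentially using the $\cs^*$-continuity of $s$. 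That expansion mechanism is what your proposal is missing, and without it (or a genuinely new substitute for it) the argument does not close. A secondary point: your last paragraph derives $ml(X)\le\ext_\chi(X)$ from the $+$-version, but the $+$-version is only available for regular $\ext^+_\chi(X)$; when $\ext^+_\chi(X)$ is singular one must (as the paper does) run the whole construction directly with the non-strict bound $\kappa=\ext_\chi(X)$.
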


\begin{proof} Let $\kappa=\ext_\chi(X)$. 
Let $\pi:M\to X$ be a subproper map from a metric space $(M,d)$ and $s:X\to M$ be a precompact-preserving section of $\pi$ having dense image $s(X)$ in $M$. Let $\U_{-1}=\{X\}$ and by induction select a sequence $(\U_k)_{k\in\w}$ of locally finite open covers of $M$ such that each cover $\U_k$ refines $\U_{k-1}$ and consists of sets with diameter $<2^{-k}$.  By Proposition~\ref{nn5.5}, each point of $X$ has a neighborhood meeting at most $\kappa$ sets $s^{-1}(U)$, $U\in\U_k$. 

For a family $\FF$ of subsets of $X$ let
$\ord(\FF)$ (resp. $\Ord(\FF)$) be the smallest cardinal $\lambda$ such that each point of $X$ belongs to (resp. has a neighborhood meeting) at most $\lambda$ sets $F\in\FF$.

\begin{claim} For each family $\FF$ of subsets of $X$ with $\Ord(\FF)\le \kappa$ there is a family $O(\FF)=\{O(F):F\in\FF\}$ of open neighborhoods $O(F)$ of the sets $F\in \FF$ in $X$ with $\ord O(\FF)\le\kappa$.
\end{claim}

Let $\w^{<\w}$ denote the set of all finite sequences of non-negative integer numbers. For each sequence $\sigma\in\w^{<\w}$ we define a family $O_\sigma(\FF)=\{O_{\sigma}(F):F\in\FF\}$ with $\Ord(O_\sigma(\FF))\le\kappa$, by induction. For the empty sequence $\emptyset $ we put $O_\emptyset(F)=F$ for all $F\in\FF$. Assume that for some finite sequence $\sigma=(n_0,\dots,n_k)$ the sets $O_\sigma(F)$, $F\in\FF$, have been defined so that the family $O_\sigma(\FF)=\{O_\sigma(F):F\in\FF\}$ has 
$\Ord(O_\sigma(\FF))\le\kappa$. Given a number $i\in\w$ consider the sequence $\sigma\hat{}i=(n_0,\dots,n_k,i)$ and let
$$
\begin{aligned}
&\mathcal N(\sigma\hat{}i)=\{N\in s^{-1}(\U_i):|\{F\in\FF:N\cap O_{\sigma}(F)\ne\emptyset\}|\le\kappa\}\\
&O_{\sigma\hat{}i}(F)=\cup\{N\in \mathcal N(\sigma\hat{}i):N\cap O_\sigma(F)\ne\emptyset\}
\end{aligned}
$$
To show that $\Ord(O_{\sigma\hat{}i}(\FF))\le\kappa$, take any point $x\in X$ and find a neighborhood $W_x$ meeting at most $\kappa$-sets of the family $O_\sigma(\FF)$. By Proposition~\ref{nn5.5} the neighborhood $W_x$ can be chosen so small that it meets at most $\kappa$ sets of the family $s^{-1}(\U_i)=\{s^{-1}(U):U\in\U_i\}$. Then the definition of the family $O_{\sigma\hat{}i}(\FF)$ implies that $W_x$ meets at most $\kappa$ sets from this family.

After completing the inductive construction, put $O(F)=\bigcup_{\sigma\in\w^{<\w}}O_\sigma(F)$ for $F\in\FF$ and consider the family $O(\FF)=\{O(F):F\in\FF\}$. It follows from $\Ord(O_\sigma(\FF))\le\kappa$ that $\ord(O(\FF))\le \kappa$.

It remains to show that each set $O(F)$, $F\in\FF$ is open in $X$. Since $X$ is a sequential space, it suffices to check that $O(F)$ is sequentially open. Assuming the converse, we could find a sequence $\{x_n\}_{n\in\w}$ in $X\setminus O(F)$, convergent to a point $x_\infty\in O(F)$. Using the $\cs^*$-continuity of the section $s$, we may assume that the sequence $s(x_n)$ converges in $M$ to some point $z_\infty$. The continuity of the map $\pi$ implies that $\pi(z_\infty)=x_\infty$. Since $x_\infty\in O(F)$, there is a finite number sequence $\sigma\in\w^{<\w}$ with $x_\infty\in  O_\sigma(F)$.  Since $\Ord(O_\sigma(\FF))\le\kappa$, the point $x_\infty$ has a neighborhood $W\subset X$ meeting at most $\kappa$ sets of the family $O_\sigma(\FF)$. Then we can find $i\in\w$ so large that each set $U\in\U_i$ containing $z_\infty$ lies in $\pi^{-1}(W)$. Then the set $s^{-1}(U)$ lies in $W$ and hence meets at most $\kappa$ sets of the family $O_{\sigma}(\FF)$. The definition of the set  $O_{\sigma\hat{}i}(F)$ ensures that $s^{-1}(U)\subset O_{\sigma\hat{}i}(F)\subset O(F)$. Since $U$ contains almost all element of the sequence $(s(x_n))$, the set $s^{-1}(U)$ contains almost all elements of the sequence $(x_n)$, a contradiction with $\{x_n\}\subset X\setminus O(F)$.
This contradiction completes the proof of the claim.
\medskip

Now the proof of the theorem is quite easy. Given a an open cover $\W$ of $X$, for every $n\in\w$ consider the subcollection $\mathcal F_n=\{F\in s^{-1}(\U_n):F\subset W(F)$ for some $W(F)\in\W\}$ having $\Ord(\mathcal F_n)\le\kappa$ by Proposition~\ref{nn5.5}.  Since $\bigcup_{n\in\w}s^{-1}(\U_n)$ is a network for $X$, the family $\bigcup_{n\in\w}\FF_n$ covers $X$.

The preceding claim yields us a family $O(\FF_n)=\{O(F):F\in\FF_n\}$ of open neighborhoods of the sets $F\in\FF_n$ with $\ord(O(\FF_n))\le\kappa$. Replacing each $O(F)$ by the intersection $O(F)\cap W(F)$, if necessary, we may assume that $O(F)\subset W(F)$ for all $F\in\FF_n$. Then $\V=\bigcup_{n\in\w}O(\FF_n)$ is an open refinement of $\W$ with $\ord(\V)\le\kappa$, which completes the proof of the inequality $ml(X)\le \kappa=\ext_\chi(X)$.

A minor modification of this proof yields also the inequality $ml^+(X)\le\ext_\chi^+(X)$ in case of a regular cardinal $\ext_\chi^+(X)$.
\end{proof}

The preceding theorem will help us to evaluate the $k$-network weight $knw(X)$ of a  $k^*$-metrizable $k$-space via the extent-characters and the density. 

\begin{theorem}\label{nn5.8} If $X$ is a $k^*$-metrizable $k$-space, then 
\begin{enumerate}
\item $knw(X)=ml(X)\cdot d(X)=\ext_\chi(X)\cdot d(X)$;
\item $knw(X)<2^{d(X)}$ if $X$ is a regular space.
\end{enumerate}
\end{theorem}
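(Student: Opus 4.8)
The plan is to read item~(1) off the cardinal equalities already proved in Theorem~\ref{n5.3} together with the estimate $ml(X)\le\ext_\chi(X)$ of Theorem~\ref{nn5.6}, and then to obtain the strict bound in~(2) by reducing it to a bound on $\ext_\chi(X)$ and estimating the extent$^+$-character through the regular-open sets of $X$.

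For item~(1) I would argue by a two-sided squeeze. Theorem~\ref{n5.3} already records $knw(X)=ml(X)\cdot d(X)=\ext(X)$, so it remains only to rewrite the middle product with $\ext_\chi$ in place of $ml$. Theorem~\ref{nn5.6} gives $ml(X)\le\ext_\chi(X)$, while the inequality $\ext_\chi(X)\le\ext(X)$ noted before Proposition~\ref{nn5.4}, combined with $\ext(X)=ml(X)\cdot d(X)$, gives $\ext_\chi(X)\le ml(X)\cdot d(X)$. Thus $ml(X)\le\ext_\chi(X)\le ml(X)\cdot d(X)$; multiplying through by the infinite cardinal $d(X)$ and using $d(X)\cdot d(X)=d(X)$ squeezes $\ext_\chi(X)\cdot d(X)$ between $ml(X)\cdot d(X)$ and itself, whence $\ext_\chi(X)\cdot d(X)=ml(X)\cdot d(X)=knw(X)$, which is exactly~(1).

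For item~(2) I would first reduce the strict bound to a statement about the extent-character. By~(1), $knw(X)=\ext_\chi(X)\cdot d(X)=\max\{\ext_\chi(X),d(X)\}$, and since always $d(X)<2^{d(X)}$ it suffices to control $\ext_\chi(X)$. The key external input I would insert is that in a regular space the regular-open sets form a base, and the assignment $V\mapsto V\cap D$ to the trace on a fixed dense set $D$ with $|D|=d(X)$ is injective on regular-open sets (since $V=\operatorname{int}\overline{V\cap D}$ for such $V$). Hence $X$ carries at most $2^{d(X)}$ regular-open sets, so $w(X)\le 2^{d(X)}$ and in particular $\chi(x;X)\le 2^{d(X)}$ for every point $x$; combined with Proposition~\ref{nn5.4} this yields $\ext^+_\chi(x;X)\le\chi(x;X)\le 2^{d(X)}$ for all $x$.

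The strict inequality is then forced by a short case analysis, and I expect this final cardinal-arithmetic step to be the only real subtlety. If $d(X)\ge\aleph_1$, then the auxiliary bound $knw(X)\le\so(X)\cdot d(X)\le\aleph_1\cdot d(X)=d(X)$ already contained in Theorem~\ref{n5.3} (using $\so(X)\le\w_1$) forces $knw(X)=d(X)<2^{d(X)}$ with no appeal to regularity. If $d(X)=\aleph_0$ the same bound gives only $knw(X)\le\aleph_1$; when $2^{\aleph_0}>\aleph_1$ this already yields $knw(X)\le\aleph_1<2^{\aleph_0}$, and the one genuinely delicate case is $2^{\aleph_0}=\aleph_1$. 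Here I would use the regular-open estimate: $\ext^+_\chi(x;X)\le 2^{\aleph_0}=\aleph_1$ for every $x$, and because $\aleph_1$ is a successor cardinal the definition of $\ext_\chi$ gives $\ext_\chi(x;X)\le\aleph_0$ for every $x$, hence $\ext_\chi(X)\le\aleph_0$ and $knw(X)=\ext_\chi(X)\cdot\aleph_0\le\aleph_0<\aleph_1=2^{\aleph_0}$. The main obstacle is precisely this last case, where both the regularity hypothesis (through $w(X)\le 2^{d(X)}$) and the passage from $\ext^+_\chi$ down to $\ext_\chi$ across the successor cardinal $2^{\aleph_0}$ are essential; everything else is formal.
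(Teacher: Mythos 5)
Your proof is correct and follows essentially the same route as the paper: item (1) is the same two-sided comparison via Theorems~\ref{n5.3} and \ref{nn5.6} together with $\ext_\chi(X)\le\ext(X)$, and item (2) uses the same chain $\ext^+_\chi(X)\le\chi(X)\le w(X)\le 2^{d(X)}$ (the paper cites Engelking for the last bound where you sketch the regular-open-set argument) and the same drop from $\ext^+_\chi\le\aleph_1$ to $\ext_\chi\le\aleph_0$ in the critical case $d(X)=\aleph_0$, $2^{\aleph_0}=\aleph_1$. The only difference is presentational: the paper runs item (2) as a proof by contradiction starting from $knw(X)=2^{d(X)}$, whereas you organize it as a direct case analysis.
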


\begin{proof} 1. The inequality $\ext_\chi(X)\cdot d(X)\le knw(X)$
follows immediately from the trivial inequalities $\ext_\chi(X)\le\ext(X)\le nw(X)\le knw(X)$ and $d(X)\le nw(X)\le knw(X)$.
On the other hand, Theorems~\ref{n5.3} and \ref{nn5.6} yield $knw(X)=ml(X)\cdot d(X)\le \ext_\chi(X)\cdot d(X)$.
\medskip

2. By Theorem~\ref{n5.3}, $d(X)\le knw(X)\le\aleph_1\cdot d(X)\le 2^{d(X)}$. So it remains to show that the assumption $knw(X)=2^{d(X)}$ leads to a contradiction. Observe that the latter equality implies that $d(X)=\aleph_0$ and $knw(X)=2^\aleph_0=\aleph_1$. By Theorem~1.5.6 \cite{En}, the space $X$, being regular, has weight $w(X)\le 2^{d(X)}$. Consequently, $\ext^+_\chi(X)\le\chi(X)\le w(X)\le 2^{\aleph_0}=\aleph_1$ and hence $\ext_\chi(X)\le\aleph_0$. Now the first item implies $knw(X)=\ext_\chi(X)\cdot d(X)\le\aleph_0$, which contradicts $knw(X)=\aleph_1$.
\end{proof} 

Combining Theorem~\ref{n5.3} with the second item of Theorem~\ref{nn5.8}, we get

\begin{corollary}\label{nn5.9} Under (CH), every regular $k^*$-metrizable $k$-space satisfies the equality $knw(X)=d(X)$.
\end{corollary}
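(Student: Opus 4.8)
The plan is to read off the desired equality by combining the cardinal computation of Theorem~\ref{n5.3} with the strict upper bound of Theorem~\ref{nn5.8}(2), organizing the argument as a dichotomy on the size of $d(X)$. Since Theorem~\ref{n5.3} already supplies $d(X)\le knw(X)$ for every $k^*$-metrizable $k$-space, the entire content is the reverse inequality $knw(X)\le d(X)$. If $X$ is finite this is trivial, so I would assume $d(X)\ge\aleph_0$ and split into the two cases $d(X)\ge\aleph_1$ and $d(X)=\aleph_0$.

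In the first case, $d(X)\ge\aleph_1$, I would use only Theorem~\ref{n5.3}, which gives $knw(X)\le\so(X)\cdot d(X)$. As $\so(X)\le\w_1$ holds by the very definition of the sequential order, we have $|\so(X)|\le\aleph_1\le d(X)$, so $\so(X)\cdot d(X)=d(X)$ by the absorption law for infinite cardinals; hence $knw(X)\le d(X)$ and equality follows. Note that this case uses neither regularity nor (CH). In the second case, $d(X)=\aleph_0$, regularity lets me invoke Theorem~\ref{nn5.8}(2), which yields the strict inequality $knw(X)<2^{d(X)}=2^{\aleph_0}$. Here (CH) enters in the form $2^{\aleph_0}=\aleph_1$, so that $knw(X)<\aleph_1$, i.e.\ $knw(X)\le\aleph_0=d(X)$. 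Combined again with $d(X)\le knw(X)$, this gives $knw(X)=d(X)=\aleph_0$. The two cases are exhaustive, so the corollary follows.

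I do not expect any genuine obstacle here, since all of the real work has been front-loaded into Theorems~\ref{n5.3} and \ref{nn5.8}; the only points meriting a moment's care are that the bound $\so(X)\le\w_1$ is automatic (so the factor $\so(X)$ is harmless once $d(X)\ge\aleph_1$), and that (CH) is needed precisely to convert the strict bound $knw(X)<2^{\aleph_0}$ into $knw(X)\le\aleph_0$ in the countable-density case. It is exactly this last collapse that requires (CH), which is why the hypothesis appears in the statement.
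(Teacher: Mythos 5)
Your proposal is correct and follows exactly the route the paper intends: the paper gives no explicit proof beyond "combine Theorem~\ref{n5.3} with Theorem~\ref{nn5.8}(2)", and your case split ($d(X)\ge\aleph_1$ handled by $knw(X)\le\so(X)\cdot d(X)\le\aleph_1\cdot d(X)=d(X)$; $d(X)=\aleph_0$ handled by $knw(X)<2^{\aleph_0}=\aleph_1$ under (CH)) is precisely that combination. Your side remarks about where regularity and (CH) are actually used are accurate.
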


This corollary is specific for $k$-spaces and does not hold in general.

\begin{remark} The Frolik's space $X$ from Remark~\ref{frolik} has cardinal characteristics$$\aleph_0=d(X)< nw(X)=knw(X)=w(X)=2^{\aleph_0}<2^{2^{\aleph_0}}=|kX|=knw(kX)$$which show that even for a $k$-metrizable space $X$ the gap between $d(X)$ and $knw(X)$ can be very large.
\end{remark}

Nonetheless we can estimate the cardinal characteristics of $k^*$-metrizable spaces via the cardinal characteristics of their $k$-coreflexions.

\begin{corollary}\label{nn5.11} If $X$ is a $k^*$-metrizable space, then
$$d(X)\le \,nw(X)\le knw(X)\le d_1(X)=d_{<\w_1}(X)=knw(kX)\le \so(X)\cdot d(kX).$$
\end{corollary}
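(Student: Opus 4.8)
The plan is to reduce the entire chain to the $k$-coreflexion $kX$ and read it off from Theorem~\ref{n5.3}. By Proposition~\ref{n3.3} the space $kX$ is again $k^*$-metrizable, and since $kX$ is a $k$-space, Theorem~\ref{n5.3} applies to it verbatim. Throughout I would keep in mind three elementary facts linking $X$ and $kX$: the identity map $kX\to X$ is continuous, so $kX$ carries the finer topology and every open set of $X$ is open in $kX$; the compact subsets of $kX$ coincide, as subspaces, with those of $X$, because $kX$ induces the original topology on each compactum; and hence $X$ and $kX$ share exactly the same convergent sequences together with their limits.

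First I would settle the transfer of the relevant invariants. The two leftmost inequalities $d(X)\le nw(X)\le knw(X)$ are the general ones recorded in Section~\ref{s5}. For $knw(X)\le knw(kX)$ I would observe that any $k$-network $\mathcal N$ for $kX$ is automatically one for $X$: given an open $U\subset X$ and a compact $K\subset U$, the set $U$ is open and $K$ is compact in $kX$, so a finite subfamily of $\mathcal N$ fits between $K$ and $U$. From Proposition~\ref{n5.1} I obtain $d_1(X)=d_1(kX)$ and $d_{<\w_1}(X)=d_{<\w_1}(kX)$, and because the iterated sequential closures $\cl_\alpha$ depend only on convergent sequences, $\so(X)=\so(kX)$.

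Next I would apply Theorem~\ref{n5.3} to $kX$, which already supplies $knw(kX)=d_1(kX)\le\so(kX)\cdot d(kX)$. The one quantity not displayed in the statement of that theorem is $d_{<\w_1}$, so to splice it in I would reach into the theorem's proof: taking a subproper map $\pi\colon M\to kX$ from a metrizable $M$ with a precompact-preserving section $\sigma$ of dense image, Proposition~\ref{n5.2}(3) gives $knw(kX)\le w(M)=d(\sigma(kX))\le d_{<\w_1}(kX)$, and combined with the trivial bound $d_{<\w_1}(kX)\le d_1(kX)=knw(kX)$ this pins down $knw(kX)=d_1(kX)=d_{<\w_1}(kX)$.

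Finally I would assemble the chain by substituting the identifications of the second paragraph into the equalities of the third, obtaining
$$knw(X)\le knw(kX)=d_1(X)=d_{<\w_1}(X),\qquad knw(kX)\le\so(kX)\cdot d(kX)=\so(X)\cdot d(kX),$$
which together with $d(X)\le nw(X)\le knw(X)$ is exactly the asserted string. I expect the only genuinely delicate point to be the equality $knw(kX)=d_{<\w_1}(kX)$: Theorem~\ref{n5.3} is quoted only for $knw=d_1$, so this single link has to be argued through Proposition~\ref{n5.2}(3) rather than by citing the theorem as a black box; everything else is bookkeeping with the finer topology of $kX$ and the coincidence of compact sets and convergent sequences.
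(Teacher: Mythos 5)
Your proof is correct and follows the route the paper intends: the corollary is stated without a written proof precisely because it is meant to be read off from Theorem~\ref{n5.3} applied to the $k^*$-metrizable $k$-space $kX$ (Proposition~\ref{n3.3}), together with Proposition~\ref{n5.1}, the coincidence of compacta and convergent sequences in $X$ and $kX$, and the observation that a $k$-network for $kX$ is one for $X$. Your extra care over the link $knw(kX)=d_{<\w_1}(kX)$ via Proposition~\ref{n5.2}(3) is exactly how the proof of Theorem~\ref{n5.3} itself secures that equality (the chain $w(M)=d_{<\w_1}(X)=d_1(X)$ appears there explicitly), so nothing is missing.
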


The preceding discussion displays the importance of the local cardinal invariant $\ext_\chi(X)$. It turns out that this cardinal characteristic can be bounded from below by another local cardinal invariant, which is a quantitative version of the Arkhangelski's property $(\alpha_4)$.

 Following \cite{Ar} we say that a topological space $X$ has the {\em  property $(\alpha_4)$} at a point $x\in X$ if for any  countable family $\mathcal S$  consisting of non-trivial sequences convergent to $x$  there is a sequence $T\subset X$ convergent to $x$ and intersecting infinitely many sequences $S\in\mathcal S$. (Besides the usual meaning, by a {\em non-trivial convergent} sequence in a space $X$ we shall understand a countable infinite subset $S\subset X$ whose closure in $X$ is compact that has a unique non-isolated point $x=\lim S$ called the limit of $S$).
\smallskip

A quantification of the property $(\alpha_4)$ yields two local cardinal characteristics.
\smallskip

For a point $x$ of a topological space $X$ let $\alpha_4(x;X)$  be the smallest cardinal $\tau$ such that for any cardinal $\kappa<\tau$ there is a  family $\mathcal S$ of  size $|\mathcal S|=\kappa$  that consist of non-trivial sequences convergent to $x$ and is such that each sequence $T\subset X$ convergent to $x$ meets only finitely many sequences $S\in\mathcal S$. If such a cardinal $\tau$ does not exist (which happen if $x$ is an isolated point of the sequential coreflexion $sX$), then we put $\alpha_4(x;X)=0$.

Another way of quantifying the property $(\alpha_4)$ leads to a cardinal invariant $\alpha_4^+(x;X)$ equal to the smallest cardinal $\kappa\ge1$ such that
such that for any family $\mathcal S$ of  size $|\mathcal S|=\kappa$  that consist of non-trivial sequences convergent to $x$ there is a sequence $T\subset X$ that converges to $x$ and meets infinitely many sequences $S\in\mathcal S$. We put $\alpha_4^+(x;X)=1$ if $x$ an isolated point of $sX$.
\smallskip

Observe  that $\alpha_4(x;X)=\sup\{\kappa:\kappa<\alpha_4^+(x;X)\}$ and hence the cardinal $\alpha^+_4(x;X)$ completely determined the value of $\alpha_4(x;X)$.

We put $\alpha_4(X)=\sup_{x\in X}\alpha_4(x;X)$ and $\alpha_4^+(X)=\sup_{x\in X}\alpha_4^+(x;X)$.
\smallskip

Observe that $\alpha_4^+(X)=\aleph_0$ if and only if $x$ is a non-isolated point in $sX$ and $X$ has property $(\alpha_4)$ at $x$.
 On the other hand,
for any infinite cardinal $\kappa$ the sequential fan $S_\kappa$ with $\kappa$ nodes has $\alpha_4(S_\kappa)=\kappa$ and $\alpha^+_4(S_\kappa)=\kappa^+$.
%We recall that $S_\kappa=\{2^{-n}:n\le\infty\}\times\kappa/\{0\}\times \kappa$ is the quotient space of the discrete sum of $\kappa$ many convergent sequences whose limit points are glued together.

The cardinal $\alpha^+_4(X)$ will be used to detect the presence of a sequential copies of the sequential fan $S_\kappa$ in the space. For a point $x$ of a topological space $X$ we write $(S_\kappa,*)\subset_{\mathrm{cl}} (X,x)$  if there is a  closed subset $F\subset X$ containing point $x$ and a  homeomorphism $h:S_\kappa\to F$ mapping the unique non-isolated point of the fan $S_\kappa$ onto $x$.

\begin{theorem}\label{n6.1} Let $\pi:M\to X$ be a map from a metrizable space and $s:X\to M$ be a $\cs^*$-continuous section of $\pi$ such that $s(X)$ is dense in $M$. Then for any point $x\in X$ that is not isolated in the sequential coreflexion $sX$ we get
\begin{enumerate}
\item $\alpha_4(x;X)=\sup\{\kappa:(S_\kappa,*)\subset_{\mathrm{cl}} (sX,x)\}=\max\{\aleph_0,\ext(\pi^{-1}(x))\}\le\ext_\chi(x;sX)$;
\item $\alpha^+_4(x;X)=\min\{\kappa:(S_\kappa,*)\not\subset_{\mathrm{cl}} (sX,x)\}=\max\{\aleph_0,\ext^+(\pi^{-1}(x))\}\le\ext_\chi^+(x;sX)$.
\end{enumerate}
\end{theorem}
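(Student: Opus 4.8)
The plan is to fix an infinite cardinal $\kappa$ and prove the equivalence of three conditions, from which both items will be read off. Call a family $\mathcal S$ of non-trivial sequences converging to $x$ \emph{spread} if every sequence $T\subset X$ convergent to $x$ meets only finitely many $S\in\mathcal S$ (this is exactly the configuration witnessing the failure of $(\alpha_4)$ that defines $\alpha_4$ and $\alpha_4^+$). Writing $F_x=\pi^{-1}(x)$, I would show that for each infinite $\kappa$ the following are equivalent: (a) $(S_\kappa,*)\subset_{\mathrm{cl}}(sX,x)$; (b) there is a spread family of size $\kappa$; (c) $F_x$ contains a closed discrete subset of size $\kappa$. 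Since $sX$ is sequential, all closures in $sX$ are computed by sequences, which is what makes the fan/closedness bookkeeping clean. Granting the equivalences, item (2) follows by translating ``of size $\kappa$ for every $\kappa<{\cdot}$'' into the $+$-cardinals, with $\max\{\aleph_0,\ext^+(F_x)\}$ absorbing the degenerate case: as $x$ is non-isolated in $sX$ there is one convergent sequence, and any $n$ disjoint infinite subsequences of it form a (trivially) spread family of size $n$, so $\alpha_4^+(x;X)\ge\aleph_0$ always; item (1) then follows from $\alpha_4(x;X)=\sup\{\kappa:\kappa<\alpha_4^+(x;X)\}$ and the analogous identities for $\ext$, $\ext^+$, $\ext_\chi$.

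For the two easy implications (a)$\Rightarrow$(b)$\Rightarrow$(c): the $\kappa$ spokes of a closed copy of $S_\kappa$ form a spread family, because a sequence converging to the apex of $S_\kappa$ meets only finitely many spokes, and this property is inherited by arbitrary convergent sequences hitting the closed set $F$. For (b)$\Rightarrow$(c), apply $\cs^*$-continuity to each spoke $S_\alpha$ to get an accumulation point $y_\alpha$ of $s(S_\alpha)$ in $M$; continuity of $\pi$ gives $\pi(y_\alpha)=x$, so $y_\alpha\in F_x$. If the set $\{y_\alpha\}$ had an accumulation point (or repeated a value infinitely often), I could choose $t_i\in S_{\alpha_i}$ (distinct $\alpha_i$) with $s(t_i)$ converging, forcing $t_i=\pi(s(t_i))\to x$; then $T=\{t_i\}$ converges and meets infinitely many spokes, contradicting spread. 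Hence $\{y_\alpha\}$ is a closed discrete subset of $F_x$ of size $\kappa$.

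The hard direction is (c)$\Rightarrow$(b)$\Rightarrow$(a). Start from a closed discrete $D=\{y_\alpha\}\subset F_x$; discarding $s(x)$ (the only point of $F_x$ lying in the dense set $s(X)$) I may assume each $y_\alpha$ is non-isolated in $M=\overline{s(X)}$. Put $\delta_\alpha=\dist(y_\alpha,D\setminus\{y_\alpha\})>0$ and, using density of $s(X)$, pick distinct $w^\alpha_n\in X$ with $s(w^\alpha_n)$ inside the ball about $y_\alpha$ of radius $\min\{2^{-n},\tfrac13\delta_\alpha\}$; then $s(w^\alpha_n)\to y_\alpha$ and so $w^\alpha_n\to x$, giving spokes $S_\alpha$. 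The crux is the geometric observation that the balls $B(y_\alpha,\tfrac13\delta_\alpha)$ are pairwise separated by $\tfrac16(\delta_\alpha+\delta_\beta)$ (from $\dist(y_\alpha,y_\beta)\ge\tfrac12(\delta_\alpha+\delta_\beta)$); consequently each ``level'' $L_n=\{s(w^\alpha_n):\alpha<\kappa\}$ is closed discrete in $M$, since any accumulating transversal would force the relevant $\delta_{\alpha_j}\to0$ and hence $y_{\alpha_j}\to z$, contradicting that $D$ has no accumulation point. I expect this separation lemma to be the main obstacle. With it in hand, spread follows: if some $T=\{t_j\}$, $t_j\in S_{\alpha_j}$ with distinct $\alpha_j$, converged, $\cs^*$-continuity would make $(s(t_j))$ accumulate at some $z$; passing to a subsequence of fixed depth $n^*$ contradicts closed discreteness of $L_{n^*}$, while a subsequence of depths $n_j\to\infty$ gives (via $\dist(s(w^{\alpha_j}_{n_j}),y_{\alpha_j})<2^{-n_j}$) that $y_{\alpha_j}\to z$, contradicting closed discreteness of $D$. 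Finally (b)$\Rightarrow$(a): for a spread family I claim $F=\{x\}\cup\bigcup_\alpha S_\alpha$ is a closed copy of $S_\kappa$. Spread forces every convergent sequence in $F$ into a compact finite sub-fan $\{x\}\cup\bigcup_{\alpha\in A}S_\alpha$ ($A$ finite), which simultaneously shows $F$ is sequentially closed (hence closed in the sequential space $sX$), that each non-apex point is isolated in $F$, and that the only convergences to $x$ are the fan convergences; matching $F$ with $S_\kappa$ by a bijection respecting spokes gives a sequential homeomorphism, hence a homeomorphism.

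It remains to prove the inequality to the extent-character and assemble. For (a)$\Rightarrow\kappa\le\ext_\chi(x;sX)$: given any family $\U$ of at most $\kappa$ neighborhoods of $x$, fix an injection $\phi:\U\to\kappa$ and choose $f(U)\in U\cap S_{\phi(U)}$, nonempty since $S_{\phi(U)}$ converges to $x$. Distinct spokes give $f$ injective, and $f(\U)$, meeting each spoke of $S_\kappa$ in at most one point, is closed discrete in $S_\kappa$, hence in $sX$ because $F$ is closed; this is exactly the selection demanded by $\ext_\chi(x;sX)\ge\kappa$, and the same argument with $|\U|<\kappa$ yields the $+$-version. Taking the supremum over all $\kappa$ realizing a closed fan gives $\alpha_4(x;X)=\sup\{\kappa:(S_\kappa,*)\subset_{\mathrm{cl}}(sX,x)\}\le\ext_\chi(x;sX)$. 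Combining the $\kappa$-wise equivalences (a)$\Leftrightarrow$(b)$\Leftrightarrow$(c) with the identity $\ext(F_x)=\sup\{\kappa:F_x\text{ has a closed discrete subset of size }\kappa\}$, together with the degenerate lower bound noted in the first paragraph, produces $\alpha_4(x;X)=\sup\{\kappa:(S_\kappa,*)\subset_{\mathrm{cl}}(sX,x)\}=\max\{\aleph_0,\ext(\pi^{-1}(x))\}$ and its $+$-analogue, completing both items.
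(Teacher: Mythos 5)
Your proposal follows essentially the same route as the paper. The paper proves item (2) by the cyclic chain $\alpha_4^+(x;X)\le\max\{\aleph_0,\ext^+(\pi^{-1}(x))\}\le\min\{\kappa:(S_\kappa,*)\not\subset_{\mathrm{cl}}(sX,x)\}\le\alpha_4^+(x;X)$, which is exactly your (b)$\Rightarrow$(c), (c)$\Rightarrow$(a), (a)$\Rightarrow$(b), and then reads off item (1) from the $+$-cardinals, just as you do. Your metric separation lemma with the balls $B(y_\alpha,\tfrac13\delta_\alpha)$ is the paper's appeal to collective normality of the metrizable space $M$ (a discrete family of neighborhoods of the points of $D$) made explicit, and your selection argument for the $\ext_\chi$ bound is a direct verification of the paper's one-line observation that $\ext_\chi^+(S_\kappa)=\kappa^+$. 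The degenerate-case bookkeeping via $\max\{\aleph_0,\cdot\}$ is also handled the same way.

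One point needs care. Your standalone implication (b)$\Rightarrow$(a) --- ``for a spread family, $F=\{x\}\cup\bigcup_\alpha S_\alpha$ is a closed copy of $S_\kappa$'' --- is not justified by spreadness alone: the spread condition only controls sequences converging to $x$, whereas to get closedness of $F$ and isolatedness of the non-apex points you must control \emph{all} convergent sequences with terms in $F$. (For instance, each spoke of a spread family could carry one auxiliary point $b_\alpha$ with $b_{\alpha_j}\to p$ for some $p\ne x$; the family remains spread, yet $F$ fails to be a fan and may fail to be closed.) What saves you is that you only need this step for the family built from the closed discrete set $D$, and there your separation lemma bounds every convergent sequence in $F$, not just those tending to $x$: a transversal accumulating anywhere in $M$ forces $\delta_{\alpha_j}\to0$ and hence an accumulation point of $D$. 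So the cycle (a)$\Rightarrow$(b)$\Rightarrow$(c)$\Rightarrow$(a) still closes and the theorem follows; just do not present (b)$\Rightarrow$(a) as a consequence of spreadness in the abstract --- this is in effect why the paper passes directly from the closed discrete set to the closed fan rather than routing through an arbitrary spread family.
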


\begin{proof} Since the values of the cardinals in the second item determine the values of the corresponding cardinals in the first item, it suffices to prove item (2) only.

Let $$\begin{aligned}
\kappa_1=&\,\alpha_4^+(x;X),\\
\kappa_2=&\max\{\aleph_0,\ext^+(\pi^{-1}(x))\},\\
\kappa_3=&\min\{\kappa: (S_\kappa,*)\not\subset_{\mathrm{cl}} (sX,x)\}.
\end{aligned}
$$
We shall prove the inequalities $\kappa_1\le\kappa_2\le\kappa_3\le\kappa_1$ and $\kappa_3\le\ext_\chi^+(x;sX)$.

$(\kappa_1\le\kappa_2)$. Assume conversely that $\alpha_4^+(x;X)=\kappa_1>\kappa_2$. The definition of $\alpha_4^+(x;X)$ yields an infinite family $\mathcal S$ consisting of $\kappa_2$ many sequences convergent to $x$ such that any other sequence $T\subset X$ with $\lim T=x$ meets only finitely many sequences $S\in\mathcal S$. Since the section $s:X\to M$ is $\cs^*$-continuous, we may replace each sequence $S\in\mathcal S$ with a suitable subsequence and assume that the sequence $s(S)$ converges to some point $z_S\in M$. The continuity of $\pi$ implies that $z_S\in \pi^{-1}(x)$. We claim that the set $\{z_S:S\in\mathcal S\}$ is a closed discrete subset in $\pi^{-1}(x)$. Observe that any convergent sequence $T\subset M$ meets only finitely many sequences $s(S)$, $S\in\mathcal S$ (because its image $\pi(T)$ meets only finitely many sequences $S\in\mathcal S$). Then the indexed set $\{z_S:S\in\mathcal S\}$ cannot have accumulating points in $M$ and hence $D=\{z_S:S\in\mathcal S\}$ is a closed discrete subset of $\pi^{-1}(x)$ with size $|\mathcal S|=\kappa_2\ge \ext^+(\pi^{-1}(x))$. But this contradicts the definition of the cardinal $\ext^+(\pi^{-1}(x))$. This proves the inequality $\kappa_1\le\kappa_2$.
\medskip

$(\kappa_2\le\kappa_3)$ also will be proved by contradiction. Assume conversely that $\ext^+(\pi^{-1}(x))=\kappa_2>\kappa_3$. The definition of $\ext^+(\pi^{-1}(x))$ provides a closed discrete subset $D\subset \pi^{-1}(x)$ having size $|D|=\kappa_3$.  We may additionally assume that $s(x)\notin D$. The space $M$, being metrizable, is collectively normal. Hence for each point $z\in D$ we can find a neighborhood $O_z$ such that the family $\{O_z:z\in D\}$ is discrete. The density of $s(X)$ in $M$ implies that for any point $z\in D$ we can find a sequence $S(z)\subset O_z\cap s(X)$ convergent to $z$. Using the $\cs^*$-continuity of the section $s$ it can be shown that the set $F=\{x\}\cup\bigcup_{z\in D}\pi(S(z))$ with distinguished point $x$ is a closed copy of the fan $(S_{\kappa_3},*)$ in the sequential coreflexion $sX$ of $X$. But this contradicts the definition of the cardinal $\kappa_3$.
\medskip

To prove the inequality $\kappa_3\le\kappa_1$, assume conversely that $\kappa_3>\kappa_1=\alpha_4^+(x;X)$. Then the definition of the cardinal $\kappa_3$ yields us a topological copy of the fan $(S_{\kappa_1},*)$ in $(sX,x)$. Since each sequence convergent to $x$ meets only finitely many nodes of the fan $S_{\kappa_1}$, the space $X$ has $\alpha^+_4(X;x)>\kappa_1$, a contradiction with the definition of the cardinal $\kappa_1$.
\medskip

The final inequality $\kappa_3\le\ext_\chi(x;sX)$ follows from the observation that $\ext^+_\chi(S_\kappa)=\kappa^+$ for each infinite cardinal $\kappa$.
\end{proof}

Theorem~\ref{n6.1} gives an upper bound $\ext_\chi(x;X)$ for $\alpha_4(x;X)$. A lower bound is given by another local cardinal invariant called the $\cs$-character of $X$ at $x$.

A family $\mathcal N$ of subsets of a space $X$ is called a {\em $\cs$-network} (resp. {\em $\wcs^*$-network}) at $x$ if for any neighborhood $U\subset X$ of $X$ and any sequence $\{x_n\}_{n\in\w}\subset U$ convergent to $x$ there is a set $N\subset U$ in $\mathcal N$ such that $N$ contains all but finitely many (resp. infinitely many) points $x_n$.

The {\em $\cs$-character} (resp. {\em $\cs^*$-character}) of a space $X$ at a point $x\in X$ is the smallest size $|\mathcal N|$ of a $\cs$-network (resp. $\wcs^*$-network) at $x$. The cardinal $\cs_\chi(X)=\sup_{x\in X}\cs_\chi(x,X)$ (resp.$\cs_\chi(X)=\sup_{x\in X}\cs_\chi(x,X)$) is called the {\em $\cs$-character} (resp.  {\em $\cs^*$-character}) if $X$.

\begin{proposition}\label{n6.3} If $X$ is a $\cs^*$-metrizable space, then $\cs^*_\chi(x;X)\le\cs_\chi(x;X)\le \alpha_4(x;X)\le\ext_\chi(x;X)$. If $cs^*_\chi(x;sX)\le\aleph_0$, then $\alpha_4(x;X)=cs^*_\chi(x;X)=\cs_\chi(x;X)\in\{1,\aleph_0\}$.
\end{proposition}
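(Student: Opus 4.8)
The plan is to establish the three inequalities $\cs^*_\chi(x;X)\le\cs_\chi(x;X)\le\alpha_4(x;X)\le\ext_\chi(x;X)$ one at a time, and then read off the final equalities. Throughout I may assume $x$ is non-isolated in $sX$ (the isolated case being degenerate, giving the value $1$), and I will freely use that $\cs_\chi$, $\cs^*_\chi$ and $\alpha_4$ depend only on the convergent sequences of $X$, i.e. are the same for $X$ and $sX$. The first inequality is immediate: every $\cs$-network at $x$ is in particular a $\cs^*$-network at $x$, since a set containing all but finitely many terms of a sequence contains infinitely many of them; hence $\cs^*_\chi(x;X)\le\cs_\chi(x;X)$. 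The third inequality $\alpha_4(x;X)\le\ext_\chi(x;X)$ is supplied by Theorem~\ref{n6.1}. So the real content is the middle inequality $\cs_\chi(x;X)\le\alpha_4(x;X)$.

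For this I would first pass to the sequential coreflexion: by Theorem~\ref{n4.3} the space $sX$ is $k^*$-metrizable, so by Theorem~\ref{n1.2} I may fix a subproper map $\pi\colon M\to sX$ from a metric space $(M,d)$ together with a \emph{precompact-preserving} section $s\colon sX\to M$ whose image is dense, and put $F=\pi^{-1}(x)$. The fibre $F$ is metrizable, so $w(F)=\ext(F)$, and by Theorem~\ref{n6.1} one has $\alpha_4(x;X)=\max\{\aleph_0,\ext(F)\}=\max\{\aleph_0,w(F)\}$. I then choose a dense set $E\subseteq F$ with $|E|\le\max\{\aleph_0,w(F)\}$ and propose the family $\mathcal N=\{\,s^{-1}(\bigcup_{e\in G}B(e,2^{-n}))\colon G\in[E]^{<\w},\ n\in\w\,\}$, whose cardinality is at most $|[E]^{<\w}|\cdot\aleph_0=\max\{\aleph_0,|E|\}=\alpha_4(x;X)$. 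The claim is that $\mathcal N$ is a $\cs$-network at $x$, which yields $\cs_\chi(x;X)\le|\mathcal N|\le\alpha_4(x;X)$.

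To verify the claim, take an open $U\ni x$ in $X$ and a sequence $x_k\to x$ with $x_k\in U$. Since $\{x\}\cup\{x_k\}$ is compact in $sX$ and $s$ preserves precompact sets, the sequence $(s(x_k))$ is relatively compact in $M$; by continuity of $\pi$ its (nonempty, compact) cluster set $A$ lies in $F$. As $A$ is compact inside the open set $\pi^{-1}(U)\supseteq F$, I pick $n$ so large that the $2^{-n}$-neighbourhood of $A$ is contained in $\pi^{-1}(U)$ and cover $A$ by finitely many balls $B(e_i,2^{-n})$ with $e_i\in E$; the corresponding set $N=s^{-1}(\bigcup_i B(e_i,2^{-n}))$ then satisfies $N\subseteq U$ (because $s^{-1}(\pi^{-1}(U))=U$) and contains a tail of $(x_k)$ (because all but finitely many $s(x_k)$ lie within $2^{-n}$ of $A$). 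The main obstacle is precisely that $(s(x_k))$ need \emph{not} converge — only cluster — so that a naive pullback of a single ball fails twice over: it may both miss a tail of $(x_k)$ and spill outside $U$. Both difficulties are resolved at once by passing to $sX$ to gain $k^*$-metrizability (hence relative compactness of the $s$-images of convergent sequences) and by replacing single balls by finite unions of balls covering the compact cluster set, the latter being what keeps the count at $\alpha_4(x;X)$.

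Finally I would derive the equalities under the hypothesis $\cs^*_\chi(x;sX)\le\aleph_0$. Since $x$ is non-isolated in $sX$, any $\cs^*$-network at $x$ is infinite, so in fact $\cs^*_\chi(x;sX)=\aleph_0$; by the established chain it remains to prove $\alpha_4(x;X)\le\aleph_0$. Suppose not. Then $\alpha_4(x;X)\ge\aleph_1$, so by Theorem~\ref{n6.1} there is a closed copy $(S_{\aleph_1},*)\subset_{\mathrm{cl}}(sX,x)$. Restricting any $\cs^*$-network at $x$ to this closed subspace produces a $\cs^*$-network at the apex of the fan, whence $\cs^*_\chi(x;sX)\ge\cs^*_\chi(S_{\aleph_1},*)=\aleph_1$, contradicting the hypothesis. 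Therefore $\alpha_4(x;X)\le\aleph_0$, and together with the chain this forces $\cs^*_\chi(x;X)=\cs_\chi(x;X)=\alpha_4(x;X)=\aleph_0$; allowing the isolated case gives the value $1$, so the common value lies in $\{1,\aleph_0\}$. The one external fact I rely on here is the standard evaluation $\cs^*_\chi(S_\kappa,*)=\kappa$ of the $\cs^*$-character of the sequential fan.
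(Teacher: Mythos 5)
Your argument is correct and its overall architecture matches the paper's: both halves ultimately rest on Theorem~\ref{n6.1}, and the work consists of (i) manufacturing a $\cs$-network at $x$ of size at most $\alpha_4(x;X)$ out of the fibre $\pi^{-1}(x)$, and (ii) ruling out a closed copy of $S_{\omega_1}$ in $sX$ under the hypothesis $\cs^*_\chi(x;sX)\le\aleph_0$. The differences are worth recording. For (i) the paper fixes a $\sigma$-discrete base $\mathcal B$ of $M$, takes the subfamily $\mathcal C$ of members meeting $\pi^{-1}(x)$, and uses the \emph{pushforwards} $\pi(\bigcup\FF)$ of finite unions $\FF\subset\mathcal C$; you use \emph{pullbacks} $s^{-1}(\bigcup_iB(e_i,2^{-n}))$ of finite unions of metric balls centred in a dense subset of the fibre. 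These are essentially dual constructions and both work; yours needs one cosmetic repair: to guarantee that a tail of $(s(x_k))$ lands in $\bigcup_iB(e_i,2^{-n})$ you should cover the cluster set $A$ by balls $B(e_i,2^{-n-1})$, so that the $2^{-n-1}$-neighbourhood of $A$ (which does absorb a tail) is contained in $\bigcup_iB(e_i,2^{-n})$ — your family contains all radii $2^{-n}$, so this costs nothing. For (ii) the genuine divergence is that you invoke $\cs^*_\chi(S_{\omega_1},*)=\aleph_1$ as a standard external fact, whereas the paper proves the needed instance (that $S_{\omega_1}$ admits no countable $\wcs^*$-network at its apex) from scratch by a support/diagonalization argument that occupies the entire second half of its proof. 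The fact is true, and only the inequality $\cs^*_\chi(S_{\omega_1},*)>\aleph_0$ is needed, but in a self-contained write-up you should either reproduce that combinatorial argument or supply a precise reference; as written you have replaced the hardest step of the second assertion by a citation.
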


\begin{proof} Since $X$ is $\cs^*$-metrizable, there is a  map $\pi:M\to X$ from a metrizable space and a $\cs^*$-continuous section $s:X\to M$ such that $s(X)$ is dense in $M$. By Theorem~\ref{n6.1}, $\ext(\pi^{-1}(x))\le\alpha_4(x;X)$. Let $\mathcal B$ be a $\sigma$-discrete base of the topology of the metrizable space $M$. It follows that the subfamily $\mathcal C=\{U\in\mathcal B:{U}\cap \pi^{-1}(y)\}$ has size $$|\mathcal C|\le d(\pi^{-1}(x))=\ext(\pi^{-1}(x))\le\alpha_4(x;X).$$ Then the family $\C_\cup=\{\cup\FF:\FF\subset\C,\;|\C|<\infty\}$ also has size $\le \alpha_4(x;X)$. We claim that $\mathcal N=\{\pi(C):C\in\C_\cup\}$ is a $\cs$-network at $x$. Indeed, take any neighborhood $O_x\subset X$ of $x$ and a sequence $S=\{x_n\}\subset  U$ convergent to $x$. The $\cs^*$-continuity of the section $s$ implies that the set $s(S)$ has compact closure $K$ in $M$. Let $\FF\subset\mathcal B$ be a finite subcover of the set $K\cap \pi^{-1}(x)$ such that $\cup \FF\subset \pi^{-1}(O_x)$. The continuity of $\pi$ implies that all points of $K\setminus \pi^{-1}(x)$ are isolated and hence the set $K\setminus\cup\FF$ is finite. Then $\pi(\cup\FF)\subset O_x$ is an element of the family $\mathcal N$, containing almost all elements of the sequence $(x_n)$. This shows that $\mathcal N$ is a $\cs$-network at $x$ and hence $\cs_\chi(x;X)\le|\mathcal N|\le \alpha_4(x;X)$. The inequality $\alpha_4(x;X)\le\ext_\chi(x;X)$ was proved in Theorem~\ref{n6.1}.
\medskip

Now assume that $\cs^*_\chi(x;sX)\le\aleph_0$. If $x$ an isolated point in $sX$, then $\alpha_4(x;X)=\cs^*_\chi(x;X)=\cs_\chi(x;X)=1$. So, assume that $x$ is not isolated in $sX$, which means that there is a non-trivial convergent sequence in $X$.
In light of the preceding paragraph, it suffices to check that $\alpha_4(x;X)\le\aleph_0$. Assuming the converse and applying Theorem~\ref{n6.1}, find a closed embedding $(S_{\w_1},*)$ into $(sX,x)$. Then $\cs^*_\chi(S_{\w_1})\le \cs_\chi^*(x;sX)\le\aleph_0$ and hence there is a countable $\wcs^*$-network $\mathcal N$ for $S_{\w_1}$. Write $S_{\w_1}=\{*\}\cup\{x^\alpha_n:n\in\w,\; \alpha<\w_1\}$ where each sequence $T_\alpha=\{x^\alpha_n\}_{n\in\w}$ converges to the non-isolated point $*\notin T_\alpha$ of $S_{\w_1}$, the sequences $T_\alpha$, $\alpha\in\w_1$, are pairwise disjoint and determine the topology of $S_{\w_1}$ in the sense that a subset $F\subset T_{\w_1}$ is closed in $S_{\w_1}$ if for any $\alpha<\w_1$ the intersection $F\cap\overline{T}_\alpha$ is compact. For a set $A\subset S_{\w_1}$ let $\supp(A)=\{\alpha\in\w_1:A\cap T_\alpha\ne\emptyset\}$. Let $\mathcal N_\infty$ be the subfamily of the $\wcs^*$-network $\mathcal N$, consisting of all sets $N\in\mathcal N$ with infinite support $\supp(N)$. Since $\mathcal N_\infty$ is at most countable, we can inductively construct an at most countable subset $D\subset S_{\w_1}$ such that $D\cap N\ne\emptyset$ for any $N\in\mathcal N_\infty$ and $|D\cap T_\alpha|\le 1 $ for any $\alpha\in\w_1$. By definition of the topology of $S_{\w_1}$, the set $D$ is closed and discrete in $S_{\w_1}$. Then $U=S_{\w_1}\setminus D$ is an open neighborhood of the non-isolated point $*$ of $S_{\w_1}$. Pick any countable ordinal $\alpha$ that does not belong to the countable set $\supp(D)\cup\bigcup_{N\in\mathcal N\setminus\mathcal N_\infty}\supp(N)$. Then $T_\alpha$ is a convergent sequence in $U$ and hence there is a set $N_0\subset U$ in $\mathcal N$ having infinite intersection   with $T_\alpha$. This set $N_0$ cannot belong to $\mathcal N_\infty$ because no set $N\in\mathcal N_\infty$ lies in $U$. Then $N_0\in\mathcal N\setminus\mathcal N_\infty$, which also is not possible because $\alpha\notin \supp(N_0)$.
The obtained contradiction completes the proof of the inequality $\alpha_4(x;X)\le\aleph_0$.
\end{proof}

\begin{question} Does $\cs^*_\chi(x;X)\le\aleph_0$ imply $\alpha_4(x;X)=\cs_\chi^*(x;X)$? (The answer is affirmative if $X$ is a sequential space).
\end{question}

\begin{question} Is $\alpha_4(x;X)=\ext_\chi(x;X)$ for a point $x$ of a regular $k^*$-metrizable $k$-space?
\end{question}

\section{Characterizing $k^*$-metrizable spaces in terms of $k$-networks}\label{s7}

In this section for each space $X$ we describe a canonical construction of a map $\lim:\Nw(X)\to X$ from the so-called Network Hyperspace over $X$ such that $X$ is $k^*$-metrizable (resp. $\cs^*$-metrizable) if and only if the map $\lim$ has a precompact-preserving (resp. $\cs^*$-continuous) section.

Given a  topological space $X$ by $\mathcal P(X)$ we denote the power-set of $X$ endowed with the discrete topology. For each point $x\in X$ consider the subset $\Nw(x)$ of $\mathcal P(X)^\w$ consisting of all sequences $(A_n)_{n\in\w}$ of non-empty subsets of $X$ such that
\begin{itemize}
\item $A_{n+1}\subset A_n$ for all $n$;
\item for any neighborhood $U$ of $x$ there is a number $n\in\w$ with $\cl_1(A_n)\subset U$.
\end{itemize}
Therefore $(\{x\}\cup A_n)_{n\in\w}$ is a decreasing network at $x$.
We recall that $\cl_1(A)$ stands for the 1-st sequential closure of a set $A$ in a topological space $X$. By definition, $\cl_1(A)$ consists of limit points of sequences $(a_n)\subset A$ that converge in $X$.

The subspace $\Nw(X)=\bigcup_{x\in X}\Nw(x)$ of the Tychonoff product $\mathcal P(X)^\w$
 is called the {\em Network Hyperspace} over $X$. Since $X$ is Hausdorff, $\Nw(x)\ne\Nw(y)$ for distinct points $x,y$ of $X$, which allows us to define a map $\lim:\Nw(X)\to X$ letting $\lim^{-1}(x)=\Nw(x)$. It is easy to see that the so-defined map $\lim$ is continuous.

The construction of the Network Hyperspace is functorial in the sense that each
continuous map $f:X\to Y$ between Hausdorff spaces induces a continuous map $f^\w:\Nw(X)\to\Nw(Y)$, $f^\w:(A_n)\mapsto \big(f(A_n)\big)$ making the diagram
$$
\begin{CD}
\Nw(X)@>{\lim}>>X\\
@VV{f^\w}V @V{f}VV\\
\Nw(Y)@>{\lim}>>Y
\end{CD}
$$commutative.

The Network Hyperspace will be our principal tool in characterizing $k^*$-metrizable spaces in terms of $k$-networks.

 A family $\mathcal N$ of subsets of a space $X$ is called a {\em $k$-network} (resp. {\em \closed\  $k$-network}) for $X$ if for any open set $U\subset X$ and a compact set $K\subset U$ there is a finite subfamily $\mathcal F\subset\mathcal N$ such that $K\subset\cup\mathcal F\subset U$ (resp. $K\subset\cup\FF\subset\cl_1(\cup\FF)\subset U$).

A family $\mathcal N$ is a {\em $\wcs^*$-network} (resp. {\em \closed\  $\wcs^*$-network}) for $X$ if for any open set $U\subset X$ and a sequence $(x_n)_{n\in\w}\subset U$, convergent to a point $x_\infty\in U$ there is a set $N\in\mathcal N$ containing infinitely many points $x_n$ and such that $N\subset U$ (resp. $\cl_1(N)\subset U$).

More detail information on $k$- and $\wcs^*$-networks can be found in \cite{Lin90}, \cite{LiTa96}, \cite{LiTa2}, \cite{Ta1}.
It is clear that a family $\mathcal N$ is a $k$-network (resp. $\wcs^*$-network) for a sequentially regular space $X$ if and only if $\mathcal N$ is a \closed\  $k$-network (resp. \closed\  $\wcs^*$-network) for $X$. We recall that a topological space $X$ to be {\em sequentially regular} if for each point $x\in X$ and a neighborhood $U\subset X$ of $x$ there is another neighborhood $V\subset X$ of $x$ with $\cl_1(V)\subset U$.

A family $\mathcal A$ of subsets of a space $X$ is called {\em
compact-finite} (resp. $cs$-finite) if for any  compact
subset (resp. convergent sequence) $K\subset X$ the family $\mathcal
F=\{A\in\mathcal A:A\cap K\ne\emptyset\}$ is finite. A family
$\mathcal A$ is {\em $\sigma$-compact-finite} (resp. {\em
$\sigma$-$cs$-finite}) if it can be written as the countable union
$\mathcal A=\bigcup_{n\in\w}\mathcal A_n$ of compact-finite (resp.
$cs$-finite) subfamilies.

\begin{proposition}\label{n7.1} If $\mathcal A$ is a $\sigma$-compact-finite (resp. $\sigma$-cs-finite) family of subsets of a space $X$, then the family $\A_\infty=\{\cap \FF:\FF\subset \A,\;|\FF|<\infty\}$ also is $\sigma$-compact-finite (resp. $\sigma$-cs-finite).
\end{proposition}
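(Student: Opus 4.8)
The plan is to handle both cases at once, since the argument uses only that, for each test set $K$ (a compact set in the first case, a convergent sequence in the second), the collection of members of any single compact-finite (resp. cs-finite) subfamily that meet $K$ is finite. Throughout, $K$ denotes such a test set.

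First I would fix a decomposition $\A=\bigcup_{n\in\w}\A_n$ with each $\A_n$ compact-finite (resp. cs-finite). For a finite set $S\subset\w$ set $\A_S=\bigcup_{n\in S}\A_n$ and
$$\A_\infty^S=\{\bigcap\FF:\FF\subset\A_S,\ 0<|\FF|<\infty\}.$$
The first (routine) step is to check that $\A_\infty=\bigcup_{S\in[\w]^{<\w}}\A_\infty^S$: given a finite nonempty $\FF\subset\A$, each $A\in\FF$ lies in some $\A_{n(A)}$, so $\FF\subset\A_S$ for the finite set $S=\{n(A):A\in\FF\}$, whence $\bigcap\FF\in\A_\infty^S$. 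As $[\w]^{<\w}$ is countable, it then suffices to prove that each $\A_\infty^S$ is compact-finite (resp. cs-finite).

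This is the crux of the proof, and I expect it to rest on a single monotonicity observation. Fix $S$ and a test set $K$. Because $S$ is finite and each $\A_n$ is compact-finite (resp. cs-finite), the family $\mathcal G=\{A\in\A_S:A\cap K\ne\emptyset\}$ is a finite union of finite families and hence finite. Now if $B=\bigcap\FF\in\A_\infty^S$ meets $K$, then since $B\subset A$ for every $A\in\FF$, each such $A$ also meets $K$; that is, $\FF\subset\mathcal G$. Consequently every member of $\A_\infty^S$ meeting $K$ lies in the finite set $\{\bigcap\FF:\emptyset\ne\FF\subset\mathcal G\}$, so $\{B\in\A_\infty^S:B\cap K\ne\emptyset\}$ is finite. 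This is exactly compact-finiteness (resp. cs-finiteness) of $\A_\infty^S$.

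Combining the two steps, $\A_\infty=\bigcup_{S\in[\w]^{<\w}}\A_\infty^S$ is a countable union of compact-finite (resp. cs-finite) families and is therefore $\sigma$-compact-finite (resp. $\sigma$-cs-finite). The only point worth a remark is the empty intersection $\bigcap\emptyset=X$: I would either exclude it by requiring $\FF\ne\emptyset$, as above, or note that adjoining the single set $X$ to each $\A_\infty^S$ enlarges $\{B:B\cap K\ne\emptyset\}$ by at most one element and so is harmless.
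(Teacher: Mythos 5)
Your proof is correct and follows essentially the same route as the paper: decompose $\A=\bigcup_n\A_n$, group the finite intersections by the finite set of indices involved, and use the observation that a nonempty intersection meeting a test set $K$ forces every factor to meet $K$. If anything, your version is slightly more careful than the paper's (which forms $\A_F$ by picking exactly one set from each $\A_k$, $k\in F$, and thus glosses over subfamilies containing several members of the same $\A_k$), and your remark about $\bigcap\emptyset$ is a sensible touch.
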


\begin{proof} Write $\mathcal A$ as the countable union $\mathcal A=\bigcup_{k\in\w}\A_k$ of compact-finite families. For any finite subset $F\subset \w$ consider the compact-finite family $\mathcal A_F=\{\bigcap_{k\in F}A_k:A_k\in\A_k,\; k\in F\}$ and observe that $\mathcal A_\infty=\bigcup_F\mathcal A_F$ is $\sigma$-compact-finite, being the countable union of the compact-finite families $\mathcal A_F$.

By analogy we can prove the cs-finite case.
\end{proof}

It is clear that each $\sigma$-compact-finite $k$-network for $X$
is a $\sigma$-cs-finite $cs^*$-network.  If all compact subsets of
$X$ are countably compact, then the converse is also true.

\begin{proposition}\label{n7.2} Let $X$ be a topological space whose all compact
subsets are sequentially compact. Then each $\sigma$-cs-finite \textup{(}\closed\textup{)}
$\wcs^*$-network $\mathcal N$ for $X$ is a $\sigma$-compact-finite
\textup{(}\closed\textup{)} $k$-network for $X$.
\end{proposition}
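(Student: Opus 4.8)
The plan is to prove the two adjectives in the conclusion separately: first that $\mathcal N$ is $\sigma$-compact-finite, and then that it is a (\closed) $k$-network. Both parts will rest on the standing hypothesis that every compact subset of $X$ is sequentially compact, used in the same way — to convert a compact set into a convergent sequence to which the ``cs-level'' information about $\mathcal N$ applies.

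For the first part, I would write $\mathcal N=\bigcup_{n\in\w}\mathcal N_n$ with each $\mathcal N_n$ cs-finite and show that each $\mathcal N_n$ is already compact-finite. Fix a compact $K\subset X$ and suppose toward a contradiction that infinitely many distinct members $N_1,N_2,\dots\in\mathcal N_n$ meet $K$. Choosing $x_k\in N_k\cap K$ and invoking sequential compactness, I extract a subsequence $x_{k_j}\to x_\infty\in K$. The set $S=\{x_{k_j}:j\in\w\}\cup\{x_\infty\}$ is then a convergent sequence met by the infinitely many distinct members $N_{k_j}$, contradicting cs-finiteness of $\mathcal N_n$. This gives compact-finiteness of each $\mathcal N_n$, hence $\sigma$-compact-finiteness of $\mathcal N$.

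Once compact-finiteness is in hand, the $k$-network property becomes accessible. Fix an open $U\subset X$ and a compact $K\subset U$, and let $\mathcal M=\{N\in\mathcal N: N\cap K\neq\emptyset,\ N\subset U\}$ (in the \closed\ case, replace $N\subset U$ by $\cl_1(N)\subset U$, which still forces $N\subset U$ since $N\subset\cl_1(N)$). By compact-finiteness the family $\{N\in\mathcal N:N\cap K\neq\emptyset\}$ is countable, so I may enumerate $\mathcal M=\{M_1,M_2,\dots\}$. Assuming no finite subfamily of $\mathcal M$ covers $K$, choose $x_m\in K\setminus\bigcup_{i\le m}M_i$ for each $m$, pass to a convergent subsequence $x_{m_j}\to x_\infty\in U$, and apply the (\closed) $\wcs^*$-network property to this sequence lying in $U$. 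It delivers $N\in\mathcal N$ with $N\subset U$ (resp. $\cl_1(N)\subset U$) containing infinitely many $x_{m_j}$; since $N$ meets $K$, we have $N\in\mathcal M$, say $N=M_{i_0}$. Picking $j$ with $m_j\ge i_0$ and $x_{m_j}\in M_{i_0}$ contradicts the choice $x_{m_j}\notin\bigcup_{i\le m_j}M_i\supset M_{i_0}$. Hence some finite $\mathcal F\subset\mathcal M$ covers $K$, giving $K\subset\cup\mathcal F\subset U$.

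For the \closed\ refinement I would additionally record the elementary identity $\cl_1(A\cup B)=\cl_1(A)\cup\cl_1(B)$ (a convergent sequence in $A\cup B$ has a subsequence lying entirely in $A$ or entirely in $B$), whence $\cl_1(\cup\mathcal F)=\bigcup_{F\in\mathcal F}\cl_1(F)\subset U$ as soon as each $\cl_1(F)\subset U$; this upgrades the $k$-network conclusion to a \closed\ $k$-network. I expect the main obstacle to be the bookkeeping of the second part: one must guarantee that the family $\mathcal M$ to be enumerated is genuinely countable — which is precisely what the compact-finiteness from the first part supplies — and arrange the ``escaping'' sequence so that the single set produced by the $\wcs^*$-property is simultaneously forced into $\mathcal M$ and shown to contain an excluded point. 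A minor point to dispatch in the first part is the degenerate case in which the convergent subsequence is eventually constant equal to $x_\infty$; this is harmless, since then infinitely many distinct members contain the one point $x_\infty$, contradicting the point-finiteness that cs-finiteness already entails (a constant sequence converges).
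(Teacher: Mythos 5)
Your proof is correct and follows essentially the same route as the paper's: first upgrade cs-finite to compact-finite via sequential compactness, then run the escaping-sequence argument against a countable enumeration of the relevant subfamily, and repeat with $\cl_1(N)\subset U$ for the \closed\ case. Your two extra touches --- dispatching the eventually-constant subsequence via point-finiteness, and recording $\cl_1(A\cup B)=\cl_1(A)\cup\cl_1(B)$ to justify the \closed\ upgrade --- are small refinements of details the paper leaves implicit, not a different argument.
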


\begin{proof} First we check that each $cs$-finite family $\mathcal F$ of subsets of $X$ is compact-finite. Assuming the converse, find a sequence $(F_k)_{k\in\w}$ of pairwise distinct sets from $\mathcal F$ meeting some compact set $K$. For every $k\in\w$ pick a point $x_k\in K\cap F_k$ and use the sequential compactness of $K$ to find a convergent subsequence $(x_{k_i})_{i\in\w}$ of $(x_k)$. Since the set $\{x_{k_i}:i\in\w\}$ intersects infinitely many sets from $\mathcal F$, the family $\mathcal F$ cannot be $cs$-finite, which is a contradiction.

Therefore, each  $\sigma$-cs-finite $\wcs^*$-network $\mathcal N$ for $X$ is $\sigma$-compact-finite. Next, we show that $\mathcal N$ is a  $k$-network for $X$. Fix any open set $U\subset X$ and a compact subset $K\subset U$. Since $\mathcal N$ is $\sigma$-compact-finite, the subfamily $\mathcal N'=\{N\in\mathcal N: N\subset U, \;N\cap K\ne\emptyset\}$ is at most countable and hence can be enumerated as $\mathcal N'=\{N_k:k\in\w\}$. We claim that $K\subset N_0\cup\dots\cup N_m$ for some finite $m$. Assuming the converse, we would construct a sequence $(x_n)_{n\in\w}$ with $x_n\in K\setminus(N_0\cup\dots\cup N_n)$ for all $n\in\w$. By the sequential compactness of $K$ this sequence has a convergent subsequence $(x_{n_i})$. The family $\mathcal N$, being a $cs^*$-network, contains a set $N\in\mathcal N$ such that $N\subset U$ and $N$ meets infinitely many points of the sequence $(x_{n_i})$. Since $N$ meets $K$ and lies in $U$, it belongs to the subfamily $\mathcal N'$ and hence equals $N_m$ for some $m$. Now the choice of the points $x_k$ implies that $x_{n_i}\notin N_m=N$ for all $i$ with $n_i>m$, which is a contradiction.

Repeating the above prove for the family $\mathcal N'_1=\{N\in\mathcal N':\cl_1(N)\subset U,\; N\cap K\ne\emptyset\}$, we can prove that each $\sigma$-cs-finite \closed\ $\wcs^*$-network for $X$ is a $\sigma$-compact-finite \closed\  $k$-network for $X$.
\end{proof}

The following theorem gives an inner characterization of $\cs^*$-metrizable spaces in terms of $\wcs^*$-networks.

\begin{theorem}\label{n7.3} For a space $X$ the following conditions are equivalent:
\begin{enumerate}
\item $X$ is $\cs^*$-metrizable;
\item the map $\lim:\Nw(X)\to X$ has a $\cs^*$-continuous section $s:X\to\Nw(X)$;
\item $X$ has a $\sigma$-cs-finite \closed\  $\wcs^*$-network.
\end{enumerate}
\end{theorem}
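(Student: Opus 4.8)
The plan is to establish the cycle $(1)\Rightarrow(3)\Rightarrow(2)\Rightarrow(1)$; the two outer implications are short, while $(3)\Rightarrow(2)$ carries the real weight. For $(2)\Rightarrow(1)$ I would first note that $\mathcal P(X)$ is discrete, so the countable power $\mathcal P(X)^{\w}$ is metrizable and hence so is its subspace $\Nw(X)$. Since each fibre $\Nw(x)$ contains the constant sequence $(\{x\})_{n\in\w}$, the map $\lim\colon\Nw(X)\to X$ is a continuous surjection, and a $\cs^*$-continuous section of $\lim$ therefore witnesses directly, from the definition, that $X$ is $\cs^*$-metrizable.

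For $(1)\Rightarrow(3)$ I would take a map $\pi\colon M\to X$ from a metric space $(M,d)$ with a $\cs^*$-continuous section $\sigma\colon X\to M$ and a $\sigma$-discrete base $\mathcal B=\bigcup_k\mathcal B_k$ of $M$, and set $\mathcal N_k=\{\sigma^{-1}(B):B\in\mathcal B_k\}$ and $\mathcal N=\bigcup_k\mathcal N_k$. The single observation doing all the work is that the $\cs^*$-continuity of $\sigma$ yields $\cl_1(\sigma^{-1}(B))\subset\pi(\overline B)$: if a sequence $(z_j)\subset\sigma^{-1}(B)$ converges in $X$, then $(\sigma(z_j))$ accumulates at some $m\in\overline B$, and continuity of $\pi$ forces the limit to be $\pi(m)$. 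Choosing $B\ni\sigma(x)$ with $\overline B\subset\pi^{-1}(U)$ then gives $\cl_1(\sigma^{-1}(B))\subset U$, so $\mathcal N$ is a \closed\ $\wcs^*$-network; and for any convergent sequence $S\subset X$ the same $\cs^*$-continuity makes $\overline{\sigma(S)}$ compact, whence it meets only finitely many members of the discrete family $\mathcal B_k$, which is precisely the cs-finiteness of each $\mathcal N_k$.

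The implication $(3)\Rightarrow(2)$ is the crux. Given a $\sigma$-cs-finite \closed\ $\wcs^*$-network, which by Proposition~\ref{n7.1} may be assumed closed under finite intersections, I would manufacture a $\cs^*$-continuous section $s\colon X\to\Nw(X)$ by reversing the construction above: at each $x$ I would select a decreasing tower $(A_n(x))$ of finite intersections of network sets whose first sequential closures shrink into every neighbourhood of $x$, so that $(A_n(x))\in\Nw(x)$, and I would arrange every choice to depend only on discrete membership data at the levels $\mathcal N_k$. Then along any convergent sequence $x_j\to x_\infty$, cs-finiteness restricts each level to finitely many active sets, so after passing to a subsequence each coordinate $A_n(x_{j_i})$ stabilizes to a fixed set $Q_n$ with $(Q_n)\in\Nw(x_\infty)$, and this $(Q_n)$ is the accumulation point of $(s(x_j))$ required for $\cs^*$-continuity.

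The hard part will be the selection step. Because $X$ need not be first countable, it is not automatic that a single decreasing sequence of network sets is cofinal below \emph{all} neighbourhoods of $x$, which is exactly what membership in $\Nw(x)$ demands, whereas the network hypothesis only controls individual convergent sequences. I expect to overcome this by first refining $\mathcal N$ into a sequence of cs-finite families organized as successively finer sequential covers, using cs-finiteness to see that only countably many network sets are relevant near any fixed point, so that their nested finite intersections furnish the required $\cl_1$-neighbourhood base; the coherence needed for $\cs^*$-continuity is then inherited from the finiteness of the active sets along each convergent sequence, in exact parallel with the role played by relative compactness of $\sigma(S)$ in the proof of $(1)\Rightarrow(3)$.
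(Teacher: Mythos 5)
Your cycle $(1)\Rightarrow(3)\Rightarrow(2)\Rightarrow(1)$ is exactly the paper's, and the two outer implications are correct and essentially identical to the paper's arguments (the paper takes a $\sigma$-locally-finite base of $M$ where you take a $\sigma$-discrete one; either works). The problem is that $(3)\Rightarrow(2)$, which you rightly call the crux, is left as a plan rather than a proof, and the two points you defer are precisely where the theorem lives.

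First, the worry you raise about membership in $\Nw(x)$ --- that a $\wcs^*$-network only controls convergent sequences while $\Nw(x)$ demands a $\cl_1$-neighbourhood base --- is dissolved by an observation you never actually make: applying the \closed\ $\wcs^*$-network property to the \emph{trivial} sequence constantly equal to $x$ inside a neighbourhood $U$ produces some $N\in\mathcal N$ with $x\in N$ and $\cl_1(N)\subset U$. Since cs-finiteness forces each point to lie in only finitely many members of each level $\mathcal N_k$, the family $\{N\in\mathcal N:x\in N\}$ is countable; enumerating it increasingly with respect to a global well-ordering of $\mathcal N$ compatible with the levels, the decreasing partial intersections then shrink, together with their $\cl_1$, into every neighbourhood of $x$. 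Without this step your section $s$ is not even well defined.

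Second, the $\cs^*$-continuity of $s$ is asserted ("each coordinate stabilizes to a fixed set $Q_n$ with $(Q_n)\in\Nw(x_\infty)$") but both halves of that assertion need proof. For the stabilization, the paper arranges the decomposition $\mathcal N=\bigcup_k\mathcal N_k$ so that every tail $\bigcup_{k\ge m}\mathcal N_k$ is still a $\wcs^*$-network; this guarantees that the compactum $K=\{x_n:n\le\infty\}$ meets members of $\mathcal N$ at arbitrarily high levels, which uniformly bounds the $i$-th element of the index set of each $x_n$ inside the finite set $\Lambda(K)\cap[0,\beta_{k_{i+1}})$, so that the enumerating functions live in a compact metrizable product and a subsequence eventually constant in each coordinate can be extracted. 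Even after stabilization the limit is a priori only a point of $\mathcal P(X)^\w$: showing it belongs to $\Nw(x_\infty)$ requires applying the \closed\ $\wcs^*$-network property once more, this time to the convergent sequence $(x_n)$ itself, to exhibit for each neighbourhood $U$ of $x_\infty$ an index $\gamma$ with $\cl_1(N_\gamma)\subset U$ that occurs as one of the stabilized coordinates. Your proposal gestures at all of this ("arrange every choice to depend only on discrete membership data", "I expect to overcome this by...") but supplies none of it, so as written the argument has a genuine gap rather than merely omitted routine detail.
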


\begin{proof} The implication $(2)\Ra(1)$ is trivial.

$(1)\Ra(3)$ Assuming that $X$ is $\cs^*$-metrizable, find a map $\pi:M\to X$ from a metrizable space $M$ having a $\cs^*$-continuous section $s:X\to M$. Let $Z=s(X)$. The space $M$, being metrizable, admits a $\sigma$-locally-finite base $\mathcal B$ of the topology. For each $B\in\mathcal B$ let $N_B=s^{-1}(B)$. We claim that $\mathcal N=\{N_B:B\in\mathcal B\}$ is a $\sigma$-cs-finite \closed\  $\wcs^*$-network for $X$.
 First we show that $\mathcal N$ is a \closed\ $\wcs^*$-network for $X$. Take an open set $U\subset X$ and a sequence $(x_n)_{n\in\w}\subset U$, convergent to a point $x_\infty\in U$. It follows from the $\cs^*$-continuity of $\pi$ that the sequence $(s(x_n))$ has an accumulating point $z_\infty\in M$ which projects onto $x_\infty$ by the continuity of $\pi$.  Find a basic set $B\in\mathcal B$ such that $z_\infty\in B\subset\overline{B}\subset\pi^{-1}(U)$. Then $B$ contains infinitely many points $s(x_n)$ and hence $N_B=s^{-1}(B)$ contains infinitely many points $x_n$.
The $\cs^*$-continuity of $s$ implies that $\cl_1(N_B)\subset \pi(\bar B)\subset U$. This proves that $\mathcal N$ is a \closed\  $\wcs^*$-network for $X$.

To show that the $\wcs^*$-network $\mathcal N$ is $\sigma$-cs-finite, write the base $\mathcal B$ as the countable union $\mathcal B=\bigcup_{k\in\w}\mathcal B_k$ of locally finite families in $M$. Then $\mathcal N=\bigcup_{k\in\w}\mathcal N_k$ where $\mathcal N_k=\{N_B:B\in\mathcal B_k\}$. It remains to check that each family $\mathcal N_k$ is cs-finite in $X$. Fix any convergent sequence $K\subset X$. It follows from the $\cs^*$-continuity of $s$ that the image $s(K)$ is precompact in $M$ and thus meets only finitely many sets of the locally finite family $\mathcal B$. Then its image $K=\pi(s(K))$ meets only finitely many elements of the family $N_B$, $B\in\mathcal B_k$, which proves the $cs$-finity of $\mathcal N_k$.
Therefore each $\cs^*$-metrizable space admits a $\sigma$-$cs$-finite \closed\ $\wcs^*$-network.
\medskip

$(3)\Ra(2)$ Now assume that a space $X$ has a $\sigma$-cs-finite \closed\  $\wcs^*$-network $\mathcal N$. According to Proposition~\ref{n7.1} we may assume that $\mathcal N$ is closed under finite intersections.

The network $\mathcal N$, being $\sigma$-cs-finite, can be written as the countable union $\mathcal N=\bigcup_{k\in\w}\mathcal N_k$ of cs-finite families so that for any $m\in\w$ the family $\bigcup_{k\ge m} \mathcal N_k$ still is a \closed\ $\wcs^*$-network for $X$. Define an increasing sequence of ordinals $(\beta_k)_{k\in\w}$ letting $\beta_0=0$ and $\beta_{k+1}=\beta_k+|\mathcal N_k|$. Let also $\beta_\w=\sup_{k\in\w}\beta_k$. Then the family $\mathcal N$ can be enumerated as $\mathcal N=\{N_\alpha:\alpha<\beta_\w\}$ so that $\mathcal N_k=\{N_\alpha:\beta_k\le\alpha<\beta_{k+1}\}$ for all $k\in\w$. The cs-finity of the families $\mathcal N_k$ implies that for every point $x\in X$ the set $\Lambda(x)=\{\alpha<\beta_\w:x\in N_\alpha\}$ has finite intersection with each interval $[\beta_k,\beta_{k+1})$ and consequently, $\Lambda(x)$ has order type of $\w$ (here we also use the fact that for every $k\in\w$ the family $\{N_\alpha:\beta_k\le\alpha<\beta_\w\}$ is a network for $X$). Consequently, there is an increasing bijective map $\alpha_x:\w\to \Lambda(x)$.

 Finally, let $s(x)=(A_k)_{k\in\w}$, where $A_k=\bigcap_{i\le k}N_{\alpha_x(i)}$.
It is easy to see that  $s(x)\in \Nw(x)$ and $\lim\circ s(x)=x$, which means that $s$ is a section of the map $\lim:\Nw(X)\to X$.

It remains to check that this section is $\cs^*$-continuous.
Take any sequence $(x_n)$ in $X$ convergent to a point $x_\infty\in X$.
Let $K=\{x_n:n\le\infty\}$ and consider the family
$\Lambda(K)=\{\alpha<\beta_\w:K\cap N_\alpha\ne\emptyset\}$.
 The cs-finity of
the families $\mathcal N_k$ implies that for every $k\in\w$ the intersection $\Lambda(K)\cap[0,\beta_{k+1})$ is finite and hence $\Lambda(K)$ is a countable set.
Using the fact that for every $k\in\w$ the family $\{N_\alpha:\beta_k\le\alpha<\beta_\w\}$
is a $\wcs^*$-network for $X$, construct an increasing number sequence $(k_i)_{i\in\w}$ such that $K\subset \bigcup_{\beta_{k_i}\le\alpha<\beta_{k_{i+1}}}N_\alpha$.

For every $n\in\w$ consider the set $\Lambda(x_n)=\{\alpha<\beta_\w:x_n\in N_\alpha\}$ having order type $\w$ and let $\alpha_n=\alpha_{x_n}:\w\to\Lambda(x_n)$ be the increasing bijective map. It follows that for every $i\in\w$ we get $\alpha_n(i)<\beta_{k_{i+1}}$ and thus $\alpha_n(i)\in\Lambda_i(K)$ where
$\Lambda_i(K)=\Lambda(K)\cap [0,\beta_{k_{i+1}})$ for $i\in\w$.
Endow each finite set $\Lambda_i(K)$ with discrete topology and consider the countable product $\Pi=\prod_{i\in\w}\Lambda_i(K)$, which is a metrizable compact space.
For every $n$ the function $\alpha_n:\w\to \Lambda_i(K)$ can be considered as a point of the metrizable compact space $\Pi$. Then  the sequence $(\alpha_n)_{n\in \w}$ contains a subsequence $(\alpha_{n})_{n\in J}$ that converges to some point $\alpha_\infty\in\Pi$ (here $J$ is an infinite subset of $\w$).
Moreover, we can assume that $\alpha_{n}(i)=\alpha_\infty(i)$ for all $n\in J$ with $n\ge i$.

Consequently, $x_{n}\in N_{\alpha_\infty(i)}$ for all $n\in J$ with $n\ge i$ and hence $x_\infty=\lim x_n\in \cl_1(N_{\alpha_\infty(i)})$ for all $i\in\w$.
Consider the sequence $(N_{\alpha_\infty(i)})_{i\in\w}$. It follows from the convergence of the sequence $(\alpha_{n})_{n\in J}$ to $\alpha_\infty$ that the sequence $(s(x_{n}))_{n\in J}$ converges to the sequence $\vec n=(\cap_{j\le i} N_{\alpha_\infty(i)})_{i\in\w}$ in $\mathcal P(X)^\w$. It remains to check that $\vec n$ belongs to the subset $\Nw(X)\subset\mathcal P(X)^\w$. This will follows as soon as we prove that each neighborhood $U$ of $x_\infty$ contains some set $\cl_1(N_{\alpha_\infty(i)})$. Since $\mathcal N$ is a \closed\ $cs^*$-network,  there is a set $N_\gamma\in\mathcal N$ with $\gamma<\beta_\w$ and $\cl_1(N_\gamma)\subset U$ containing infinitely many points of the sequence $(x_n)_{n\in J}$. So the set $J_1=\{n\in J:x_n\in N_\gamma\}$ is infinite. For every $n\in J_1$, the ordinal $\gamma$ belongs to the set $\Lambda(x_n)$ and hence $\gamma=\alpha_{x_n}(k_n)$ for some number $k_n\le |\Lambda(K)\cap[0,\gamma]|$. Since the set $\Lambda(K)\cap [0,\gamma]$ is finite, for some number $m$ the set $J_2=\{n\in J_1: k_n=m\}$ is infinite. The convergence of the sequence $(\alpha_n)_{n\in J}$ to $\alpha_\infty$ implies that $\alpha_\infty(m)=\alpha_{x_n}(m)=\gamma$ for all sufficiently large numbers $n\in J$. Consequently, $N_{\alpha_\infty(m)}=N_{\gamma}\subset\cl_1(N_\gamma)\subset U$, which implies that  $\vec n=(N_{\alpha_\infty(i)})_{i\in\w}\in M$ by the definition of $M$.
\end{proof}

Theorem~\ref{n7.3} combined with Proposition~\ref{n7.2} implies a characterization of $k^*$-metrizable spaces in terms of \closed\ $k$-networks.

\begin{theorem}\label{n7.4} For a topological space $X$ the following conditions are
equivalent:
\begin{enumerate}
\item $X$ is $k^*$-metrizable;
\item $X$ is $cs^*$-metrizable and all compact subsets of $X$ are sequentially compact;
\item the map $\lim:\Nw(X)\to X$ has a section that preserves precompact sets;
\item all compact subsets of $X$ are sequentially compact and $X$
has a $\sigma$-cs-finite \closed\  $\wcs^*$-network;
\item $X$ has a $\sigma$-compact-finite \closed\  $k$-network.
\end{enumerate}
If the space $X$ is sequentially regular, then the conditions (1)--(5) are equivalent to
\begin{enumerate}
\item[(6)] $X$ has a $\sigma$-compact-finite $k$-network.
\end{enumerate}
\end{theorem}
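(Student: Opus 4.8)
The plan is to derive every equivalence by assembling the two results the theorem is advertised to rest on, Theorem~\ref{n7.3} and Proposition~\ref{n7.2}, together with Proposition~\ref{n4.2} and one structural observation: the Network Hyperspace $\Nw(X)$ is metrizable. The latter is immediate, since $\Nw(X)$ is a subspace of the countable power $\mathcal P(X)^\w$ of a discrete (hence metrizable) space; thus $\Nw(X)$ is metrizable and in particular $\mu$-complete. This is exactly what links condition (3) to the definition of $k^*$-metrizability. I would organize the argument as the cycle $(1)\Leftrightarrow(2)\Leftrightarrow(4)$, $(1)\Leftrightarrow(3)$, $(4)\Leftrightarrow(5)$, plus the sequentially regular addendum $(5)\Leftrightarrow(6)$.

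First I would record the backbone equivalences. The equivalence $(1)\Leftrightarrow(2)$ is precisely the $k^*$-case of Proposition~\ref{n4.2}. For $(2)\Leftrightarrow(4)$, note that both conditions carry the clause ``all compact subsets of $X$ are sequentially compact'', while $\cs^*$-metrizability is equivalent to the existence of a $\sigma$-cs-finite \closed\ $\wcs^*$-network by Theorem~\ref{n7.3}; hence (2) and (4) differ only by a restatement of one conjunct. For $(1)\Leftrightarrow(3)$: if $\lim\colon\Nw(X)\to X$ admits a precompact-preserving section, then, $\Nw(X)$ being metrizable, $\lim$ is subproper by Theorem~\ref{n1.2} and $X$ is $k^*$-metrizable, giving $(3)\Rightarrow(1)$; conversely, assuming (1) I pass through (2) to obtain a $\cs^*$-continuous section of $\lim$ from Theorem~\ref{n7.3}, and then upgrade it to a precompact-preserving one by Proposition~\ref{n1.3}, whose hypotheses hold because $\Nw(X)$ is $\mu$-complete and all compact subsets of $X$ are sequentially compact.

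The one step that is not a pure citation is $(4)\Leftrightarrow(5)$. The implication $(4)\Rightarrow(5)$ is exactly Proposition~\ref{n7.2}. For $(5)\Rightarrow(4)$ the inclusion half is trivial: every $\sigma$-compact-finite \closed\ $k$-network is a $\sigma$-cs-finite \closed\ $\wcs^*$-network, since compact-finite families are cs-finite and a \closed\ $k$-network is a \closed\ $\wcs^*$-network. The real content, and the main obstacle, is showing that a $\sigma$-compact-finite $k$-network forces every compact subset $K\subset X$ to be sequentially compact. Here I would argue that the traces on $K$ of the network members meeting $K$ form a countable $k$-network for $K$: writing the network as $\bigcup_n\mathcal N_n$ with each $\mathcal N_n$ compact-finite, only finitely many members of each $\mathcal N_n$ meet $K$, so countably many meet $K$ in total, and intersecting these with $K$ yields a countable $k$-network, hence a countable network, for $K$. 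A compact space with countable network is metrizable, so $K$ is metrizable and therefore sequentially compact. This supplies the sequential-compactness clause of (4) and completes the chain $(1)\Leftrightarrow(2)\Leftrightarrow(3)\Leftrightarrow(4)\Leftrightarrow(5)$.

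Finally, for sequentially regular $X$ I would prove $(5)\Leftrightarrow(6)$. Since a \closed\ $k$-network is in particular a $k$-network, (5) trivially implies (6). For the converse it suffices to see that, for sequentially regular $X$, any $k$-network is automatically a \closed\ $k$-network, so the same $\sigma$-compact-finite family witnessing (6) witnesses (5). Given a compact $K$ inside an open set $U$, sequential regularity supplies, for each $x\in K$, an open $V_x\ni x$ with $\cl_1(V_x)\subset U$; a finite subcover $V=V_{x_1}\cup\dots\cup V_{x_m}$ of $K$ then satisfies $\cl_1(V)=\cl_1(V_{x_1})\cup\dots\cup\cl_1(V_{x_m})\subset U$, using that $\cl_1$ commutes with finite unions. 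Applying the $k$-network property to $K\subset V$ produces a finite $\FF$ with $K\subset\cup\FF\subset V$, whence $\cl_1(\cup\FF)\subset\cl_1(V)\subset U$, which is exactly the \closed\ $k$-network condition.
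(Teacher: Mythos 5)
Your proposal is correct and follows essentially the same route as the paper: the cycle $(1)\Leftrightarrow(2)\Leftrightarrow(3)\Leftrightarrow(4)$ is assembled from Theorem~\ref{n7.3} and Propositions~\ref{n4.2} and \ref{n1.3} (using that $\Nw(X)$, as a subspace of $\mathcal P(X)^\w$, is metrizable and hence $\mu$-complete), $(4)\Rightarrow(5)$ is Proposition~\ref{n7.2}, and $(5)\Rightarrow(4)$ rests on the same observation that the traces of the network on a compact set $K$ form a countable $k$-network, forcing $K$ to be metrizable and hence sequentially compact. Your explicit verification of $(5)\Leftrightarrow(6)$ via $\cl_1$ commuting with finite unions just spells out what the paper dismisses as obvious (it is stated without proof right before the theorem).
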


\begin{proof} The equivalences $(1)\Leftrightarrow(2)\Leftrightarrow(3)\Leftrightarrow(4)$ follow from
Theorem~\ref{n7.3} and Propositions~\ref{n4.2} and \ref{n1.3}. The implication $(4)\Ra(5)$ follows
from Proposition~\ref{n7.2}. To prove that $(5)\Ra(4)$ it suffices
to check that for a space $X$ possessing a $\sigma$-compact-finite
\closed\ $k$-network $\mathcal N$ each compact subset $K\subset X$ is
sequentially compact. For this observe that the family $\A=\{N\cap
K:N\in\mathcal N,\;N\cap K\ne\emptyset\}$ is a countable
$k$-network in $K$, which implies the metrizability (and hence sequential compactness) of $K$.

The equivalence $(6)\Leftrightarrow(5)$ for a sequentially regular space $X$ is obvious.
\end{proof}

\begin{remark} The $\sigma$-compact-finite $k$-networks in the characterization of $k^*$-metrizable spaces cannot be replaced by compact-countable $k$-networks: the ordinal space $[0,\w_1)$ has a compact-countable base of the topology and hence has a compact-countable $k$-network. Yet, it is not $k^*$-metrizable, being sequentially compact but not compact.
\end{remark}

Each continuous binary operation $\circ:X\times X\to X$ on a regular space $X$ induces a continuous binary operation $$\bullet:\Nw(X)\times\Nw(X)\to\Nw(X),\; (A_n)\bullet(B_n)\mapsto (A_n\circ B_n)$$ on $\Nw(X)$, where $A\circ B=\{a\circ b:a\in A,\; b\in B\}$. If $\circ$ is an associative (commutative) operation on $X$, then $\bullet$ is an associative (commutative) operation on $\Nw(X)$. Moreover the limit operator $\lim:\Nw(X)\to X$  preserves the operation in the sense that $\lim((A_n)\bullet (B_n))=\lim(A_n)\circ\lim(B_n)$.

Combining this observation with Theorem~\ref{n7.4} we obtains an algebraic characterization of $k^*$-metrizable topological semigroups.

\begin{corollary}\label{n7.6} A  regular (commutative) topological semigroup $X$ is $k^*$-metrizable if and only if there is a continuous homomorphism $h:M\to X$ from a (commutative) metrizable topological semigroup $M$ admitting a section $s:X\to M$ that preserves precompact sets.
\end{corollary}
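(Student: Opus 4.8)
The plan is to read the corollary as two essentially independent assertions and to dispatch each one by invoking results already in hand, with the Network Hyperspace $\Nw(X)$ itself serving as the required metrizable semigroup in the nontrivial direction.

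For the ``if'' part there is nothing semigroup-theoretic to do. Suppose $h\colon M\to X$ is a continuous homomorphism from a metrizable topological semigroup $M$ admitting a precompact-preserving section $s\colon X\to M$. Since $h\circ s=\id_X$, the map $h$ is surjective, and by the equivalence $(2)\Ra(1)$ of Theorem~\ref{n1.2} the existence of the precompact-preserving section $s$ makes $h$ subproper. Thus $X$ is the image of the metrizable space $M$ under a subproper map, i.e. $X$ is $k^*$-metrizable by definition. Neither the algebraic structure of $M$ nor the homomorphism property of $h$ is used here; they are simply discarded.

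For the ``only if'' part I would take $M=\Nw(X)$ equipped with the operation $\bullet$ and $h=\lim$. The paragraph preceding the corollary already records that the semigroup operation $\circ$ on the regular space $X$ induces a continuous operation $\bullet$ on $\Nw(X)$ which is associative, and commutative when $\circ$ is, and that $\lim\colon\Nw(X)\to X$ is a continuous homomorphism; so $M$ is a (commutative) topological semigroup and $h$ is a continuous homomorphism onto $X$. The single new observation that makes the argument run is that $\Nw(X)$ is metrizable: the power set $\mathcal P(X)$ carries the \emph{discrete} topology and is therefore metrizable by the $0$--$1$ metric, irrespective of its cardinality; hence the countable power $\mathcal P(X)^\w$ is a countable product of metrizable spaces and thus metrizable, and its subspace $\Nw(X)$ is metrizable as well. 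Finally, since $X$ is $k^*$-metrizable, the implication $(1)\Ra(3)$ of Theorem~\ref{n7.4} supplies a section $s\colon X\to\Nw(X)$ of $\lim$ that preserves precompact sets. Taking $M=\Nw(X)$, $h=\lim$ and this $s$ completes the construction.

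The only place requiring thought is the metrizability of $\Nw(X)$; everything else is bookkeeping on top of Theorems~\ref{n1.2} and \ref{n7.4}. I would therefore state that metrizability observation first and point out explicitly that it rests on the discreteness of $\mathcal P(X)$ together with the countability of the exponent $\w$, since replacing $\w$ by an uncountable index set would already destroy metrizability and with it the whole argument.
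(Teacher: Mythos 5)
Your proposal is correct and follows exactly the route the paper intends: the ``if'' direction is immediate from Theorem~\ref{n1.2} and the definition of $k^*$-metrizability, and the ``only if'' direction combines the observation that $\bullet$ makes $\Nw(X)$ a (commutative) metrizable topological semigroup with $\lim$ a continuous homomorphism, together with the precompact-preserving section supplied by Theorem~\ref{n7.4}. Your explicit justification of the metrizability of $\Nw(X)$ as a subspace of the countable power of the discrete space $\mathcal P(X)$ is a point the paper leaves implicit, but it is the same argument.
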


We recall that a {\em topological semigroup} is a pair $(S,\circ)$ consisting of a topological space $S$ and a continuous associative operation $\circ:S\times S\to S$.

\section{Interplay between $k^*$-metrizable spaces and other generalized metric spaces}\label{s8}

In this section we study the interplay between $k^*$-metrizable spaces and other classes of generalized metric space such as
La\v snev  or $\aleph$-spaces. We remind that a Hausdorff
space $X$ is called a {\em La\v snev space} if it is the image of a
metrizable space under a closed continuous map.

We need to recall the definition of the {\em Arens' fan}, which is the subset
$$S_2=\{(0,0),(\frac1n,0),(\frac1n,\frac1m):n,m\in\IN\}$$ of the plane,
endowed with the strongest topology inducing the original topology on each
compact subset $K_n=\{(0,0),(\frac1k,0),(\frac1i,\frac1j):k,j\in\IN,\;i\le
n\}$, $n\in\IN$, see \cite{Arens}.

The following theorem
is close by spirit to the results of \cite{Lin}, \cite{LD}, \cite{NT},
\cite{Ta2}.

\begin{theorem}\label{n8.1} Let $X$ be a Hausdorff space. Then
\begin{enumerate}
\item  $X$ is a La\v snev space iff $X$ is a Fr\'echet-Urysohn $k^*$-metrizable space iff $X$ is a Fr\'echet-Urysohn $\cs^*$-metrizable space iff $X$ is a sequential $\cs^*$-metrizable space not containing a topological copy of the Arens' fan $S_2$;
\item  $X$ is $\cs$-metrizable if and only if $X$ is a $\cs^*$-metrizable space
containing no (sequentially closed)
subspace sequentially homeomorphic to the fans $S_\w$ or $S_2$;
\item $X$ is $k$-metrizable  if and only if $X$ is a $k^*$-metrizable space
containing no (sequentially closed)
subspace sequentially homeomorphic to the fans $S_\w$ or $S_2$;
\item $X$ is metrizable iff $X$ is a sequential $\cs$-metrizable space iff $X$ is a $k$-metrizable $k$-space.
\end{enumerate}
\end{theorem}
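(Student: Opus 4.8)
The plan is to reduce all four equivalences to a single statement about sequential spaces by passing through the coreflexions $kX$ and $sX$, to isolate one genuinely new combinatorial fact, and to import the remaining work from Theorem~\ref{n1.6}. The guiding observation is that $X$, $kX$ and $sX$ share the same convergent sequences, so the conditions ``contains a (sequentially closed) subspace sequentially homeomorphic to $S_\w$ or $S_2$'' are insensitive to the passage between these three spaces. Combining this with Theorem~\ref{n4.3} ($X$ is $\cs$-metrizable iff $sX$ is metrizable, and $\cs^*$-metrizable iff $sX$ is $k^*$-metrizable) and Propositions~\ref{n3.2}--\ref{n3.3} (the analogues for $kX$), every assertion collapses to one core lemma about a sequential $k^*$-metrizable space $Y$ in the role of $sX$ or $kX$; here I would use that a $k^*$-metrizable $k$-space has metrizable, hence sequential, compact subsets and is therefore itself sequential, so that the lemma legitimately applies to $kX$. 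Part~(4) is then immediate: for sequential $X$ we have $sX=X$, so $\cs$-metrizability forces $X$ metrizable, and for a $k$-space $kX=X$, so $k$-metrizability forces $X$ metrizable, the converse implications being trivial.

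The core lemma I would prove is that a sequential $k^*$-metrizable space $Y$ is Fr\'echet--Urysohn if and only if it contains no topological copy of the Arens fan $S_2$, and is metrizable if and only if it contains in addition no copy of $S_\w$. The ``only if'' halves are soft: a Fr\'echet--Urysohn space cannot contain $S_2$ (which is sequential but not Fr\'echet--Urysohn, and the property is hereditary), while in a metrizable space a sequentially closed subspace is closed, hence metrizable, hence first countable, so it cannot be sequentially homeomorphic to either non-metrizable fan.

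For the substantive direction the crux is: \emph{a sequential $k^*$-metrizable space $Y$ containing no topological copy of $S_2$ is Fr\'echet--Urysohn}, and this is where I expect the main obstacle. Arguing by contradiction, if $Y$ fails to be Fr\'echet--Urysohn then, being sequential, $\so(Y)\ge 2$, and since $\cl_1(A)$ cannot be sequentially closed we obtain a set $A$ and a point $p\in\cl_2(A)\setminus\cl_1(A)$; unwinding this gives points $y_n\to p$ with $y_n\in\cl_1(A)$ and, for each $n$, points $a_{n,m}\to y_n$ with $a_{n,m}\in A$, while $p\notin\cl_1(A)$ means that no sequence from $A$ converges to $p$. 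That last clause is exactly the defining pathology of $S_2$ and rules out ``diagonal'' sequences reaching the apex; the remaining work is to prune the array $\{p\}\cup\{y_n\}\cup\{a_{n,m}\}$, against a $\sigma$-compact-finite \closed\ $k$-network supplied by Theorem~\ref{n7.4}, so that it carries precisely the Arens topology (each compactum meeting only finitely many spokes, no unintended convergent sequences). This diagonalisation against the compact-finite network is the delicate step, close in spirit to \cite{Lin}, \cite{LD}, \cite{NT}, \cite{Ta2}.

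Granting the crux, the rest is quick and reuses Theorem~\ref{n1.6}. A Fr\'echet--Urysohn $k^*$-metrizable space $Y$ carries a subproper map $\pi\colon M\to Y$ from a metrizable, hence $\mu$-complete, space; by Theorem~\ref{n1.6} this map is inductively closed, so $Y$ is the closed image of a closed metrizable subspace of $M$, that is, $Y$ is La\v snev, which settles the hard implication of part~(1). If moreover $Y$ contains no closed copy of $S_\w$, Theorem~\ref{n1.6} upgrades the map to an inductively perfect one, so $Y$ is a perfect image of a metrizable space and therefore metrizable, completing the metrization half of the core lemma. I would then assemble part~(1) as the cycle ``La\v snev $\Ra$ Fr\'echet--Urysohn $k^*$-metrizable $\Ra$ Fr\'echet--Urysohn $\cs^*$-metrizable $\Ra$ sequential $\cs^*$-metrizable without $S_2$ $\Ra$ La\v snev'' (the first step using that La\v snev spaces are Fr\'echet--Urysohn and, by Proposition~\ref{n1.5}, $k^*$-metrizable; the last using the crux together with the previous sentence), and deduce parts~(2) and~(3) by applying the metrization half of the core lemma to $Y=sX$ and to $Y=kX$, translating back through Theorem~\ref{n4.3} and Propositions~\ref{n3.2}--\ref{n3.3}.
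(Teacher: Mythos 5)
Your overall architecture is viable and, for parts (1) and (4), essentially reproduces the paper's argument: La\v snev implies Fr\'echet-Urysohn, Proposition~\ref{n1.5} supplies the $\cs^*$-continuous (indeed precompact-preserving) section, and Theorem~\ref{n1.6} converts a Fr\'echet-Urysohn $\cs^*$-metrizable space back into a La\v snev space, or into a metrizable one when there is no closed copy of $S_\w$. But there are two genuine gaps.

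First, the step you yourself single out as the crux --- a sequential $\cs^*$-metrizable space containing no topological copy of $S_2$ is Fr\'echet-Urysohn --- is not actually proved. You set up the array $\{p\}\cup\{y_n\}\cup\{a_{n,m}\}$ and say it remains to prune it against a $\sigma$-compact-finite network so that it carries exactly the Arens topology; but that pruning (killing unintended convergent sequences among the $a_{n,m}$ and unintended accumulation at $p$, while keeping each $y_n$ a limit of its spoke) is precisely where the difficulty lies, and no argument is given. The paper does not prove this either: it imports it from Siwiec \cite{Si}. Either cite that result or carry out the diagonalisation; as written the proposal has a hole exactly at its declared centre.

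Second, your derivation of parts (2) and (3) from the ``core lemma'' does not deliver the parenthetical strengthening in the statement. The backward implications there assume only that $X$ has no \emph{sequentially closed} subspace sequentially homeomorphic to $S_\w$ or $S_2$, so the contrapositive must produce a sequentially closed copy. Your route does produce a closed copy of $S_\w$ in the Fr\'echet-Urysohn case (Theorem~\ref{n1.6}(5) gives closedness), but in the non-Fr\'echet-Urysohn case it only promises a topological copy of $S_2$, with no control on its closure, so the weaker hypothesis is not contradicted. The paper sidesteps the dichotomy entirely: since a non-metrizable $k^*$-metrizable $k$-space $sX$ cannot be $k$-metrizable, the subproper map $\pi:M\to sX$ fails to be proper over some compactum $K$; an infinite closed discrete set $\{z_n\}\subset\pi^{-1}(K)$, together with sequences from the dense image $\sigma(sX)$ converging to the points $z_n$, yields a set $W=\{x_\infty,\pi(z_n),\pi(z_{n,m})\}$ that is closed in $sX$ and is a copy of $S_\w$ or of $S_2$ according to whether the points $\pi(z_n)$ all equal $x_\infty$ or are pairwise distinct. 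That single construction produces both fans, already sequentially closed, in one stroke; you would need either to adopt it or to supplement your $S_2$-construction with a closedness argument.
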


\begin{proof} 1. If $X$ is a La\v snev space, then there is a closed map $\pi:M\to X$ of a metrizable space onto $X$ and $X$ is a Fr\'echet-Urysohn space by \cite[2.4.G]{En}. By Proposition~\ref{n1.5} each section $s:M\to X$ is $\cs^*$-continuous. Consequently, $X$ is $\cs^*$-metrizable. Being Fr\'echet-Urysohn, $X$ is a $k^*$-metrizable space. Since the Arens' fan $S_2$ is not Fr\'echet-Urysohn, $X$ cannot contain a topological copy of $S_2$.

Now assume conversely that $X$ is a Fr\'echet-Urysohn $\cs^*$-metrizable space and find a metrizable space $M$ and a map $\pi:M\to X$ having a $\cs^*$-continuous section. By Theorem~\ref{n1.6} the map $\pi$ is inductively closed and hence $X$ is a La\v snev space. If $X$ is a sequential space containing no copy of the Arens' fan, then $X$ is Fr\'echet-Urysohn by \cite{Si}.
\smallskip

2. Assume that $X$ is $\cs$-metrizable, which is equivalent to the metrizability of the sequential coreflexion $sX$ of $X$. We need to show that  $X$ contains no subspace $W$
sequentially homeomorphic to the Fr\'echet--Urysohn or Arens fan. Actually,
we can prove more:  $X$ contains no subspace sequentially homeomorphic to
a non-metrizable sequential space $Z$. Suppose that this is not true.
Then we can find a
sequential homeomorphism $h\colon\, W\to Z$ of a subspace $W\subset X$ onto $Z$. Since the identity map $i:sX\to X$ is a sequential homeomorphism, the composition $h\circ i|i^{-1}(W)\colon\, i^{-1}(W)\to Z$ is a sequential homeomorphism. Since
both spaces $i^{-1}(W)$ and $Z$ are sequential, the function
$h\circ i|i^{-1}(W)$ is a
homeomorphism. Then the space $Z$, being homeomorphic to the metrizable
space $i^{-1}(W)$, is metrizable, which is a contradiction.
\smallskip

Now assume conversely that $X$ is a $\cs^*$-metrizable but not $\cs$-metrizable space. We will find a sequentially closed subspace in $X$ sequentially homeomorphic to a Fr\'echet-Urysohn or Arens' fan.
 By Theorem~\ref{n4.3}(1) the sequential coreflexion $sX$ of $X$ is $\cs^*$-metrizable but not metrizable. Since all compact subsets of $sX$ are sequentially compact (by the sequentiality of $sX$), the space $sX$ is $k^*$-metrizable and hence $sX$ is the image of a metrizable space $M$ under a subproper map $\pi:M\to sX$. Let $\sigma:sX\to M$ be a section of $\pi$ that preserves precompact sets. Without loss of generality, $\sigma(sX)$ is dense in $M$.

 The space $sX$ is not $k$-metrizable, being a non-metrizable $k$-space. Consequently, the map $\pi$ fails to be proper and for some compact set $K\subset sX$ the preimage $\pi^{-1}(K)$ is not compact and hence contains an
infinite closed discrete subset $\{z_n\colon\, n\in\IN\}$. The sequentiality of $X$ implies the sequential compactness of the compactum $K$. Passing to a suitable
subsequence we can assume that the sequence $(\pi(z_n))_{n=1}^\infty\subset
K$ converges to a point $x_\infty$ of the sequentially compact space $K$.
Again passing to a subsequence we can assume that either $\pi(z_n)=x_\infty$
for all $n$ or $\pi(z_m)\ne \pi(z_n)\ne x_\infty$ for all $n\ne m$.

Since the set $\{z_n\colon\, n\in\IN\}$ is closed and discrete in $Z$ while $\sigma(K)$
is precompact, only finitely many points $z_n$ belong to $\sigma(K)$. Without
loss of generality, we can assume that $z_n\notin \sigma(K)$ for all $n$ and
thus $\{z_n\colon\, n\in\IN\}\subset Z\setminus \sigma(K)$. Pick pairwise disjoint
neighborhoods $O(z_n)$ of the points $z_n$ in $Z$ such that the collection
$\{O(z_n)\colon\, n\in\IN\}$ is discrete in $Z$. Since $\sigma(sX)$ is dense in $Z$
we can find for every $n\in\IN$ a sequence $(z_{n,m})_{m=1}^\infty\subset
\sigma(X)\cap O(z_n)$ convergent to $z_n$. Then the sequence
$(\pi(z_{n,m}))_{m=1}^\infty$ converges to $\pi(z_n)$. It can be shown that
the set $W=\{x_\infty,\pi(z_n),\pi(z_{n,m})\colon\, n,m\in\IN\}$ is
closed in $sX$. We claim that $W$ is sequentially homeomorphic either to
the Fr\'echet--Urysohn fan or to the Arens fan.

Indeed, if $\pi(z_n)=x_\infty$ for all $n$, we define a
homeomorphism $h\colon\, S_\omega\to W$ from the Fr\'echet--Urysohn fan
letting $h(*)=x_\infty$ and
$h(2^{-m},n)=\pi(z_{n,m})$ for $(2^{-m},n)\in S_\w\setminus\{*\}$. Here $*=\{0\}\times \w$ is the non-isolated point of the fan $S_\w=\{2^{-m}:m\le\w\}\times\w/\{0\}\times\w$. If $\pi(z_n)\ne
\pi(z_m)\ne x_\infty$ for all $n\ne m$, we define a  homeomorphism
$h\colon\, S_2\to W$ from the Arens fan
letting $h(0,0)=x_\infty$, $h(\frac1n,0)=\pi(z_n)$, and
$h(\frac1n,\frac1m)=\pi(z_{n,m})$ for all $n,m\in\IN$.

Since the identity map $i:sX\to X$ is a sequential homeomorphism, the image $i(W)$ is a sequentially closed subset of $X$, sequentially homeomorphic to the Fr\'echet-Urysohn or Arens' fans.
\smallskip

3. If $X$ is $k$-metrizable, then it is $k^*$-metrizable and $\cs$-metrizable. By the preceding item, $X$ contains no subspace sequentially homeomorphic to the Fr\'echet-Urysohn or Arens' fan. Now assume conversely that $X$ is a $k^*$-space containing no sequentially closed subspace homeomorphic to the Fr\'echet-Urysohn or Arens' fan. By the preceding item, $X$ is $\cs$-metrizable. Being a $k^*$-space, $X$ is the image of a metrizable space under a subproper map. Consequently all compact subsets of $X$ are metrizable and hence sequentially compact. Now Proposition~\ref{n4.2} implies that $X$ is $k$-metrizable.
\smallskip

4. The last item trivially follows from the fact that the (sequential) $k$-coreflexion of $X$ coincides with $X$ if $X$ is a (sequential) $k$-space.
\end{proof}

Next, we establish the interplay between $k^*$- and $\cs^*$-metrizable spaces and  generalized metric spaces defined with help of $k$- or $\wcs^*$-networks (like $\aleph_0$- or $\aleph$-spaces).
 $\aleph_0$-Spaces were introduced by E.Michael
\cite{Mi} as regular spaces with countable $k$-network. He
also proved that a regular space $X$ is an $\aleph_0$-space if and
only if $X$ is the image of a metrizable separable space under a
compact-covering map. We are going to show that a bit more is
true: each regular $\aleph_0$-space is the image of a metrizable
separable space under a subproper map.  A version of the following theorem was proved by Chuan Liu and Y.Tanaka in \cite{LiTa2}.

\begin{theorem}\label{n8.2} For a regular topological space $X$ the
following conditions are equivalent:
\begin{enumerate}
\item $X$ is an $\aleph_0$-space;
\item $X$ is the image of a metrizable separable space
$M$ under a subproper map $\pi:M\to X$;
\item $X$ is a $k^*$-metrizable space with countable network;
\item $X$ is a $\cs^*$-metrizable space with countable network;
\item the $k$-coreflexion $kX$ of $X$ is a $k^*$-metrizable
space with countable extent;
\item the sequential coreflexion $sX$ of $X$ is a
$\cs^*$-metrizable space with countable extent.
\item $kX$ is the image of a metrizable separable space $M$ under a quotient map $f:M\to kX$.
\end{enumerate}
If $sX$ is regular and (CH) holds, then the conditions (1)--(6) are equivalent to:
\begin{enumerate}
\item[(8)] $sX$ is separable and $\cs^*$-metrizable.
\end{enumerate}
\end{theorem}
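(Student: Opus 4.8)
The plan is to cluster the conditions around the block $\{(1),(2),(5)\}$ and to prove the cycle $(1)\Ra(2)\Ra(3)\Ra(4)\Ra(5)\Ra(1)$ together with the side-loops $(1)\Ra(6)\Ra(4)$ and $(2)\Ra(7)\Ra(5)$, and finally $(1)\Ra(8)\Ra(4)$ under (CH). Once the single hard step $(1)\Ra(2)$ is established, every other implication is a short consequence of the cardinal machinery of Section~\ref{s5}, so I would concentrate the work there.

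For $(1)\Ra(2)$ I would start from the fact that a regular $\aleph_0$-space has a countable $k$-network which, being regular (hence sequentially regular), may be taken \closed\ and, by Proposition~\ref{n7.1}, closed under finite intersections; call it $\mathcal N$. I would then run the construction of the implication $(3)\Ra(2)$ in Theorem~\ref{n7.3} and observe that the section $s\colon X\to\Nw(X)$ it produces sends each $x$ to a decreasing sequence $(A_k)$ with every $A_k\in\mathcal N$, so that $s(X)\subseteq\mathcal N^\w$. Since $\mathcal N$ is countable, $\mathcal N^\w$ (a countable discrete set raised to the power $\w$) is separable metrizable, and the accumulation points constructed in that proof again lie in $\mathcal N^\w\cap\Nw(X)$. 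Hence $M:=\Nw(X)\cap\mathcal N^\w$ is separable metrizable, $\lim|M\colon M\to X$ is continuous and surjective (as $s(X)\subseteq M$), and $s$ is a $\cs^*$-continuous section into $M$. Because $M$ is metrizable (so $\mu$-complete) and all compact subsets of the cosmic space $X$ are metrizable and so sequentially compact, Propositions~\ref{n1.3} and \ref{n4.2} make $s$ precompact-preserving and $\lim|M$ subproper. The point needing care is the verification that limits of convergent sequences from $s(X)$ remain inside $M$, exactly as in Theorem~\ref{n7.3}.

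The remaining implications are quick. $(2)\Ra(3)$: a continuous image of a separable metrizable space has countable network, and a subproper image of a metrizable space is $k^*$-metrizable by definition; $(3)\Ra(4)$ is trivial. For $(4)\Ra(5)$ I would first note that in the cosmic space $X$ each compact set has a countable network, hence is metrizable and sequentially compact, so Proposition~\ref{n4.2} upgrades $\cs^*$-metrizability to $k^*$-metrizability and $kX$ is $k^*$-metrizable by Proposition~\ref{n3.3}; moreover a regular cosmic space is a continuous image of a separable metrizable space, so a countable dense set of the domain maps onto a sequentially dense set and $d_1(X)=\aleph_0$. Then Theorem~\ref{n5.3} and Corollary~\ref{nn5.11} give $\ext(kX)=knw(kX)=d_1(X)=\aleph_0$, which is (5). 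Conversely $(5)\Ra(1)$: Theorem~\ref{n5.3} yields $knw(kX)=\ext(kX)=\aleph_0$, and since a $k$-network for $kX$ is one for $X$ we get $knw(X)\le knw(kX)=\aleph_0$, so the regular space $X$ is an $\aleph_0$-space. The loop through (6) is identical with $sX$ in place of $kX$: $(1)\Ra(6)$ uses $\ext(sX)=knw(sX)=d_1(sX)=d_1(X)=\aleph_0$ (Proposition~\ref{n5.1}, Corollary~\ref{nn5.11}, Theorem~\ref{n5.3}) together with Theorem~\ref{n4.3}, while $(6)\Ra(4)$ reads off $nw(X)\le nw(sX)=\ext(sX)=\aleph_0$ and transfers $\cs^*$-metrizability from $sX$ to $X$ by Theorem~\ref{n4.3}.

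For (7) I would prove $(2)\Ra(7)$ by noting that a subproper map is compact-covering and that any compact-covering map onto the $k$-space $kX$ is quotient (test closedness on each compactum, where the restriction is a closed map of compacta). The reverse $(7)\Ra(5)$ is the second genuinely hard point: from $q\colon M\to kX$ quotient with $M$ separable metrizable we get $nw(kX)\le\aleph_0$ and, by Franklin's theorem, $kX$ sequential, and the crux is to show $kX$ is $\cs^*$-metrizable. I would do this by verifying that $q$ is sequentially quotient — a convergent sequence in $kX$ lifts, along a subsequence, to a convergent sequence in $M$, using only that $q^{-1}$ of a non-closed set is non-closed and that $M$ is metric — and then checking that the images of a countable base of $M$ form a countable \closed\ $\wcs^*$-network, so that Theorem~\ref{n7.3} gives $\cs^*$-metrizability and (5) follows as before. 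Finally, $(1)\Ra(8)$ needs no extra hypothesis: $sX$ is $\cs^*$-metrizable (Theorem~\ref{n4.3}) and $d(sX)=d_{\w_1}(X)\le d_1(X)=\aleph_0$; and $(8)\Ra(4)$ is where (CH) and the regularity of $sX$ enter, through Corollary~\ref{nn5.9}, which gives $knw(sX)=d(sX)=\aleph_0$ for the regular separable $k^*$-metrizable $k$-space $sX$, hence $nw(X)\le nw(sX)=\aleph_0$ with $X$ still $\cs^*$-metrizable. I expect the two lifting/bookkeeping arguments — inside $(1)\Ra(2)$ and inside $(7)\Ra(5)$ — to be the only real obstacles, everything else being cardinal arithmetic resting on Theorem~\ref{n5.3}.
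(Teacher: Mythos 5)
Most of your plan is sound, and in several places it is genuinely different from the paper's proof, which runs the single cycle $(1)\Ra(3)\Ra(4)\Ra(5)\Ra(6)\Ra(2)\Ra(7)\Ra(1)\Leftrightarrow(8)$. In particular your direct proof of $(1)\Ra(2)$ is correct and is a nice shortcut: once the countable \closed\ $k$-network $\mathcal N$ is closed under finite intersections, the section built in the proof of Theorem~\ref{n7.3} (and the accumulation points it produces) take values in $\Nw(X)\cap\mathcal N^\w$, which is separable metrizable, and Propositions~\ref{n1.3} and \ref{n4.2} finish the job. Your closings $(5)\Ra(1)$ and $(6)\Ra(4)$ via the cardinal identities of Theorem~\ref{n5.3} and Corollary~\ref{nn5.11} are also valid (the paper instead routes $(5)$ and $(6)$ onward to $(2)$), and $(4)\Ra(5)$, $(2)\Ra(7)$ and the treatment of $(8)$ agree in substance with the paper.

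The one genuine gap is in $(7)\Ra(5)$. Your lifting claim is fine: a quotient map $q:M\to kX$ with $M$ metrizable is sequentially quotient, and this does show that the images $q(B)$ of a countable base $\mathcal B$ of $M$ form a countable $\wcs^*$-network for $kX$. But Theorem~\ref{n7.3} requires a \closed\ $\wcs^*$-network, i.e.\ $\cl_1(q(B))\subset U$, and choosing $B$ with $\overline B\subset q^{-1}(U)$ does not give this: a sequence $(q(b_m))$ with $b_m\in B$ can converge in $kX$ to a point outside $U$ whenever $(b_m)$ is closed and discrete in $M$, because the convergent lift supplied by sequential quotientness sits in other fibres and need not meet $\overline B$. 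For a sequentially regular space this defect is harmless (one shrinks $U$ first, which is exactly why the paper can ignore the issue for $X$ itself), but you are building the network for $kX$, whose topology is finer than that of $X$ and need not be (sequentially) regular — and the \closed\ condition is precisely what Theorem~\ref{n7.3} uses in the non-regular case. The paper sidesteps this by proving $(7)\Ra(1)$ instead: Michael's argument shows directly that $\{q(B):B\in\mathcal B\}$ is a countable $k$-network for the regular space $X$, which is condition (1). Your step can be repaired the same way: either prove $(7)\Ra(1)$ directly, or observe that $q(\mathcal B)$ is a $\wcs^*$-network for $X$ (hence \closed\ by the regularity of $X$), conclude that $X$ is $\cs^*$-metrizable with countable network, and close the loop through $(4)$ rather than through $(5)$.
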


\begin{proof} We shall prove the implications $(1)\Ra(3)\Ra(4)\Ra(5)\Ra(6)\Ra(2)\Ra(7)\Ra(1)\Leftrightarrow(8)$.
\smallskip

The implication $(1)\Ra(3)$ follows from Theorem~\ref{n7.4} while $(3)\Ra(4)$ is trivial.
\smallskip

$(4)\Ra(5)$ If $X$ is $\cs^*$-metrizable space with countable network, then each compact subset of $X$ has countable network and hence is metrizable. By Proposition~\ref{n4.2}, the space $X$ is $k^*$-metrizable and so is the $k$-coreflexion $kX$ of $X$, see Proposition~\ref{n3.3}.
The space $X$ has  countable network, and hence  is the image
of a separable metrizable space $Z$ under  a continuous map
$g\colon\, Z\to X$, see \cite[4.9]{Gru}. Observe that the map $g:Z\to kX$ is continuous. Consequently, the space $kX$ has countable network and cannot contain uncountable closed discrete subset.
\smallskip

$(5)\Ra(6)$ Assume that the $k$-coreflexion $kX$ of $X$ is a $k^*$-metrizable space with countable extent. Then each compact subset of $kX$ is metrizable and hence the space $kX$ is sequential and coincides with the sequential coreflexion $sX$ of $X$.
Then $sX=kX$ is a $\cs^*$-metrizable space with countable extent.
\smallskip

$(6)\Ra(2)$ Assume that $sX$ is a $\cs^*$-metrizable space with countable extent. By 
Theorem~\ref{n4.3}, the space $X$ is $\cs^*$-metrizable. So we can take  a map $\pi:M\to X$ from a metrizable space $M$ that has a $\cs^*$-continuous section $\sigma:X\to M$ with dense image $s(X)$ in $M$. We may assume that $\sigma(X)$ is dense in $M$. We claim that the space $M$ is separable. Otherwise, we could find an uncountable subset $D\subset \sigma(X)$ that is closed and discrete in $X$. Then the set $\pi(D)$ has finite intersection with each convergent sequence in $sX$ and consequently, $\pi(D)$ is an uncountable closed discrete subset in $sX$, which is not possible as $sX$ has countable extent. Therefore, the space $M$ is separable and consequently, each compact subset of $X$ has countable network and is metrizable. By Proposition~\ref{n1.3}, the map $\pi:M\to X$ is subproper.
\smallskip

$(2)\Ra(7)$ If $\pi:M\to X$ is a subproper map from a metrizable separable space, then $\pi$, considered as a map from $M$ to $kX$ is compact-covering and thus quotient.

$(7)\Ra(1)$ This is due to E.Michael \cite{Mi}. Let $f:M\to kX$ be a quotient map of a metrizable separable space $M$ and let $\mathcal B$ be a countable base of the topology of $M$. We claim that $f(\mathcal B)=\{f(B):B\in\mathcal B\}$ is a $k$-network for $X$. Take any open set $U\subset X$ and a compact subset $K\subset U$. Let $\{C_i:i\in\w\}$ be an enumeration of the family $\C=\{f(B):B\in\mathcal B,\; f(B)\subset U\}$. We claim that $K\subset C_1\cup\dots\cup C_n$ for some $n\in\w$. Assuming the converse, we can find a sequence $(x_n)$ in $K$ such that $x_n\notin C_1\cup\dots\cup C_n$ for all $n\in\w$. The compactum $K$, being the image of a separable metrizable space, has countable network and hence is metrizable. Then, passing to a subsequence, we may assume that $(x_n)$ converges to some point $x\in K$ such that $x\ne x_n$ for all $n\in\w$. Since $f:M\to kX$ is quotient and $A=\{x_n:n\in\w\}$ is not closed in $kX$, the set $\pi^{-1}(A)$ is not closed in $M$. Let $z\in \overline{\pi^{-1}(A)}\setminus \pi^{-1}(A)$. Then $z\in \pi^{-1}(K)\subset \pi^{-1}(U)$. Choose $B\in\mathcal B$ with $z\in B\subset \pi^{-1}(U)$. Then $f(B)\in\C$ and $f(B)$ contains infinitely many $x_n$'s.  This contradicts the way the $x_n$'s were chosen.
\smallskip

The equivalence $(1)\Leftrightarrow(8)$ follows from Corollary~\ref{nn5.9}.
\end{proof}

\begin{remark}
{\rm
The countably compact
space $X\subset\beta\IN$ from Remark~\ref{frolik} is a separable
$k$-metrizable space without countable network (and thus $X$ is not an
$\aleph_0$-space). Another example of this sort is the Lindel\"ofication $\{\infty\}\cup D$ of an uncountable discrete space $D$ which is Lindel\"of and $k$-metrizable but has no countable network. These examples show that
the network weight $nw(X)$ in Theorem~\ref{n8.2}(3) cannot be replaced by the density $d(X)$ or Lindel\"of number $l(X)$. We do not know if this can be done for the hereditary density or hereditary Lindel\"of number.
}\end{remark}

\begin{question} Has a regular $k^*$-metrizable space $X$
countable network if it is hereditarily separable and hereditarily Lindel\"of?
\end{question}

$\aleph_0$-Spaces form a subclass in a more general class of
$\aleph$-spaces introduced by  O'Meara \cite{OM}. We recall that
a regular space $X$ is defined to be an {\em $\aleph$-space} if it
has a $\sigma$-locally-finite $k$-network, see \cite{Lin90}. Since
each locally-finite family is compact-finite, we may apply
Theorem~\ref{n7.4} to conclude that the class of
$k^*$-metrizable spaces contains all $\aleph$-spaces.

\begin{corollary}\label{n8.5} Each $\aleph$-space is $k^*$-metrizable.
\end{corollary}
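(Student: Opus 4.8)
The plan is to deduce the statement directly from the $k$-network characterization of $k^*$-metrizable spaces furnished by Theorem~\ref{n7.4}, so that almost no new work is required. Let $X$ be an $\aleph$-space. By definition $X$ is regular and carries a $\sigma$-locally-finite $k$-network $\mathcal N=\bigcup_{n\in\w}\mathcal N_n$, where each subfamily $\mathcal N_n$ is locally finite in $X$. My goal is to recognize this network as one of the kind demanded by condition (6) of Theorem~\ref{n7.4}.

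The one substantive observation is that local finiteness upgrades to compact finiteness for free: as recorded just after the definition of compact-finite families, every locally finite collection of subsets of $X$ is compact-finite, because each compact set is covered by finitely many open sets, and each such open set meets only finitely many members of a locally finite family, so the compact set meets only finitely many. Hence each $\mathcal N_n$ is compact-finite, and therefore the union $\mathcal N=\bigcup_{n\in\w}\mathcal N_n$ is a $\sigma$-compact-finite $k$-network for $X$.

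Next I would supply the regularity hypothesis needed to activate condition (6). Since $X$ is regular it is in particular sequentially regular, because each regular space is sequentially regular. Theorem~\ref{n7.4} then asserts that, for a sequentially regular space, possession of a $\sigma$-compact-finite $k$-network (condition (6)) is equivalent to $k^*$-metrizability (condition (1)). Applying the implication $(6)\Ra(1)$ to the network $\mathcal N$ constructed above yields at once that $X$ is $k^*$-metrizable, which is the assertion of the corollary.

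I expect no genuine obstacle here: all of the real difficulty has already been absorbed into the proof of Theorem~\ref{n7.4}, which passes through the Network Hyperspace and Proposition~\ref{n7.2}. The corollary is immediate once the $\sigma$-locally-finite $k$-network is reinterpreted as a $\sigma$-compact-finite one, and it is worth recording mainly because it places the entire class of $\aleph$-spaces inside the class of $k^*$-metrizable spaces.
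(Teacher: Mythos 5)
Your proof is correct and follows exactly the route the paper itself takes: observe that a locally finite family is compact-finite, so the $\sigma$-locally-finite $k$-network of an $\aleph$-space is a $\sigma$-compact-finite $k$-network, and then invoke condition (6) of Theorem~\ref{n7.4} for the (sequentially) regular space $X$. Nothing further is needed.
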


It is interesting to mention that $\aleph$- and $k$-spaces can be
characterized as regular $k$-spaces possessing a
$\sigma$-compact-finite \underline{closed} $k$-network. Such a
characterization holds because each compact-finite family of
closed subsets of a $k$-space is locally-finite. In spite of the
similarity in characterizations of $k^*$-metrizable and
$\aleph$-spaces, there are $k^*$-metrizable spaces which are
not $\aleph$-spaces. The simplest example is the uncountable
sequential fan $S_{\w_1}$ which is a regular $k^*$-metrizable
$k$-space that fails to be an $\aleph$-space. 
The following theorem helps to detect $\aleph$-spaces among $k^*$-metrizable spaces with small character.

\begin{proposition} A regular paracompact $k^*$-metrizable $k$-space $X$ with $\ext_\chi(X)\le\aleph_0$ is an $\aleph$-space.
\end{proposition}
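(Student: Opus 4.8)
The plan is to begin from the $\sigma$-compact-finite \closed\ $k$-network provided by Theorem~\ref{n7.4} and to upgrade it to a $\sigma$-locally-finite one, the upgrade being powered by paracompactness and controlled by the hypothesis $\ext_\chi(X)\le\aleph_0$. Concretely, I would fix a subproper map $\pi\colon M\to X$ from a metrizable space, a precompact-preserving section $s\colon X\to M$ of dense image, and a $\sigma$-locally-finite base $\mathcal B=\bigcup_{n\in\w}\mathcal B_n$ of $M$, and then form the families $\mathcal N_n=\{s^{-1}(B)\colon B\in\mathcal B_n\}$. Exactly as in the proof of Theorem~\ref{n7.3}, $\mathcal N=\bigcup_n\mathcal N_n$ is a \closed\ $\wcs^*$-network; and since $X$ is a $k^*$-metrizable $k$-space all its compacta are metrizable, so $X$ is sequential and $\mathcal N$ is a $\sigma$-compact-finite \closed\ $k$-network for $X$.

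Two structural facts about each $\mathcal N_n$ will drive the argument. First, compact-finiteness together with the $k$-space property forces every transversal of every subfamily of $\mathcal N_n$ to be closed and discrete: were some transversal $P$ not closed and discrete, then a suitable non-closed subset of $P$ would, $X$ being a $k$-space, meet some compact set $K$ in an infinite set, and hence $K$ would meet infinitely many members of $\mathcal N_n$, contradicting compact-finiteness. Second, the cardinal hypothesis yields local countability. Viewing each locally finite open family $\mathcal B_n$ (enlarged by $M$) as a point-finite open cover of $M$, Proposition~\ref{nn5.5} shows that every $x\in X$ has a neighbourhood $O_x$ with $s(O_x)$ meeting fewer than $\ext_\chi^+(x;X)\le\aleph_1$ of the sets $B\in\mathcal B_n$; thus each $\mathcal N_n$ is locally countable in $X$. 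Equivalently, by Theorem~\ref{n6.1} the inequality $\ext_\chi(X)\le\aleph_0$ rules out a closed copy of the fan $S_{\w_1}$, and it is precisely such a fan --- at whose vertex every neighbourhood meets all $\w_1$ rays --- that would destroy local countability.

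It then remains to prove the decomposition lemma: \emph{in a paracompact space, a locally countable family all of whose transversals are closed and discrete is $\sigma$-locally finite.} Granting it and applying it to each $\mathcal N_n$, the union $\bigcup_n\mathcal N_n$ is a $\sigma$-locally-finite $k$-network, so the regular space $X$ is an $\aleph$-space. To prove the lemma I would use paracompactness to choose a locally finite open refinement $\{V_j\}_{j\in J}$ of the cover of $X$ by open sets meeting only countably many members, enumerate the members met by each $V_j$, and try to colour the members by $\w$ so that every colour class is locally finite. The main obstacle, and the step I expect to be hardest, is that a single member may meet sets $V_j$ lying far apart, so the naive colouring by the index in the first $V_j$ that a member meets need not produce locally finite classes. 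I expect to overcome this by exploiting the closed-and-discrete transversal property in tandem with the collective normality of the paracompact space $X$: closed-discrete transversals can be separated by discrete open families, while local countability confines the separation to countably many stages, the absence of a closed $S_{\w_1}$ guaranteeing that no point forces an uncountable obstruction. Carrying out this colouring rigorously is where the real work lies.
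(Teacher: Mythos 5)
Your preparatory material is sound and runs parallel to the paper's own proof: the paper likewise fixes a subproper $\pi\colon M\to X$ with a precompact-preserving section $s$ of dense image, takes locally finite open covers $\U_n$ of $M$, and uses Proposition~\ref{nn5.5} together with $\ext_\chi(X)\le\aleph_0$ to get, for each $x$, a neighborhood $O_x$ with $s(O_x)$ meeting only countably many $U\in\U_n$. The genuine gap is your final step. The ``decomposition lemma'' on which everything hinges is exactly the part you do not prove, and as you state it -- \emph{in a paracompact space, a locally countable family all of whose transversals are closed and discrete is $\sigma$-locally finite} -- it is false. Take $X=\w_1$ with the discrete topology and the family of tails $\{(\alpha,\w_1):\alpha<\w_1\}$: the space is paracompact, every point lies in only countably many tails (so the family is locally countable, singletons being open), and every subset of $X$ is closed and discrete, so every transversal is; yet the family is not $\sigma$-point-finite (any uncountable subfamily $\{(\alpha,\w_1):\alpha\in S\}$ has a point $\beta>\sup$ of the first $\w$ elements of $S$ lying in infinitely many of its members), hence not $\sigma$-locally finite. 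Your intended application also carries compact-finiteness of each $\mathcal N_n$, which this example lacks, but you neither add that hypothesis to the lemma nor supply a proof with it, so the argument as written does not close.

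The missing idea -- and the route the paper takes -- is that one should not try to colour the members of $\mathcal N_n$ while keeping them intact; one should \emph{localize} them. From paracompactness take a $\sigma$-discrete open refinement $\V_n=\bigcup_{m\in\w}\V_{n,m}$ of the cover $\{O_x:x\in X\}$ witnessing local countability. For each $V\in\V_{n,m}$ the image $s(V)$ meets only countably many $U\in\U_n$; enumerate these as $U_{V,0},U_{V,1},\dots$ and form the pieces $N_{V,k}=V\cap s^{-1}(U_{V,k})$. For fixed $(n,m,k)$ the family $\{N_{V,k}:V\in\V_{n,m}\}$ is discrete simply because each member sits inside its own $V$ and $\V_{n,m}$ is discrete -- no transversal analysis is needed. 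The union over all $(n,m,k)$ remains a $k$-network: a compact $K$ contained in an open $U$ meets only finitely many $V$ from each $\V_{n,m}$ and is covered by finitely many sets $s^{-1}(U')\subset U$ with $U'\in\U_n$, so finitely many pieces $V\cap s^{-1}(U')\subset U$ cover $K$. This yields a $\sigma$-discrete $k$-network outright, and regularity then gives that $X$ is an $\aleph$-space. In short: the enumeration is done \emph{per element of the discrete refinement}, and the members are cut down to live inside those elements; that is what converts ``locally countable'' into ``$\sigma$-discrete'' here, not a colouring of the original family.
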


\begin{proof}  Fix a subproper map $\pi:M\to X$ of a metric space $(M,d)$ possessing a precompact-preserving section $s:X\to M$ with dense image $s(X)$ in $M$. For every $n\in\w$ fix a locally finite open cover $\U_n$ of the metric space $M$ by sets of diameter $<2^{-n}$. By Proposition~\ref{nn5.6},  each point $x\in X$ has a neighborhood $O_n(x)$ meeting at most countably many sets $s^{-1}(U)$, $U\in\U_n$. The paracompactness of $X$ yields a $\sigma$-discrete open cover $\V_n$ of $X$ refining the cover $\{O_n(x):x\in X\}$. Write $\V_n=\bigcup_{m\in\w}\V_{n,m}$, where each collection $V_{n,m}$ is discrete in $M$. For every $V\in \V_{n,m}$ the image $s(V)$ lies in the union of a countable subcollection $\U_V\subset\U_n$. Enumerate this subcollection as $\U_V=\{U_{V,k}:k<|\U_V|\}$ and put $U_{V,k}=\emptyset$ for $k\ge|\U_V|$. Finally let 
$$\mathcal N_{n,m,k}=\{s^{-1}(U_{V,k}): V\in\V_{n,m}\}$$
and observe that the family $\mathcal N_{n,m,k}$ is discrete in $X$.
On the other hand, it can be easily shown that the union $\mathcal N=\bigcup_{n,m,k\in\w}\mathcal N_{n,m,k}$ is a $k$-network for $X$. Thus $X$ admits a $\sigma$-discrete $k$-network and hence $X$ is an $\aleph$-space.
\end{proof}

\begin{problem} Characterize $\aleph$-spaces within the class of
$k^*$-metrizable spaces. In particular, is a regular
$k^*$-metrizable $k$-space $X$ an $\aleph$-space if  $\alpha_4(X)\le\aleph_0$? (The answer is affirmative if $X$ carries a topological group structure, see Theorem~\ref{n14.2}).
\end{problem}

Next we investigate the interplay between $k^*$-metrizable and semi-stratifiable spaces.
We recall that a regular space $X$ is {\em semi-stratifiable} if there is a function $G$ which assigns to each $n\in\w$ and a closed subset $F\subset X$, an open set $G(n,F)$ containing $F$ such that
\begin{itemize}
\item[(a)] $F=\bigcap_n {G(n,F)}$;
\item[(b)] $F\subset K\;\Ra\; G(n,F)\subset G(n,K)$.
\end{itemize}
If also
\begin{itemize}
\item[(a')] $F=\bigcap_n\overline{G(n,F)}$,
\end{itemize}
then $X$ is called {\em stratifiable}, see \cite[\S5]{Gru}.
\smallskip

By Theorem~\cite[5.9]{Gru}, each regular space with countable network is semi-stratifiable. In particular, each $\aleph_0$-space is semi-stratifiable.
However, an $\aleph_0$-space need not be stratifiable.
 The simplest example is the Banach space $(l^1,\mathrm{weak})$ endowed with the weak topology, which is not stratifiable according to \cite{Ga}. Another example is the space $C_k(\IQ)$ of continuous functions on rationals, endowed with the compact-open topology. By a recent result of P.Nyikos \cite{Nyi}, $C_k(\IQ)$ fails to be a stratifiable space but is an $\aleph_0$-space according to a classical result of E.Michael \cite{Mi}. Also $k^*$-metrizable spaces need not be semi-stratifiable.

\begin{example} The Frolik's space $X$ from Example~\ref{frolik}, being separable and not Lindel\"of, fails to be subparacompact and hence is not semi-stratifiable, see \cite[5.11]{Gru}. Nonetheless, $X$ is a $k$-metrizable space.
\end{example}

In the meantime, $\cs^*$-metrizable spaces are semi-stratifiable in a weaker sequential sense. We shall say that a subset $U$ of a space $X$ is a {\em sequential barrier} for a set $F\subset X$ if for any sequence $(x_n)\subset X$ convergent to a point $x_\infty\in F$ there is $m\in\w$ such that $x_n\in U$ for all $n\ge m$. It is clear that each open set $U$ is a sequential barrier for any set $F\subset U$. The converse is true in Fr\'echet-Urysohn spaces.

\begin{theorem}\label{n8.8} For each  $\cs^*$-metrizable space $X$ there is a function $G$ which assigns to each $n\in\w$ and a closed subset $F\subset X$, a  sequential barrier $G(n,F)$ for $F$ such that
\begin{itemize}
\item $F=\bigcap_n{G(n,F)}$;
\item $F\subset K\;\Ra\; G(n,F)\subset G(n,K)$.
\end{itemize}
\end{theorem}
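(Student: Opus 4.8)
The plan is to pull back the canonical metric semistratification of the covering space along the section. Since $X$ is $\cs^*$-metrizable, I would fix a metrizable space $M$ with a metric $d$, a continuous surjection $\pi\colon M\to X$, and a $\cs^*$-continuous section $s\colon X\to M$ of $\pi$. For a closed set $F\subset X$ the fibrewise preimage $\pi^{-1}(F)$ is closed in $M$ by continuity of $\pi$. Writing $V_n(A)=\{y\in M:\dist(y,A)<1/n\}$ for the open $1/n$-neighborhood of a set $A\subset M$, I would simply set
$$G(n,F)=s^{-1}\big(V_n(\pi^{-1}(F))\big)\qquad(n\in\w).$$

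The two algebraic requirements are then immediate. If $F\subset K$ are closed, then $\pi^{-1}(F)\subset\pi^{-1}(K)$, so $\dist(y,\pi^{-1}(K))\le\dist(y,\pi^{-1}(F))$ for every $y\in M$; hence $V_n(\pi^{-1}(F))\subset V_n(\pi^{-1}(K))$ and $G(n,F)\subset G(n,K)$. For the intersection, $x\in F$ gives $s(x)\in\pi^{-1}(\{x\})\subset\pi^{-1}(F)\subset V_n(\pi^{-1}(F))$, so $F\subset\bigcap_n G(n,F)$; conversely, if $x\in\bigcap_n G(n,F)$ then $\dist(s(x),\pi^{-1}(F))<1/n$ for all $n$, whence $s(x)\in\pi^{-1}(F)$ because $\pi^{-1}(F)$ is closed, and therefore $x=\pi(s(x))\in F$. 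Thus $F=\bigcap_n G(n,F)$.

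The only substantial point — and the step where $\cs^*$-continuity is really used — is to verify that each $G(n,F)$ is a sequential barrier for $F$. Let $(x_k)_{k\in\w}$ be a sequence in $X$ converging to some $x_\infty\in F$, and suppose toward a contradiction that the set $J=\{k\in\w:x_k\notin G(n,F)\}$ is infinite, i.e. $\dist(s(x_k),\pi^{-1}(F))\ge 1/n$ for all $k\in J$. Applying the $\cs^*$-continuity of $s$ to the convergent subsequence $(x_k)_{k\in J}$, the sequence $(s(x_k))_{k\in J}$ has an accumulation point $z\in M$; since $M$ is metrizable, some sub-subsequence $s(x_{k_j})$ converges to $z$. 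On one hand $\pi(s(x_{k_j}))=x_{k_j}\to x_\infty$, so continuity of $\pi$ together with the Hausdorff property forces $\pi(z)=x_\infty\in F$, whence $z\in\pi^{-1}(F)$ and $\dist(z,\pi^{-1}(F))=0$. On the other hand the map $y\mapsto\dist(y,\pi^{-1}(F))$ is $1$-Lipschitz, hence continuous, so from $\dist(s(x_{k_j}),\pi^{-1}(F))\ge 1/n$ we obtain $\dist(z,\pi^{-1}(F))\ge 1/n$, a contradiction. Therefore $J$ is finite and $x_k\in G(n,F)$ for all sufficiently large $k$.

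The anticipated obstacle is precisely this last paragraph: because $s$ is merely $\cs^*$-continuous, the images $s(x_k)$ of a convergent sequence need not converge (nor even be bounded), so one cannot hope for $s(x_k)\to s(x_\infty)$. The saving observation is that \emph{every} accumulation point of $(s(x_k))$ must project to $x_\infty$ and hence lie in $\pi^{-1}(F)$; choosing the target set to be the full fibrewise preimage $\pi^{-1}(F)$ rather than, say, $\overline{s(F)}$ is exactly what makes the argument close. Note that this construction uses neither regularity of $X$ nor sequential compactness of its compacta, so it applies to every $\cs^*$-metrizable space.
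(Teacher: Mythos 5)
Your proof is correct and takes essentially the same route as the paper: there one pulls back a stratification operator $G_M$ of the metrizable space $M$ via $G(n,F)=s^{-1}\big(G_M(n,\pi^{-1}(F))\big)$, and your $1/n$-neighborhoods $V_n(\pi^{-1}(F))$ are precisely the canonical such operator for a metric. The key verification of the sequential barrier property — using $\cs^*$-continuity to produce an accumulation point of $(s(x_k))$ and observing that it must project to $x_\infty$ and hence lie in $\pi^{-1}(F)$ — is the same argument as in the paper, with the Lipschitz continuity of $\dist(\cdot,\pi^{-1}(F))$ playing the role of the openness of $G_M(n,\pi^{-1}(F))$.
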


\begin{proof} Given a $\cs^*$-metrizable space $X$, find a map $\pi:M\to X$ of a metrizable space $M$ having a $\cs^*$-continuous section $s:X\to M$. We may additionally assume that the set $Z=s(X)$ is dense in $M$.

The space $M$, being metrizable, is stratifiable and hence has a  function $G_M$ assigning to each
 $n\in\w$ and a closed subset $F\subset X$, an open set $G_M(n,F)$ containing $F$ and such that
\begin{itemize}
\item $F=\bigcap_n \overline{G_M(n,F)}$;
\item $F\subset K\;\Ra\; G_M(n,F)\subset G_M(n,K)$.
\end{itemize}
Using the stratifying function $G_M$ define a function $G$ letting $$G(n,F)=s^{-1}( G_M(n,\pi^{-1}(F)\big),$$ where $n\in\w$ and $F$ is a closed subset of $X$. We claim that $G(n,F)$ is a sequential barrier at $F$. Indeed, assume that there is a  sequence $(x_n)\in X\setminus G(n,F)$ convergent to a point $x_\infty\in F$. The $\cs^*$-continuity of the function $s$ implies the existence of a limit point $z_\infty$ for the sequence $(s(x_n))$. Then $\pi(z_\infty)=x_\infty$ by the continuity of $\pi$.
Since $G_M(n,\pi^{-1}(F))$ is an open neighborhood of $z_\infty$, infinitely many points of the sequence $(s(x_n))$ belong to $G_M(n,\pi^{-1}(F))$ and then infinitely many points of the sequence $(x_n)$ belong to the set $G(n,F)=s^{-1}(G_M(n,\pi^{-1}(F))$ which contradicts the choice of the sequence $(x_n)$.

In obvious way the two properties of the function $G_M$  imply the corresponding two properties of the function $G$ indicated in Theorem~\ref{n8.8}.
\end{proof}

Also $\cs^*$-spaces have a property
close to the monotonical normality. We recall that a topological space $X$
is called {\em monotonically normal} if to each pair $(H,K)$ of disjoint
closed subsets of $X$ one can assign an open set $D(H,K)\subset X$ such
that
\begin{itemize}
\item $H\subset D(H,K)\subset\overline{D(H,K)}\subset X\setminus K$;
\item if $H\subset H'$ and $K\supset K'$, then $D(H,K)\subset
D(H',K')$.
\end{itemize}

The function $D$ is called a {\em monotone normality operator} for $X$,
see \cite[5.15]{Gru} or \cite{Collins}. Observe that for
such an operator $D$ we have
$$
\cl(D(H,K))\cap K=\emptyset\quad\hbox{and}\quad
\cl(X\setminus D(H,K))\cap H=\emptyset
$$
for any disjoint closed subsets $H,K$ of $X$.

According to a characterization
in \cite[5.22]{Gru}, a space $X$ is stratifiable if and only if the product
$X\times S_1$ of $X$ and a convergent sequence $S_1=\{2^{-n}:n\le\infty\}$ is monotonically normal.
\medskip

We define a topological space $X$ to be {\em monotonically sequentially
normal} if to each pair $(H,K)$ of disjoint closed subsets of $X$ one can
assign a subset $D(H,K)\subset X$ such that
\begin{itemize}
\item $\scl(D(H,K))\cap K=\emptyset$ and $\scl(X\setminus D(H,K))\cap
H=\emptyset$;
\item if $H\subset H'$ and $K\supset K'$, then $D(H,K)\subset
D(H',K')$.
\end{itemize}

Here $\scl(A)$ stands for the 1-st sequential closure of a subset
$A\subset X$. Since $\scl(A)=\cl(A)$
for every subset $A$ of a
Fr\'echet--Urysohn space, we see that each monotonically sequentially
normal Fr\'echet--Urysohn space is monotonically normal.

\begin{proposition}\label{monseq}
Each $\cs^*$-metrizable space is monotonically
sequentially normal.
\end{proposition}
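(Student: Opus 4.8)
The plan is to transport a monotone normality operator of the metrizable domain $M$ to $X$ along the $\cs^*$-continuous section. First I would fix a map $\pi:M\to X$ from a metrizable space $M$ together with a $\cs^*$-continuous section $s:X\to M$. Since $M$ is metrizable, it is stratifiable and hence monotonically normal; let $D_M$ be a monotone normality operator for $M$, so that for any disjoint closed subsets $A,B\subset M$ the set $D_M(A,B)$ is open, $\cl(D_M(A,B))\cap B=\emptyset$, $\cl(M\setminus D_M(A,B))\cap A=\emptyset$, and $D_M$ is monotone in the sense that $A\subset A'$, $B\supset B'$ imply $D_M(A,B)\subset D_M(A',B')$.

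Given a pair $(H,K)$ of disjoint closed subsets of $X$, I would set $\tilde H=\pi^{-1}(H)$ and $\tilde K=\pi^{-1}(K)$; these are disjoint closed subsets of $M$ by the continuity of $\pi$. Put $E(H,K)=D_M(\tilde H,\tilde K)$ and define
$$D(H,K)=s^{-1}\big(E(H,K)\big).$$
Monotonicity is immediate: if $H\subset H'$ and $K\supset K'$, then $\tilde H\subset\tilde H'$ and $\tilde K\supset\tilde K'$, whence $E(H,K)\subset E(H',K')$ and therefore $D(H,K)\subset D(H',K')$.

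It remains to verify the two sequential-closure conditions. To see that $\scl(D(H,K))\cap K=\emptyset$, I would suppose on the contrary that some sequence $(x_n)\subset D(H,K)$ converges to a point $x_\infty\in K$. Then $s(x_n)\in E(H,K)$ for all $n$, and by the $\cs^*$-continuity of $s$ the sequence $(s(x_n))$ has an accumulation point $z_\infty\in M$; since $M$ is metrizable, some subsequence $(s(x_{n_k}))$ converges to $z_\infty$. The continuity of $\pi$ gives $\pi(z_\infty)=\lim_k x_{n_k}=x_\infty\in K$, so $z_\infty\in\tilde K$; but $z_\infty\in\cl(E(H,K))$ as a limit of points of $E(H,K)$, contradicting $\cl(E(H,K))\cap\tilde K=\emptyset$. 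The condition $\scl(X\setminus D(H,K))\cap H=\emptyset$ is dual: using $X\setminus D(H,K)=s^{-1}(M\setminus E(H,K))$ and the closedness of $M\setminus E(H,K)$, the analogous argument produces an accumulation point $z_\infty\in\tilde H\cap\cl(M\setminus E(H,K))$, contradicting $\cl(M\setminus E(H,K))\cap\tilde H=\emptyset$.

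The only genuine subtlety — and the step I would treat most carefully — is the passage through $\cs^*$-continuity: it supplies merely an accumulation point of $(s(x_n))$, not a limit, so one must extract a convergent subsequence in the metric space $M$ and check that its $\pi$-image still converges to the original limit $x_\infty$. Everything else (disjointness and closedness of the preimages, monotonicity, and the stability of $\cl$ and of the closed set $M\setminus E(H,K)$ under subsequential limits) is routine.
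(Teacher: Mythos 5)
Your proof is correct and follows essentially the same route as the paper: both transport a monotone normality operator $D_M$ of the metrizable domain along the $\cs^*$-continuous section via $D(H,K)=s^{-1}\bigl(D_M(\pi^{-1}(H),\pi^{-1}(K))\bigr)$ and verify the sequential-closure conditions by extracting a convergent subsequence from the accumulation point supplied by $\cs^*$-continuity. Your write-up just makes explicit the subsequence argument that the paper leaves as ``it can be shown''.
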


\begin{proof} Given a $\cs^*$-metrizable space let $\pi:M\to X$ be a
continuous map from a metrizable space $M$ and let
$s\colon\, X\to M$ be a $\cs^*$-continuous section of
$\pi$. Without loss of generality, the image $Z=s(X)$ is dense
in~$M$. The metrizable space $M$ is monotonically normal, hence admits a
monotone normality operator $D_M$.

Given a pair $(H,K)$ of disjoint closed subsets of $X$, let
$$
D(H,K)=s^{-1}\big(D_M(\pi^{-1}(H),\pi^{-1}(K))\big).
$$
 If $(H',K')$ are disjoint closed
subsets of $X$ with $H\subset H'$ and $K\supset K'$, then
$$
D_M(\pi^{-1}(H),\pi^{-1}(K))\subset D_M(\pi^{-1}(H'),\pi^{-1}(K'))
$$
 and hence
$D(H,K)\subset D(H',K')$.

It can be shown that
$$
\scl(D(H,K))\subset \pi(\overline{Z\cap
D_M(\pi^{-1}(H),\pi^{-1}(K))})= \pi(\overline{D_M(\pi^{-1}(H),\pi^{-1}(K))}).
$$
Since
$\overline{D_M(\pi^{-1}(H),\pi^{-1}(K))}\cap \pi^{-1}(K)=\emptyset$, we get
$\scl(D(H,K))\cap K=\emptyset$.

Similarly, we obtain
$$
\scl(X\setminus D(H,K))=\scl(\pi(Z\setminus
D_M(\pi^{-1}(H),\pi^{-1}(K))))\subset \pi(\overline{Z\setminus
D_M(\pi^{-1}(H),\pi^{-1}(K))}).$$ Since $Z\setminus D_M(\pi^{-1}(H),\pi^{-1}(K))\subset
M\setminus D_M(\pi^{-1}(H),\pi^{-1}(K))$ and the latter set misses $\pi^{-1}(H)$,
we get that $\scl(X\setminus D(H,K))\cap H=\emptyset$.
\end{proof}

It is interesting to remark that there exist $\sigma$-compact
$\aleph_0$-spaces which are  monotonically sequentially normal but
not monotonically normal.

\begin{example} For every infinite-dimensional Banach space $X$
whose  dual $X^*$ is separable
the space $(X,\weak)$ is an $\aleph_0$-space
(see Proposition~\ref{4.14}) and thus is monotonically sequentially
normal, but $(X,\weak)$ is not monotonically normal, see \cite{Ga}.
\end{example}

\section{$k^*$-Metrizable function spaces}\label{s9}

Given topological spaces $X$ and $Y$ let $C(X,Y)$ be the set
of all continuous functions from $X$ to $Y$. There is a general
way of topologizing $C(X,Y)$ using a family $\mathcal K$ of
compact subsets of $X$. Namely we endow the set $C(X,Y)$ with the
topology generated by the sub-base consisting of the sets $$
[F,U]=\{f\in C(X,Y):f(F)\subset U\}$$ where $F$ is a closed subset
of a compact set $K\in\mathcal K$ and $U$ runs over open subsets
of $Y$, and denote the obtained topological  space by $C_{\mathcal
K}(X,Y)$. It is easy to see that the space $C_{\mathcal K}(X,Y)$
is Hausdorff if $Y$ is Hausdorff and $\cup\mathcal K$ is dense in
$X$. If, in addition, the space $Y$ is (completely) regular, then
so is the function space $C_{\mathcal K}(X,Y)$, see Theorems
3.4.13 and 3.4.15 of \cite{En}.

If the union $\cup\mathcal K\subset X$ is separable and  all compact subsets of a space $Y$ are metrizable, then all compact subsets of the function space $C_{\mathcal K}(X,Y)$ are metrizable too. Indeed, take any countable dense subset $D\subset \cup\mathcal K$ and consider the restriction operator $R:C_{\mathcal K}(X,Y)\to Y^D$ which is continuous and injective. Then each compact subset $K\subset C_{\mathcal K}(X,Y)$ is homeomorphic
to the compact subset $R(K)$ of the countable product $Y^D$ and hence is metrizable.

Two particular choices of the family $\mathcal K$ are especially important. If $\mathcal K$ is the family of all finite (resp. compact) subsets of $X$, then we write $C_p(X,Y)$ (resp. $C_k(X,Y)$) instead of $C_{\mathcal K}(X,Y)$ and say about the topology of pointwise convergence (the compact-open topology) on the set $C(X,Y)$.

By a classical result of E.Michael \cite{Mi} the function space $C_k(X,Y)$ is an $\aleph_0$-space for
any $\aleph_0$-spaces $X$ and $Y$. This result was generalized to $\aleph$-spaces by O'Meara \cite{OM}.

\begin{theorem}[O'Meara]\label{n9.1} The space $C_k(X,Y)$ of continuous functions from an $\aleph_0$-space to an $\aleph$-space is an $\aleph$-space, and hence $C_k(X,Y)$ is $k^*$-metrizable.
\end{theorem}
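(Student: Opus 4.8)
The plan is to show that $C_k(X,Y)$ carries a $\sigma$-locally-finite $k$-network; once this is done, $C_k(X,Y)$ is an $\aleph$-space by definition, and its $k^*$-metrizability follows at once from Corollary~\ref{n8.5}. The regularity of $C_k(X,Y)$ and the metrizability of its compact subsets need no separate argument: they are guaranteed by the discussion opening this section, since an $\aleph_0$-space $X$ has a countable network and is therefore separable (so $\cup\mathcal K=X$ is separable), while all compact subsets of the $\aleph$-space $Y$ are metrizable. So I would fix a countable $k$-network $\mathcal P$ for $X$ that is closed under finite unions (thus a pseudobase: whenever $K\subseteq U$ with $K$ compact and $U$ open, a single $P\in\mathcal P$ satisfies $K\subseteq P\subseteq U$), and a $\sigma$-locally-finite $k$-network $\mathcal Q=\bigcup_{n\in\w}\mathcal Q_n$ for $Y$ with each $\mathcal Q_n$ locally finite.

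As a candidate network I take the family $\mathcal N$ of all finite intersections $\bigcap_{i=1}^{k}[P_i,Q_i]$ with $P_i\in\mathcal P$ and $Q_i\in\mathcal Q$, where $[P,Q]=\{f\in C(X,Y):f(P)\subseteq Q\}$. This family is closed under finite intersections, and it splits into subfamilies indexed by the data $(k,(P_1,\dots,P_k),(n_1,\dots,n_k))$: for fixed such data let $\mathcal N(k,\vec P,\vec n)$ consist of those intersections with $Q_i\in\mathcal Q_{n_i}$. As $\mathcal P$ and $\w$ are countable there are only countably many choices of the data, so it remains to check that each $\mathcal N(k,\vec P,\vec n)$ is locally finite in $C_k(X,Y)$. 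This is the easy point-evaluation step: given $f\in C_k(X,Y)$, pick $x_i\in P_i$ for each $i\le k$ and use the local finiteness of $\mathcal Q_{n_i}$ at $f(x_i)$ to choose an open $V_i\ni f(x_i)$ meeting only finitely many members of $\mathcal Q_{n_i}$; then $W=\bigcap_{i\le k}[\{x_i\},V_i]$ is an open neighbourhood of $f$, and if $g\in W\cap\bigcap_i[P_i,Q_i]$ then $g(x_i)\in V_i$ and $g(x_i)\in g(P_i)\subseteq Q_i$, so $V_i\cap Q_i\neq\emptyset$ and each $Q_i$ is confined to finitely many possibilities. Thus $W$ meets only finitely many members, and $\mathcal N$ is $\sigma$-locally-finite.

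The substance of the theorem, and the step I expect to be the main obstacle, is the verification that $\mathcal N$ is a $k$-network for $C_k(X,Y)$. Since $\mathcal N$ is closed under finite intersections, a routine combination reduces this to a single subbasic neighbourhood: given a compact $\Phi\subseteq[K,V]$ with $K\subseteq X$ compact and $V\subseteq Y$ open, one must find a finite $\mathcal F\subseteq\mathcal N$ with $\Phi\subseteq\cup\mathcal F\subseteq[K,V]$. The image $\Phi(K)=\{f(x):f\in\Phi,\ x\in K\}$ is a compact subset of $V$, so using that $\mathcal Q$ is a $k$-network we may cover it, $\Phi(K)\subseteq Q_1\cup\dots\cup Q_r\subseteq V$ with $Q_l\in\mathcal Q$. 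The goal is then to manufacture, on the domain side, finitely many $P\in\mathcal P$ covering $K$ on which every $f\in\Phi$ takes values inside one of the $Q_l$, so that suitable finite intersections of the $[P,Q_l]$'s have a finite union that sandwiches $\Phi$ inside $[K,V]$ (using $K\subseteq\bigcup P$ and $\bigcup Q_l\subseteq V$).

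The genuine difficulty lies exactly here, and is the reason the theorem is not formal: the members $Q_l$ of a $k$-network of the $\aleph$-space $Y$ are in general \emph{not} neighbourhoods, so the preimages $f^{-1}(Q_l)$ are not open and cannot be fed directly into the pseudobase $\mathcal P$ of $X$, which pulls back only open target sets. Overcoming this requires using the compactness of $\Phi$ in an essential way. I would pass to the compact product $\Phi\times K$, on which the evaluation map $(f,x)\mapsto f(x)$ is continuous, and combine a finite rectangular subcover there with an even-continuity (Ascoli-type) property of compact subsets of $C_k(X,Y)$ over the $\aleph_0$-space $X$, so that the values of all $f\in\Phi$ near a given point of $K$ can be trapped inside a common network element of $Y$, uniformly in $f$. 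This uniform matching of domain-side sets $P\in\mathcal P$ with codomain-side sets $Q\in\mathcal Q$ over the whole of $\Phi$, aided by the metrizability of the compactum $\Phi$ secured above, is the technical heart of O'Meara's argument; once the $k$-network property is in hand, the conclusion follows as in the first paragraph.
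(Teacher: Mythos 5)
First, note that the paper does not actually prove Theorem~\ref{n9.1}: it is quoted with attribution from O'Meara \cite{OM} (following Michael \cite{Mi} for the $\aleph_0$-case), and the only contribution of the paper is the final clause ``hence $C_k(X,Y)$ is $k^*$-metrizable,'' which is exactly the appeal to Corollary~\ref{n8.5} that you make. So there is no in-paper argument to compare against; your proposal has to stand on its own as a proof of O'Meara's result.

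Judged that way, it has a genuine gap. The preliminary reductions (regularity and metrizability of compacta from the opening of Section~\ref{s9}), the choice of the candidate family $\mathcal N$ of finite intersections $\bigcap_i[P_i,Q_i]$, and the point-evaluation argument for $\sigma$-local-finiteness are all correct (modulo the trivial caveat that the $P_i$ must be nonempty so that the points $x_i\in P_i$ exist). But the verification that $\mathcal N$ is a $k$-network --- which you yourself identify as ``the substance of the theorem'' and ``the technical heart of O'Meara's argument'' --- is never carried out. After covering $\Phi(K)\subseteq Q_1\cup\dots\cup Q_r\subseteq V$ you correctly observe that the $Q_l$ are not neighbourhoods, so $f^{-1}(Q_l)$ cannot be fed into the $k$-network of $X$, and then you gesture at ``an even-continuity (Ascoli-type) property'' without stating it, proving it, or showing how it closes the loop. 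Concretely, what is missing is the uniform refinement step: one must produce, for the \emph{whole} compactum $\Phi$ simultaneously, finitely many pairs $(P,Q)$ with $K$ covered by the $P$'s, each $Q\subseteq V$, and $g(P)\subseteq Q$ for \emph{every} $g$ in an appropriate piece of $\Phi$ --- and this requires an actual argument combining the continuity of the evaluation $\Phi\times K\to Y$ on the compact product with the regularity of $Y$ (to interpose open sets $V_x$ with $\overline{V_x}\subseteq V$ between values and $V$, so that the $k$-networks of $X$ and $Y$ can be applied to genuinely open data), followed by a finite-subcover extraction on $\Phi\times K$. Since this is precisely the step where the theorem could fail if done carelessly, deferring it to ``O'Meara's argument'' means the proposal is a correct plan plus an easy lemma, not a proof.
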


We shall derive from this theorem a bit more general result
treating subspaces of $C_{\mathcal K}(X,Y)$.

\begin{theorem}\label{n9.2} Let $X$ be an $\aleph_0$-space, $Y$ be an $\aleph$-space and $\mathcal K$ be a family of compact subsets of a space $X$ with dense union
$\cup\mathcal K$ in $X$. A
subspace $\mathcal F\subset C_{\mathcal K}(X,Y)$ is $k^*$-metrizable if the evaluation operator
$e:\mathcal F\times X\to Y$, $e:(f,x)\mapsto f(x)$ is sequentially
continuous.
\end{theorem}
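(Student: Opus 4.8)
The plan is to compare the given $\mathcal K$-topology on $\mathcal F$ with the finer compact-open topology and then to transfer $\cs^*$-metrizability through the sequential coreflexion, exploiting Theorem~\ref{n4.3}(2), which says that $\cs^*$-metrizability depends only on the sequential coreflexion. Write $\mathcal F_k$ for the same underlying set of functions equipped with the subspace topology inherited from $C_k(X,Y)$. Since $\mathcal K$ consists of compact sets, the $\mathcal K$-topology is coarser than the compact-open topology, so the identity map $\mathcal F_k\to\mathcal F$ is continuous; in particular every compact-open convergent sequence is $\mathcal K$-convergent to the same limit.

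The key step is the reverse implication for sequences: using sequential continuity of the evaluation operator $e$, I would show that every $\mathcal K$-convergent sequence in $\mathcal F$ is automatically compact-open convergent. Suppose $f_n\to f$ in $\mathcal F$. Given an arbitrary compact set $K\subset X$ (not necessarily belonging to $\mathcal K$) and an open $V\subset Y$ with $f(K)\subset V$, suppose some $f_n$ failed to map $K$ into $V$, and pick $x_n\in K$ with $f_n(x_n)\notin V$. Since $X$ is an $\aleph_0$-space its compact subsets are metrizable, hence sequentially compact, so after passing to a subsequence we may assume $x_n\to x_*\in K$. Then $(f_n,x_n)\to(f,x_*)$ in $\mathcal F\times X$, and the sequential continuity of $e$ forces $f_n(x_n)=e(f_n,x_n)\to f(x_*)\in V$; as $V$ is open this contradicts $f_n(x_n)\notin V$. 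Hence $f_n\to f$ in $\mathcal F_k$. Together with the previous paragraph this shows that the two topologies have exactly the same convergent sequences with the same limits, so their sequentially closed sets agree and the sequential coreflexions coincide: $s\mathcal F=s\mathcal F_k$.

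To assemble the conclusion, I would argue as follows. By Theorem~\ref{n9.1} the space $C_k(X,Y)$ is $k^*$-metrizable, and $\mathcal F_k$, being a subspace of it, is $k^*$-metrizable by Theorem~\ref{3.3}(1); in particular $\mathcal F_k$ is $\cs^*$-metrizable. By Theorem~\ref{n4.3}(2) the coreflexion $s\mathcal F_k$ is then $\cs^*$-metrizable, and since $s\mathcal F=s\mathcal F_k$, a second application of Theorem~\ref{n4.3}(2) yields that $\mathcal F$ itself is $\cs^*$-metrizable. Finally, $\cup\mathcal K$ is separable (it has a countable network, being a subspace of the $\aleph_0$-space $X$) and all compact subsets of the $\aleph$-space $Y$ are metrizable (as $Y$ is $k^*$-metrizable by Corollary~\ref{n8.5}, and compact subsets of $k^*$-metrizable spaces are metrizable); hence, by the observation opening this section, every compact subset of $C_{\mathcal K}(X,Y)$, and so of $\mathcal F$, is metrizable and therefore sequentially compact. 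Proposition~\ref{n4.2} then upgrades the $\cs^*$-metrizability of $\mathcal F$ to $k^*$-metrizability.

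The main obstacle is the second paragraph: one must recognize that sequential continuity of the evaluation map is precisely the hypothesis needed to promote $\mathcal K$-convergence to compact-open convergence on \emph{all} compacta of $X$, not merely those in $\mathcal K$, and that sequential compactness of the compact subsets of an $\aleph_0$-space is exactly what makes the subsequence extraction legitimate. Everything else is a formal transfer through the sequential coreflexion, enabled by the fact that $\cs^*$-metrizability is an invariant of $sX$ and that the relevant function-space compacta are metrizable.
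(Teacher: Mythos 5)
Your proof is correct and follows essentially the same route as the paper's: the heart of both arguments is the observation that sequential continuity of the evaluation map, together with sequential compactness of the (metrizable) compacta of the $\aleph_0$-space $X$, promotes $\mathcal K$-convergence of a sequence to compact-open convergence, after which O'Meara's theorem and the metrizability of compact subsets of $C_{\mathcal K}(X,Y)$ finish the job. The only cosmetic difference is the final packaging: the paper records the comparison of the two topologies by showing that the identity map $(\mathcal F,\tau_k)\to(\mathcal F,\tau_{\mathcal K})$ is proper and then invokes preservation of $k^*$-metrizability under proper maps, whereas you pass through the sequential coreflexion and Proposition~\ref{n4.2}.
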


\begin{proof} Let $(\FF,\tau_k)$ (resp. $(\FF,\tau_\KK)$) denote the set $\FF$ endowed with the topology inherited from the function space $C_k(X,Y)$ (resp. $C_\KK(X,Y)$). It is clear that the identity map $\id:(\FF,\tau_k)\to(\FF,\tau_\KK)$ is continuous. The sequential continuity of the evaluation operator $e:\FF\times X\to Y$ and the metrizability of compact subsets of $X$ imply that the inverse function  $\id^{-1}:(\FF,\tau_\KK)\to(\FF,\tau_k)$ is sequentially continuous.
Moreover, this function is continuous on each compact subset of $(\FF,\tau_\KK)$ because compact subsets of the function space $C_\KK(X,Y)\supset\FF$ are metrizable (this follows from the separability of $\cup\KK$ and metrizability of compact subsets of $Y$).
Therefore the idenity map $\id: (\FF,\tau_k)\to(\FF,\tau_\KK)$ is proper.

By Theorem~\ref{n9.1}, the space $C_k(X,Y)$ is $k^*$-metrizable and so is its subspace $(\FF,\tau_k)$. Since the $k^*$-metrizability is preserved by proper maps, the space $\FF=(\FF,\tau_\KK)$ is $k^*$-metrizable.
\end{proof}

Unfortunately we do not know the answer to the following intriguing 

\begin{question} Is $C_k(X,Y)$ $k^*$-metrizable for any $\aleph_0$-space $X$ and any regular $k^*$-metrizable space $Y$?
\end{question}

As an application of Theorem~\ref{n9.2} we prove the $k^*$-metrizability of spaces of continuous homomorphisms between topological groups.
For topological groups $G,H$ and a family $\mathcal K$ of compact subsets of $G$ let  $\Hom_{\mathcal K}(G,H)$ be the subspace of the function space $C_{\mathcal K}(G,H)$,  consisting of all continuous group homomorphisms.

\begin{corollary}\label{n9.6} Let $G,H$ be topological groups such that $G$ is a Baire $\aleph_0$-space and $H$ is an $\aleph$-space. Then for any family $\mathcal K$ of compact subsets of $G$ with  $\cup\mathcal K=G$ the space $\Hom_{\mathcal K}(G,H)$ is $k^*$-metrizable.
\end{corollary}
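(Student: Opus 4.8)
The plan is to deduce this from Theorem~\ref{n9.2} applied to $X=G$, $Y=H$, the given family $\mathcal K$, and $\FF=\Hom_{\mathcal K}(G,H)\subset C_{\mathcal K}(G,H)$. All standing hypotheses of Theorem~\ref{n9.2} are immediate: $G$ is an $\aleph_0$-space, $H$ is an $\aleph$-space, and $\cup\mathcal K=G$ is (densely) contained in $G$. Thus the whole corollary reduces to verifying that the evaluation operator $e\colon\Hom_{\mathcal K}(G,H)\times G\to H$, $e\colon(f,x)\mapsto f(x)$, is sequentially continuous; this is the only place where the group structure and the Baire property of $G$ are used.

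So I would take a sequence $(f_n,x_n)$ converging to $(f_\infty,x_\infty)$ in $\Hom_{\mathcal K}(G,H)\times G$ and show $f_n(x_n)\to f_\infty(x_\infty)$ in $H$. First note that $f_n\to f_\infty$ pointwise on all of $G$: for each $x\in G=\cup\mathcal K$ the singleton $\{x\}$ is a closed subset of some member of $\mathcal K$, so $[\{x\},U]$ is a neighbourhood of $f_\infty$ for every open $U\ni f_\infty(x)$, whence $f_n(x)\to f_\infty(x)$; in particular $f_n(x_\infty)\to f_\infty(x_\infty)$. Writing $g_n=x_\infty^{-1}x_n$ we have $g_n\to e_G$ by continuity of the operations of $G$, and the homomorphism property gives $f_n(x_n)=f_n(x_\infty)\,f_n(g_n)$. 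Since $f_n(x_\infty)\to f_\infty(x_\infty)$ and multiplication in $H$ is continuous, it therefore suffices to prove the ``sequential equicontinuity at the unit'' statement: whenever $g_n\to e_G$, one has $f_n(g_n)\to e_H$.

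The hard part is this last statement, and it is exactly here that I would invoke the Baire property of $G$. Fix a neighbourhood $W$ of $e_H$ and, using regularity of the topological group $H$, choose a symmetric open $U\ni e_H$ with $\overline U\cdot\overline U\subset W$. For $m\in\w$ put $C_m=\bigcap_{n\ge m}f_n^{-1}(\overline U)$, a closed subset of $G$ since every $f_n$ is continuous. Pointwise convergence shows that the open nonempty set $\Omega=f_\infty^{-1}(U)\ni e_G$ is covered by the sets $C_m$: if $f_\infty(g)\in U$ then $f_n(g)\to f_\infty(g)$ forces $f_n(g)\in\overline U$ for large $n$. As $\Omega$ is an open subspace of the Baire space $G$ it is itself Baire, so some $C_{m_0}$ has nonempty interior; pick $g_0$ in the interior and an open $V_0\ni e_G$ with $g_0V_0\subset C_{m_0}$. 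Then for every $h\in V_0$ and every $n\ge m_0$ both $g_0$ and $g_0h$ lie in $C_{m_0}$, so $f_n(g_0)\in\overline U$ and $f_n(g_0h)\in\overline U$, and the homomorphism identity $f_n(h)=f_n(g_0)^{-1}f_n(g_0h)$ yields $f_n(h)\in\overline U^{-1}\cdot\overline U=\overline U\cdot\overline U\subset W$. Thus $f_n(V_0)\subset W$ for all $n\ge m_0$. Given $g_n\to e_G$ we then have $g_n\in V_0$ for large $n$, hence $f_n(g_n)\in W$ eventually, proving $f_n(g_n)\to e_H$ and completing the verification.

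I would stress that the decisive trick making the Baire argument go through is the use of the left-translation identity $f_n(h)=f_n(g_0)^{-1}f_n(g_0h)$ with \emph{both} $g_0$ and $g_0h$ inside the single set $C_{m_0}$: this bounds $f_n(h)$ directly by $\overline U\cdot\overline U$ and avoids any conjugation by the uncontrolled elements $f_n(g_0)$, which is the usual obstruction in equicontinuity arguments for non-abelian $H$. With sequential continuity of $e$ established, Theorem~\ref{n9.2} immediately gives the $k^*$-metrizability of $\Hom_{\mathcal K}(G,H)$.
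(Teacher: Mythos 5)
Your proof is correct, and it reduces the corollary to Theorem~\ref{n9.2} exactly as the paper does; the difference lies entirely in how the sequential continuity of the evaluation operator is verified. The paper takes any compact $\mathcal A\subset\Hom_{\mathcal K}(G,H)$ and compact $B\subset G$, observes that $\mathcal A$ is compact in $C_p(G,H)$, and invokes Troallic's Theorem 4.4 to conclude that on $\mathcal A$ the pointwise and compact-open topologies coincide, whence $e|\mathcal A\times B$ is continuous; this yields (a bit more than) sequential continuity by applying it to convergent sequences together with their limits. You instead give a self-contained Banach--Steinhaus-type argument: reduce to the ``sequential equicontinuity at the unit'' statement via the factorization $f_n(x_n)=f_n(x_\infty)f_n(x_\infty^{-1}x_n)$, and then run a Baire category argument on the closed sets $C_m=\bigcap_{n\ge m}f_n^{-1}(\overline U)$ inside the open set $f_\infty^{-1}(U)$, exploiting the identity $f_n(h)=f_n(g_0)^{-1}f_n(g_0h)$ with both arguments in a single $C_{m_0}$. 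All the steps check out: the symmetric $U$ with $\overline U\cdot\overline U\subset W$ exists in any topological group, singletons lie in members of $\mathcal K$ because $\cup\mathcal K=G$ (so convergence in the $\mathcal K$-topology gives pointwise convergence), and open subspaces of Baire spaces are Baire. What your route buys is transparency and independence from the cited equicontinuity theorem, making explicit exactly where the Baire property and the homomorphism identity enter; what the paper's route buys is brevity and the slightly stronger conclusion that $e$ is continuous on products of compacta, not merely sequentially continuous, though only the latter is needed for Theorem~\ref{n9.2}.
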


\begin{proof} This corollary will follow from Theorem~\ref{n9.2} as soon as
we check that the evaluation operator $e:\Hom_{\mathcal
K}(G,H)\times G\to H$ is sequentially continuous. Take any two
compact sets $\mathcal A\subset\Hom_{\KK}(G,H)$ and $B\subset G$.
Since $\cup\mathcal K=G$, the set $\mathcal A$ is compact in
$C_p(G,H)$. Applying Troallic's Theorem 4.4 \cite{Tro}, we
conclude that $\mathcal A$ is compact in $C_k(G,H)$ which implies
that the topology of pointwise convergence on $\mathcal A$
coincides with the compact-open topology. This implies that the
restriction of the evaluation operator $e$ to $\A\times B$ is
continuous and hence $e$ is sequentially continuous.
\end{proof}

\section{Subproper maps between spaces of measures}\label{s10}

By a {\em measure} on a topological space $X$ we understand a
countably additive function $\mu\colon\, \BB(X)\to[0,\infty)$ defined on
the $\sigma$-algebra of Borel subsets of $X$. A measure $\mu$ on $X$ is
called a {\em probability measure} if $\mu(X)=1$. A measure $\mu$ on $X$
is {\em Radon} if for every $\e>0$ and every Borel subset $B\subset X$
there is a compact subset $K\subset B$ with $\mu(B\setminus K)<\e$.

For a Hausdorff topological space $X$ by $P_r(X)$ we denote the space of
all probability Borel Radon measures on $X$ endowed with the topology
generated by the sub-basis consisting of the sets $\{\mu\in
P_r(X)\colon\, \mu(U)>a\}$ where $a\in\IR$ and $U$
runs over open subsets of~$X$.

It is well-known that for a Tychonoff  space $X$ the topology of $P_r(X)$ is
generated by the sub-basis consisting of the sets $\{\mu\in
P_r(X)\colon\, \int_Xf\,d\mu>0\}$ where $f$ runs over all bounded real-valued
continuous functions on $X$. For a compact Hausdorff space $X$ the space
$P_r(X)$ is known to be compact and Hausdorff \cite[8.5.1]{Bo},
\cite[Ch.~8]{B2003}, \cite[VII.3.5]{FF} or \cite{Fe1};
for a metrizable space $X$ the space $P_r(X)$ is metrizable
\cite[8.5.1]{Bo}, \cite[Ch.~8]{B2003}, or \cite{Ba}.

For a Borel function $f\colon\, X\to Y$ between topological spaces let
$P_r(f)\colon\, P_r(X)\to P_r(Y)$ be the function assigning to each measure
$\mu\in P_r(X)$ the measure $\eta=P_r(f)(\mu)$ such that
$\eta(B)=\mu(\pi^{-1}(B))$ for any Borel subset $B\subset Y$. It is
well-known that for every continuous map $f\colon\, X\to Y$ between Hausdorff spaces
the function $P_r(f)\colon\, P_r(X)\to P_r(Y)$ is continuous;
for an injective map
$f$ between Tychonoff  spaces the map $P_r(f)$ is injective;
for a surjective map $f$ between compact Hausdorff spaces the map $P_R(f)$
is surjective; for a topological embedding $f\colon\, X\to Y$ of
Tychonoff  spaces the map $P_r(f)$ is a topological embedding
(see \cite[Ch.~8,9]{B2003}).

A family $\MM\subset P_r(X)$ of measures on $X$ is called {\em uniformly
tight} if for every $\e>0$ there is a compact subset $K_\e\subset X$ such
that $\mu(X\setminus K_\e)<\e$ for all $\mu\in\MM$. Every uniformly tight
collection $\MM\subset P_r(X)$ of measures on a Tychonoff  space is
precompact in $P_r(X)$. The converse is not always true \cite{Preiss}. A
Tychonoff  space $X$ is called {\em Prohorov} if each compact subset of
$P_r(X)$ is uniformly tight. If each convergent sequence in $P_r(X)$ is
uniformly tight, then the space $X$ is called {\em sequentially Prohorov}.
The class of Prohorov spaces includes all \v Cech-complete spaces (hence
all locally compact spaces and all complete metric spaces) and all
$k_\omega$-spaces, see \cite[8.3.5]{Bo}, \cite[Ch.~8]{B2003}.
However, the space $\IQ$ of
rational numbers is not Prohorov, see \cite{Preiss}. On the other
hand, each Tychonoff  space of countable type (in particular, each metrizable
space) is sequentially Prohorov \cite[8.3.15]{Bo}. We recall that a
topological space $X$ has countable type if each point $x\in X$ lies in a
compact subset of $X$ possessing a countable fundamental system of
neighborhoods, see \cite[3.1.E]{En}.

\begin{theorem}\label{1.12}
Suppose a map $f\colon\, X\to Y$ between
Tychonoff  spaces admits a section $s\colon\, Y\to X$ that preserves precompact
sets. Then the map $P_r(f)\colon\, P_r(X)\to P_r(Y)$ admits a section $l\colon\, P_r(Y)\to
P_r(X)$ such that $l(\mu)(\overline{s(K)})=\mu(K)$ for every compact
subset $K\subset Y$ and every measure $\mu\in P_r(Y)$.
\end{theorem}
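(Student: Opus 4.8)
The plan is to reduce the theorem to a single lifting construction over one compact set, and then to assemble the global measure by summing such lifts over a decomposition of $\mu$ into pieces carried by pairwise disjoint compacta.

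First I would record the easy half of the required identity. Once $l$ is a section, i.e. $P_r(f)(l(\mu))=\mu$, the inclusion $\overline{s(K)}\subset f^{-1}(K)$ (which follows from $f(\overline{s(K)})\subset\overline{f(s(K))}=K$, using continuity of $f$ and closedness of the compactum $K$) gives $l(\mu)(\overline{s(K)})\le l(\mu)(f^{-1}(K))=\mu(K)$ for free. So the whole content is to manufacture a section for which the reverse inequality $l(\mu)(\overline{s(K)})\ge\mu(K)$ also holds.

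The core step is a per-compact lifting lemma: for a compact $K\subset Y$, setting $C=\overline{s(K)}$ (compact, since $s$ preserves precompact sets) and noting $f(C)=K$, I would produce a Radon measure $\nu_K$ on $C$ with $f_*\nu_K=\mu|_K$ and $\nu_K(\overline{s(B)})\ge\mu(B)$ for every compact $B\subset K$. To build $\nu_K$ I use ``pushforward of the section along finite Borel partitions''. For a finite Borel partition $\mathcal P=\{B_i\}$ of $K$ and points $y_i\in B_i$, put $\nu_{\mathcal P}=\sum_i\mu(B_i)\,\delta_{s(y_i)}$, a Radon measure on $C$ of mass $\mu(K)$. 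Since $P_r(C)$ is compact (as $C$ is compact Hausdorff), the net $(\nu_{\mathcal P})$ indexed by partitions ordered by refinement has a weakly convergent subnet; let $\nu_K$ be a limit. For any bounded continuous $g$ on $Y$ one has $\int g\circ f\,d\nu_{\mathcal P}=\sum_i\mu(B_i)\,g(y_i)$, a Riemann sum converging to $\int_K g\,d\mu$ as $\mathcal P$ refines (using uniform continuity of $g$ on the compactum $K$); passing to the limit and invoking uniqueness of Radon measures via continuous functions yields $f_*\nu_K=\mu|_K$. For the support bound I restrict attention to partitions refining $\{B,K\setminus B\}$ (a cofinal, hence eventual, condition along the subnet), so that $s(y_i)\in s(B)\subset\overline{s(B)}$ whenever $B_i\subset B$, giving $\nu_{\mathcal P}(\overline{s(B)})\ge\mu(B)$; since $\overline{s(B)}$ is closed, the Portmanteau inequality $\nu_K(\overline{s(B)})\ge\limsup\nu_{\mathcal P}(\overline{s(B)})\ge\mu(B)$ finishes the lemma.

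To globalize, I would use inner regularity of the Radon measure $\mu$ to choose pairwise disjoint compacta $L_1,L_2,\dots\subset Y$ with $\mu(Y\setminus\bigcup_j L_j)=0$ (peeling off compacta of almost full measure inside the successive open complements). Writing $\mu_j=\mu|_{L_j}$ and applying the lemma to each $L_j$ gives measures $\nu_j$ on $\overline{s(L_j)}\subset X$; I set $l(\mu)=\sum_j\nu_j$. Then $l(\mu)$ is a Radon probability measure (a countable sum of compactly supported Radon measures with $\sum_j\nu_j(X)=\sum_j\mu(L_j)=1$), and $P_r(f)(l(\mu))=\sum_j f_*\nu_j=\sum_j\mu_j=\mu$, so $l$ is a section. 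Finally, for compact $K$, applying the support bound with $B=K\cap L_j$ (compact) and summing gives $l(\mu)(\overline{s(K)})\ge\sum_j\nu_j(\overline{s(K\cap L_j)})\ge\sum_j\mu(K\cap L_j)=\mu(K)$, which together with the easy upper bound yields the desired equality. I expect the main obstacle to be the per-compact lemma, specifically reconciling along one subnet the Riemann-sum convergence that forces $f_*\nu_K=\mu|_K$ with the Portmanteau step delivering the support inequality; the saving grace is that $s$ is only ever evaluated at the finitely many representatives $y_i$, so its discontinuity and non-measurability never enter.
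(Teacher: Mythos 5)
Your proof is correct, but it is organized quite differently from the paper's. The paper passes to the Stone--\v Cech compactifications, identifies $P_r(X)$ with a subspace of the compact space $P_r(\beta X)$, and obtains $l(\mu)$ as a point of $\bigcap_{K\in\KK(Y)}M_K$ where $M_K=\{\eta: P_r(\beta f)(\eta)=\mu,\ \eta(\overline{s(K)})\ge\mu(K)\}$; the finite intersection property is verified by decomposing $Y$ along the atoms of the algebra generated by a finite family of compacta, exhausting each atom by countably many disjoint compacta, and citing the fact that $P_r$ carries surjections of compacta to surjections. You avoid $\beta X$ and the compactness/FIP argument entirely: you prove the per-compactum lifting by hand (the net of discrete measures $\sum_i\mu(B_i)\delta_{s(y_i)}$ over refining finite Borel partitions, with Riemann-sum convergence giving $f_*\nu_K=\mu|_K$ and the Portmanteau inequality giving the support bound), and you globalize once and for all via a single $\mu$-almost exhaustion of $Y$ by disjoint compacta $L_j$. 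The essential extra content in your route is that your lemma delivers the bound $\nu_K(\overline{s(B)})\ge\mu(B)$ \emph{uniformly over all compact $B\subset K$}, which is exactly what lets the decomposition $B=K\cap L_j$ recover the inequality for an arbitrary compactum $K$ after summing; the paper instead gets all compacta at once from compactness of $P_r(\beta X)$. Your approach is more self-contained (it reproves, in strengthened form, the surjectivity of $P_r$ on surjections of compacta that the paper cites) and more constructive; the paper's is shorter given the cited functorial facts. One remark: the obstacle you flag at the end is not actually there --- the Riemann-sum convergence holds along the \emph{full} net of partitions (hence along any subnet), and for each fixed compact $B$ the inequality $\nu_{\mathcal P}(\overline{s(B)})\ge\mu(B)$ holds on the residual tail of partitions refining $\{B,K\setminus B\}$, so both properties pass to the limit of any convergent subnet with no further reconciliation needed. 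Your ``easy half'' $l(\mu)(\overline{s(K)})\le\mu(K)$ is the same observation the paper uses to upgrade the inequality to the stated equality.
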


\begin{proof} Let $\beta f\colon\, \beta X\to\beta Y$ be the
continuous extension of the map $f$ onto the Stone-\v Cech compactifications
of $X$ and $Y$. Since the functor $P_r$ preserves embeddings \cite[2.4]{Ba}, we
can identify the space $P_r(X)$ with a subspace of $P_r(\beta X)$.
Given a probability Radon measure $\mu$ on $Y$ and a
compactum $K\in\KK(Y)$ consider the closed
subset
$$
M_K=\{\eta\in P_r(\beta X)\colon\,  P_r(\beta f)(\eta)=\mu\quad
\hbox{and}\quad \eta(\overline{s(K)})\ge\mu(K)\}
$$
of the compact space $P_r(\beta X)$. We claim that the intersection $\bigcap_{K\in\KK(Y)}M_K$ is not
empty. Since the sets $M_K$ are closed in the compact space $P_r(\beta X)$, it
suffices to verify that $\bigcap_{K\in\FF}M_K\ne\emptyset$ for every finite
collection $\FF\subset\KK(Y)$. Let $\A$ be the smallest algebra of subsets of
$X$ containing the family $\FF$ and let $\{A_1,\dots,A_m\}$ denote the set of
atoms (that is the minimal non-empty sets) in $\A$. It is clear that the
sets $A_i$
are pairwise disjoint Borel subsets of $X$.

Since the measure $\mu$ is Radon, for every $i\le m$ we can find a countable
collection $\KK_i$ of pairwise disjoint compact subsets of $A_i$ such that
$\mu(A_i)=\sum_{K\in\KK_i}\mu(K)$. Next, for every $K\in\KK_i$ find a measure
$\eta_K$ on the compactum $\overline{s(K)}$ whose image under the map $f$ is the
restriction $\mu|K$ of the measure $\mu$ onto the compactum $K$. Finally, let
$\eta=\sum_{i=1}^m\sum_{K\in\KK_i}\eta_K$. It is easy to see that
$P_r(\beta f)(\eta)=\mu$ and $\eta(\overline{s(K)})\ge \mu(K)$ for every $K\in\FF$.
Thus $\eta\in\bigcap_{K\in\FF}M_K\ne\emptyset$ and the intersection
$\bigcap_{K\in\KK(Y)}M_K$ contains some measure $l(\mu)$.

Thus we define a function $l\colon\, P_r(Y)\to P_r(\beta X)$.
It is clear that $P_r(\beta f)(l(\mu))=\mu$ and $l(\mu)
(f^{-1}(K))\ge l(\mu)(\overline{s(K)})\ge \mu(K)=l(\mu) (f^{-1}(K))$ for each
$K\in\KK(Y)$. Let us show that the measure $l(\mu)$ can be identified with
a Radon measure on $X$. Since the measure $\mu$ is Radon, for every
$\varepsilon>0$ there is a compact subset $K\subset Y$ with
$\mu(K)>1-\varepsilon$. Then
$l(\mu)(\overline{s(K)})\ge\mu(K)>1-\varepsilon$ and
thus $l(\mu)$ is a Radon measure on $X$.
\end{proof}

\begin{corollary}\label{1.13} Let $f\colon\, X\to Y$ be a subproper map between Tychonoff  spaces. Then the map
$P_r(f)\colon\, P_r(X)\to P_r(Y)$ admits a section that preserves uniformly tight
families of measures. Consequently, if the space $Y$ is Prohorov, then the
map $P_r(f)$ is subproper.
\end{corollary}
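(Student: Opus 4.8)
The plan is to deduce both assertions directly from Theorem~\ref{1.12}. Since $f$ is subproper, Theorem~\ref{n1.2} supplies a section $s\colon\, Y\to X$ of $f$ that preserves precompact sets, so Theorem~\ref{1.12} applies and produces a section $l\colon\, P_r(Y)\to P_r(X)$ of $P_r(f)$ satisfying $l(\mu)(\overline{s(K)})=\mu(K)$ for every compact $K\subset Y$ and every $\mu\in P_r(Y)$. Everything then reduces to routine manipulations with uniform tightness.

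First I would verify that $l$ preserves uniformly tight families. Let $\MM\subset P_r(Y)$ be uniformly tight and fix $\e>0$. Choose a compact set $K_\e\subset Y$ with $\mu(K_\e)>1-\e$ for all $\mu\in\MM$. Because $s$ preserves precompact sets, the closure $C_\e=\overline{s(K_\e)}$ is a compact subset of $X$. The defining property of $l$ gives $l(\mu)(C_\e)=\mu(K_\e)>1-\e$, whence $l(\mu)(X\setminus C_\e)<\e$ for every $\mu\in\MM$. As $\e>0$ was arbitrary, the family $l(\MM)=\{l(\mu):\mu\in\MM\}$ is uniformly tight in $P_r(X)$, which proves the first claim.

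For the second claim, assume in addition that $Y$ is Prohorov and take any precompact subset $\mathcal K\subset P_r(Y)$. Then its closure $\overline{\mathcal K}$ is compact in $P_r(Y)$, hence uniformly tight by the Prohorov property of $Y$, and therefore $\mathcal K\subset\overline{\mathcal K}$ is uniformly tight as well. By the previous paragraph $l(\mathcal K)$ is uniformly tight in $P_r(X)$, and since every uniformly tight collection of Radon measures on a Tychonoff space is precompact (as recorded earlier in this section), $l(\mathcal K)$ is precompact in $P_r(X)$. Thus $l$ is a precompact-preserving section of $P_r(f)$, and Theorem~\ref{n1.2} yields that $P_r(f)$ is subproper.

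There is no genuine obstacle here beyond bookkeeping: the single point demanding care is the transfer of tightness in the first step, which rests on combining the identity $l(\mu)(\overline{s(K)})=\mu(K)$ with the compactness of $\overline{s(K)}$ guaranteed by the precompact-preservation of $s$. Note that even the weaker inequality $l(\mu)(\overline{s(K)})\ge\mu(K)$ established in the proof of Theorem~\ref{1.12} already suffices, since only the lower bound $l(\mu)(C_\e)>1-\e$ is used.
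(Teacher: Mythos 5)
Your proof is correct and is exactly the deduction the paper intends: the paper states Corollary~\ref{1.13} without proof as an immediate consequence of Theorem~\ref{1.12}, and your two steps (transferring tightness via $l(\mu)(\overline{s(K_\e)})=\mu(K_\e)>1-\e$ with $\overline{s(K_\e)}$ compact, then using the Prohorov property of $Y$ together with the fact that uniformly tight families are precompact in $P_r(X)$) are precisely the intended bookkeeping. Your closing remark that the inequality $l(\mu)(\overline{s(K)})\ge\mu(K)$ already suffices is also accurate.
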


In spite of the fact that a metric space does not need to be Prohorov, we shall
show that the functor $P_r$ preserves subproper maps
between metrizable spaces.\begin{theorem}\label{1.14a} Suppose  $f\colon\, X\to Y$ is a map from a Tychonoff space $X$ onto a sequentially Prohorov space $Y$. If $f$ has a precompact-preserving section, then the map $P_r(f)\colon\, P_r(X)\to P_r(Y)$ has a $\cs^*$-continuous section.
\end{theorem}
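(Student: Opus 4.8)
The plan is to build directly on Theorem~\ref{1.12}, which already furnishes a section $l\colon P_r(Y)\to P_r(X)$ of $P_r(f)$ with the crucial mass-transfer property $l(\mu)(\overline{s(K)})=\mu(K)$ for every compact $K\subset Y$ and every $\mu\in P_r(Y)$. Here $s\colon Y\to X$ denotes a precompact-preserving section of $f$. The only thing left is to verify that this particular $l$ is $\cs^*$-continuous. Note that Corollary~\ref{1.13} already handled the case of a \emph{Prohorov} target by showing $l$ preserves \emph{all} uniformly tight families; the present hypothesis is weaker (only \emph{sequentially} Prohorov), so I cannot expect $l$ to preserve precompact sets, but I will show that the weaker sequential Prohorov assumption is exactly enough to control convergent sequences.

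First I would fix a convergent sequence $(\mu_n)_{n\in\w}$ in $P_r(Y)$ and aim to produce an accumulation point of $(l(\mu_n))_{n\in\w}$ in $P_r(X)$. Since $Y$ is sequentially Prohorov, the countable family $\{\mu_n:n\in\w\}$ is uniformly tight: for each $\e>0$ there is a compact set $K_\e\subset Y$ with $\mu_n(Y\setminus K_\e)<\e$ for all $n$. Because $s$ preserves precompact sets, the closure $C_\e=\overline{s(K_\e)}$ is a compact subset of $X$. Applying the mass-transfer identity of Theorem~\ref{1.12} to $K_\e$ gives $l(\mu_n)(C_\e)=l(\mu_n)(\overline{s(K_\e)})=\mu_n(K_\e)\ge 1-\e$, so $l(\mu_n)(X\setminus C_\e)\le\e$ for every $n$. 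Thus the images $\{l(\mu_n):n\in\w\}$ form a uniformly tight family in $P_r(X)$, witnessed by the very same compacta $C_\e$.

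Finally I would invoke the standard fact (recalled in Section~\ref{s10}) that every uniformly tight collection of measures on a Tychonoff space is precompact in $P_r(X)$. Hence the set $\{l(\mu_n):n\in\w\}$ has compact closure in $P_r(X)$, and a sequence lying in a compact set always has an accumulation point there. That accumulation point is precisely the object required by the definition of $\cs^*$-continuity, so $l$ is $\cs^*$-continuous and is the desired section of $P_r(f)$. The conceptual crux — and the only genuine step — is recognizing that sequential Prohorov-ness supplies uniform tightness of the \emph{sequence} $\{\mu_n\}$ (rather than of arbitrary compact families), which transfers to the images through the exact mass equality of Theorem~\ref{1.12}, and that $\cs^*$-continuity demands merely an accumulation point, matching what precompactness of $\{l(\mu_n)\}$ delivers; no convergence of the images themselves is needed, which is exactly why the weaker hypothesis suffices.
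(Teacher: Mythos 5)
Your proof is correct and follows essentially the same route as the paper: the paper invokes Corollary~\ref{1.13} (itself a direct consequence of Theorem~\ref{1.12}) to get a section preserving uniformly tight families, then uses sequential Prohorov-ness to make convergent sequences uniformly tight and concludes precompactness of the image sequence, hence an accumulation point. You merely unpack Corollary~\ref{1.13} back to the mass-transfer identity of Theorem~\ref{1.12}, which is the same argument written out in slightly more detail.
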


\begin{proof} By Corollary~\ref{1.13}, the map $P_r(f)\colon\, P_r(X)\to P_r(Y)$
admits a section $s\colon\, P_r(Y)\to P_r(X)$ preserving uniformly tight families
of measures. Since $Y$ is sequentially Prohorov, each convergent sequence $(\mu_n)_{n\in\w}$ in $P_r(Y)$ is uniformly tight and hence its image $\{s(\mu_n)\}_{n\in\w}$, being uniformly tight, is precompact  in $P_r(X)$ and thus has an accumulation point in $P_r(X)$, which means that $s$ is $\cs^*$-continuous.
\end{proof}

Combining Theorem~\ref{1.14a} with Theorem~\ref{n1.4} we get

\begin{corollary}\label{1.14} Let $f\colon\, X\to Y$ be a subproper
map between Tychonoff  spaces. Then the map $P_r(f)\colon\, P_r(X)\to P_r(Y)$ is subproper provided the space $P_R(X)$ is $\mu$-complete, $Y$ is
sequentially Prohorov, and each compact subset of $P_R(Y)$ is sequentially
compact.
\end{corollary}

\begin{remark}
{\rm
It should be mentioned that for a $\mu$-complete Tychonoff
space $X$ the space $P_r(X)$ does not need to be $\mu$-complete:
  according to
\cite{Fe2} the space $P_r(\IR^{\omega_1})$ contains a closed topological
copy of the ordinal segment $[0,\omega_1)$ and hence is not
$\mu$-complete.
}\end{remark}

A topological space $X$ is called {\em submetrizable} if it admits a
continuous bijective map $f\colon\, X\to M$ onto a metrizable space $M$.

\begin{theorem}\label{1.16} If $f\colon\, X\to Y$ is a subproper map from a
metrizable space $X$ onto a submetrizable sequentially Prohorov Tychonoff
space $Y$,
then the map $P_r(f)\colon\, P_r(X)\to P_r(Y)$ is subproper.
\end{theorem}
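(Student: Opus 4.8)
The plan is to deduce the statement directly from Corollary~\ref{1.14}, checking its three provisos one at a time. Since $X$ is metrizable it is Tychonoff, and $f\colon X\to Y$ is a subproper map between Tychonoff spaces, so Corollary~\ref{1.14} will apply as soon as we verify that (i) $P_r(X)$ is $\mu$-complete, (ii) $Y$ is sequentially Prohorov, and (iii) every compact subset of $P_r(Y)$ is sequentially compact.

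The first two provisos come almost for free. Proviso (ii) is part of the hypothesis. For proviso (i), the metrizability of $X$ forces $P_r(X)$ to be metrizable (as recorded in the discussion preceding Theorem~\ref{1.12}); a metrizable space is paracompact and hence $\mu$-complete by \cite[8.5.13]{En}, so $P_r(X)$ is $\mu$-complete.

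The only substantial step is proviso (iii), and this is where the submetrizability of $Y$ is used. By hypothesis there is a continuous bijection $g\colon Y\to N$ onto a metrizable space $N$. Applying the functor $P_r$ yields a continuous map $P_r(g)\colon P_r(Y)\to P_r(N)$; since $g$ is an injective map between the Tychonoff spaces $Y$ and $N$, the induced map $P_r(g)$ is injective by the functorial properties of $P_r$ recorded above, and since $N$ is metrizable so is $P_r(N)$. Now for any compact subset $K\subset P_r(Y)$ the restriction $P_r(g)|K$ is a continuous injection from the compact space $K$ into the Hausdorff space $P_r(N)$, hence a homeomorphism onto its image; thus $K$ is homeomorphic to a subspace of the metrizable space $P_r(N)$ and is therefore metrizable, and in particular sequentially compact. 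This gives proviso (iii), and Corollary~\ref{1.14} then yields that $P_r(f)$ is subproper.

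I expect the crux of the argument to be precisely this last observation, namely that $P_r$ carries submetrizable spaces to submetrizable spaces---combining the preservation of injective maps between Tychonoff spaces with the preservation of metrizability---together with the elementary fact that compacta in a submetrizable space are metrizable (a continuous injection of a compactum into a Hausdorff space is an embedding). Everything else amounts to matching the hypotheses of the theorem against those of Corollary~\ref{1.14}: the domain $X$ being metrizable supplies the $\mu$-completeness of $P_r(X)$, while the sequential Prohorov assumption on $Y$ is simply inherited.
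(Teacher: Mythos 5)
Your proof is correct and is essentially identical to the paper's: both reduce to Corollary~\ref{1.14} by noting that $P_r(X)$ is metrizable (hence $\mu$-complete), and both use the continuous injection $g\colon Y\to N$ onto a metrizable space together with the injectivity of $P_r(g)$ and the metrizability of $P_r(N)$ to conclude that compacta in $P_r(Y)$ are metrizable and therefore sequentially compact. No gaps.
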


\begin{proof}
Since the space $P_r(X)$ is metrizable, it is
$\mu$-complete. The space~$Y$, being submetrizable, admits an injective
continuous map $g\colon\, Y\to M$ into a metrizable space $M$. By \cite[2.1]{Ba}, the
map $P_r(g)\colon\, P_r(Y)\to P_r(M)$ is injective. Then for every compact subset
$\KK\subset P_r(Y)$ the restriction $P_r(g)|\KK$ is an embedding of $\KK$
into the metrizable space $P_r(M)$ which implies that the compactum $\KK$
is metrizable and thus sequentially compact. Now we can apply
Corollary~\ref{1.14} to conclude that the map $P_r(f)\colon\, P_r(X)\to P_r(Y)$ is subproper.
\end{proof}

The following simple assertion shows that the requirement on $Y$ to be
sequentially Prohorov is essential in the last theorem.

\begin{proposition}\label{1.17} Let $f\colon\, X\to Y$ be a map between Tychonoff
spaces. If the space $X$ is Prohorov and the map $P_r(f)\colon\, P_r(X)\to P_r(Y)$
is subproper, then the space $Y$ is Prohorov.
\end{proposition}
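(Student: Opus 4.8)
The plan is to verify the defining property of Prohorov spaces directly: given an arbitrary compact subset $\KK\subset P_r(Y)$, I will show that $\KK$ is uniformly tight. Since $P_r(f)$ is subproper, Theorem~\ref{n1.2} provides a section $s\colon\, P_r(Y)\to P_r(X)$ of $P_r(f)$ that preserves precompact sets. As $\KK$ is compact and hence precompact, the image $s(\KK)$ is precompact in $P_r(X)$; let $\LL=\overline{s(\KK)}$ denote its closure, which is compact in $P_r(X)$. Because $X$ is Prohorov, every compact subset of $P_r(X)$ is uniformly tight, so in particular $\LL$ is uniformly tight.

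The remaining step is to transport the tightness from $\LL$ back to $\KK$ through $f$. Fix $\e>0$ and use the uniform tightness of $\LL$ to choose a compact set $C_\e\subset X$ with $\eta(X\setminus C_\e)<\e$ for all $\eta\in\LL$, in particular for every measure of the form $\eta=s(\mu)$ with $\mu\in\KK$. Put $K_\e=f(C_\e)$, which is a compact subset of $Y$ as the continuous image of a compactum. The key computation combines the functorial formula $P_r(f)(\eta)(B)=\eta(f^{-1}(B))$ with the section identity $P_r(f)(s(\mu))=\mu$: for any $\mu\in\KK$ and $\eta=s(\mu)$,
$$
\mu(Y\setminus K_\e)=\eta\big(f^{-1}(Y\setminus K_\e)\big)=\eta\big(X\setminus f^{-1}(K_\e)\big).
$$
Since $C_\e\subset f^{-1}(f(C_\e))=f^{-1}(K_\e)$, we have $X\setminus f^{-1}(K_\e)\subset X\setminus C_\e$, whence $\mu(Y\setminus K_\e)\le\eta(X\setminus C_\e)<\e$. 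As $\mu\in\KK$ was arbitrary, the single compactum $K_\e$ witnesses tightness of $\KK$ at level $\e$; letting $\e$ vary shows $\KK$ is uniformly tight, and therefore $Y$ is Prohorov.

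I do not expect a serious obstacle, since the argument is essentially a transfer of uniform tightness along the precompact-preserving section furnished by subproperness. The only points demanding care are bookkeeping ones: first, confirming that $s(\KK)$ is genuinely precompact so that its closure is a legitimate compact subset of $P_r(X)$ to which the Prohorov hypothesis on $X$ applies; and second, getting the direction of the inclusion $C_\e\subset f^{-1}(K_\e)$ right, so that the measure estimate propagates in the correct direction from $\LL$ to $\KK$. Everything else reduces to the defining formula for $P_r(f)$ and elementary manipulations with preimages.
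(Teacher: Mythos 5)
Your proof is correct. The paper states Proposition~\ref{1.17} as a ``simple assertion'' and omits the argument entirely; your transfer of uniform tightness from $\overline{s(\KK)}\subset P_r(X)$ back to $\KK$ via the identity $\mu(Y\setminus f(C_\e))=s(\mu)\bigl(X\setminus f^{-1}(f(C_\e))\bigr)\le s(\mu)(X\setminus C_\e)$ is exactly the intended reasoning, and all the bookkeeping (precompactness of $s(\KK)$, the inclusion $C_\e\subset f^{-1}(f(C_\e))$) is handled properly.
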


According to \cite{FGH} the Banach space $l^1$ endowed with the weak
topology is not sequentially Prohorov. Since each weakly convergent
sequence in $l^1$ converges in norm, the identity map
$\id\colon\, l^1\to(l^1,\weak)$ from the norm into the weak topology is proper. Yet, the map $P_r(\id)\colon\, P_r(l^1)\to
P_r(l^1,\weak)$ is not subproper (because the space $l^1$
is Prohorov while $(l^1,\weak)$ is not). Thus we get the
following example complementing Theorem~\ref{1.16}.

\begin{example} There is a bijective proper map $f\colon\, X\to
Y$ from a Polish space $X$ onto a Tychonoff  submetrizable space $Y$ such
that the map $P_r(f)\colon\, P_r(X)\to P_r(Y)$ is not subproper.
\end{example}

The results  on preservation of subproper maps by the functor of Radon measures will help us to detect $k^*$-metrizable spaces of probability measures.

\begin{theorem}\label{n10.9} Let $X$ be a Tychonoff sequentially Prohorov space and $P_r(X)$ be the space of probability Radon measures on $X$.
\begin{enumerate}
\item If $P_r(X)$ is $k^*$-metrizable or $\cs^*$-metrizable, then so is the space $X$;
\item If $X$ is $k^*$-metrizable, then $P_r(X)$ is $\cs^*$-metrizable;
\item If $X$ is submetrizable and $k^*$-metrizable, then $P_r(X)$ is $k^*$-metrizable;
\item $X$ is an $\aleph_0$-space if and only if $P_r(X)$ is an $\aleph_0$-space.
\end{enumerate}
\end{theorem}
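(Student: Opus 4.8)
The plan is to treat this theorem as an assembly of the functorial properties of the Radon measure functor established in Theorems~\ref{1.14a} and~\ref{1.16}, the hereditary stability of the two metrizability classes, and the characterization of $\aleph_0$-spaces in Theorem~\ref{n8.2}. The one genuinely new ingredient is the canonical map $\delta\colon X\to P_r(X)$, $\delta(x)=\delta_x$, sending a point to its Dirac measure. First I would check that $\delta$ is a topological embedding: it is injective because $X$ is Hausdorff, continuous because the preimage of a subbasic set $\{\mu\colon\mu(U)>a\}$ is either $\emptyset$, $U$, or $X$, and open onto its image because $\delta(U)=\delta(X)\cap\{\mu\colon\mu(U)>\tfrac12\}$ for every open $U\subset X$. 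Thus $X$ is homeomorphic to the subspace $\delta(X)\subset P_r(X)$, and item~(1) follows at once from the hereditariness of $k^*$-metrizability (Theorem~\ref{3.3}(1)) and of $\cs^*$-metrizability (Theorem~\ref{n4.4}(1)).

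For item~(2) I would fix a subproper map $\pi\colon M\to X$ from a metrizable space $M$ witnessing the $k^*$-metrizability of $X$. Since $M$ is metrizable (hence Tychonoff) and $\pi$ carries a precompact-preserving section, while $X$ is Tychonoff and sequentially Prohorov by hypothesis, Theorem~\ref{1.14a} applies directly and yields a $\cs^*$-continuous section of $P_r(\pi)\colon P_r(M)\to P_r(X)$. As $P_r(M)$ is metrizable, this exhibits $P_r(X)$ as the image of a metrizable space under a map with a $\cs^*$-continuous section (the section forces surjectivity), i.e. $P_r(X)$ is $\cs^*$-metrizable. Item~(3) is even more immediate: with the extra hypothesis that $X$ is submetrizable, Theorem~\ref{1.16} says that $P_r(\pi)\colon P_r(M)\to P_r(X)$ is already subproper, so $P_r(X)$ is the image of the metrizable space $P_r(M)$ under a subproper map and hence $k^*$-metrizable.

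Item~(4) I would prove in two directions. For necessity, if $P_r(X)$ is an $\aleph_0$-space then, since $X\cong\delta(X)$ is a subspace of $P_r(X)$ and the class of $\aleph_0$-spaces is hereditary (countable networks and regularity both pass to subspaces), $X$ is an $\aleph_0$-space. For sufficiency, assume $X$ is an $\aleph_0$-space; then $X$ is $k^*$-metrizable (Theorem~\ref{n8.2}), so item~(2) gives that $P_r(X)$ is $\cs^*$-metrizable. By Theorem~\ref{n8.2} there is a subproper map $\pi\colon M\to X$ from a separable metrizable space $M$; then $P_r(M)$ is separable metrizable, $P_r(\pi)$ is surjective (it has a section by Corollary~\ref{1.13}), and the continuous surjective image of a space with a countable network again has a countable network, so $P_r(X)$ has a countable network. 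Since $P_r(X)$ is Tychonoff (because $X$ is), it is a regular $\cs^*$-metrizable space with countable network, whence $P_r(X)$ is an $\aleph_0$-space by Theorem~\ref{n8.2}.

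The routine verifications --- that $\delta$ is an embedding, that $P_r(M)$ is separable metrizable when $M$ is, and that continuous surjections preserve countable networks --- are standard. The point that deserves genuine care, and is the main obstacle, is keeping the hypotheses of the transfer theorems~\ref{1.14a} and~\ref{1.16} aligned in the correct direction: it is precisely the sequentially Prohorov assumption on $X$ that converts a precompact-preserving section of $\pi$ downstairs into a $\cs^*$-continuous section of $P_r(\pi)$ upstairs, and it is the submetrizability of $X$ (the hypothesis singling out item~(3)) that forces compact subsets of $P_r(X)$ to be metrizable, thereby upgrading that $\cs^*$-section to a genuinely precompact-preserving one and delivering subproperness rather than merely $\cs^*$-metrizability.
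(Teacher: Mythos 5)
Your proposal is correct and is essentially the derivation the paper intends: Theorem~\ref{n10.9} is stated without proof as a consequence of Theorems~\ref{1.14a} and \ref{1.16}, Corollary~\ref{1.13}, the hereditariness results, and Theorem~\ref{n8.2}, and your assembly (including the Dirac embedding for item~(1) and the necessity half of item~(4)) matches that route, with the hypotheses of the transfer theorems correctly aligned.
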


\section{$k^*$-metrizable spaces and Skorohod properties}\label{s11}

Following \cite{BK} and \cite{BBK1} we say that a Tychonoff  space
$X$ has the {\em  strong Skorohod property} if to each measure
$\mu\in P_r(X)$ one can assign a Borel function $\xi_\mu\colon\,
[0,1]\to X$ such that $\mu$ is the image of  Lebesgue measure
under the function $\xi_\mu$ and for every  sequence
$(\mu_n)\subset P_r(X)$ convergent to a measure $\mu_0\in P_r(X)$
the function sequence $(\xi_{\mu_n})$ converges to $\xi_{\mu_0}$
almost surely on $[0,1]$. The {\em uniformly tight strong Skorohod
property} is defined by requiring the latter only for uniformly
tight weakly convergent sequences $(\mu_n)$.

A topological space $X$ has the {\em weak Skorohod property} if to
each measure $\mu\in P_r(X)$ one can assign a Borel function
$\xi_\mu\colon\, [0,1]\to X$ such that $\mu$ is the image of
Lebesgue measure under the function $\xi_\mu$ and for every
uniformly tight sequence $(\mu_n)\subset P_r(X)$ the function
sequence $(\xi_{\mu_n})$ contains a subsequence that converges
almost surely in $X$ with respect to Lebesgue measure $\lambda$ on
$[0,1]$.

It was shown in \cite[3.11]{BK} and \cite[2.4]{BBK1} that each
metrizable space has the  strong Skorohod property and each $k$-metrizable space has the uniformly tight strong Skorohod property.

It turns out that the weak Skorohod property is preserved by
subproper  maps.

\begin{theorem}\label{2.2}
A Tychonoff  space $Y$ has the weak Skorohod property if and only
if there is a subproper map $f\colon\, X\to Y$ from a space $X$
possessing the weak Skorohod property.
\end{theorem}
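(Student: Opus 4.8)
The ``only if'' direction is immediate: the identity map $\id\colon Y\to Y$ is proper, hence subproper, so one may simply take $X=Y$. Thus all the content lies in the ``if'' direction, and the plan is to transport the weak Skorohod data from $X$ down to $Y$ along the map $f$.

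First I would fix, using the weak Skorohod property of $X$, an assignment $\mu\mapsto\eta_\mu$ associating to each $\mu\in P_r(X)$ a Borel function $\eta_\mu\colon[0,1]\to X$ that pushes Lebesgue measure $\lambda$ forward to $\mu$ and enjoys the subsequence-convergence property for uniformly tight sequences. The key auxiliary tool is Corollary~\ref{1.13}: since $f$ is a subproper map between Tychonoff spaces (note that $X$, carrying Radon measures, is Tychonoff), the induced map $P_r(f)\colon P_r(X)\to P_r(Y)$ admits a section $l\colon P_r(Y)\to P_r(X)$ that preserves uniformly tight families of measures. I would then define the Skorohod assignment for $Y$ by the composition $\xi_\nu=f\circ\eta_{l(\nu)}$ for $\nu\in P_r(Y)$.

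The verification splits into two routine checks. For the image-measure condition, since $\eta_{l(\nu)}$ pushes $\lambda$ to $l(\nu)$ and $l$ is a section of $P_r(f)$, the function $\xi_\nu=f\circ\eta_{l(\nu)}$ pushes $\lambda$ forward to $P_r(f)(l(\nu))=\nu$; continuity of $f$ guarantees that each $\xi_\nu$ is Borel. For the sequential condition, given a uniformly tight sequence $(\nu_n)\subset P_r(Y)$, the family-preservation property of $l$ makes $(l(\nu_n))$ uniformly tight in $P_r(X)$, so the weak Skorohod property of $X$ supplies an infinite $J\subset\w$ for which the subsequence $(\eta_{l(\nu_n)})_{n\in J}$ converges $\lambda$-almost surely in $X$; composing with the continuous map $f$ then shows that $(\xi_{\nu_n})_{n\in J}=(f\circ\eta_{l(\nu_n)})_{n\in J}$ converges almost surely in $Y$, as required.

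I expect essentially no technical difficulty once the section $l$ is in hand, so the substantive work has already been carried out in Corollary~\ref{1.13}. The single point deserving care is conceptual rather than computational: the lifting $l$ must respect \emph{uniform tightness} and not merely weak precompactness, because it is precisely the uniform tightness of $(l(\nu_n))$ that licenses the use of the weak Skorohod property of $X$. I would also emphasize that no Prohorov hypothesis on $Y$ is needed, since the argument uses only the first assertion of Corollary~\ref{1.13} (the existence of a tightness-preserving section), not the subproperness of $P_r(f)$.
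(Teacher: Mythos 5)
Your proof is correct and follows essentially the same route as the paper: both arguments invoke Corollary~\ref{1.13} to obtain a uniform-tightness-preserving section $l$ of $P_r(f)$ and then define the Skorohod parametrization on $Y$ by composing $f$ with the parametrization on $X$ applied to $l(\nu)$. The verification of the image-measure and subsequence-convergence conditions matches the paper's computation line for line.
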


\begin{proof} The ``only if'' part is trivial. To prove the ``if'' part, let
$f\colon\, X\to Y$ be a subproper map. Assume that the space $X$
has the weak Skorohod property, i.e., to each probability Radon
measure $\eta$ on $X$ we can assign a Borel map $\xi_\eta\colon\,
[0,1]\to X$ so that $\eta$ is the image of Lebesgue measure
$\lambda$ under the map $\xi_\eta$ and for every uniformly tight
sequence $(\eta_n)$ of probability Radon measures on $X$ the
function sequence $(\xi_{\eta_n})$ contains a subsequence that
converges almost surely.

By Corollary~\ref{1.13}, the map $P_r(f)\colon\, P_r(X)\to P_r(Y)$
admits a section $s\colon\, P_r(Y)\to P_r(X)$ preserving uniformly
tight families of measures. Let us assign to each measure $\mu\in
P_r(Y)$ the Borel function $\zeta_\mu=f\circ\xi_{s(\mu)}\colon\,
[0,1]\to Y$. It is clear that $$
P_r(\zeta_\mu)(\lambda)=P_r(f\circ
\xi_{s(\mu)})(\lambda)=P_r(f)\circ
P_r(\xi_{s(\mu)})(\lambda)=P_r(f)(s(\mu))=\mu . $$
 Thus $\mu$ is the image of
 Lebesgue measure $\lambda$ under the Borel function $\zeta_\mu$.

Now assume that $(\mu_n)\subset P_r(Y)$ is a uniformly tight
sequence of probability Radon measures on $Y$. Since the section
$s$ preserves uniformly tight families of measures, the sequence
$(s(\mu_n))$ of measures on $X$ is uniformly tight. By our choice
of the maps $\xi_\mu$, the function sequence $(\xi_{s(\mu_n)})$
contains a subsequence $(\xi_{s(\mu_{n_k})})$ that converges
almost surely. Then by the continuity of $f$, the subsequence
$(\zeta_{\mu_{n_k}})$ of the sequence $(\zeta_{\mu_n})$ converges
almost surely in $Y$ which completes the proof of the weak
Skorohod property of the space $Y$.
\end{proof}

Since each $k$-metrizable space has the uniformly tight strong
Skorohod property \cite[2.4]{BBK1} and each metrizable space has
the weak Skorohod property \cite[\S4]{BK}, Theorems~\ref{n10.9} and
\ref{2.2} imply the following result.

\begin{theorem}\label{2.3} Let $X$ be a Tychonoff space.
\begin{enumerate}
\item If $X$ is metrizable, then $X$ has the strong Skorohod property;
\item If $X$ is $k$-metrizable, then $X$ has the uniformly tight strong Skorohod
property.
\item If $X$ is $k^*$-metrizable, then $X$ has the weak Skorohod property.
\end{enumerate}
\end{theorem}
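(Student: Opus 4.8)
Theorem 2.3 collects three assertions; since items (1) and (2) are explicitly cited from the literature (\cite{BBK1} and \cite{BK}), my plan focuses on deriving item (3), which is the genuinely new contribution and the one the introduction advertises as generalizing Lemma 4.2(ii) of \cite{BK}.

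The plan is to chain together two results already established in the excerpt: the preservation of the weak Skorohod property under subproper maps (Theorem~\ref{2.2}) and the metric characterization / definition of $k^*$-metrizable spaces (Definition of $k^*$-metrizability together with Theorem~\ref{n10.9}, and ultimately the fact that metrizable spaces have the weak Skorohod property). First I would recall that, by definition, a $k^*$-metrizable space $X$ is the image of a metrizable space $M$ under a subproper map $\pi\colon M\to X$. Next I would invoke item (1) of this very theorem (equivalently, the result from \cite{BK} that every metrizable space has the strong, and hence the weak, Skorohod property) to conclude that the domain $M$ possesses the weak Skorohod property. At this point both hypotheses of the ``if'' direction of Theorem~\ref{2.2} are met: we have a subproper map $f=\pi\colon M\to X$ from a space $M$ that enjoys the weak Skorohod property.

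The conclusion then follows immediately: Theorem~\ref{2.2} asserts that a Tychonoff space $Y$ has the weak Skorohod property whenever there is a subproper map onto $Y$ from a space possessing that property. Applying this with $Y=X$ and the subproper map $\pi\colon M\to X$ gives that $X$ has the weak Skorohod property, as desired.

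I expect the proof of item (3) to be essentially a one-line deduction, so there is no serious obstacle in this final step; the real work has already been done in Theorem~\ref{2.2}. The only point requiring mild care is the standing hypothesis that $X$ be Tychonoff, which is needed for the measure-theoretic machinery of Theorem~\ref{2.2} (the functor $P_r$, uniform tightness, and the section argument) to apply; this is supplied by the blanket assumption of the theorem statement. For completeness the written proof should also note that items (1) and (2) are quoted directly from \cite{BK} and \cite{BBK1}, so that the full theorem is accounted for, and then give the short derivation of (3) from Definition of $k^*$-metrizability together with Theorem~\ref{2.2}.
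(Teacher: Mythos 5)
Your proposal is correct and matches the paper's own argument: the paper likewise quotes items (1) and (2) from \cite{BK} and \cite{BBK1} and obtains item (3) by applying Theorem~\ref{2.2} to the subproper map $\pi\colon M\to X$ from a metrizable (hence weak-Skorohod) space $M$ furnished by the definition of $k^*$-metrizability. Nothing further is needed.
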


\begin{remark}
{\rm It was shown in \cite[3.4]{BBK1} that the ordinal segment
$[0,\omega_1]$ endowed with the interval topology has the strong
and weak Skorohod properties. Since $[0,\omega_1]$ is a
non-metrizable sequentially compact space, it cannot be $\cs^*$-metrizable. This shows that the
class of Tychonoff  spaces with the weak Skorohod property is
strictly wider than the class of $k^*$-metrizable spaces. }\end{remark}

\section{$k^*$-metrizable locally convex spaces}\label{s12}

In this section we consider locally convex spaces that are $k^*$-metrizable.
 We recall that a locally convex space $X$ is the {\em strict inductive
limit} of a sequence $X_1\subset X_2\subset\dots$ of its subspaces (denoted by
$\ind X_n$) if $X=\bigcup_{n=1}^\infty X_n$ and $X$ carries the strongest
locally convex topology inducing the original topology on each space $X_n$.
It is well known that each bounded subset $B$ of the strict inductive limit
$X=\ind X_n$ lies in some $X_n$, see \cite[II.6.5]{Sch}. Also we will use the
well-known fact that the topology of the
locally convex sum $\oplus_{i\in\I}X_i$ of a
collection of locally convex spaces coincides with the corresponding
box-product topology,
see Exercises II.12 and I.1 in \cite{Sch}. Theorems~\ref{3.3}, \ref{3.3a},
and \ref{n3.10} imply that
the class of $k^*$-metrizable locally convex spaces has the following
permanence properties.

\begin{theorem}\label{4.1}
{\rm1.} Every subspace of a $k^*$-metrizable locally convex space $k^*$-metrizable.

{\rm2.} The countable product of $k^*$-metrizable locally convex is $k^*$-metrizable.

{\rm3.} The locally convex sum $\oplus_{i\in\I}X_i$ of arbitrary collection
of $k^*$-metrizable locally convex spaces is $k^*$-metrizable.

{\rm4.} The projective limit of a sequence of $k^*$-metrizable locally convex spaces is $k^*$-metrizable.

{\rm5.} The strict inductive limit $\ind X_n$ of an increasing sequence of
 $k$-closed $k^*$-metrizable spaces is $k^*$-metrizable.

{\rm6.}
A locally convex space $X$ is $k^*$-metrizable if there is
a sequence $\{X_n\}_{n=1}^\infty$ of compactly closed $k^*$-metrizable subspaces of $X$ such that every convergent sequence of $X$ lies in some
$X_n$.
\end{theorem}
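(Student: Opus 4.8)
The plan is to obtain all six assertions as formal consequences of the general topological permanence results in Theorems~\ref{3.3}, \ref{3.3a} and \ref{n3.10}, feeding in only two structural facts about locally convex spaces recalled above: that the topology of a locally convex sum coincides with the box-product topology, and that every bounded subset of a strict inductive limit $\ind X_n$ is absorbed by some $X_n$, see \cite[II.6.5]{Sch}.

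First I would dispatch the three statements that are immediate. Item~1 is Theorem~\ref{3.3}(1) and item~2 is Theorem~\ref{3.3}(2), applied verbatim to the underlying topological spaces. Item~6 is precisely the ``if'' direction of Theorem~\ref{3.3a}(2): a compactly closed subspace is exactly a $k$-closed subspace, so a sequence $\{X_n\}_{n=1}^\infty$ of $k$-closed $k^*$-metrizable subspaces containing every convergent sequence forces $X$ to be $k^*$-metrizable.

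For item~3 I would use the identification of the locally convex sum $\oplus_{i\in\I}X_i$ with a subspace of the box-product $\square_{i\in\I}X_i$, its topology coinciding with the box-product topology. Theorem~\ref{n3.10} gives that $\square_{i\in\I}X_i$ is $k^*$-metrizable, whence Theorem~\ref{3.3}(1) transfers $k^*$-metrizability to the subspace $\oplus_{i\in\I}X_i$. For item~4 I would realize the projective limit of the sequence as the closed subspace $\{(x_n)\in\prod_n X_n: p_{mn}(x_m)=x_n\ \text{for}\ m\ge n\}$ of the countable product $\prod_n X_n$, where the $p_{mn}$ denote the bonding maps; by item~2 this product is $k^*$-metrizable, and by Theorem~\ref{3.3}(1) so is the subspace.

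The only assertion demanding genuine (if modest) care is item~5, and this is where I would concentrate. The hypotheses supply a nested sequence $X_1\subset X_2\subset\cdots$ of $k$-closed $k^*$-metrizable subspaces with $X=\ind X_n$, and to invoke Theorem~\ref{3.3a}(2) (equivalently item~6) I must verify that every convergent sequence in $X$ lies in some $X_n$. A convergent sequence together with its limit is compact, hence bounded in the topological vector space $X$, and every bounded subset of a strict inductive limit is contained in some $X_n$. Thus the hypothesis of item~6 is met and $X$ is $k^*$-metrizable. The sole point to watch is the reduction from compactness to boundedness; since compacta are bounded in every topological vector space this causes no difficulty, and it is precisely the strict-inductive-limit structure (rather than mere countability of the exhaustion) that guarantees the absorption needed here.
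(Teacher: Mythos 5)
Your proposal is correct and follows exactly the route the paper intends: the paper offers no separate proof of Theorem~\ref{4.1} beyond the remark that it follows from Theorems~\ref{3.3}, \ref{3.3a} and \ref{n3.10} together with the two recalled facts about locally convex sums carrying the box-product topology and bounded sets in strict inductive limits being absorbed by some $X_n$. Your write-up simply makes those reductions explicit (including the compact $\Rightarrow$ bounded step in item~5), so there is nothing to add.
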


Next, we look for $k^*$-metrizable spaces among operator spaces. Given two linear topological spaces $X$ and $Y$ let $\LL(X,Y)$
denote the linear space of all linear continuous operators from $X$ to $Y$.
For a collection $\SG$ of subsets of~$X$, the topology of uniform
convergence on elements of $\SG$ (or briefly,  the {\em $\SG$-topology})
is the locally convex topology whose base at the origin consists of
the sets
$$
\{L\in\LL(X,Y)\colon\, L(S_1\cup\dots\cup S_n)\subset U\},
$$
 where
$S_1,\dots,S_n\in\SG$ and $U$ is an open neighborhood of the origin in $Y$,
see \cite[III.3]{Sch}.
The space $\LL(X,Y)$ endowed with the $\SG$-topology will be denoted by $\LL_{\SG}(X,Y)$.
The following important theorem follows immediately from Theorem~\ref{n9.2}.

\begin{theorem}\label{n12.2} Let $X$ be a linear topological $\aleph_0$-space and $\mathcal S$ be a family of compact subsets of $X$ with dense union $\cup\mathcal S$ in $X$. For any linear topological $\aleph$-space $Y$ the operator space $\LL_{\mathcal S}(X,Y)$ endowed with the $\mathcal S$-topology is $k^*$-metrizable provided the evaluation operator
$$e:\LL_{\mathcal S}(X,Y)\times X\to Y,\; e:(T,x)\mapsto T(x)$$ is sequentially continuous.
\end{theorem}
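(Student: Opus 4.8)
The plan is to exhibit the operator space $\LL_{\mathcal S}(X,Y)$ as a topological subspace of the function space $C_{\mathcal S}(X,Y)$ and then to quote Theorem~\ref{n9.2} with $\mathcal F=\LL_{\mathcal S}(X,Y)$; this is exactly the ``follows immediately'' promised in the text. First I would record the purely set-theoretic inclusion $\LL(X,Y)\subset C(X,Y)$: every continuous linear operator $T\colon X\to Y$ is in particular a continuous map, so $\LL(X,Y)$ is a subset of $C(X,Y)$ and we may regard $\mathcal F=\LL(X,Y)$ as a subset of $C_{\mathcal S}(X,Y)$.

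The single genuine verification is that the $\mathcal S$-topology on $\LL(X,Y)$ coincides with the topology that $C_{\mathcal S}(X,Y)$ induces on it. Both are topologies of uniform convergence on the members of $\mathcal S$: a sub-basic neighborhood in $C_{\mathcal S}(X,Y)$ has the form $[F,U]=\{f\colon f(F)\subset U\}$ with $F$ a closed subset of some $S\in\mathcal S$ and $U$ open in $Y$, whereas the $\mathcal S$-topology is generated by the translates of the sets $\{T\in\LL(X,Y)\colon T(S)\subset V\}$ with $S\in\mathcal S$ and $V$ a neighborhood of the origin in $Y$. Since the elements of $\LL(X,Y)$ are continuous maps into a topological vector space $Y$ (hence a uniform space whose uniformity is translation invariant), the coincidence of these two topologies on $\LL(X,Y)$ is the standard identification of the compact-open topology with the topology of uniform convergence on the compacta of $\mathcal S$; see \cite[III.3]{Sch}. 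I expect this bookkeeping to be the only point requiring an argument, and it is routine.

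Once this identification is in place, the hypotheses of Theorem~\ref{n9.2} are met verbatim with $\mathcal K=\mathcal S$ and $\mathcal F=\LL_{\mathcal S}(X,Y)$. Indeed, $X$ is an $\aleph_0$-space and $Y$ is an $\aleph$-space (we use only their topological structure, discarding the linearity for this step); the family $\mathcal S$ consists of compact subsets of $X$ with dense union $\cup\mathcal S$ in $X$; the set $\mathcal F$ is a subspace of $C_{\mathcal S}(X,Y)$ by the preceding paragraph; and by hypothesis the evaluation operator $e\colon\mathcal F\times X\to Y$, $e\colon(T,x)\mapsto T(x)$, is sequentially continuous. Theorem~\ref{n9.2} then yields that $\mathcal F=\LL_{\mathcal S}(X,Y)$ is $k^*$-metrizable, which completes the proof. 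The hard part, such as it is, lies entirely in the topological-vector-space bookkeeping of the first two paragraphs; no topological input beyond Theorem~\ref{n9.2} is needed.
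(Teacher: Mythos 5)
Your proposal is correct and follows exactly the route the paper takes: the paper's entire proof is the remark that the statement ``follows immediately from Theorem~\ref{n9.2}'', and your argument simply makes explicit the two implicit steps, namely that $\LL_{\mathcal S}(X,Y)$ sits inside $C_{\mathcal S}(X,Y)$ with the induced topology (the standard identification of the set-open topology on compacta with uniform convergence on those compacta, valid since $Y$ is a topological vector space and hence uniform) and that the hypotheses of Theorem~\ref{n9.2} are then met verbatim. Nothing further is needed.
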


A~subset $\FF\subset \LL(X,Y)$ is called {\em
equicontinuous} if for each neighborhood $U\subset Y$ of zero there is a
neighborhood $V\subset X$ of zero such that $f(V)\subset U$ for each $f\in \FF$.
It is easy to see that for any equicontinuous family $\FF\subset\LL(X,Y)$ the evaluation operator $\FF\times X\to Y$ is continuous and hence preserves precompact sets.

A {\em barrel} in a locally convex space $X$ is a closed convex symmetric
subset of $X$. A locally convex space $X$ is {\em barrelled} if each barrel
in $X$ is a neighborhood of zero, see \cite[II.7]{Sch}. It is well known
that each Baire locally convex space is barrelled. On the other hand, there
are separable normed barrelled spaces of the first Baire category, see
\cite[5.3.6]{Ku} or \cite{Ba3}.

\begin{corollary}\label{n12.3} Let $X$ be a linear topological $\aleph_0$-space and $\mathcal S$ be a family of compact subsets of $X$ with $\cup\mathcal S=X$. Assume that either $X$ is a Baire space or else $X$ is a barrelled locally convex space. Then for any linear topological $\aleph$-space $Y$ the space $\LL_{\mathcal S}(X,Y)$ is $k^*$-metrizable.
\end{corollary}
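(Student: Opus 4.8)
The plan is to derive the corollary directly from Theorem~\ref{n12.2}. Because $\cup\mathcal S=X$, the density requirement ``$\cup\mathcal S$ dense in $X$'' of that theorem is satisfied trivially, so the whole statement reduces to checking the single hypothesis of Theorem~\ref{n12.2} that is not automatic here, namely that the evaluation operator $e\colon\LL_{\mathcal S}(X,Y)\times X\to Y$, $e(T,x)=T(x)$, is sequentially continuous. Once this is verified, Theorem~\ref{n12.2} yields the $k^*$-metrizability of $\LL_{\mathcal S}(X,Y)$ with no further work.

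To establish sequential continuity of $e$, I would fix an arbitrary sequence $(T_n,x_n)_{n\in\w}$ in $\LL_{\mathcal S}(X,Y)\times X$ converging to a point $(T_\infty,x_\infty)$ and aim to prove $T_n(x_n)\to T_\infty(x_\infty)$ in $Y$. The key reduction is to consider the family $\FF=\{T_n:n\in\w\}\cup\{T_\infty\}$ and to show that it is equicontinuous. Granting equicontinuity, the observation recorded just before this corollary (that the evaluation map of an equicontinuous family is continuous) applies to $\FF$: the restricted evaluation $\FF\times X\to Y$ is continuous, and since $(T_n,x_n)\to(T_\infty,x_\infty)$ takes place inside $\FF\times X$, continuity gives $T_n(x_n)=e(T_n,x_n)\to e(T_\infty,x_\infty)=T_\infty(x_\infty)$, which is exactly the desired sequential continuity.

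Thus the crux is the equicontinuity of $\FF$, and this is where the Baire or barrelledness hypothesis is used. I would first record that $\FF$ is pointwise bounded: convergence $T_n\to T_\infty$ in the $\mathcal S$-topology is uniform convergence on each compact $S\in\mathcal S$, and since $\cup\mathcal S=X$ every $x\in X$ lies in some $S$, whence $T_n(x)\to T_\infty(x)$ and the orbit $\{T(x):T\in\FF\}$ is a convergent sequence together with its limit, hence bounded in $Y$. Pointwise boundedness then feeds into the uniform boundedness principle to give equicontinuity: when $X$ is a Baire topological vector space this is the second-category form of the Banach--Steinhaus theorem, which holds for an arbitrary topological vector space $Y$; when $X$ is barrelled it is the classical Banach--Steinhaus theorem for barrelled spaces. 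I expect this uniform-boundedness step to be the main obstacle. The delicate point is the barrelled case: the natural candidate neighborhood $\bigcap_{T\in\FF}T^{-1}(U)$ is automatically closed, balanced and absorbing, but it is a barrel (and hence a neighborhood of $0$ by barrelledness) only when the neighborhood $U$ of $0$ in $Y$ may be chosen convex, so one must appeal to the local convexity of the target $Y$; the Baire case, by contrast, uses only the category theorem and needs no convexity of $Y$.
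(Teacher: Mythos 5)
Your proof is correct and follows essentially the same route as the paper: reduce to sequential continuity of the evaluation operator, deduce pointwise boundedness of the relevant family from $\cup\mathcal S=X$, and invoke the Banach--Steinhaus/uniform boundedness principle (the paper cites Theorem 4.2 of Ch.~III.4 in Schaefer, which covers both the Baire and the barrelled case) to obtain equicontinuity and hence continuity of the restricted evaluation. The only cosmetic difference is that the paper works with an arbitrary compact set $\mathcal K\subset\LL_{\mathcal S}(X,Y)$ rather than with the range of a convergent sequence, which amounts to the same thing here.
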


\begin{proof} It suffices to check that the evaluation operator $e:\LL_{\mathcal S}(X,Y)\times X\to Y$ is sequentially continuous. Take two compact sets $\mathcal K\subset \LL_{\mathcal S}(X,Y)$ and $K\subset X$. Since $\cup\mathcal S=X$ for each $x\in X$ the set $\mathcal K(x)=\{T(x):T\in\mathcal K\}$ is compact and hence bounded in $Y$. Now applying
Theorem 4.2 of \cite[Ch.III.4]{Sch} we conclude that the set $\mathcal K$ is equicontinuous, which implies that the restriction $e|\mathcal K\times K$ is continuous.
\end{proof}

 Another corollary of Theorem~\ref{n12.2} concerns the compact-open topology on the operator spaces.

\begin{corollary}\label{n12.4} Let $X$ be a linear topological $\aleph_0$-space and $\mathcal S$ be a family of compact subsets of $X$ containing all convergent sequences in $X$. For any linear topological $\aleph$-space $Y$ the operator space $\LL_{\mathcal S}(X,Y)$ is $k^*$-metrizable.
\end{corollary}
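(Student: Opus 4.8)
The plan is to deduce this directly from Theorem~\ref{n12.2}. That theorem requires three things: that $X$ be a linear topological $\aleph_0$-space (given), that $Y$ be a linear topological $\aleph$-space (given), that $\cup\mathcal S$ be dense in $X$, and that the evaluation operator $e\colon\LL_{\mathcal S}(X,Y)\times X\to Y$, $e\colon(T,x)\mapsto T(x)$, be sequentially continuous. The density requirement will be automatic: each constant sequence $(x,x,\dots)$ converges to $x$, so its range $\{x\}$ is a convergent sequence and hence lies in $\mathcal S$ by hypothesis; thus $\cup\mathcal S=X$. So the only real work is to check sequential continuity of $e$.

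To verify sequential continuity I would take sequences $(T_n)_{n\in\w}$ converging to $T$ in $\LL_{\mathcal S}(X,Y)$ and $(x_n)_{n\in\w}$ converging to $x$ in $X$, and show $T_n(x_n)\to T(x)$ in $Y$. The decisive observation is that the set $S=\{x_m:m\in\w\}\cup\{x\}$ is a compact convergent sequence, so by hypothesis $S\in\mathcal S$; consequently $T_n\to T$ uniformly on $S$ in the $\mathcal S$-topology, meaning that for every neighborhood $U$ of the origin in $Y$ there is $N$ with $(T_n-T)(s)\in U$ for all $n\ge N$ and all $s\in S$. Then I would split
\begin{equation*}
T_n(x_n)-T(x)=(T_n-T)(x_n)+\bigl(T(x_n)-T(x)\bigr).
\end{equation*}
Since $x_n\in S$ for every $n$, uniform convergence on $S$ gives $(T_n-T)(x_n)\in U$ for $n\ge N$, so the first summand tends to $0$; the second tends to $0$ because the single operator $T$ is continuous and $x_n\to x$. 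Hence $T_n(x_n)\to T(x)$, establishing sequential continuity of $e$, and an application of Theorem~\ref{n12.2} yields the $k^*$-metrizability of $\LL_{\mathcal S}(X,Y)$.

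The step I expect to be the crux is the control of the first summand $(T_n-T)(x_n)$. Unlike Corollary~\ref{n12.3}, here $X$ is assumed neither Baire nor barrelled, so no Banach--Steinhaus argument is available to produce equicontinuity of the family $\{T_n:n\in\w\}$ on an arbitrary compact set, and one cannot separately estimate $T_n(x_n)-T_n(x)$. What makes the argument go through --- and this is exactly where the hypothesis that $\mathcal S$ contains \emph{all} convergent sequences is used --- is that the moving point $x_n$ itself belongs to the fixed member $S\in\mathcal S$, so uniform convergence of $(T_n)$ to $T$ on $S$ already bounds $(T_n-T)(x_n)$ without any equicontinuity input. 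Thus the hypothesis on $\mathcal S$ plays precisely the role that barrelledness or the Baire property played in Corollary~\ref{n12.3}.
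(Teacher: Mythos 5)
Your proposal is correct and matches the paper's intended argument: the paper states Corollary~\ref{n12.4} as an immediate consequence of Theorem~\ref{n12.2}, and the only content to supply is exactly the verification you give, namely that sequential continuity of the evaluation follows because the convergent sequence $\{x_n\}_{n\in\w}\cup\{x\}$ is itself a member of $\mathcal S$, so $(T_n-T)(x_n)$ is controlled by uniform convergence on that fixed set. Your identification of this as the point where the hypothesis on $\mathcal S$ replaces the barrelledness/Baire hypothesis of Corollary~\ref{n12.3} is also accurate.
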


Observing that the dual space $X'$ to a locally convex space $X$ coincides
with the operator space $\LL(X,\IR)$ and taking into account that regular $k^*$-metrizable spaces with countable network are $\aleph_0$-spaces, we get

\begin{corollary}\label{4.3a}
Let $X$ be a linear topological $\aleph_0$-space and $\mathcal S$ be a family of compact subsets of $X$ such that $\cup\mathcal S$ is dense in $X$. The dual
space $X'$ endowed with the $\SG$-topology is an
$\aleph_0$-space if the duality map $X'\times X\to\IR$, $(f,x)\mapsto f(x)$, is sequentially continuous.
\end{corollary}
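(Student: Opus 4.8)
The plan is to realize the dual $X'$ as the operator space $\LL_{\SG}(X,\IR)$ and to combine Theorem~\ref{n12.2} with the characterization of $\aleph_0$-spaces furnished by Theorem~\ref{n8.2}. Since $\IR$ is a separable metrizable space, it is in particular a linear topological $\aleph$-space, and under the identification $X'=\LL(X,\IR)$ the duality map $(f,x)\mapsto f(x)$ is precisely the evaluation operator $e\colon\LL_{\SG}(X,\IR)\times X\to\IR$, which is sequentially continuous by hypothesis. Applying Theorem~\ref{n12.2} with $Y=\IR$ therefore shows that $X'$, endowed with the $\SG$-topology, is $k^*$-metrizable.

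In view of the equivalence $(1)\Leftrightarrow(3)$ in Theorem~\ref{n8.2}, it remains only to verify that the regular space $X'$ has a countable network. Regularity itself is immediate: as $\cup\SG$ is dense in $X$ and $\IR$ is completely regular, the function space $C_{\SG}(X,\IR)$ is completely regular, and $X'$ inherits this as a subspace. The idea for the network is to pass through the compact-open topology $\tau_k$ on $X'$. Because each member of $\SG$ is compact, the $\SG$-topology is coarser than $\tau_k$, so the identity map $\id\colon(X',\tau_k)\to(X',\SG)$ is a continuous bijection. By Michael's theorem \cite{Mi} (see Section~\ref{s9}) the space $C_k(X,\IR)$ is an $\aleph_0$-space, hence has a countable network, and so does its subspace $(X',\tau_k)$, network weight being hereditary. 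Since a continuous surjective image of a space with countable network again has a countable network (the images $\{g(N):N\in\mathcal N\}$ of a network $\mathcal N$ form a network of the range), we conclude that $(X',\SG)$ has a countable network.

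Putting these observations together --- $k^*$-metrizability from Theorem~\ref{n12.2}, regularity, and a countable network --- Theorem~\ref{n8.2} yields that $X'$ with the $\SG$-topology is an $\aleph_0$-space. I do not anticipate a genuine difficulty in this argument: the substantive work is already carried by Theorem~\ref{n12.2}, and the only point demanding a moment's attention is the correct direction of the comparison $\SG\subseteq\KK(X)$, guaranteeing that $\tau_k$ refines the $\SG$-topology and hence that the transfer of the countable network runs the right way.
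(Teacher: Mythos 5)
Your argument is correct and follows essentially the same route as the paper, which likewise obtains $k^*$-metrizability from Theorem~\ref{n12.2} applied to $X'=\LL_{\SG}(X,\IR)$ and then invokes the characterization of $\aleph_0$-spaces as regular $k^*$-metrizable spaces with countable network (Theorem~\ref{n8.2}). The only difference is that the paper leaves the countable-network step implicit, whereas you supply it cleanly via the continuous identity $(X',\tau_k)\to(X',\SG)$ and Michael's theorem for $C_k(X,\IR)$.
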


\begin{corollary}\label{4.5}
Let $X$ be a linear topological $\aleph_0$-space and either $X$ is
Baire or $X$ is locally convex and barrelled. Then the dual space
$X'$ endowed with the topology of pointwise convergence is an
$\aleph_0$-space.
\end{corollary}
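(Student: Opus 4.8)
The plan is to deduce this from Corollary~\ref{4.3a}. The topology of pointwise convergence on $X'=\LL(X,\IR)$ is precisely the $\SG$-topology associated with the family $\SG$ of all \emph{finite} subsets of $X$; since $\cup\SG=X$ is trivially dense in $X$, Corollary~\ref{4.3a} applies the moment we verify that the duality map $d\colon X'\times X\to\IR$, $d(f,x)=f(x)$, is sequentially continuous (here $X'$ carries the topology of pointwise convergence). So the whole argument reduces to this sequential continuity.

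To check it, I would take a sequence $(f_n,x_n)$ in $X'\times X$ converging to $(f,x)$, which means $f_n\to f$ pointwise and $x_n\to x$ in $X$, and show $f_n(x_n)\to f(x)$. Writing $f_n(x_n)-f(x)=f_n(x_n-x)+\big(f_n(x)-f(x)\big)$, the second summand tends to $0$ immediately by pointwise convergence, so everything comes down to proving $f_n(x_n-x)\to 0$.

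The crux is equicontinuity of the sequence $\{f_n\}$. Since $f_n\to f$ pointwise, the family $\{f_n\colon n\in\w\}$ is pointwise bounded (each scalar sequence $(f_n(y))_n$ converges, hence is bounded), and this is exactly where the hypothesis enters: a Baire space, respectively a barrelled locally convex space, obeys the uniform boundedness principle, so a pointwise bounded family of continuous linear functionals is equicontinuous. This is Theorem~4.2 of \cite[Ch.III.4]{Sch}, invoked in just the same way as in the proof of Corollary~\ref{n12.3}. Granting equicontinuity, for each $\e>0$ there is a neighborhood $V$ of the origin in $X$ with $|f_n(v)|<\e$ for all $n$ and all $v\in V$; as $x_n-x\to 0$ eventually lies in $V$, we obtain $|f_n(x_n-x)|<\e$ for all large $n$, whence $f_n(x_n-x)\to 0$ and therefore $f_n(x_n)\to f(x)$. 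This establishes that $d$ is sequentially continuous, and Corollary~\ref{4.3a} then yields that $X'$ with the topology of pointwise convergence is an $\aleph_0$-space. I expect the equicontinuity step to be the only substantive point—the rest is purely formal—and it is precisely the step that fails without the Baire or barrelled assumption, since for a general linear topological $\aleph_0$-space a pointwise convergent sequence of functionals need not be equicontinuous.
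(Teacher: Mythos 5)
Your proof is correct and takes essentially the same route as the paper, which obtains Corollary~\ref{4.5} from Corollary~\ref{4.3a} (equivalently, from Corollary~\ref{n12.3} with $Y=\IR$ and $\SG$ the finite subsets of $X$) by checking sequential continuity of the duality map via the Banach--Steinhaus equicontinuity theorem for Baire, resp.\ barrelled, spaces. The only cosmetic difference is that the paper's argument in Corollary~\ref{n12.3} deduces equicontinuity for compact subsets of the dual, whereas you apply it directly to the pointwise bounded sequence $\{f_n\}$; both are the same invocation of \cite[Ch.III.4, Theorem 4.2]{Sch}.
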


\begin{remark}
{\rm It follows from Corollary~\ref{4.5} and
Proposition~\ref{monseq} that for any Baire (or barrelled locally
convex) linear topological $\aleph_0$-space $X$ the dual space
$X'$ endowed with the topology of pointwise convergence is
monotonically sequentially normal. On the other hand, by \cite{Ga}
for any infinite-dimensional locally convex space $X$ the dual
space $X'$ endowed with the topology of pointwise is not
monotonically normal. }\end{remark}

Next, we consider the compact-open topology on dual spaces.

\begin{corollary}\label{n12.8} For any linear topological $\aleph_0$-space $X$ and any collection $\mathcal S$ containing all convergent sequences in $X$ the dual space $X'$ endowed with the topology of $\SG$-convergence is an $\aleph_0$-space.
\end{corollary}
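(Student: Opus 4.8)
The plan is to deduce this corollary directly from Corollary~\ref{4.3a}, which already packages both the $k^*$-metrizability and the countable network into the single conclusion that the dual carries an $\aleph_0$-space topology. Recall that the dual space $X'$ coincides with the operator space $\LL(X,\IR)$ and that the $\SG$-topology on it is precisely the topology considered in Corollary~\ref{4.3a} (with $\mathcal S$ interpreted, as throughout this section, as a family of compact subsets of $X$). Thus it remains only to verify the two hypotheses of that corollary: that $\cup\mathcal S$ is dense in $X$ and that the duality map $X'\times X\to\IR$, $(f,x)\mapsto f(x)$, is sequentially continuous.

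The density of $\cup\mathcal S$ is immediate. Since $\mathcal S$ contains the (compact) closure of every convergent sequence, and since in the topological vector space $X$ every point $x$ is the limit of a non-trivial convergent sequence (for instance $x+v/n\to x$ for a fixed nonzero $v\in X$), the limit $x$ lies in the corresponding member of $\mathcal S$; hence $\cup\mathcal S=X$, which is in particular dense.

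The sequential continuity of the duality map is the heart of the argument and is exactly where the assumption that all convergent sequences belong to $\mathcal S$ is used. Given a sequence $(f_n,x_n)$ converging to $(f,x)$ in $X'\times X$, the set $S=\{x_n:n\in\w\}\cup\{x\}$ is the closure of a convergent sequence, hence a member of $\mathcal S$. Consequently, convergence of $(f_n)$ to $f$ in the $\SG$-topology forces uniform convergence on $S$, i.e. $\sup_{y\in S}|f_n(y)-f(y)|\to 0$. Writing $f_n(x_n)-f(x)=\bigl(f_n(x_n)-f(x_n)\bigr)+\bigl(f(x_n)-f(x)\bigr)$, the first summand tends to $0$ by uniform convergence on $S$ (as each $x_n\in S$) and the second tends to $0$ by the continuity of $f$. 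Therefore $f_n(x_n)\to f(x)$, so the duality map is sequentially continuous.

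With both hypotheses verified, Corollary~\ref{4.3a} applies and yields that $X'$ endowed with the $\SG$-topology is an $\aleph_0$-space. I do not expect a genuine obstacle here; the only point requiring care is the correct reading of ``$\mathcal S$ contains all convergent sequences'' as the statement that the compact closure $\overline S=S\cup\{\lim S\}$ of each convergent sequence $S$ is an element of $\mathcal S$, which is what makes both the density claim and the uniform-convergence step go through. (Any additional, larger compact members of $\mathcal S$ only refine the topology and so do not disturb the uniform-convergence estimate on $S$.)
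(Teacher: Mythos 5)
Your proof is correct and takes essentially the same route as the paper: Corollary~\ref{n12.8} is left without an explicit argument because it is the dual-space instance of Corollary~\ref{n12.4}, i.e.\ an application of Corollary~\ref{4.3a}, and the only content is exactly what you supply --- that $\cup\mathcal S$ is dense and that the duality map is sequentially continuous since $\SG$-convergence entails uniform convergence on each (compact closure of a) convergent sequence. Your closing remark about how to read ``$\mathcal S$ contains all convergent sequences'' matches the paper's intended convention.
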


For metrizable separable spaces $X$ the compact-open topology of the dual space $X'$ can be described in more details.

We recall that a topological space $X$ is a {\em $k_\omega$-space} if
there is a countable collection $\C$ of compact subsets of $X$ such that
$X$ carries the strongest topology inducing the original topology on each
compactum $C\in\C$.
It is clear that each
compact subset of a submetrizable space is metrizable. The converse
statement is true for $k_\omega$-spaces:  if each compact subset of a
$k_\omega$-space $X$ is metrizable, then the space $X$ is submetrizable.
It is well-known that each submetrizable $k_\omega$-space is a stratifiable $\aleph_0$-space. 

\begin{theorem}\label{4.7} Let $X$ be a separable metrizable locally
convex space and let
$\SG$ be a collection of totally bounded subsets of~$X$
containing all sequences convergent to zero in~$X$. Then the dual space
$X'$ endowed with the $\SG$-topology is a submetrizable $k_\omega$-space.
\end{theorem}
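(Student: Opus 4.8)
The plan is to produce both a continuous metrization of $X'_{\SG}$ onto a metric space and an explicit $k_\omega$-decomposition by polars, and then to identify the $\SG$-topology with the precompact-open topology by means of the Banach--Dieudonn\'e theorem.

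First I would settle submetrizability. Fix a countable dense set $D=\{x_k:k\in\w\}$ in $X$ and consider the evaluation map $R:X'\to\IR^D$, $R(f)=(f(x_k))_{k\in\w}$. It is injective because a continuous functional is determined by its values on $D$, and its target $\IR^D\cong\IR^\w$ is metrizable. To see that $R$ is $\SG$-continuous it suffices to check that each point evaluation $f\mapsto f(x)$ is $\SG$-continuous: the set $\{x/n:n\ge1\}$ is a sequence convergent to $0$, hence lies in $\SG$, and $\sup_n|f(x/n)|=|f(x)|$, so uniform convergence on this null sequence controls $|f(x)|$. Thus $R$ is a continuous bijection onto the metrizable space $R(X')\subset\IR^\w$, giving submetrizability.

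Next I would build the $k_\omega$-decomposition. Choose a decreasing base $(V_n)_{n\in\w}$ of neighborhoods of $0$ in $X$ and set $P_n=nV_n^\circ$, where $V_n^\circ\subset X'$ is the polar. The sets $P_n$ are equicontinuous, weak${}^*$-compact by the Alaoglu--Bourbaki theorem, and metrizable in the weak${}^*$ topology because $X$ is separable (on $V_n^\circ$ the weak${}^*$ topology agrees with pointwise convergence on $D$, which is metrizable, and a continuous bijection from a compactum onto a Hausdorff space is a homeomorphism). They increase and cover $X'$, since every $f\in X'$ is bounded on some $V_n$. On each $P_n$ the $\SG$-topology coincides with the weak${}^*$ topology: it is finer than weak${}^*$ by the point-evaluation remark above, and coarser than the precompact-open topology because each member of $\SG$ is totally bounded, while on an equicontinuous set pointwise and precompact convergence agree.

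The main obstacle is to show that the $\SG$-topology is \emph{exactly} the $k_\omega$-topology determined by $\{P_n\}$, and not merely a topology covered by these compacta. For this I would invoke the Banach--Dieudonn\'e theorem, applied to the separable Fr\'echet completion $\hat X$ (note $X'=\hat X'$, and that passing to $\hat X$ alters neither the precompact-open nor the null-sequence topology on the dual): it asserts that on $X'$ the topology of uniform convergence on null sequences, the precompact-open topology, and the finest topology agreeing with $\sigma(X',X)$ on equicontinuous sets all coincide. Since $\SG$ lies between the null sequences and the totally bounded sets, the $\SG$-topology is squeezed between the first two of these, hence equals all three. Finally, because every equicontinuous set is contained in some $P_n$ and the weak${}^*$ topology is hereditary, the finest topology agreeing with $\sigma(X',X)$ on equicontinuous sets is precisely the finest topology agreeing with the given topology on each $P_n$, i.e.\ the $k_\omega$-topology of $\{P_n\}$. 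Thus $X'_{\SG}$ carries the strongest topology inducing the original topology on each compactum $P_n$, exhibiting it as a submetrizable $k_\omega$-space.
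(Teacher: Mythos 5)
Your proof is correct and follows essentially the same route as the paper: the polars of a countable neighborhood base give an increasing sequence of metrizable weak${}^*$-compacta covering $X'$, and the Banach--Dieudonn\'e theorem identifies the $\SG$-topology (squeezed between uniform convergence on null sequences and on precompact sets) with the finest topology agreeing with $\sigma(X',X)$ on equicontinuous sets, i.e.\ with the resulting $k_\omega$-topology. You merely fill in details the paper leaves implicit, such as submetrizability via evaluation on a countable dense set and the harmless passage to the completion before invoking Banach--Dieudonn\'e.
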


\begin{proof} Let $\{U_n\}_{n\in\IN}$ be a neighborhood base at the origin
of $X$ such that $U_{n+1}\subset U_n$ for all $n$. Let $U_n^\circ=\{f\in
X'\colon\, |f(x)|\le 1$ for each $x\in U_n\}$, $n\in\IN$. It is clear that
$X'=\bigcup_{n\in\IN}U_n^\circ$ and each set $U_n^\circ$ is
equicontinuous. By the Alaoglu--Bourbaki theorem \cite[III.4.3]{Sch} and
 \cite[Ch.~III, \S4, Theorem 4.7]{Sch} the sets $U_n^\circ$, $n\in\IN$, are
metrizable compacta in the topology of simple convergence on~$X'$.
By the
Banach-Dieudonn\'e theorem \cite[IV.6.3]{Sch}, the $\SG$-topology
coincides with the strongest topology inducing the topology of simple
convergence on equicontinuous subsets of $X'$. It follows from the above
remarks that the dual space $X'$ endowed with the $\SG$-topology is a
submetrizable $k_\omega$-space.
\end{proof}

\begin{corollary}\label{4.8}
Let $\{X_n\colon\, n\in\IN\}$ be a countable cover of
a locally convex space $X$ by separable metrizable linear subspaces and
let $\SG$ be a collection of totally bounded subsets of $X$ such that
$\SG$ includes all  sequences convergent to zero
in the spaces $X_n$ and each
set $S\in\SG$ lies in some $X_n$. Then the dual space $X'$ endowed with
the $\SG$-topology is a stratifiable  $\aleph_0$-space.
\end{corollary}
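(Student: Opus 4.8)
The plan is to realize $(X',\SG)$ as a subspace of a countable product of dual spaces attached to the individual subspaces $X_n$, each factor being handled by Theorem~\ref{4.7}. First I would set, for each $n\in\IN$, $\SG_n=\{S\in\SG:S\subset X_n\}$. Since every $S\in\SG$ is totally bounded in $X$ and lies in $X_n$, it is totally bounded in the subspace $X_n$; moreover $\SG_n$ contains all sequences convergent to zero in $X_n$, because by hypothesis all such sequences belong to $\SG$ and are contained in $X_n$. Thus $\SG_n$ is a collection of totally bounded subsets of the separable metrizable locally convex space $X_n$ containing all null sequences of $X_n$, so Theorem~\ref{4.7} applies and shows that the dual $(X_n',\SG_n)$ is a submetrizable $k_\w$-space; by the remark preceding Theorem~\ref{4.7} it is therefore a stratifiable $\aleph_0$-space.

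Next I would consider the restriction map $R\colon X'\to\prod_{n\in\IN}X_n'$, $R(f)=(f|X_n)_{n\in\IN}$, where each factor $X_n'$ carries the $\SG_n$-topology and the product carries the Tychonoff topology. The map $R$ is injective: since $X=\bigcup_n X_n$, the equalities $f|X_n=g|X_n$ for all $n$ force $f=g$. The crucial step is to check that $R$ is a topological embedding, i.e. that the $\SG$-topology on $X'$ coincides with the initial topology induced by $R$. Each coordinate restriction $f\mapsto f|X_n$ is continuous from $(X',\SG)$ to $(X_n',\SG_n)$, because the preimage of a subbasic set $\{h\in X_n':h(S)\subset U\}$ with $S\in\SG_n\subset\SG$ is exactly the subbasic set $\{f\in X':f(S)\subset U\}$ of the $\SG$-topology; hence $R$ is continuous. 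Conversely, every subbasic neighborhood $\{f:f(S)\subset U\}$ of the $\SG$-topology has $S\in\SG$ contained in some $X_n$, so $S\in\SG_n$ and this set is the $R$-preimage of the open set $\pi_n^{-1}(\{h\in X_n':h(S)\subset U\})$. Here the hypothesis that each $S\in\SG$ lies in some $X_n$ is used in an essential way. Thus the two topologies agree and $R$ is an embedding onto its image; in particular $(X',\SG)$ is Hausdorff, being a continuous injective image-space over a Hausdorff range.

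Finally, $(X',\SG)$ is homeomorphic, via $R$, to a subspace of the countable product $\prod_{n\in\IN}(X_n',\SG_n)$ of stratifiable $\aleph_0$-spaces. Both the class of stratifiable spaces and the class of $\aleph_0$-spaces are preserved by countable products and are hereditary with respect to subspaces, so the product is a stratifiable $\aleph_0$-space and hence so is $(X',\SG)$. I expect the main obstacle to be the verification that $R$ is an embedding rather than merely a continuous bijection onto its image: this is exactly where the structural hypotheses ``each $S\in\SG$ lies in some $X_n$'' and ``$\SG$ contains the null sequences of every $X_n$'' enter, the former guaranteeing that $\SG$-uniform convergence is detected coordinatewise and the latter guaranteeing, through Theorem~\ref{4.7}, that each factor has the desired descriptive properties.
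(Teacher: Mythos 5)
Your argument is correct and follows exactly the paper's route: decompose $\SG$ into the subfamilies $\SG_n=\{S\in\SG:S\subset X_n\}$, apply Theorem~\ref{4.7} to each $(X_n',\SG_n)$, embed $(X',\SG)$ into $\prod_{n\in\IN}(X_n',\SG_n)$ via restriction, and invoke closure of stratifiable $\aleph_0$-spaces under countable products and subspaces. The only difference is that you spell out the verification that the restriction map is a topological embedding, which the paper leaves implicit with ``it follows that''.
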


\begin{proof} For every $n\in\IN$ let $\SG_n=\{S\in\SG\colon\, S\subset X_n\}$. It
follows that the dual space $X$ endowed with the $\SG$-topology is a
subspace of the product $\prod_{n\in\IN}X_n'$, where each dual space $X_n'$
is endowed with the $\SG_n$-topology. By Theorem~\ref{4.7}, each dual
space $X_n'$ is a submetrizable $k_\omega$-space and thus a stratifiable
$\aleph_0$-space. Since the class of stratifiable
$\aleph_0$-spaces is closed under countable products and passage to
 subspaces,
we conclude that $X'$ is a stratifiable $\aleph_0$-space.
\end{proof}

\begin{corollary}\label{4.9}
Let $X$ be a locally convex space that is a
countable union $X=\bigcup_{n=1}^\infty X_n$ of separable metrizable
linear subspaces such that each compact subset of $X$ lies
in some~$X_n$.
Then the dual space $X'$ endowed with the compact-open topology is
a stratifiable  $\aleph_0$-space.
\end{corollary}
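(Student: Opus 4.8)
The plan is to deduce Corollary~\ref{4.9} directly from Corollary~\ref{4.8} by taking $\SG$ to be the family of \emph{all} compact subsets of $X$. With this choice the $\SG$-topology on the dual $X'=\LL(X,\IR)$ is precisely the compact-open topology (the topology of uniform convergence on compacta): since a finite union of compact sets is again compact, the basic zero-neighborhoods $\{L:L(S_1\cup\dots\cup S_n)\subset U\}$ already reduce to single compact sets $S$, which is exactly the compact-open topology on $X'\subset C(X,\IR)$. Thus it suffices to check that this particular $\SG$ satisfies the three standing hypotheses of Corollary~\ref{4.8}.

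First I would verify that each member of $\SG$ is totally bounded, which holds because every compact subset of a Hausdorff topological vector space is totally bounded. Next, for each $n$ and each sequence $(x_k)\subset X_n$ converging to $0$ in the subspace $X_n$, the set $\{x_k:k\in\IN\}\cup\{0\}$ is a compact subset of $X$ contained in $X_n$ (note $0\in X_n$ since $X_n$ is a linear subspace), and hence belongs to $\SG$; this secures the requirement that $\SG$ contain all null sequences of the spaces $X_n$. Finally, the requirement that each $S\in\SG$ lie in some $X_n$ is literally the hypothesis that every compact subset of $X$ is contained in some $X_n$. With all three conditions confirmed, Corollary~\ref{4.8} yields that $X'$ endowed with the $\SG$-topology, i.e.\ with the compact-open topology, is a stratifiable $\aleph_0$-space, as claimed.

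Since each verification is routine, there is no substantial obstacle here; the only points deserving a moment's care are identifying the $\SG$-topology with the compact-open topology and recalling that the closure of a convergent sequence is compact, so that the null sequences of the subspaces $X_n$ fall automatically into the chosen family $\SG$ and lie in the appropriate $X_n$.
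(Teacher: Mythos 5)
Your proposal is correct and is essentially the paper's intended argument: Corollary~\ref{4.9} is stated as an immediate consequence of Corollary~\ref{4.8} with $\SG$ taken to be the family of all compact subsets of $X$, and your verifications (compact sets are totally bounded, null sequences of the $X_n$ have compact closure lying in $X_n$, and each compact set lies in some $X_n$ by hypothesis) are exactly what is needed. The identification of this $\SG$-topology with the compact-open topology on $X'$ is also handled as the paper intends.
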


We recall that {\em the strong dual topology} on the dual space $X'$ to a
locally convex space $X$ is the $\SG$-topology where $\SG$ is the
collection of all bounded subsets of $X$.
A locally convex space $X$ is called {\em boundedly-compact} if each
closed bounded subset of $X$ is compact, see \cite[8.4.7]{Ed}.
It is clear that for such a space $X$ the strong dual topology
on the dual space $X'$ coincides with the topology of precompact convergence.

\begin{corollary}\label{4.10}
Let $X$ be a locally convex space that is a
countable union $X=\bigcup_{n=1}^\infty X_n$ of separable metrizable
linear subspaces such that each compact subset of $X$ lies in
some~$X_n$. If the space $X$ is boundedly-compact, then
the dual space $X'$ endowed with the strong dual topology is
a stratifiable $\aleph_0$-space.
\end{corollary}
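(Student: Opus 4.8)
The plan is to identify the strong dual topology on $X'$ with the compact-open topology and then invoke Corollary~\ref{4.9}. The hypotheses of the present corollary contain exactly the hypotheses of Corollary~\ref{4.9} --- namely that $X$ is a countable union of separable metrizable linear subspaces $X_n$ each of which absorbs all compact subsets of $X$ --- so the only new ingredient to exploit is the boundedly-compact assumption, which I will use solely to see that the strong dual topology and the compact-open topology coincide on $X'$.

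First I would recall that the strong dual topology is by definition the $\SG$-topology for $\SG$ the family of all bounded subsets of $X$, while the compact-open topology is the $\SG'$-topology for $\SG'$ the family of all compact subsets of $X$. Since $X$ is boundedly-compact, every closed bounded subset of $X$ is compact; in particular, for any bounded set $B\subset X$ its closure $\overline{B}$ is closed and bounded, hence compact, so $B$ is precompact. Conversely every precompact (in particular every compact) subset of $X$ is bounded. Thus the families of bounded and of precompact subsets of $X$ agree.

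Next I would observe that for any continuous linear functional $f\in X'$ and any subset $S\subset X$ one has $\sup_{x\in S}|f(x)|=\sup_{x\in\overline{S}}|f(x)|$, so the basic neighborhoods of the origin defining uniform convergence on $S$ and on $\overline{S}$ coincide. Consequently uniform convergence on a bounded set $B$ is the same as uniform convergence on the compact set $\overline{B}$, and the $\SG$-topology determined by the bounded sets coincides with the topology of precompact convergence, which in turn coincides with the compact-open topology. This is precisely the coincidence recorded in the remark preceding the statement, now justified in detail.

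Having established that the strong dual topology on $X'$ equals the compact-open topology, the conclusion is immediate: by Corollary~\ref{4.9} the dual space $X'$ endowed with the compact-open topology is a stratifiable $\aleph_0$-space, and therefore so is $X'$ endowed with the strong dual topology. I do not expect any genuine obstacle here; the only point requiring care is the verification that uniform convergence on a bounded set reduces to uniform convergence on its compact closure, which rests on the elementary fact that a continuous linear functional attains the same supremum on a set and on its closure.
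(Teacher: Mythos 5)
Your proposal is correct and follows essentially the route the paper intends: the boundedly-compact hypothesis is used only to identify the strong dual topology with the topology of (pre)compact convergence (as noted in the remark preceding the corollary), after which Corollary~\ref{4.9} applies verbatim. The verification that uniform convergence on a bounded set coincides with uniform convergence on its compact closure is exactly the right justification for that identification.
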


We recall that a locally convex space $X$ is called an
{\em {\rm(LF)}-space} if
$X$ is the strict inductive limit of a sequence of complete metrizable
linear subspaces of $X$.

\begin{corollary}\label{4.11}
If $X$ is a separable {\rm(LF)}-space, then the dual space $X'$ endowed with the topology of simple
convergence and the dual space $X'$ endowed with the compact-open topology
are $\aleph_0$-spaces. If the space $X$ is boundedly-compact,
then $X'$ endowed with the strong dual topology is a stratifiable
$\aleph_0$-space.
\end{corollary}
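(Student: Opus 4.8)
The plan is to deduce all three assertions from the previously established Corollaries~\ref{4.5}, \ref{4.9} and \ref{4.10}, once the Fr\'echet steps of $X$ have been shown to be separable. Write $X=\ind X_n$ as the strict inductive limit of an increasing sequence of complete metrizable linear subspaces $X_n$; each $X_n$ is locally convex, barrelled and Baire, and the topology that $X$ induces on $X_n$ is the original Fr\'echet topology of $X_n$.

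First I would verify that every $X_n$ is separable. Since $X$ is separable it is ccc, i.e.\ it admits no uncountable disjoint family of nonempty open sets. If some $X_n$ were non-separable then, being metrizable, it would contain for some $\varepsilon>0$ an uncountable set $A$ whose points are pairwise at distance $\ge\varepsilon$ in a translation-invariant metric of $X_n$. Using that the subspace topology on $X_n$ agrees with its Fr\'echet topology, I would pick an open convex balanced neighbourhood $W$ of zero in $X$ with $W\cap X_n$ inside the $\varepsilon$-ball of $X_n$; then $a-a'\in X_n\setminus W$ for distinct $a,a'\in A$, so the translates $a+\tfrac12 W$ (for $a\in A$) are pairwise disjoint, contradicting the ccc of $X$. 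Thus each $X_n$ is a separable metrizable (hence $\aleph_0$-) space that is barrelled and Baire. I also record the two standard facts to be used below: every compact subset of $X$ is bounded, and bounded subsets of a strict inductive limit lie in some $X_n$ (see \cite[II.6.5]{Sch}); consequently every compact subset of $X$ lies in, and is compact in, some $X_n$.

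With this in hand the compact-open and strong-dual assertions are immediate. The presentation $X=\bigcup_n X_n$ satisfies the hypotheses of Corollary~\ref{4.9}, giving that $X'$ with the compact-open topology is a stratifiable $\aleph_0$-space; and if $X$ is boundedly-compact, Corollary~\ref{4.10} applies verbatim to yield the same conclusion for the strong dual topology. For the topology of simple convergence the earlier corollaries do not apply directly, so I would pass to a product. Restriction gives an injective map $r=(r_n)\colon X'\to\prod_n X_n'$ with $r_n(f)=f|_{X_n}$, whose image consists of the compatible families $(f_n)$ satisfying $f_{n+1}|_{X_n}=f_n$; because $X=\bigcup_n X_n$, this $r$ is a topological embedding of $(X',\sigma(X',X))$ into the countable product $\prod_n\bigl(X_n',\sigma(X_n',X_n)\bigr)$. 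By Corollary~\ref{4.5} each factor $(X_n',\sigma(X_n',X_n))$ is an $\aleph_0$-space, and since the class of $\aleph_0$-spaces is stable under countable products and subspaces (Theorem~\ref{3.3}(1,2) for $k^*$-metrizability, together with the evident stability of regularity and of a countable network), the subspace $(X',\sigma(X',X))$ is an $\aleph_0$-space as well.

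The main obstacle is the separability of the individual steps $X_n$: this is precisely what licenses the use of Corollaries~\ref{4.5}, \ref{4.9} and \ref{4.10}, and it is not a formal consequence of the separability of $X$ for general (non-metrizable) subspaces, so the ccc/uncountable-$\varepsilon$-net argument above is doing the real work. A secondary subtlety is that the simple-convergence case cannot be handled by Corollary~\ref{4.8} (its family $\SG$ of finite sets omits the convergent sequences of the $X_n$) and must instead be routed through the product embedding as above.
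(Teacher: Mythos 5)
Your proof is correct. The paper states this corollary without proof, the intended derivation being: each compact (hence bounded) subset of $X=\ind X_n$ lies in some step $X_n$, so Corollaries~\ref{4.9} and \ref{4.10} give the compact-open and strong-dual assertions, while for the topology of simple convergence one applies Corollary~\ref{4.5} directly to $X$ itself, which is barrelled and is a linear topological $\aleph_0$-space (being $k^*$-metrizable by Theorem~\ref{4.1}(5) and having countable network as a countable union of separable metrizable subspaces, hence an $\aleph_0$-space by Theorem~\ref{n8.2}). You handle the simple-convergence case differently: instead of verifying that $X$ is an $\aleph_0$-space, you embed $(X',\sigma(X',X))$ into the countable product $\prod_n(X_n',\sigma(X_n',X_n))$ via restriction and apply Corollary~\ref{4.5} to each separable Fr\'echet step; this is a valid embedding since the weak-$*$ topology on $X'$ is initial with respect to evaluations at points of $\bigcup_n X_n$, and it buys you a more elementary route that bypasses Theorems~\ref{4.1}(5) and \ref{n8.2}, at the cost of invoking stability of $\aleph_0$-spaces under countable products and subspaces. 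You are also right that Corollary~\ref{4.8} cannot be used here because the family of finite sets omits the convergent sequences. Finally, your explicit verification that each step $X_n$ is separable (via an uncountable $\varepsilon$-separated set producing an uncountable disjoint family of translates $a+\tfrac12W$ open in $X$, contradicting the ccc of the separable space $X$) fills a point the paper leaves tacit, and it is genuinely needed, since metrizable subspaces of separable spaces are not separable in general; this is the most substantive part of your write-up and it is correct.
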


A classical example of a boundedly-compact (LF)-space is the space $\DD$
of test functions. Its dual $\DD'$ endowed with the strong dual topology is
the space of distributions.

\begin{corollary}\label{4.12}
The space $\DD$ of test functions and its
dual $\DD'$ endowed with the topology of simple convergence are 
$\aleph_0$-spaces. The dual $\DD'$ to $\DD$ endowed with the strong dual
topology is a stratifiable  $\aleph_0$-space.
\end{corollary}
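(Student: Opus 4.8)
The plan is to treat the three assertions separately, reducing the two statements about the dual $\DD'$ to Corollary~\ref{4.11} and deriving the statement about $\DD$ itself from its strict inductive limit structure. Recall that $\DD=\ind\DD_n$, where $\DD_n$ is the Fr\'echet space of smooth functions supported in the $n$-th member of a compact exhaustion; each $\DD_n$ is a separable complete metrizable locally convex space, so $\DD$ is an (LF)-space, and (by the trace argument of the next paragraph, or by countable network) it is separable.

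First I would show that $\DD$ is an $\aleph_0$-space. Each step space $\DD_n$, being separable metrizable, is $k^*$-metrizable, and by the standard theory of strict inductive limits (see \cite{Sch}) each $\DD_n$ is a closed, hence $k$-closed, subspace of $\DD$; thus Theorem~\ref{4.1}(5) applies and shows that $\DD$ is $k^*$-metrizable. Next I would check that $\DD$ carries a countable network. Since the inductive limit topology induces on each $\DD_n$ its original (second countable) topology, a countable network $\mathcal N_n$ for $\DD_n$ is also a family of subsets of $\DD$, and $\mathcal N=\bigcup_{n}\mathcal N_n$ is a countable network for $\DD$: any $x\in\DD$ lies in some $\DD_m$, and for an open $U\ni x$ the trace $U\cap\DD_m$ is open in $\DD_m$, so some $N\in\mathcal N_m$ satisfies $x\in N\subset U\cap\DD_m\subset U$. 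As a locally convex space $\DD$ is regular, so the implication $(3)\Ra(1)$ of Theorem~\ref{n8.2} lets me conclude that the regular $k^*$-metrizable space $\DD$ with countable network is an $\aleph_0$-space.

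For the dual, $\DD$ is a separable (LF)-space, so Corollary~\ref{4.11} immediately yields that $\DD'$ with the topology of simple convergence is an $\aleph_0$-space. Moreover $\DD$ is boundedly-compact (its Montel property, recorded in the preceding remark), so the second part of Corollary~\ref{4.11} gives that $\DD'$ endowed with the strong dual topology---which for a boundedly-compact space coincides with the topology of precompact convergence---is a stratifiable $\aleph_0$-space. This settles all three claims.

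The bulk of the argument is bookkeeping of previously established results; the only point requiring care is the countable network for $\DD$, which rests on the two standard structural facts about strict (LF)-spaces invoked above: that each $\DD_n$ is closed in $\DD$ and that the limit topology restricts to the original topology on $\DD_n$. I expect this verification, rather than the appeals to Corollary~\ref{4.11}, to be the main (though entirely routine) obstacle.
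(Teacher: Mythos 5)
Your proposal is correct and follows the route the paper intends: the two statements about $\DD'$ are direct applications of Corollary~\ref{4.11} (using that $\DD$ is a separable, boundedly-compact (LF)-space), and the claim that $\DD$ itself is an $\aleph_0$-space is obtained exactly as you do, via Theorem~\ref{4.1}(5) for $k^*$-metrizability of the strict inductive limit, the countable network coming from the closed metrizable steps $\DD_n$, and the implication $(3)\Rightarrow(1)$ of Theorem~\ref{n8.2}. The paper leaves all of this implicit, so your write-up simply supplies the intended bookkeeping.
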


It  was shown in \cite{Ba4}  that the space of distributions $\DD'$ endowed
with the strong dual topology is homeomorphic to the countable power
$(\IR^\infty)^\omega$ of the strict inductive limit $\IR^\infty$ of
finite-dimensional Euclidean spaces.

\begin{remark}
{\rm
It follows from Corollary~\ref{4.9} and \cite{Ga} that for an infinite-dimensional
{\rm(LF)}-space $X$ the topology of compact convergence on the dual $X'$ is
monotonically normal, but the topology of simple convergence is not. Yet,
both topologies are monotonically sequentially normal.
}\end{remark}

\section{The $k^*$-metrizability of the weak topology on Banach spaces}\label{s13}

In this section we detect Banach spaces whose weak topology is $k^*$-metrizable. For a Banach space $X$ by
$(X,\weak)$ we denote $X$ endowed with the weak topology. Since
$(X,\weak)$ is a subspace of the second dual space $X^{**}$ endowed with
the topology of simple convergence, we can apply Corollary~\ref{4.5} to get

\begin{proposition}\label{4.14} If $X$ is a normed space with a separable
dual, then $(X,\weak)$ is an $\aleph_0$-space.
\end{proposition}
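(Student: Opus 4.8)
The plan is to realize $(X,\weak)$ as a subspace of an $\aleph_0$-space produced by Corollary~\ref{4.5} and then invoke the hereditarity of the class of $\aleph_0$-spaces. The starting observation, already recorded in the sentence preceding the proposition, is that the canonical embedding $J\colon X\to X^{**}$, $J(x)(f)=f(x)$, identifies the weak topology $\weak=\sigma(X,X^*)$ on $X$ with the subspace topology inherited from $X^{**}$ equipped with the topology $\sigma(X^{**},X^*)$ of simple convergence on $X^*$. Indeed, both topologies are the initial topologies generated by the evaluation functionals $x\mapsto f(x)$, $f\in X^*$, so $J$ is a topological embedding of $(X,\weak)$ into $(X^{**},\sigma(X^{**},X^*))$.

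First I would apply Corollary~\ref{4.5} to the separable dual $X^*$ in place of $X$. Since the dual of any normed space is complete, $X^*$ is a separable Banach space; being a separable metrizable locally convex space it has a countable base, hence a countable $k$-network, and is therefore a (regular) $\aleph_0$-space, while its completeness makes it a Baire space. Thus the hypotheses of Corollary~\ref{4.5} are met, and the corollary yields that the dual $(X^*)'=X^{**}$ endowed with the topology of pointwise convergence, i.e. with $\sigma(X^{**},X^*)$, is an $\aleph_0$-space.

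Finally I would combine the two steps: by the first paragraph $(X,\weak)$ is homeomorphic to the subspace $J(X)$ of the $\aleph_0$-space $(X^{**},\sigma(X^{**},X^*))$, and since the class of $\aleph_0$-spaces is hereditary (a countable $k$-network restricts to a countable $k$-network on any subspace, and regularity passes to subspaces), $(X,\weak)$ is an $\aleph_0$-space.

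The only genuinely delicate point is the verification that $J$ is a homeomorphism onto its image, namely that the topology of simple convergence on $(X^*)'$ restricts along $J$ to \emph{exactly} the weak topology of $X$, and not to some coarser or finer topology; this reduces to the remark that evaluation at a fixed $f\in X^*$ agrees on $J(X)$ with the functional $f$ itself, so the two families of generating seminorms coincide. Everything else is bookkeeping about which standard permanence properties of $\aleph_0$-spaces are being invoked.
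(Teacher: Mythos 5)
Your proof is correct and follows exactly the route the paper takes: the paper's entire argument is the sentence preceding the proposition, namely that $(X,\weak)$ is a subspace of $X^{**}$ with the topology of simple convergence on $X^*$, to which Corollary~\ref{4.5} applies (with the separable Banach, hence Baire, $\aleph_0$-space $X^*$ in the role of the domain). Your write-up merely makes explicit the details the paper leaves implicit — completeness of $X^*$, the embedding $J$, and hereditarity of $\aleph_0$-spaces — all of which are handled correctly.
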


There also exists a
Banach space $X$ with non-separable dual such that $(X,\weak)$
is an $\aleph_0$-space.
We recall that a normed space $X$ has the {\it Shur
property} if each weakly convergent sequence of $X$ converges in norm.
A standard example of a Banach space with the Shur property is $l^1$, the
Banach space of all absolutely convergent series. It is known that each
infinite-dimensional Banach space with the Shur property contains a
subspace isomorphic to $l^1$, see \cite[p.~212]{Di}. 
It follows from the definition that for a Banach space $X$ with the Shur property, the sequential coreflexion of $(X,\weak)$ coincides with $X$, which means that $(X,\weak)$ is $\cs$-metrizable. 
It turns out that the converse assertion also is true.

%If $X$ is separable, then $(X,\weak)$ has countable network and hence all compact subsets of $(X,\weak)$ are metrizable. In this case the sequential coreflexion of $(X,\weak)$ coincides with the $k$-coreflexion and we get

\begin{proposition}\label{4.15} For a normed space $X$ the following conditions are equivalent:
\begin{enumerate}
\item $(X,\weak)$ is $k$-metrizable;
\item $(X,\weak)$ is $\cs$-metrizable;
\item $X$ has the Shur property.
\end{enumerate}
\end{proposition}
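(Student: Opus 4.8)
The plan is to establish the cycle $(3)\Rightarrow(1)\Rightarrow(2)\Rightarrow(3)$. The implication $(1)\Rightarrow(2)$ is immediate: by Proposition~\ref{n4.2} every $k$-metrizable space is $\cs$-metrizable, this being the ``only if'' half of the characterization of $k$-metrizability.

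For $(3)\Rightarrow(1)$ I would first observe that the Shur property means the weakly convergent sequences of $X$ are exactly the norm convergent ones, with the same limits. Hence a subset of $X$ is sequentially closed in $(X,\weak)$ if and only if it is norm-closed, so the sequential coreflexion $s(X,\weak)$ carries the norm topology and is in particular metrizable; by Theorem~\ref{n4.3}(1) this already shows $(X,\weak)$ is $\cs$-metrizable. To upgrade this to $k$-metrizability I would invoke the Eberlein--\v{S}mulian theorem (applied in the completion of $X$): every weakly compact subset is weakly sequentially compact, so all compact subsets of $(X,\weak)$ are sequentially compact, and Proposition~\ref{n4.2} then yields that $(X,\weak)$ is $k$-metrizable.

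The substance of the proposition is $(2)\Rightarrow(3)$, which I would prove contrapositively: assuming $X$ fails the Shur property, I will show $(X,\weak)$ is not $\cs$-metrizable. Failure of Shur gives a weakly convergent, non-norm-convergent sequence; translating by its weak limit and normalizing (a weakly convergent sequence is norm bounded by the uniform boundedness principle, so after passing to a subsequence the norms converge to a positive constant) I obtain a sequence $(x_n)$ with $\|x_n\|=1$ and $x_n\to 0$ weakly. Adapting von Neumann's example, I set $A=\{x_m+(m+1)x_n : m<n\}$. For fixed $m$ the sequence $(x_m+(m+1)x_n)_{n>m}$ converges weakly to $x_m$, so each $x_m\in\cl_1(A)$, and since $x_m\to 0$ weakly we get $0\in\cl_1(\{x_m:m\in\IN\})\subset\cl_2(A)$. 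On the other hand $0\notin\cl_1(A)$: any sequence in $A$ that converges weakly is norm bounded, and the estimate $\|x_m+(m+1)x_n\|\ge(m+1)-1=m$ forces the first indices of its terms to stay bounded, after which a weak limit of such a sequence is either some $x_m$ (of norm $1$) or some element $x_m+(m+1)x_n$ (of norm $\ge m\ge 1$); in either case the limit is nonzero. Thus $0\in\cl_2(A)\setminus\cl_1(A)$ and $(X,\weak)$ is not Fr\'echet--Urysohn.

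Finally I would transfer this to the coreflexion: since $X$ and $sX$ have the same convergent sequences, the operators $\cl_1$ and $\cl_2$ computed above agree in $s(X,\weak)$, so this space, which is sequential, contains a point of $\cl_2(A)$ lying outside $\cl_1(A)$ and is therefore not Fr\'echet--Urysohn, hence not metrizable. By Theorem~\ref{n4.3}(1), $(X,\weak)$ is not $\cs$-metrizable, as required. The main obstacle is precisely $(2)\Rightarrow(3)$: the construction of $A$ together with the two closure computations, the delicate one being the boundedness-plus-triangle-inequality argument for $0\notin\cl_1(A)$, where the coefficient $m+1$ is chosen so as to keep every element of $A$ and every possible weak limit away from the origin.
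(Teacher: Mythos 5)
Your proof is correct, but the key implication $(2)\Rightarrow(3)$ is argued by a genuinely different route than the paper's. The paper derives from the $\cs$-metrizability of $(X,\weak)$ that this space has Arkhangelski's property $(\alpha_4)$ (being sequentially homeomorphic to the metrizable $s(X,\weak)$), and then applies $(\alpha_4)$ to the countable family of null sequences $S_m=\{m\cdot x_n\}_{n\in\w}$ to extract a weakly null sequence $(m_k x_{n_k})$ with $m_k\to\infty$, contradicting norm-boundedness of weakly convergent sequences. You instead exhibit the failure of the Fr\'echet--Urysohn property: your von Neumann-type set $A=\{x_m+(m+1)x_n:m<n\}$ satisfies $0\in\cl_2(A)\setminus\cl_1(A)$, and since $\cl_1,\cl_2$ depend only on convergent sequences this rules out metrizability of $s(X,\weak)$, hence $\cs$-metrizability via Theorem~\ref{n4.3}(1). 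Both arguments detect a standard obstruction to metrizability of the sequential coreflexion -- yours locates an Arens-type configuration (sequential order $\ge 2$ at the origin), the paper's an $S_\w$-type configuration -- and both hinge on the same quantitative fact that $\|x_n\|=1$ forces unboundedness of suitably rescaled terms; your $0\notin\cl_1(A)$ verification via $\|x_m+(m+1)x_n\|\ge m$ is sound (granting your indexing from $m\ge 1$). A side benefit of your route is that it directly connects to Theorem~\ref{n8.1}(2), which characterizes $\cs$-metrizability by the absence of sequentially closed copies of $S_\w$ or $S_2$; a benefit of the paper's route is that it is shorter and isolates the single property $(\alpha_4)$ as the responsible invariant. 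Your cycle $(3)\Rightarrow(1)\Rightarrow(2)$ also differs superficially from the paper's $(1)\Leftrightarrow(2)$ via Proposition~\ref{n4.2} plus Eberlein--\v{S}mulian, but the content is the same.
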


\begin{proof} By Eberlein-Smulyan Theorem \cite[p.52]{HHZ} compact subsets of $(X,\weak)$ are sequentially compact. Now we see that the equivalence $(1)\Leftrightarrow(2)$ follows from Proposition~\ref{n4.2}. 

$(2)\Ra(3)$ Assume that the space $(X,\weak)$ is $\cs$-metrizable but $X$ fails to have the Shur property. Then we can find a sequence $(x_n)_{n\in\w}$ on the unit sphere of $X$ that weakly converges to zero. Observe that for every $m\in\IN$ the sequence $(m\cdot x_n)_{n\in\w}$ also weakly converges to zero. The $\cs$-metrizability of $(X,\weak)$ implies that the space $(X,\weak)$ has  Arkhangelski's property $(\alpha_4)$, which allows us to select two increasing number sequences $(m_k)$ and $(n_k)$ such that the sequence $(m_k\cdot x_{n_k})_{k\in\w}$ weakly converges to zero, which is impossible because this sequence is unbounded.

$(3)\Ra(2)$ If $X$ has the Shur property, then the identity map $\id:X\to (X,\weak)$ has sequentially continuous inverse witnessing the $\cs$-metrizability of the space $(X,\weak)$.
\end{proof}

Thus the Shur property is responsible of the $k$-metrizability of the weak topology of a Banach space. The problem of characterization of Banach spaces with $k^*$-metrizable weak topology is more delicate. 

\begin{theorem}\label{4.16}  Let $X$ be a normed space and $B$ be the closed unit ball of $X$. Then the following conditions are equivalent:
\begin{enumerate}
\item $(X,\weak)$ is $k^*$-metrizable;
\item $(X,\weak)$ is $\cs^*$-metrizable;
\item $(B,\weak)$ is $k^*$-metrizable;
\item $(B,\weak)$ is $\cs^*$-metrizable.
\end{enumerate}

Moreover, if the completion of $X$ contains no isomorphic copy of $l^1$, then the conditions (1)--(3) are equivalent to

\begin{enumerate}
\item[(5)] $(B,\weak)$ is $\cs$-metrizable;
\item[(6)] $(B,\weak)$ is $k$-metrizable;
\item[(7)] $(B,\weak)$ is metrizable;
\item[(8)] $X^*$ is separable.
\end{enumerate}
\end{theorem}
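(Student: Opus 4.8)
The plan is to prove the unconditional equivalences $(1)\Leftrightarrow(2)\Leftrightarrow(3)\Leftrightarrow(4)$ first, and then reduce the ``Moreover'' statement to a single functional-analytic implication. For the first part I would use that, by the Eberlein--Smulyan theorem, every weakly compact set is weakly sequentially compact, so all compact subsets of $(X,\weak)$ and of $(B,\weak)$ are sequentially compact; hence Proposition~\ref{n4.2} identifies $k^*$-metrizability with $\cs^*$-metrizability here, giving $(1)\Leftrightarrow(2)$ and $(3)\Leftrightarrow(4)$. Since $(B,\weak)$ is a subspace of $(X,\weak)$, Theorem~\ref{3.3}(1) yields $(1)\Ra(3)$. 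For the return $(4)\Ra(2)$ I would write $(X,\weak)=\bigcup_{n\in\IN}(nB,\weak)$, note that each $nB$ is convex and norm closed, hence weakly (in particular sequentially) closed, and that the homothety $x\mapsto nx$ is a weak homeomorphism of $(B,\weak)$ onto $(nB,\weak)$, so each $(nB,\weak)$ is $\cs^*$-metrizable; as a weakly convergent sequence is weakly, hence (by the uniform boundedness principle) norm, bounded, it lies in some $nB$. Theorem~\ref{n4.4}(4) then gives the $\cs^*$-metrizability of $(X,\weak)$, closing the cycle.

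Granting the first part, the ``Moreover'' statement reduces to one implication. The arc $(8)\Ra(7)\Ra(6)\Ra(5)\Ra(4)$ is routine: $(8)\Ra(7)$ is the classical fact that a dense sequence $(f_k)$ in the unit sphere of a separable $X^*$ metrizes the weak topology of $B$ via $\sum_k 2^{-k}|f_k(\cdot)|$, while $(7)\Ra(6)\Ra(5)\Ra(4)$ are merely the arrows metrizable $\Ra$ $k$-metrizable $\Ra$ $\cs$-metrizable $\Ra$ $\cs^*$-metrizable from the diagram of Section~\ref{s4}. Thus everything comes down to proving $(4)\Ra(8)$ under the hypothesis that the completion of $X$ contains no isomorphic copy of $l^1$.

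To prove $(4)\Ra(8)$ I would invoke Rosenthal's $l^1$-theorem, which under this hypothesis guarantees that every bounded sequence in $X$ has a weakly Cauchy subsequence (and that $X$ has no infinite-dimensional subspace with the Shur property, every such subspace containing $l^1$). Assume $(B,\weak)$ is $\cs^*$-metrizable. If its sequential coreflexion $s(B,\weak)$ is countably compact, then Theorem~\ref{n4.4}(10) makes $(B,\weak)$ compact and metrizable, so $X^*$ is separable and we are done. Otherwise there is a sequence in $B$ with no weakly convergent subsequence; by Rosenthal it has a weakly Cauchy subsequence $(u_n)$, necessarily weak${}^*$-convergent to some $u^{**}\in X^{**}\setminus X$. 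Meanwhile, $(B,\weak)$ being $\cs^*$-metrizable with sequentially compact compacta, Theorem~\ref{n8.1}(1) forces $s(B,\weak)$ to be a La\v snev (hence Fr\'echet--Urysohn) space containing no copy of the Arens' fan $S_2$. I would then argue that, were $X^*$ non-separable, the genuine second-level weak limit $u^{**}$ together with the many independent convergence directions supplied by non-separability could be assembled into a closed copy of $S_2$ inside $s(B,\weak)$; since $S_2$ is sequential but not $\cs^*$-metrizable, this contradicts Theorem~\ref{n8.1}(1) and the hereditarity of $\cs^*$-metrizability (Theorem~\ref{n4.4}(1)). Hence $X^*$ must be separable.

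The main obstacle is exactly this last construction: converting the abstract data ``$X^*$ non-separable'' plus ``a weakly Cauchy, weakly divergent sequence'' into an explicit closed Arens' fan — equivalently, into the infinite-dimensional Shur subspace in which the dichotomy of the introduction is phrased. This is the genuinely functional-analytic heart of the theorem, combining Rosenthal's dichotomy and the description of $X^{**}$ by iterated weak limits with the quantitative fan/fiber analysis of Theorem~\ref{n6.1} (through which the size of a fan at a point is governed by $\ext$ of the corresponding fibre). Everything outside this step is bookkeeping with the permanence properties of Sections~\ref{s3}--\ref{s4} and the implication diagram.
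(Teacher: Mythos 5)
Your handling of the unconditional equivalences $(1)\Leftrightarrow(2)\Leftrightarrow(3)\Leftrightarrow(4)$ (Eberlein--Smulyan plus Proposition~\ref{n4.2}, hereditarity for $(1)\Ra(3)$, and the decomposition $X=\bigcup_n nB$ with Theorem~\ref{n4.4} for the return) is correct and essentially the paper's argument, as is the routine arc $(8)\Ra(7)\Ra(6)\Ra(5)\Ra(4)$. The problem is entirely in your $(4)\Ra(8)$.

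There are two defects, one factual and one structural. First, your intended contradiction rests on the claim that the Arens fan $S_2$ is ``sequential but not $\cs^*$-metrizable.'' This is false: $S_2$ is a countable submetrizable $k_\w$-space, hence an $\aleph_0$-space, hence $k^*$-metrizable (Theorem~\ref{n7.4}, Corollary~\ref{n8.5}); indeed Theorem~\ref{n8.1}(1) is phrased precisely so that sequential $\cs^*$-metrizable spaces are allowed to contain $S_2$ --- containing it only destroys the Fr\'echet--Urysohn/La\v snev property, not $\cs^*$-metrizability. Second, and more fundamentally, the copy of $S_2$ you hope to assemble from a weakly Cauchy divergent sequence and the non-separability of $X^*$ cannot exist: under the hypothesis that the completion of $X$ contains no copy of $l^1$, Rosenthal's theorem (as quoted in the paper from Diestel) makes every bounded subset of $X$ Fr\'echet--Urysohn in the weak topology, and a Fr\'echet--Urysohn space contains no copy of $S_2$. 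So your construction is not merely unfinished --- it is impossible, and you never actually derive a contradiction from ``$X^*$ non-separable.'' The paper's argument goes the other way: $(B,\weak)$ is Fr\'echet--Urysohn by Rosenthal; if it were $k^*$-metrizable but non-metrizable it would be a non-metrizable La\v snev space and hence (by Theorem~\ref{n8.1} together with \cite[Corollary~1.8]{Na}) would contain a closed copy of the Fr\'echet--Urysohn fan $S_\w$; but $(B^2,\weak)$ is also Fr\'echet--Urysohn (Rosenthal applied to bounded subsets of $X^2$) and would then contain $S_\w\times S_\w$, which is not Fr\'echet--Urysohn \cite[1.6]{Na}. The relevant fan is therefore $S_\w$, not $S_2$, and the contradiction lives in the square of the ball, not in the ball itself. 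You would need to replace your final paragraph with this (or an equivalent) argument.
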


\begin{proof} The equivalences $(1)\Leftrightarrow(2)$, $(3)\Leftrightarrow(4)$, $(5)\Leftrightarrow(6)$ follow from Proposition~\ref{n4.2} combined with the Eberlein-Smulyan Theorem \cite[p.52]{HHZ}.
The equivalence $(2)\Leftrightarrow(4)$ can be easily derived from 
Theorem~\ref{n4.4}(1,3). The equivalence $(7)\Leftrightarrow(8)$ is well-known while $(7)\Ra(5)\Ra(4)$ are trivial.

So, it remains to prove that $(3)\Ra(7)$ under the assumption that $l^1$ does not embed into $X$. In this case we may apply the
famous Rosenthal $l^1$-Theorem to conclude that bounded subsets of $X$ and $X^2$ are Fr\'echet--Urysohn with respect to
the weak topology, see \cite[pp.~215, 216]{Di}. In particular, the unit ball $B$ of $X$ is
Fr\'echet--Urysohn in the weak topology. Assuming that the space
$(X,\weak)$ is $k^*$-metrizable but not metrizable, we would get that $(B,\weak)$ is a non-metrizable
Fr\'echet--Urysohn $k^*$-metrizable space which by
  \cite[Corollary~1.8]{Na} or Theorem~\ref{n8.1} is a La\v{s}nev space
containing a subspace
homeomorphic to the Fr\'echet--Urysohn fan.
Then the square $(B^2,\weak)$ is a Fr\'echet--Urysohn space containing
the square of the Fr\'echet--Urysohn fan. This implies that the square of the
Fr\'echet--Urysohn fan is a Fr\'echet--Urysohn space which is not true, see
\cite[1.6]{Na}.
\end{proof}

It follows from Theorem 1.c.9 of \cite{LT} that a Banach space $X$ with an
unconditional basis contains an isomorphic copy of $l^1$ if and only if
the dual $X^*$ is not separable.

\begin{question}\label{4.18}
Let $X$ be a Banach space with an
unconditional basis. Is $(X,\weak)$ $k^*$-metrizable?
Does $(X,\weak)$ have the weak Skorohod property?
\end{question}

A standard example of a separable Banach space having a
non-separable dual and
containing no isomorphic copy of $l^1$ is the James tree space~$JT$,
see
\cite[p.~215]{HHZ}, \cite{LSt}. By Theorem~\ref{4.16}, $(JT,\weak)$ fails to be $k^*$-metrizable. Since each separable Banach space (in particular,~$JT$) is
a quotient of~$l^1$, we arrive at the following result.

\begin{proposition}\label{4.19} There is a Banach space $X$
possessing a closed
linear subspace $Y\subset X$ such that $(X,\weak)$ is $k$-metrizable, but $(X/Y,\weak)$ is not even $\cs^*$-metrizable. Thus the $k$-, $k^*$-, $\cs$-, and $\cs^*$-metrizability is not preserves by open maps.
\end{proposition}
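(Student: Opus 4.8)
The plan is to produce the example from the James tree space $JT$ together with the fact that every separable Banach space is a quotient of $l^1$. First I would take $X=l^1$. Since $l^1$ has the Shur property, Proposition~\ref{4.15} shows that $(l^1,\weak)$ is $k$-metrizable, and hence, by the implications recorded in Section~\ref{s4}, it is simultaneously $k$-, $k^*$-, $\cs$-, and $\cs^*$-metrizable. As $JT$ is a separable Banach space, it is a quotient of $l^1$, so there is a closed linear subspace $Y\subset l^1$ and a Banach-space isomorphism $l^1/Y\cong JT$; let $q\colon l^1\to l^1/Y$ denote the canonical quotient map. Because Banach-space isomorphisms are homeomorphisms for the weak topologies, I may regard $q$ as a continuous surjection $(l^1,\weak)\to(JT,\weak)$.

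The key step, and the main obstacle, is to check that $q$ is \emph{open} as a map between the weak topologies. Being a bounded linear surjection, $q$ is weak-to-weak continuous, so by linearity it suffices to show that $q$ carries each basic weak neighbourhood $U=\{x:|f_i(x)|<1,\ i\le n\}$ of the origin, with $f_1,\dots,f_n\in(l^1)^*$, onto a weak neighbourhood of the origin in $l^1/Y$. I would split a basis of $\operatorname{span}\{f_1,\dots,f_n\}$ into a part lying in the annihilator $Y^\perp\cong(l^1/Y)^*$ and a complementary part whose restrictions to $Y$ are linearly independent. The first part descends to functionals on $l^1/Y$ cutting out a basic weak neighbourhood of the origin, while the coordinate conditions imposed by the second part can always be satisfied by adjusting the representative of a coset by a suitable vector of $Y$. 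This exhibits a basic weak neighbourhood of the origin inside $q(U)$, so $q$ is weakly open. (Equivalently, one may simply invoke the standard fact that a surjective bounded operator between Banach spaces is weak-to-weak open.)

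Finally I would rule out the target. The discussion preceding this proposition already records, through Theorem~\ref{4.16} (using that $JT$ is complete, contains no isomorphic copy of $l^1$, and has non-separable dual, so condition~(8) fails), that $(JT,\weak)$ is not $k^*$-metrizable; since conditions~(1) and~(2) of that theorem are equivalent, $(JT,\weak)$ is not $\cs^*$-metrizable either. As each of $k$-, $k^*$-, $\cs$-, and $\cs^*$-metrizability implies $\cs^*$-metrizability, the space $(JT,\weak)$ enjoys none of the four properties. Thus the open continuous surjection $q\colon(l^1,\weak)\to(JT,\weak)$ maps a space possessing all four properties onto a space possessing none of them, which proves that none of the $k$-, $k^*$-, $\cs$-, and $\cs^*$-metrizability properties is preserved by open maps.
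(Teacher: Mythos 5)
Your proof is correct and follows exactly the paper's route: $X=l^1$ is $k$-metrizable in the weak topology via the Shur property (Proposition~\ref{4.15}), the James tree space $JT$ is realized as $l^1/Y$, and Theorem~\ref{4.16} (via Rosenthal's $l^1$-theorem and the non-separability of $JT^*$) rules out $\cs^*$-metrizability of $(JT,\weak)$. The only addition is your explicit verification that the quotient map is weak-to-weak open, a standard fact the paper leaves implicit; your annihilator/complement argument for it is sound.
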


In the following proposition we describe a situation where such a
pathology cannot appear.

\begin{proposition}\label{4.20} Let $Y$ be a reflexive subspace of a
Banach space $X$. If $(X,\weak)$ is $k^*$-metrizable, then so is
the space $(X/Y,\weak)$.
\end{proposition}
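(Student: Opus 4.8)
The plan is to show that the quotient map $q\colon X\to X/Y$, regarded as a map $q\colon (X,\weak)\to(X/Y,\weak)$, is subproper; since $(X,\weak)$ is $k^*$-metrizable by hypothesis, Theorem~\ref{3.3}(4) will then immediately yield the $k^*$-metrizability of the image $(X/Y,\weak)$. As a bounded surjective linear operator, $q$ is weak-to-weak continuous and onto, so the only thing to produce is a set $Z'\subset X$ with $q(Z')=X/Y$ such that $Z'\cap q^{-1}(K)$ is relatively weakly compact in $X$ whenever $K\subset X/Y$ is relatively weakly compact (recall that in the weak topology precompactness is exactly relative weak compactness).

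The heart of the matter is the following lifting fact: \emph{for every weakly compact $K\subset X/Y$ and every $N>0$ the bounded set $E=q^{-1}(K)\cap N B_X$ is relatively weakly compact in $X$}, where $B_X$ is the closed unit ball. To prove this I would invoke the classical characterization that a bounded subset $A$ of a Banach space is relatively weakly compact if and only if its weak$^*$ closure in the bidual lies in the space. So let $\xi$ belong to the weak$^*$ closure $\overline{E}^{\,w^*}\subset X^{**}$. The adjoint $q^{**}\colon X^{**}\to (X/Y)^{**}$ is weak$^*$-to-weak$^*$ continuous and $q^{**}(E)=q(E)\subset K$; since $K$ is weakly compact, it is weak$^*$ compact, hence weak$^*$ closed, inside $(X/Y)^{**}$, so $q^{**}(\xi)\in K\subset X/Y$. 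Here the reflexivity of $Y$ enters decisively: dualizing the short exact sequence $0\to Y\to X\xrightarrow{q} X/Y\to 0$ twice gives $\ker q^{**}=Y^{\perp\perp}$, and $Y^{\perp\perp}=Y$ is precisely the statement that the reflexive space $Y$ is weak$^*$ closed in $X^{**}$. Picking $x_0\in X$ with $q(x_0)=q^{**}(\xi)$, we obtain $\xi-x_0\in\ker q^{**}=Y\subset X$, whence $\xi\in X$. Thus $\overline{E}^{\,w^*}\subset X$ and $E$ is relatively weakly compact.

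With the lemma in place the witnessing set is easy to exhibit: put $Z'=\{x\in X:\|x\|\le 2\|q(x)\|+1\}$. The quotient-norm identity $\|q(x)\|=\dist(x,Y)$ allows, for each $z\in X/Y$, a choice of preimage of norm $<\|z\|+1$, so $q(Z')=X/Y$. If $K\subset X/Y$ is relatively weakly compact, hence norm-bounded by some $M$, then $Z'\cap q^{-1}(K)\subset q^{-1}(K)\cap(2M+1)B_X$, which is relatively weakly compact by the lemma; consequently $Z'$ certifies that $q$ is subproper, and the proposition follows from Theorem~\ref{3.3}(4). The entire difficulty is concentrated in the lifting lemma, i.e.\ in verifying that the bounded slices $q^{-1}(K)\cap N B_X$ over $K$ remain relatively weakly compact; this is exactly the place where the reflexivity of $Y$ (through $\ker q^{**}=Y$) is indispensable, and it cannot be dropped, as the quotient $l^1\to JT$ underlying Proposition~\ref{4.19} demonstrates.
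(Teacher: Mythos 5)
Your proof is correct, but it follows a genuinely different route from the paper's. The paper first takes a continuous, positively homogeneous section $s\colon X/Y\to X$ of the quotient operator (via Benyamini--Lindenstrauss), and then, to show $s$ preserves weakly precompact sets, invokes the Davis--Figiel--Johnson--Pe\l czy\'nski factorization to enclose a given weakly compact $K\subset X/Y$ in $T(B_R)$ for a reflexive $R$, forms the pull-back $Z=\{(y,x)\colon T(y)=Q(x)\}$, and uses that reflexivity is a three-space property to conclude that $Z$, hence $\pr_X(D)\supset s(K)$, is weakly compact. You instead certify subproperness directly from Definition 1.1 with the explicit set $Z'=\{x\colon \|x\|\le 2\|q(x)\|+1\}$ (no continuous section needed), and concentrate the work in the lifting lemma that $q^{-1}(K)\cap NB_X$ is relatively weakly compact; your proof of that lemma via the bidual --- $\overline{E}^{\,w^*}$ is pushed by the weak$^*$-continuous $q^{**}$ into the weak$^*$-closed set $K$, and $\ker q^{**}=Y^{\perp\perp}=Y$ exactly when $Y$ is reflexive --- is a clean, standard duality argument. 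Both proofs isolate the same phenomenon (weakly compact subsets of $X/Y$ lift to weakly compact subsets of $X$ when the kernel is reflexive), but yours avoids the selection theorem, the DFJP factorization, and the three-space property, at the cost of producing only a witnessing subset rather than a continuous precompact-preserving section; for the statement as given, that is all Theorem~\ref{3.3}(4) requires. One cosmetic point: the definition of subproper quantifies over precompact (i.e.\ relatively weakly compact) $K$, so you should pass to $\overline{K}$ before applying your lemma, which you implicitly do and which costs nothing.
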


\begin{proof} Let $Q\colon\, X\to X/Y$ be the quotient operator.
By  \cite[Proposition~1.19]{BL}
there is a continuous section $s\colon\, X/Y\to X$ of $Q$ such
that $s$ is positively homogeneous, i.e.,
 $s(\lambda x)=\lambda s(x)$ for each $x\in X/Y$ and $\lambda>0$.
It follows from the continuity and positive homogeneity of $s$ that
$s$ maps bounded subsets of $X/Y$ onto bounded subsets of $X$.

To finish the proof it remains to apply Theorem 3.3(4) and show that for each
weakly compact subset $K\subset X/Y$ the weak closure of $s(K)$ in $X$ is
weakly compact. By the Davis--Figiel--Johnson--Pe\l czy\'nski
factorization theorem
\cite{DFJP} there is an injective linear continuous operator $T\colon\, R\to X/Y$
from a reflexive Banach space $R$ such that the image $T(B)$ of the unit
ball $B\subset R$ contains the set $K$. In the product $R\times X$
consider the closed linear subspace $Z=\{(y,x)\in R\times X\colon\,  T(y)=Q(x)\}$.
Observe that the projection $\pr_X\colon\, Z\to X$ is injective while the kernel
of the projection $\pr_R\colon\, Z\to R$ is isomorphic to the reflexive space $Y$.
Since the reflexivity is a tree-space property \cite[4.1]{Ca}, we conclude that
the Banach space $Z$ is reflexive. Since $s$ preserves bounded sets, the
image $s(K)$ is bounded in $X$. Then the preimage $\pr_X^{-1}(s(K))\subset
Z$ is contained in the bounded set $(B\times s(K))\cap Z$ of $Z$. Take any
closed bounded convex set $D\subset Z$ with $D\supset \pr_Z^{-1}(s(K))$.
By the reflexivity of $Z$ the set $D$ is weakly compact and so is its
image $\pr_X(D)$ in $X$. Since $s(K)\subset\pr_X(D)$, we conclude that the
weak closure of $s(K)$ in $X$ is weakly compact. Thus
$Q\colon\, (X,\weak)\to(X/Y,\weak)$ is a subproper map and we can
apply Theorem~\ref{3.3}(4) to finish the proof.
\end{proof}

In light of Propositions~\ref{4.19} and \ref{4.20} the following
question appears naturally.

\begin{question} Let $X$ be a Banach space and let
$Y$ be a closed linear
subspace of $X$. Suppose that $(Y,\weak)$ and $(X/Y,\weak)$ are $k^*$-metrizable. Is $(X,\weak)$ $k^*$-metrizable?
\end{question}

In the following proposition we describe situations when an answer to
this question is affirmative. We recall that $c_0$ is the Banach space of
all real sequences convergent to zero.

\begin{proposition}\label{4.23}
Let $X$ be a normed space and let $Y$ be a closed linear
subspace of $X$ such that both $(Y,\weak)$ and $(X/Y,\weak)$ are $k^*$-metrizable. Then $(X,\weak)$ is $k^*$-metrizable if one of
the following conditions holds{\rm:}

{\rm1)} $Y$ is complemented in $X$;

{\rm2)}
there is a linear continuous operator $T\colon\, X\to Z$ into a normed
space $Z$ such that $T|Y$ is an isomorphic embedding and $(Z,\weak)$ is $k^*$-metrizable;

{\rm3)}
$X$ is separable and $Y$ is isomorphic to a subspace of $c_0$;

{\rm4)}
$X$ is complete and $X/Y$ has the Shur property.
\end{proposition}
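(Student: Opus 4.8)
The plan is to treat the four cases separately, reducing each to the permanence properties of $k^*$-metrizability recorded in Theorems~\ref{3.3} and \ref{4.1}; the substantive work in each case is to produce either a topological embedding of $(X,\weak)$ into a $k^*$-metrizable space or a subproper surjection onto $(X,\weak)$ from such a space.

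I would handle (2) first, since (1) is a special case of it. Let $Q\colon X\to X/Y$ be the quotient operator and consider the $\weak$-to-$\weak$ continuous linear map $(Q,T)\colon (X,\weak)\to (X/Y,\weak)\times(Z,\weak)$, $x\mapsto(Qx,Tx)$. As $T|Y$ is an isomorphic embedding, no nonzero $x\in Y$ has $Tx=0$, so $(Q,T)$ is injective. The key point is that $(Q,T)$ is a topological embedding for the weak topologies: the topology it pulls back from the product (whose dual is $(X/Y)^{*}\times Z^{*}$) is $\sigma(X,\,Y^{\perp}+T^{*}Z^{*})$, where $Y^{\perp}=Q^{*}(X/Y)^{*}$. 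Using the isomorphism $T|Y$ together with the Hahn--Banach theorem I would verify the identity $Y^{\perp}+T^{*}Z^{*}=X^{*}$: given $f\in X^{*}$, transport $f|Y$ through $(T|Y)^{-1}$ and extend to some $\psi\in Z^{*}$, so that $f-T^{*}\psi$ vanishes on $Y$, i.e. lies in $Y^{\perp}$. Hence the pulled-back topology is exactly $\sigma(X,X^{*})=\weak$, and $(Q,T)$ realizes $(X,\weak)$ as a subspace of the $k^*$-metrizable product $(X/Y,\weak)\times(Z,\weak)$ (Theorem~\ref{4.1}(2)); Theorem~\ref{4.1}(1) then finishes (2). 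For (1), a complemented $Y$ with bounded projection $P\colon X\to Y$ is precisely the instance $Z=Y$, $T=P$ of (2) (equivalently, $(X,\weak)\cong(Y,\weak)\times(X/Y,\weak)$ directly).

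For (3) I would reduce to (2) via Sobczyk's theorem. Fix an isomorphic embedding $j\colon Y\to c_0$. Since $X$ is separable, the dual ball $B_{X^{*}}$ is weak$^{*}$ metrizable, so Sobczyk's theorem extends $j$ to a bounded operator $T\colon X\to c_0$ with $T|Y=j$ still an isomorphic embedding. Because $c_0$ has separable dual $l^1$, Proposition~\ref{4.14} gives that $(c_0,\weak)$ is an $\aleph_0$-space, hence $k^*$-metrizable by Corollary~\ref{n8.5}. Now (2) applies with $Z=c_0$. Case (4) is where completeness enters, through a Bartle--Graves type selection: since $X$ is Banach, $Q$ is open and admits a continuous (norm-to-norm) section $s\colon X/Y\to X$ (as in Proposition~\ref{4.20}). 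I would define $\Psi\colon (Y,\weak)\times(X/Y,\mathrm{norm})\to (X,\weak)$ by $\Psi(y,w)=y+s(w)$; this is a continuous bijection with inverse $x\mapsto(x-s(Qx),\,Qx)$, and its domain is $k^*$-metrizable by Theorem~\ref{3.3}(2). The aim is to show $\Psi$ is subproper by checking that $\Psi^{-1}$ preserves weakly compact sets, whence Theorem~\ref{3.3}(4) gives the $k^*$-metrizability of the image $(X,\weak)$. The crucial observation is that, because $X/Y$ has the Shur property, every weakly compact $K'\subset X/Y$ is norm compact: by the Eberlein--Smulyan theorem $K'$ is weakly sequentially compact, and the Shur property upgrades weak sequential convergence to norm convergence, so $K'$ is norm-(sequentially-)compact. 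Thus for weakly compact $K\subset X$ the second coordinate $Q(K)$ is norm compact, so $s(Q(K))$ is norm compact; and the first coordinate $\{x-s(Qx):x\in K\}$ lies in the weakly compact set $K-s(Q(K))$ and in the weakly closed subspace $Y$, hence has weakly compact closure in $(Y,\weak)$. Therefore $\Psi^{-1}(K)$ is precompact in the product and $\Psi$ is subproper.

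The main obstacle, I expect, is Case (4): one must pinpoint why completeness is needed (the continuous section $s$ furnished by the open mapping/Bartle--Graves theorem) and then verify that the $\weak$-to-$\weak$ continuous bijection $\Psi$ has a precompact-preserving inverse, for which the reduction ``weakly compact in a Shur space $=$ norm compact'' is decisive. In Case (2) the only delicate step is the identity $Y^{\perp}+T^{*}Z^{*}=X^{*}$ that promotes the continuous injection $(Q,T)$ to a topological embedding; everything else is a routine application of the product and subspace permanence theorems.
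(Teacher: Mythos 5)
Your proposal is correct and follows essentially the same route as the paper in all four cases: reducing (1) and (3) to (2) via Sobczyk, realizing $(X,\weak)$ as a weak subspace of $(X/Y,\weak)\times(Z,\weak)$ through $(Q,T)$ in case (2), and in case (4) using a continuous (Michael/Bartle--Graves) section $s$ of $Q$ to exhibit $(X,\weak)$ as the image of $(Y,\weak)\times(X/Y,\mathrm{norm})$ under the proper bijection $(y,w)\mapsto y+s(w)$, with properness coming from the fact that weakly compact subsets of the Shur space $X/Y$ are norm compact. The only difference is that you explicitly verify the identity $Y^{\perp}+T^{*}Z^{*}=X^{*}$ to confirm that $(Q,T)$ is a weak-to-weak embedding, a detail the paper leaves as ``it can be shown.''
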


\begin{proof} 1. The first statement follows immediately from the second.
\smallskip

2. Suppose that $T\colon\, X\to Z$ is a  continuous linear
 operator into a normed
space such that $T|Y$ is an isomorphic embedding and the space $(Z,\weak)$ is $k^*$-metrizable. By Theorem~\ref{3.3}, the product
$(X/Y,\weak)\times (Z,\weak)$ is  $k^*$-metrizable. Denote by
$Q\colon\, X\to X/Y$ the quotient operator. It can be shown that the operator
$(Q,T)\colon\, X\to(X/Y)\times Z$ is an isomorphic embedding. Then $(X,\weak)$,
being a subspace of $(X/Y,\weak)\times (Z,\weak)$, is $k^*$-metrizable.
\smallskip

3. Next, assume that the space $X$ is separable and there is an isomorphism
$h\colon\, Y\to Z$ of $Y$ onto a subspace $Z$ of $c_0$. By the
Sobczyk theorem
\cite[Theorem~94]{HHZ},
there is a  continuous linear operator $T\colon\, X\to c_0$ such that $T|Y=h$.
Since the space $c_0$ has the separable dual $c_0^*=l^1$, the space
$(c_0,\weak)$ is an  $\aleph_0$-space according to
Proposition~\ref{4.14}. Now the
previous statement yields that $(X,\weak)$ is $k^*$-metrizable.
\smallskip

4. Finally assume that $X$ is a Banach space and the quotient $X/Y$ has
the Shur property. It follows from the Michael selection theorem
that there is
a continuous section $s\colon\, X/Y\to X$ of the quotient operator $Q\colon\, X\to X/Y$.
By Theorem~\ref{3.3}, the product $(Y,\weak)\times (X/Y)$ is $k^*$-metrizable. Consider the bijective continuous map $f\colon\, (Y,\weak)\times
(X/Y)\to X$ defined by $f(y,x)=y+s(x)$. To show that $(X,\weak)$ is $k^*$-metrizable, it suffices to verify that the map $f$ is proper.
Let $K$ be a weakly compact subset of $X$. Then $Q(K)$ is a weakly compact
subset of $X/Y$ and by Eberlein-\v Smulyan Theorem \cite[p.52]{HHZ}, $Q(K)$ is sequentially compact in the weak topology of $X/Y$. Since the space $X/Y$ has the Shur property, the set
$Q(K)$ is norm compact. Consequently, $s(Q(K))$ is a norm compact subset
of $X$ and $C=(K-s(Q(K)))\cap Y$ is a weakly compact subset of $Y$.
Observing that $f^{-1}(K)\subset C\times Q(K)$ we see that $f$ is proper.
\end{proof}

Finally, we show that the equivalence of the conditions $(1)\Leftrightarrow(8)$ in Theorem~\ref{4.16} cannot be proved without any restrictions on $X$. A suitable counterexample can be constructed using the operation of the $l_1$-sum $\big(\sum_{i\in\w}X_i\big)_{l_1}$ of a sequence of Banach spaces $X_i$ with norms $\|\cdot\|_i$. We recall that
$$\big(\sum_{i\in\w}X_i\big)_{l_1}=\{(x_i)_{i\in\w}\in\prod_{i=1}^\infty X_i:\sum_{i\in\w}\|x_i\|<\infty\}$$
has norm $\|(x_i)\|=\sum_{i\in\w}\|x_i\|<\infty$.

\begin{proposition}\label{n13.11} For any sequence $\{(X_i,\|\cdot\|_i)$ of normed spaces whose weak topology is $k^*$-metrizable the weak topology of the $l_1$-sum $X=\big({\sum_{i\in\w}}X_i\big)_{l_1}$ is $k^*$-metrizable. If infinitely many spaces $X_i$ fail to have the Shur property, then the weak topology of the unit ball of $X$ is not $\cs$-metrizable.
\end{proposition}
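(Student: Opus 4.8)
The plan is to prove the first assertion via the metric characterization of Theorem~\ref{n3.4}, and the second via a failure of Arkhangelski's property $(\alpha_4)$. Since $(X,\weak)$ is locally convex, hence completely regular and in particular sequentially regular, Theorem~\ref{n3.4} reduces the first claim to producing a metric $\rho$ on $X$ satisfying conditions (a)--(c) of that theorem (with ``convergent'' read as ``weakly convergent''). Each $(X_i,\weak)$ is $k^*$-metrizable, so Theorem~\ref{n3.4} supplies metrics $\rho_i$ on $X_i$ with properties (a)--(c); replacing $\rho_i$ by $\min\{\rho_i,1\}$ I may assume $\rho_i\le1$. Writing $x=(x_i)_{i\in\w}$ and letting $\pi_i\colon X\to X_i$ be the coordinate projection, I will set
$$\rho(x,x')=\sum_{i\in\w}2^{-i}\rho_i(x_i,x_i')+\sum_{i\in\w}\bigl|\,\|x_i\|_i-\|x_i'\|_i\,\bigr|.$$
Both sums are finite (the second is bounded by $\|x\|+\|x'\|$) and $\rho$ separates points because the $\rho_i$ do, so $\rho$ is a genuine metric.

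The crux is a description of weakly compact sets in the $\ell_1$-sum, which I would prove by a diagonal argument together with the Eberlein--{\v S}mulian theorem (applied after passing to the completion, since the weak topologies agree on $X$) for sufficiency, and a gliding-hump argument of Dunford--Pettis type for necessity: a subset $K\subset X$ is relatively weakly compact iff it is bounded, each $\pi_i(K)$ is relatively weakly compact in $X_i$, and $K$ is uniformly summable, i.e. $\sup_{x\in K}\sum_{i>N}\|x_i\|_i\to0$ as $N\to\infty$. Granting this, property (a) follows. Indeed, for weakly compact $K$ each $\pi_i(K)$ is $\rho_i$-totally bounded by property (a) of $\rho_i$, so the first summand of $\rho$ is totally bounded on $K$; and $\{(\|x_i\|_i)_i:x\in K\}$ is a bounded, uniformly summable, hence totally bounded, subset of $\ell_1$, so the second summand is totally bounded on $K$ as well. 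As a finite sum of totally bounded pseudometrics is totally bounded, (a) holds.

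For (b), if $\rho(x^{(n)},x)\to0$ then $x_i^{(n)}\to x_i$ weakly for each $i$ (property (b) of $\rho_i$) and the norm sequences converge in $\ell_1$, whence $\sum_{i>N}\|x_i^{(n)}\|_i\le\sum_{i>N}\|x_i\|_i+\sum_i|\,\|x_i^{(n)}\|_i-\|x_i\|_i\,|$ forces uniform summability; the sufficiency half of the characterization then gives $x^{(n)}\to x$ weakly in $X$. For (c), a $\rho$-Cauchy sequence with a weakly convergent subsequence $x^{(n_k)}\to x$ has each $(x_i^{(n)})_n$ $\rho_i$-Cauchy with subsequential weak limit $x_i$, so property (c) of $\rho_i$ gives $x_i^{(n)}\to x_i$ weakly for all $i$, while $\ell_1$-Cauchyness of the norm sequences yields uniform summability of the whole sequence; the characterization again gives $x^{(n)}\to x$ weakly. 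Thus $(X,\weak)$ is $k^*$-metrizable. The main obstacle is establishing the weak-compactness characterization, above all its necessity (the uniform-tail condition); a conceptual point that makes the argument go through is that $\rho$ is only required to be \emph{finer} than the weak topology, so the non-weakly-continuous norm terms are harmless for (b) and (c)---$\rho$ need not detect weak convergence, only imply it.

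For the second assertion I would exhibit a failure of $(\alpha_4)$ at $0$ in $(B,\weak)$; as in the proof of Proposition~\ref{4.15}, this is impossible for a $\cs$-metrizable space, since $\cs$-metrizability makes the sequential coreflexion metrizable (Theorem~\ref{n4.3}), metrizable spaces have $(\alpha_4)$, and $(\alpha_4)$ depends only on convergent sequences, which $X$ and its coreflexion share. Choose indices $i_1<i_2<\cdots$ with $X_{i_k}$ lacking the Shur property, and in each a normalized, weakly null sequence $(u_k^{(n)})_{n\in\w}$ of pairwise distinct vectors. Let $S_k=\{e_{i_k}(u_k^{(n)}):n\in\w\}\subset B$, where $e_{i_k}(v)$ is the element of $X$ with $v$ in coordinate $i_k$ and $0$ elsewhere; each $S_k$ is a non-trivial sequence converging weakly to $0$. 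No weakly null sequence meets infinitely many $S_k$: if $(w_m)$ were such, with $w_m\in S_{k_m}$ and the $k_m$ distinct, pick by Hahn--Banach norming functionals $f_{i_{k_m}}\in X_{i_{k_m}}^*$ with $\|f_{i_{k_m}}\|=1$ and $f_{i_{k_m}}(u_{k_m}^{(n_m)})=1$, and set $f_j=0$ otherwise; then $f=(f_j)_j\in X^*=(\prod_j X_j^*)_{\ell_\infty}$ and $f(w_m)=1$ for all $m$, contradicting $w_m\to0$ weakly. Hence $(B,\weak)$ fails $(\alpha_4)$ at $0$ and is therefore not $\cs$-metrizable.
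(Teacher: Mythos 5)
Your proof is correct in substance but takes a genuinely different route for the first assertion. The paper works through the $k$-network characterization (Theorem~\ref{n7.4}): from $\sigma$-compact-finite $k$-networks $\mathcal N_i$ of the summands it builds explicit sets $N[(N_i)_{i\in F},\e]$ (finitely many coordinates constrained by members of $\mathcal N_i$, the remaining tail of mass at most $\e$) and verifies directly that this family is a $\sigma$-compact-finite $\wcs^*$-network for $(X,\weak)$. You instead invoke the metric characterization (Theorem~\ref{n3.4}) and design a single metric $\rho$ whose second summand, the $\ell_1$-distance between the norm sequences, encodes exactly the tail control that the parameter $\e$ encodes in the paper's networks; your verification of (a)--(c) is sound, including the subsequence/identification-of-limits step and the observation that $\rho$ need only refine the weak topology. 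The two arguments converge on the same analytic crux: the uniform summability of tails of weakly convergent (or weakly compact) subsets of the $\ell_1$-sum. The paper proves this in full --- assembling Hahn--Banach norming functionals on blocks into an operator $X\to l_1$ and invoking the Shur property of $l_1$ to kill the gliding hump --- whereas you only name it (``a gliding-hump argument of Dunford--Pettis type'') and defer it; since property (a) of your metric stands or falls with this lemma, you should carry it out (the sequence case is exactly the paper's computation, and the compact case reduces to it by Eberlein--\v Smulian in the completion). Your approach buys a shorter, more conceptual verification once the compactness characterization is in hand, and it isolates that characterization as a reusable statement; the paper's approach avoids any appeal to Eberlein--\v Smulian and produces the witnessing $k$-network explicitly. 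For the second assertion your argument coincides with the paper's (weakly null normalized sequences on infinitely many axes, failure of $(\alpha_4)$ at $0$ via an $\ell_\infty$-assembled functional), and you usefully supply the verification that the paper leaves as ``it can be shown.''
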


\begin{proof} For every $i\in\IN$ fix a $\sigma$-compact-finite $k$-network $\mathcal N_i$ for the $k^*$-metrizable space $(X_i,\weak)$. Let $B$ denote the closed unit ball of the normed space $X=\big({\sum_{i\in\w}}X_i\big)_{l_1}$. Given a number $\e\in\{2^{-m}:m\in\w\}$, a finite subset $F\subset\w$, and a sequence of sets $(N_i)_{i\in F}\in\prod_{i\in F}\mathcal N_i$, consider the set
$$N[(N_i)_{i\in F},\e]=\{(x_i)\in X:\mbox{$\forall i\in F\; x_i\in N_i$ and $\sum_{i\in\w\setminus F}\|(x_i)\|\le \e\}$}\}.$$
It is easy to check that the family $\mathcal N$ of all such sets $N[(N_i)_{i\in F},\e]$
is $\sigma$-compact-finite in $(X,\weak)$. It remains to check that $\mathcal N$ is a $k$-network for $(X,\weak)$. According to Proposition~\ref{n7.2} this will follow as soon as we show that all compact subsets of $(X,\weak)$ are sequentially compact and $\mathcal N$ is a $\wcs^*$-network for $(X,\weak)$.

First we check that each weakly compact subset $K$ of $X$ is metrizable. Given a number $i\in\w$ consider the projection $K_i$  of $K$ onto the factors $X_i$. Then $K_i$, being a compact subset of the $k^*$-metrizable space $(X_i,\weak)$, is metrizable and separable. Consequently, the  linear hull $L_i$ of $K_i$ is separable in $(X_i,\weak)$ and also in $X_i$ because weak and norm separability is equivalent for convex sets. Then $K$, being a subset of the separable normed space $\big(\sum_{i\in\w}L_i)_{l_1}$, has countable network both in norm and weak topologies. Consequently, the weak topology of $K$ is metrizable because it has countable network.

Next, we check that $\mathcal N$ is a $\wcs^*$-network for $(X,\weak)$. Fix an open set $W$ in $(X,\weak)$ and a sequence $\{x^n\}_{n\in\w}\subset W$ weakly convergent to a point $x^\infty\in W$. 
It will be more convenient to think of elements of $X$ as functions $x:\w\to \bigcup_{i\in\w}X_i$ with $x(i)\in X_i$ for all $i\in\w$. 

The compactness of the set $S=\{x^n:n\le\infty\}$ in $W$ implies the existence of an open neighborhood $W_0$ of the origin in $(X,\weak)$ such that $S+W_0+W_0\subset W$. Next, find $\e\in \{2^{-m}:m\in\w\}$ such that $W_0$ contains the closed $2\e$-ball centered at the origin.
 
 We claim that for any $\e>0$ there is $m\in\w$ such that $\sum_{i=m}^\infty \|x^n(i)\|_i\le\e$ for all but finitely many numbers $n$. Assume the converse: there is $\e>0$ such that for any $m\in\w$ there are infinitely many numbers $n\in\w$ such that $\sum_{i=m}^\infty \|x^n(i)\|_i> \e$. Inductively we can construct two increasing number sequences $(m_k)$ and $(n_k)$ such that for any $k\in\w$ 
$$\sum_{i=m_k+1}^{m_{k+1}}\|x^{n_k}(i)\|_i>\e.$$
By the Hahn-Banach Theorem for every $k\in\w$ we can find a linear functional $f_k$ on $\big(\sum_{i=m_k+1}^{m_{k+1}}X_i\big)_{l_1}$ with norm $\|f_k\|=1$ such that $f_k\big(x^{n_k}|[m_k, m_{k+1})\big)>\e$. These functionals form a continuous operator $$f:X\to l_1,\; f:x\mapsto (f_k\big(x|[m_k,m_{k+1})\big)_{k\in\w}$$ on $X$. The weak convergence of $(x^{n_k})$ to $x^\infty$ implies the weak convergence of $\big(f(x^{n_k})\big)$ to $f(x^\infty)$. Since $l_1$ has the Shur property, this sequence converges in norm, which yields a number $i_0\in\w$ with 
$$\sum_{i\ge i_0}|f_i\big(x^{n_k}|[m_i,m_{i+1})\big)|<\e\mbox{ \ and hence \ } |f_k\big(x^{n_k}|[m_k,m_{k+1})\big)|<\e$$ for all $k\ge i_0$. But this contradicts the choice of the functional $f_k$.

So we may take a number $m\in\w$ such that $\sum_{i=m}^\infty \|x^n(i)\|_i\le\e$ for all numbers $n\le \infty$. Let $\pr_m:X\to \big(\sum_{i<m}X_i\big)_{l_1}$ be the projection of $X$ onto the subspace $\big(\sum_{i<m}X_i\big)_{l_1}=\{(z_i)\in X:z_i=0$ for all $i\ge m\}$. The sequence $(\pr_m(x^n))_{n\in\w}$ weakly converges to $\pr_m(x^\infty)$. Since the norm distance between $x^n$ and its projection $\pr_m(x^n)$ does not exceed $\e$, we get $\{\pr_m(x^n):n\in\w\}\subset S+W_0$. Since $\mathcal N_i$ are $k$-networks in $(X_i,\weak)$, we can select sets $N_i\in\mathcal N_i$ for $i<m$ whose product $\prod_{i<m}X_i$ lies in $(S+W_0)\cap \big(\sum_{i<m}X_i)_{l_1}$ and contains infinitely many points of the sequence $(\pr_m(x^n))$. Then $N[(N_i)_{i<m},\e]\in\mathcal N$ lies in $S+W_0+\e\, B\subset W$ and contains infinitely many points of the sequence $(x^n)$, witnessing that $\mathcal N$ is a $\wcs^*$-network for $(X,\weak)$.

By Proposition~\ref{n7.2}, the $\sigma$-compact finite $\wcs^*$-network $\mathcal N$ is a $k$-network for $(X,\weak)$ and by Theorem~\ref{n7.4}, the space $(X,\weak)$ is $k^*$-metrizable.
\medskip

Now assuming that infinitely many spaces $X_i$ fail to have the Shur property, we shall show that the weak unit ball $(B,\weak)$ of $X$ is not $cs$-metrizable. Without loss of generality, all the spaces $X_k$ fail the Shur property, which allows us in each space $X_k$ to find a sequence $(x^k_n)_{n\in\w}$ on the unit sphere that weakly converges to zero. Identify each space $X_k$ with the subspace $\{(z_i)_{i\in\w}\in X:z_i=0$ for all $i\ne k\}$ of $X$ and consider the set $V=\{0,x^k_n:k,n\in\w\}\subset (B,\weak)$. It can be shown that the space $V$ fails to have the Arkhangelski's property $(\alpha_4)$,  which implies that $(B,\weak)$ cannot be $cs$-metrizable.
\end{proof}

Using the above Proposition one can show that the weak unit ball of the space $L_1[0,1]$ is not $k$-metrizable.

\begin{problem} Is the space $L_1[0,1]$ $k^*$-metrizable in its weak topology?
\end{problem}

More generally, we can pose a

\begin{problem} Characterize Banach spaces $X$ whose weak unit ball is $k$-metrizable (resp. $k^*$-metrizable). 
\end{problem}

Let us note that in the realm of nonseparable Banach spaces the $k^*$-metrizability of the weak topology is "orthogonal" to the WCG-property.
We recall that a Banach space $X$ is {\em weakly compact generated} (briefly, WCG) if there is a weakly compact subset $K$ whose linear hull is dense in $X$.

\begin{proposition} If a WCG Banach space $X$ has $k^*$-metrizable weak topology, then $X$ is separable.
\end{proposition}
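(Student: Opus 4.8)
The plan is to exploit the single structural consequence of $k^*$-metrizability that we actually need here, namely that every compact subset of a $k^*$-metrizable space is metrizable (this is noted in the Introduction: subproper maps are compact-covering, so compacta pull back from metrizable spaces). Let $K\subset X$ be a weakly compact set whose linear span is dense in $X$, witnessing that $X$ is WCG. Since $(X,\weak)$ is $k^*$-metrizable and $K$ is compact in $(X,\weak)$, the space $(K,\weak)$ is metrizable. This is the only place where the hypothesis is used.

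Next I would record that a metrizable compact space is separable, so $(K,\weak)$ contains a countable weakly dense subset $D$. Let $Y=\overline{\mathrm{span}\,D}$ denote the norm-closed linear span of $D$; this is a separable closed linear subspace of $X$, since finite rational combinations of elements of $D$ form a countable norm-dense subset of $Y$. The crucial step is to check that $K\subset Y$. Because $D\subset K$ and $K$ is weakly closed, the weak closure of $D$ in $X$ satisfies $\overline{D}^{\,\weak}=K$, whence $K\subset\overline{\mathrm{span}\,D}^{\,\weak}$; and by Mazur's theorem the weak closure of the convex set $\mathrm{span}\,D$ coincides with its norm closure $Y$. Therefore $K\subset Y$, so $\mathrm{span}\,K\subset Y$, and since $Y$ is norm-closed we conclude $X=\overline{\mathrm{span}\,K}\subset Y$. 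Thus $X=Y$ is separable.

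The argument is short, and the only point requiring care is the passage from the weak separability of $K$ to the norm separability of $X$; this is precisely where Mazur's theorem (weak closure $=$ norm closure for convex sets) does the work, upgrading a countable weakly dense subset of $K$ into a genuine separable subspace containing $K$. No appeal to Eberlein--\v Smulyan or to the full force of Theorem~\ref{4.16} is needed: the metrizability of weakly compact subsets, automatic for $k^*$-metrizable spaces, together with the WCG hypothesis already forces separability.
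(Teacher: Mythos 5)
Your proof is correct and follows the same route the paper takes: the paper's one-line argument is precisely that compact subsets of a $k^*$-metrizable space are metrizable, hence separable, applied to the weakly compact generating set. You have simply filled in the routine remaining step (upgrading weak separability of $K$ to norm separability of $X$ via Mazur's theorem), which the paper leaves implicit.
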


The proof follows from the fact that compact subsets of $k^*$-metrizable spaces are metrizable and hence separable.

\begin{remark}
{\rm
Propositions~\ref{4.14}, \ref{4.15}, and \ref{4.23}, \ref{n13.11} give
 examples of infinite-dimensional Banach spaces $X$ which being endowed
with the weak topology are $k^*$-metrizable spaces and hence have the
weak Skorohod property. In contrast, the space $(X,\weak)$ has the {\em strong}
Skorohod property if and only if $X$ is finite-dimensional, see \cite{BBK2}.
}\end{remark}

\section{The structure of sequential $k^*$-metrizable groups}\label{s14}

Results of the preceding section give us many examples of non-metrizable
$k^*$-metrizable topological groups. However such
topological groups rarely are sequential in view of the following result \cite[2.7]{LST}.

\begin{theorem}[Liu-Sakai-Tanaka]\label{n14.1} A sequential topological group $G$ with point-countable cosmic $k$-network contains a cosmic open subgroup.
\end{theorem}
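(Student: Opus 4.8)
The plan is to reduce the statement to a purely local assertion at the identity and then to exploit homogeneity together with point-countability. Throughout write $e$ for the neutral element of $G$, and recall that topological groups are Tychonoff, so for a subgroup ``cosmic'' means simply ``having a countable network''; note also that if $e$ is isolated then $G$ is discrete and $\{e\}$ is already a cosmic open subgroup, so we may assume $G$ is non-discrete. Let $\mathcal N$ be the given point-countable $k$-network (whose members, being cosmic, carry countable networks). The central reduction I would establish first is: \emph{if $e$ has an open neighbourhood $V$ with a countable network, then $G$ has a cosmic open subgroup.} Indeed, replacing $V$ by $V\cap V^{-1}$ we may assume $V=V^{-1}$, and then $H=\bigcup_{n\ge 1}V^n$ is an open subgroup, since it contains the open set $V$. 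Each $V^n$ is the image of the cosmic space $V^{\times n}$ under the continuous multiplication map $V^{\times n}\to G$, hence is cosmic, because the class of cosmic spaces is stable under finite products and continuous images; and $H$, being a countable union of cosmic subspaces, is itself cosmic. Thus it suffices to produce a cosmic neighbourhood of $e$.

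Next I would record two consequences of the hypotheses that turn the $k$-network into a tool adapted to sequences. First, every compact subset $K\subset G$ is metrizable: the trace of $\mathcal N$ on $K$ exhibits $K$ as a compact space with a point-countable $k$-network, and such compacta are known to be metrizable. In particular all compacta of $G$ are sequentially compact, so the point-countable $k$-network $\mathcal N$ is at the same time a point-countable $\cs^*$-network (a $k$-network becomes a $\cs^*$-network as soon as convergent sequences have sequentially compact closures, in the spirit of Proposition~\ref{n7.2}). Second, by homogeneity the local network weight is the same at every point, so the desired conclusion ``$e$ has a cosmic neighbourhood'' is equivalent to ``$G$ has countable local network weight at $e$''; I would therefore argue by contradiction, assuming that no neighbourhood of $e$ has a countable network.

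The heart of the proof, and the step I expect to be the main obstacle, is to derive a contradiction from this assumption using the group operation in an essential way. The failure of local cosmicness means that no countable subfamily of $\mathcal N$ is a network at $e$, so by a transfinite recursion of length $\w_1$ and by sequentiality one builds $\w_1$-many nontrivial sequences $T_\alpha=\{x^\alpha_n\}_n$ converging to $e$ that are independent, i.e.\ form the trace of a copy of the fan $S_{\w_1}$ with apex $e$, each $T_\alpha$ being captured (as a $\cs^*$-network) by some $N_\alpha\in\mathcal N$ lying in a fixed small neighbourhood of $e$. One cannot simply invoke the ``support'' argument available for a \emph{countable} $\cs^*$-network as in Proposition~\ref{n6.3}, because $\mathcal N$ is only point-countable and the bare fan $S_{\w_1}$ \emph{does} carry a point-countable $k$-network; the homogeneity of $G$ must intervene. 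The plan is to use multiplication to form diagonal product sequences such as $\big(x^{\alpha_k}_{n_k}\cdot (x^{\beta_k}_{m_k})^{-1}\big)_k$ out of the fan, which again converge to $e$ and hence are captured by members of $\mathcal N$; tracking the supports of the capturing members as the index pairs range over a suitably chosen uncountable almost disjoint system then forces some element of $G$ to lie in uncountably many members of $\mathcal N$, contradicting point-countability. Designing the almost disjoint system and the product sequences so that the capturing members are driven onto a common element, while keeping the recursion consistent so that every auxiliary sequence genuinely converges to $e$, is the technical crux. Once this contradiction is obtained, $e$ possesses a cosmic neighbourhood, and the subgroup construction of the first paragraph produces the required cosmic open subgroup.
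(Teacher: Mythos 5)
The paper does not actually prove this statement: Theorem~\ref{n14.1} is imported verbatim from Liu--Sakai--Tanaka \cite[2.7]{LST}, so there is no in-paper argument to compare yours against; I can only assess your proposal on its own merits. Your preliminary reductions are sound: a symmetric cosmic open neighbourhood $V$ of $e$ generates the open subgroup $\bigcup_n V^n$, which is cosmic because cosmic spaces are preserved by finite products, continuous images and countable unions; and compacta in $G$ are metrizable because the trace of a point-countable $k$-network on a compact subspace is again a point-countable $k$-network, and compacta with such networks are metrizable (Gruenhage--Michael--Tanaka). So the theorem does reduce to producing a cosmic neighbourhood of $e$.

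The problem is that everything after that reduction --- which is the entire mathematical content of the theorem --- is a plan rather than a proof, and you say so yourself (``the technical crux''). Two concrete gaps. First, the passage from ``no neighbourhood of $e$ has a countable network'' to ``there is a closed copy of $S_{\omega_1}$ with apex $e$'' is asserted, not established. The natural recursion (at stage $\alpha<\omega_1$ the countably many members of $\mathcal N$ meeting the previously chosen sequences fail to be a network at $e$, so pick a new sequence escaping them) does not obviously produce sequences that are \emph{independent} in the strong sense needed, nor does it guarantee that their union with $\{e\}$ is closed and carries the fan topology; that is exactly a failure of an $(\alpha_4)$/$\ext_\chi$-type property and requires an argument, not just sequentiality. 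Second, and more seriously, you correctly observe that a closed copy of $S_{\omega_1}$ is \emph{not} by itself incompatible with a point-countable cosmic $k$-network (the fan itself has one), so the group operation must do the work --- but the proposed mechanism (diagonal products $x^{\alpha_k}_{n_k}\cdot(x^{\beta_k}_{m_k})^{-1}$ indexed by an almost disjoint family, forcing some point into uncountably many members of $\mathcal N$) is never executed. There is no verification that the product sequences converge to $e$, no specification of the almost disjoint system, and no argument that the capturing members of $\mathcal N$ must accumulate on a single point. Since this is precisely where the theorem lives, the proposal as written does not constitute a proof.
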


A $k$-network $\mathcal N$ will be called {\em cosmic} if each space $N\in\mathcal N$ is {\em cosmic} in the sense that $N$ is the continuous image of a separable metrizable space.  This theorem of C.Liu, M.Sakai, Y.Tanaka can be completed by the following result of T.Banakh and L.Zdomskyy \cite{BZ}.
 
\begin{theorem}[Banakh-Zdomskyy]\label{n14.2} A sequential topological group $G$ with countable $\cs^*$-character is a stratifiable $\aleph$-space. More precisely, $G$ is either metrizable or contains an open $k_\omega$-subgroup.
\end{theorem}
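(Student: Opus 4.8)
The plan is to prove the dichotomy first and then read off the ``stratifiable $\aleph$-space'' conclusion from it. Throughout I use that $G$ is homogeneous: left translations are homeomorphisms, so $\cs^*_\chi(x;G)=\cs^*_\chi(e;G)\le\aleph_0$ for every $x$, and a countable $\wcs^*$-network $\mathcal N_e$ at the identity $e$ yields, by translation, a local $\wcs^*$-network $g\mathcal N_e$ at every $g\in G$. Two cases are immediate: if $e$ is isolated then $G$ is discrete, hence metrizable; and if $e$ has a countable neighbourhood base then $G$ is first-countable, so the Birkhoff--Kakutani theorem makes $G$ metrizable. From now on I assume that $e$ is non-isolated and has no countable neighbourhood base, and I aim to produce an open $k_\omega$-subgroup.

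First I would record two consequences of the hypotheses. Since $G$ is sequential, every compact subset $K\subseteq G$ is sequentially compact; as $K$ also has countable $\cs^*$-character, a countable $\wcs^*$-network on $K$ becomes a countable $k$-network by Proposition~\ref{n7.2}, so $K$ has countable network and is metrizable. Thus \emph{all compact subsets of $G$ are metrizable}. Consequently, by Proposition~\ref{n4.2} and Theorem~\ref{n7.4}, exhibiting a $\sigma$-compact-finite \closed\ $k$-network on $G$ would already give $k^*$-metrizability, and an honest $\sigma$-locally-finite $k$-network would give the $\aleph$-space property; so all the generalized-metric conclusions are subsumed by the structural dichotomy.

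The core construction is where homogeneity is indispensable: the non-homogeneous fan $S_\w$ has countable $\cs^*$-character yet fails $(\alpha_4)$ and admits no such structure, so it is precisely the group law that must be exploited. Starting from the countable $\wcs^*$-network $\mathcal N_e=\{N_k\}$ at $e$, I would choose for each $k$ a null-sequence witnessing $N_k$ and amalgamate the chosen witnesses; the failure of first countability forces these witnesses to be genuinely fan-like, while translation propagates the picture uniformly over $G$. The goal is to extract an increasing chain of metrizable compacta $K_1\subseteq K_2\subseteq\cdots$ with $e\in K_1$, satisfying $K_n\cdot K_n\subseteq K_{n+1}$ and $K_n^{-1}\subseteq K_{n+1}$, so that $H=\bigcup_n K_n$ is a subgroup. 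Two facts must then be checked: that $H$ is a \emph{neighbourhood} of $e$, equivalently a sequential barrier in the sense that every sequence converging to $e$ is eventually in $H$ (this uses that the amalgamated witnesses capture, up to tails, every null-sequence); and that $(K_n)$ is a $k_\omega$-\emph{sequence}, i.e. every compact subset of $H$ lies in some $K_n$ and $H$ carries the induced inductive-limit topology. Because $G$ is a $k$-space and $H$ is $\sigma$-compact, the latter reduces to showing the $K_n$ absorb compacta, which I would derive from sequential compactness together with the countable network bound. Granting this, $H$ is an open $\sigma$-compact subgroup with metrizable compacta, hence a submetrizable $k_\omega$-group.

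Finally the structural conclusion follows formally. A submetrizable $k_\omega$-space is a stratifiable $\aleph_0$-space (as recorded before Theorem~\ref{4.7}), so the open subgroup $H$ is a stratifiable $\aleph$-space; since $H$ is open, $G$ is the topological sum $\bigsqcup gH$ of clopen cosets each homeomorphic to $H$, and both stratifiability and the $\aleph$-space property pass to topological sums of clopen pieces. Hence $G$ is a stratifiable $\aleph$-space, and it is either metrizable or contains the open $k_\omega$-subgroup $H$. The main obstacle is the core construction of the chain $(K_n)$: converting the pointwise, translation-invariant network data into a single global compact exhaustion and proving both its openness and the $k_\omega$-sequence property. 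This is the step that must fuse sequentiality, homogeneity, and the failure of first countability, and it is essentially a refinement of the Liu--Sakai--Tanaka method behind Theorem~\ref{n14.1}, strengthened so as to produce a $k_\omega$- rather than merely a cosmic open subgroup.
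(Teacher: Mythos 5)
The paper does not actually prove this statement: it is imported verbatim from \cite{BZ} and used as a black box, so there is no internal proof to compare your attempt against. Judged on its own, your proposal gets the overall architecture right --- reduce to the dichotomy, dispose of the metrizable cases via homogeneity and Birkhoff--Kakutani, and recover ``stratifiable $\aleph$-space'' formally from an open $k_\omega$-subgroup by decomposing $G$ into clopen cosets --- but the entire mathematical content of the theorem is concentrated in the one step you do not carry out.

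Concretely, everything hinges on constructing the chain $K_1\subseteq K_2\subseteq\cdots$ of compacta with $K_nK_n\cup K_n^{-1}\subseteq K_{n+1}$ so that $H=\bigcup_nK_n$ is simultaneously a sequential barrier at $e$ (hence open, since $G$ is sequential and $H$ is a subgroup) and absorbs all compacta (hence is a $k_\omega$-space). Your text replaces this with ``choose for each $k$ a null-sequence witnessing $N_k$ and amalgamate the chosen witnesses'' together with the unproved assertion that the amalgamated witnesses ``capture, up to tails, every null-sequence.'' A countable $\wcs^*$-network at $e$ only guarantees that each null sequence has \emph{infinitely many} terms in \emph{some} network element inside each neighbourhood; it does not produce a compact set containing a tail of every null sequence, and manufacturing one is precisely where the group multiplication must do nontrivial work (this is the key lemma of \cite{BZ}, not a routine amalgamation). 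You flag this yourself as ``the main obstacle,'' which is accurate: what you have is a proof schema whose decisive step is left as a conjecture. A secondary, likely repairable, slip: your claim that every compact $K\subseteq G$ is metrizable invokes Proposition~\ref{n7.2}, which requires a single $\sigma$-cs-finite $\wcs^*$-network for all of $K$, whereas countable $\cs^*$-character only gives a countable $\wcs^*$-network at each individual point; globalizing this over a possibly non-separable compactum needs an additional argument.
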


A topological group $G$ is called a {\em $k_\omega$-group} if its underlying topological space is a $k_\omega$-space.

We shall use these two theorems to prove a structure theorem for sequential $k^*$-metrizable topological groups.

\begin{theorem}\label{n14.3} Each sequential $k^*$-metrizable group $G$ has countable $\cs^*$-character and hence is a stratifiable $\aleph$-space. More precisely, $G$ is either metrizable or contains an open $k_\w$-subgroup. 
\end{theorem}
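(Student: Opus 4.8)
The plan is to reduce the whole statement to a single cardinal inequality and then quote the Banakh--Zdomskyy theorem. Since $G$ carries a topological group structure it is homogeneous, so every local cardinal invariant is constant over $G$; in particular $\cs^*_\chi(G)=\cs^*_\chi(e;G)$, where $e$ is the identity. By Theorem~\ref{n14.2} it therefore suffices to prove that $\cs^*_\chi(e;G)\le\aleph_0$: once the $\cs^*$-character is shown to be countable, that theorem immediately gives that $G$ is a stratifiable $\aleph$-space which is either metrizable or contains an open $k_\w$-subgroup, which is exactly the conclusion sought.

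To bound $\cs^*_\chi(e;G)$ I would invoke the metric machinery of Section~\ref{s5}. We may assume $G$ is not discrete (a discrete group is metrizable), so $e$ is non-isolated in $G$. Being $k^*$-metrizable, $G$ is $\cs^*$-metrizable, so Proposition~\ref{n6.3} yields $\cs^*_\chi(e;G)\le\alpha_4(e;G)$ and it is enough to bound $\alpha_4(e;G)$. Fix a map $\pi\colon M\to G$ from a metrizable space $M$ together with a $\cs^*$-continuous section $s\colon G\to M$ whose image is dense in $M$ (replace $M$ by $\cl(s(G))$ if necessary). Theorem~\ref{n6.1} then identifies $\alpha_4(e;G)$ with $\sup\{\kappa:(S_\kappa,*)\subset_{\mathrm{cl}}(sG,e)\}$. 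As $G$ is sequential we have $sG=G$, and since a closed copy of $S_\kappa$ for any uncountable $\kappa$ contains a closed copy of $S_{\w_1}$, the inequality $\alpha_4(e;G)\le\aleph_0$ is equivalent to the assertion that $G$ contains no closed copy of the uncountable sequential fan $S_{\w_1}$. Thus the entire theorem comes down to the following topological-algebraic statement: \emph{a sequential $k^*$-metrizable group contains no closed copy of $S_{\w_1}$}.

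To prove this I would argue by contradiction. Suppose $F\subset G$ is a closed copy of $S_{\w_1}$ with vertex $e$ and spines $S_\alpha=\{x^\alpha_n:n\in\w\}$, $\alpha<\w_1$, each converging to $e$. In a fan every convergent sequence meets only finitely many spines, and the closure of such a sequence is a compact subset of $F$, which likewise meets only finitely many spines; hence it suffices to manufacture a \emph{single} sequence converging to $e$ that meets infinitely many of the $S_\alpha$. Here the $k^*$-metrizability enters through a $\sigma$-cs-finite \closed\ $\wcs^*$-network $\mathcal N=\bigcup_k\mathcal N_k$ supplied by Theorem~\ref{n7.3} (closed under finite intersections by Proposition~\ref{n7.1}), and the group structure enters through continuity of multiplication at $(e,e)$. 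The idea is: for each $\alpha$ choose, inside a prescribed neighborhood of $e$, a member $N\in\mathcal N$ capturing infinitely many points of $S_\alpha$; a pressing-down (Fodor) argument on $\w_1$ freezes the level $k$, and the cs-finiteness of $\mathcal N_k$, combined with multiplication, is then used to amalgamate points drawn from infinitely many distinct spines into one sequence still converging to $e$, contradicting the fan topology.

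The hard part is precisely this amalgamation step. The uncountable fan $S_{\w_1}$ is itself $k^*$-metrizable, so the $\wcs^*$-network alone can never produce the contradiction: in $S_{\w_1}$ a transversal chosen over infinitely many spines is closed and discrete, never convergent, and the group operation is the only extra resource available to pull the escaping limit back toward $e$. Concretely one exploits that for every neighborhood $W\ni e$ there is a symmetric $V\ni e$ with $VV\subset W$, feeding this into the successive network levels so that the amalgamated transversal lands in $W$ for cofinally many indices. Making this confinement rigorous, so that the constructed transversal genuinely converges to $e$ in $G$ rather than merely clustering in a metric completion (equivalently, so that it acquires a $G$-convergent subsequence and condition (c) of the metric from Theorem~\ref{n3.4} applies), is where the real work lies, since the non-metrizable, non-first-countable nature of $G$ forbids a naive diagonalization. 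Once this is achieved the contradiction with the fan structure is immediate, $\alpha_4(e;G)\le\aleph_0$ follows, and the proof is completed by Theorem~\ref{n14.2}.
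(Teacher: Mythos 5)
Your reduction is the same as the paper's: homogeneity plus Theorem~\ref{n14.2} reduce everything to $\cs^*_\chi(G)\le\aleph_0$, and via Proposition~\ref{n6.3} and Theorem~\ref{n6.1} this is equivalent to the absence of a closed copy of $S_{\w_1}$ in $G$. But at exactly that point your argument stops being a proof. The ``amalgamation step'' you describe --- using a $\sigma$-cs-finite $\wcs^*$-network, a pressing-down argument on $\w_1$, and the continuity of multiplication at $(e,e)$ to assemble a sequence converging to $e$ that meets infinitely many spines of the fan --- is the entire mathematical content of the theorem, and you yourself flag it as the part where ``the real work lies'' without carrying it out. As you correctly observe, the network alone cannot yield the contradiction because $S_{\w_1}$ is itself $k^*$-metrizable, so everything hinges on how the group operation pulls a transversal back toward $e$; no mechanism for doing this is actually exhibited, and it is not at all clear that the $VV\subset W$ trick can be ``fed into the successive network levels'' in the way you suggest. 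A transversal over infinitely many spines is closed and discrete in the fan, and multiplying or translating its points produces points that need not lie in $F$ at all, so the contradiction with the fan topology is not ``immediate'' even if some confinement were achieved.

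The paper avoids this direct attack entirely and argues by contradiction along a different route. Assuming $\cs^*_\chi(G)>\aleph_0$, Theorem~\ref{n6.1} gives a closed copy of $S_{\w_1}$, hence of $S_\w$; Lemma~4 of \cite{BZ} then forbids a closed copy of the metrizable hedgehog $H_\w$ in the sequential group $G$. The absence of $H_\w$ is what is actually exploited: it forces every point of the metrizable preimage $M$ (under the subproper map $\pi:M\to G$) to have a neighborhood with precompact image, so the section $s$ turns a $\sigma$-locally-finite base of $M$ into a point-countable \emph{cosmic} $k$-network for $G$. The Liu--Sakai--Tanaka Theorem~\ref{n14.1} then yields an open cosmic subgroup $H$, which by Theorem~\ref{n8.2} is an $\aleph_0$-space and hence has countable $\cs^*$-character, contradicting the assumption. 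If you want to salvage your outline, you would either have to prove your amalgamation claim from scratch (essentially reproving the $S_\w$ versus $H_\w$ dichotomy for sequential groups) or route the argument through the hedgehog and the cosmic-subgroup theorem as the paper does.
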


\begin{proof} This theorem will follow from Theorem~\ref{n14.2} as soon as we check that $G$ has countable $\cs^*$-character. Assuming the converse and applying Theorem~\ref{n6.1}, we will get that $\alpha_4(G)>\aleph_0$ and hence $\alpha_4^+(G)>\aleph_1$, which means that $G$ contains a closed copy of the uncountable sequential fan $S_{\w_1}$. In particular, $G$ contains a closed  topological copy of $S_\w$. 
By Lemma 4 from \cite{BZ}, a sequential topological group cannot simultaneously contain closed copies of the sequential fan $S_\w$ and its metrizable counterpart, the sequential hedgehog $$H_\w=\{0,2^{-m}e_n:n,m\in\w\}\subset l^2,$$ where $(e_n)$ is the standard orthonormal base of the separable Hilbert space $l^2$.  The space $H_\w$ is not locally compact at its unique non-isolated point $0$, and is the smallest non-locally compact space among first countable non-locally compact spaces: each first countable non-locally compact space contains a closed copy of $H_\w$.

Therefore, the group $G$ contains no closed copy of $H_\w$. We shall use this fact to show that $G$ has a point-countable cosmic $k$-network.

Fix a subproper map $\pi:M\to X$ of a metrizable space $M$ onto $X$ and let $s:X\to M$ be a section of $\pi$ that preserves precompact sets. 

\begin{claim}\label{n14.6} Each point $x\in M$ has a neighborhood $U\subset M$ whose image $\pi(U)$ is precompact in $G$.
\end{claim}

 Assuming the converse, fix a countable neighborhood base $(U_n)$ at $x$. For every $U_n$ the image $\pi(U_n)$ is not precompact, which means that the closure $\overline{\pi(U_n)}$ is not compact. The sequentiality of the $k^*$-metrizable space $G$ implies that each countably compact space is compact, see Proposition~\ref{n3.7}. Hence $\overline{\pi(U_n)}$ is not countably compact and contains a countable infinite closed discrete subset $D_n$ not containing the point $y=\pi(x)$. By induction we can construct an increasing number sequence $(n_k)$ such that $\overline{\pi(U_{n_k})}\cap \bigcup_{i\le n_{k-1}}D_i=\emptyset$ for all $k$. The ``convergence'' of the sets $U_n$ to $x$ implies that $(D_{n_k})$ converges to $y=\pi(x)$ in the sense that each neighborhood of $y$ contains all but finitely many sets $D_n$. This fact can be used to show that the space $D=\{y\}\cup\bigcup_k D_{n_k}$ is a closed copy of the hedgehog $H_\w$ in $G$, which is not possible.\hfill $\Box$
\smallskip

So we can find a $\sigma$-locally-finite base $\mathcal B$ for $M$ consisting of set $U\subset M$ with precompact images $\pi(U)$ in $G$. Since each compact subset of $G$ is metrizable, the sets $s^{-1}(U)\subset \pi(U)$, $U\in\mathcal B$, are metrizable and separable. The precompact preserving property of $s$ implies that $\mathcal N=\{s^{-1}(U):U\in\mathcal B\}$ is a $\sigma$-compact-finite $k$-network consisting of metrizable separable subsets of $G$. In particular, $\mathcal N$ is a point-countable cosmic $k$-network for $G$. By Theorem~\ref{n14.1}, the group $G$ contains an open cosmic subgroup $H$. This subgroup has countable network and, being a $k^*$-metrizable space, has countable $k$-network by Theorem~\ref{n8.2}. By Proposition~\ref{n6.3}, $\cs^*(G)=\cs^*_\chi(H)\le\alpha_4(H)\le \ext(H)\le knw(H)\le\aleph_0$. So $G$ has countable $\cs^*$-character. Application of Theorem~\ref{n14.2} completes the proof.
\end{proof}

Theorem~\ref{n14.3} will be applied to show that many cardinal characteristics of a sequential $k^*$-metrizable group coincide with a specific group-topological invariant $ib(G)$ called the {\em index of boundedness} of $G$ and equal  to the smallest cardinal $\kappa$ such that for any open neighborhood $U\subset G$ of the neutral element $e\in G$ there is a subset $F\subset G$ of size $|F|\le \kappa$ with $G=F\cdot U=\{x\cdot y:x\in F,\; y\in U\}$. It is known that $ib(G)\le\min\{\ext(G),d(G)\}$ for any topological group.

\begin{theorem}\label{n14.9} If $G$ is an infinite sequential $k^*$-metrizable group, then $ib(G)=d(G)=\ext(G)=s(G)=l(G)=nw(G)=knw(G)$.
\end{theorem}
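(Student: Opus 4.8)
The plan is to reduce the whole theorem to the single inequality $\ext(G)\le ib(G)$, deducing every remaining (in)equality from the theory of Section~\ref{s5} and from standard facts about $ib$. Since $G$ is sequential, it is a $k$-space, so Theorem~\ref{n5.3} applies to the $k^*$-metrizable $k$-space $G$ and yields
$$d(G)\le knw(G)=nw(G)=s(G)=\ext(G)=l(G).$$
On the other hand, for any topological group a dense set $D$ satisfies $G=D\cdot V$ for every symmetric neighborhood $V$ of the neutral element (each $gV$ meets $D$), so $ib(G)\le d(G)$. Thus once $\ext(G)\le ib(G)$ is established, the chain $\ext(G)\le ib(G)\le d(G)\le\ext(G)$ collapses, forcing $ib(G)=d(G)=\ext(G)$ and hence the equality of all seven cardinals.

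To prove $\ext(G)\le ib(G)$ I would invoke the structure Theorem~\ref{n14.3}: $G$ is either metrizable or contains an open (hence clopen) $k_\omega$-subgroup $H$; in the metrizable case put $H=G$. First I would verify the local inequality $\ext(H)\le ib(H)$. If $H$ is metrizable, then all its cardinal invariants coincide, so $\ext(H)=d(H)$; choosing a countable symmetric base $(U_n)$ at the identity and sets $F_n$ with $H=F_nU_n$ and $|F_n|\le ib(H)$, the union $\bigcup_n F_n$ is dense, whence $d(H)\le\aleph_0\cdot ib(H)=ib(H)$, which is legitimate since an infinite group has $ib(H)\ge\aleph_0$. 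If instead $H$ is a $k_\omega$-group, then $H$ is $\sigma$-compact, hence Lindel\"of, so $\ext(H)\le l(H)=\aleph_0$, while writing $H=\bigcup_n K_n$ with $K_n$ compact and covering each $K_n$ by finitely many translates of a given neighborhood gives $ib(H)\le\aleph_0$; as $H$ is infinite both equal $\aleph_0$. In either case $\ext(H)\le ib(H)$.

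It remains to pass from $H$ to $G$ through the coset decomposition. Because $H$ is clopen, the cosets $\{gH\}$ form a clopen partition of $G$, each homeomorphic to $H$, and their number is $[G:H]$. A closed discrete subset $D\subset G$ meets each coset in a closed discrete subset of size at most $\ext(H)$, so $\ext(G)\le[G:H]\cdot\ext(H)$. Fixing a neighborhood $U\subset H$ of the identity, every translate $fU$ lies in a single coset, so a cover $G=FU$ uses at least $[G:H]$ translates, giving $ib(G)\ge[G:H]$; restricting such a cover to $H$ shows $ib(G)\ge ib(H)$, whence $ib(G)\ge[G:H]\cdot ib(H)$. Combining these with $\ext(H)\le ib(H)$ yields
$$\ext(G)\le[G:H]\cdot\ext(H)\le[G:H]\cdot ib(H)\le ib(G),$$
which completes the argument. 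The only genuine obstacle is the local inequality $\ext(H)\le ib(H)$; everything else is the coset bookkeeping above together with the infinite-cardinal identity $\max\{\kappa,\lambda\}=\kappa\cdot\lambda$ and the observation that all cardinals involved are at least $\aleph_0$, since the infinite group $G$ has arbitrarily large finite closed discrete subsets and cannot be covered by a bounded finite number of translates of arbitrarily small neighborhoods.
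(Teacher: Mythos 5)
Your proof is correct, but the key step is genuinely different from the paper's. The paper also invokes Theorem~\ref{n14.3} to produce the open $k_\omega$-subgroup $H$, but then argues directly on $k$-networks: since compacta of $G$ are metrizable, $H$ is a submetrizable $k_\omega$-space, hence $knw(H)=\aleph_0$; writing $G=F\cdot H$ with $|F|\le ib(G)$ and translating a countable $k$-network of $H$ by $F$ gives $knw(G)\le ib(G)$, and the chain $ib(G)\le\min\{\ext(G),d(G)\}\le nw(G)\le knw(G)\le ib(G)$ closes without ever appealing to Theorem~\ref{n5.3}. You instead prove $\ext(G)\le ib(G)$ by coset bookkeeping --- the local computation $\ext(H)=ib(H)=\aleph_0$ for the $\sigma$-compact $H$, together with $\ext(G)\le[G:H]\cdot\ext(H)$ and $ib(G)\ge[G:H]\cdot ib(H)$ --- and then need Theorem~\ref{n5.3} to propagate the bound from $\ext(G)$ to $knw(G)$. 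What each route buys: yours uses only the $\sigma$-compactness of $H$ (Lindel\"ofness plus finitely many translates per compactum) and is more careful in the metrizable case, where the paper simply says ``trivial'' although $d(G)\le ib(G)$ still requires your $\bigcup_n F_n$ argument; the paper's route is shorter, self-contained modulo the elementary inequalities of Section~\ref{s5}, and pins down the strongest invariant $knw(G)$ directly rather than through the equalities of Theorem~\ref{n5.3}. The only spots where you are slightly cavalier are the cardinal arithmetic when $\ext(H)$ or $[G:H]$ is finite, but since $ib(G)\ge\aleph_0$ for the infinite group $G$ the final inequality survives in every case.
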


\begin{proof} The assertion is trivial if $G$ is metrizable.
So, assume that $G$ is not metrizable.
By Theorem~\ref{n14.3}, the group $G$ contains an open $k_\w$-subgroup $H$.
Since all compact subsets in $G$ are metrizable, we get $knw(H)=\aleph_0$.
By the definition of $ib(G)$, there is a subset $F\subset G$ of size $|F|\le ib(G)$ with $G=F\cdot H$. Then for any countable $k$-network $\mathcal N$ in $H$ the family $F\cdot\mathcal N=\{f\cdot N: f\in F,\;N\in\mathcal N\}$ is a $k$-network for $G$ having size $|F\cdot \mathcal N|\le|F|\cdot |\mathcal N|=ib(G)$, which yields $knw(G)\le ib(G)$. Combining this inequality with $ib(G)\le\min\{\ext(G),d(G)\}\le nw(G)\le knw(G)$, we get the desired equalities.
\end{proof} 

Theorem~\ref{n14.3} will be also applied to the problem of topological classification of non-metrizable sequential $k^*$-metrizable groups. This can be done for two classes of groups: punctiform groups and locally convex spaces.

A topological space $X$ is called {\em punctiform} if each compact subset of $X$ is zero-dimensional. A topological classification of punctiform $k^*$-metrizable group involves the notion of the compact scatteredness rank.

Given
a topological space $X$  let $X_{(1)}\subset X$ denote the set of
all non-isolated points of $X$. For each ordinal $\alpha$ define
the $\alpha$-th derived set $X_{(\alpha)}$ of $X$ by transfinite
induction:
$X_{(\alpha)}=\bigcap_{\beta<\alpha}(X_{(\beta)})_{(1)}$. By
the {\em scatteredness height} $\mathrm{sch}(X)$ of $X$ we
understand the smallest ordinal $\alpha$ such that
$X_{(\alpha+1)}=X_{(\alpha)}$. A topological space $X$ is {\em
scattered} if $X_{(\alpha)}=\emptyset$ for some ordinal $\alpha$.
By the {\em compact scatteredness rank}\/  of a topological
space $X$ we understand the ordinal
$\mathrm{scr}(X)=\sup\{\mathrm{sch}(K): K$ is a scattered compact
subspace of $X\}$. In particular, $\mathrm{scr}(X)=\w_1$ for any uncountable compact metrizable space $X$ (such a space $X$ contains a copy of the Cantor set and hence a copy of each countable compactum).

\begin{theorem}\label{n15.2} Two non-metrizable sequential punctiform
$k^*$-metrizable groups $G,H$ are homeomorphic if and only if $d(G)=d(H)$ and
$\mathrm{scr}(G)=\mathrm{scr}(H)$.
\end{theorem}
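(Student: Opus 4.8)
The ``only if'' part is immediate, since the density $d(\cdot)$ and the compact scatteredness rank $\mathrm{scr}(\cdot)$ are topological invariants. For the ``if'' part I would first invoke the structure Theorem~\ref{n14.3}: a non-metrizable sequential $k^*$-metrizable group contains an open $k_\w$-subgroup. Let $N_G\le G$ and $N_H\le H$ be such subgroups. As an open subspace of $G$, the subgroup $N_G$ is again sequential, punctiform and $k^*$-metrizable, and it is non-metrizable (otherwise $G$, being the topological sum of translates of the metrizable space $N_G$, would itself be metrizable). Being a $k_\w$-space all of whose compacta are metrizable, $N_G$ is $\sigma$-compact with separable compacta and hence cosmic, so $d(N_G)=\aleph_0$. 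Since $N_G$ is open, $G$ decomposes as the topological sum of its cosets, which yields a homeomorphism $G\cong D(\kappa_G)\times N_G$, where $D(\kappa_G)$ is the discrete space of cardinality $\kappa_G=[G:N_G]$.

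This decomposition reduces the entire problem to the ``$k_\w$-core'' $N_G$. First, the invariants of $G$ are recovered from those of the core: every compact subset of $G$ meets only finitely many cosets, and a finite disjoint clopen union of scattered compacta has scatteredness height equal to the maximum of the heights of its pieces, so $\mathrm{scr}(G)=\mathrm{scr}(N_G)$, and likewise $\mathrm{scr}(D(\tau)\times N_G)=\mathrm{scr}(N_G)$. Since $N_G$ is separable, $d(G)=\max\{\kappa_G,\aleph_0\}$; thus $d(G)>\aleph_0$ forces $\kappa_G=d(G)$, while $d(G)=\aleph_0$ means that $G$ is itself separable. The plan is therefore to establish the normal form
$$G\cong D(d(G))\times N_{\mathrm{scr}(G)},$$
where $N_\alpha$ denotes a fixed separable non-metrizable punctiform $k_\w$-group with $\mathrm{scr}(N_\alpha)=\alpha$, and where in the separable case one uses that $D(\aleph_0)\times N\cong\mathbb{Z}\times N$ is again such a core of the same rank and hence is absorbed by $N$. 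Granting this normal form (together with the fact that cores of equal rank are homeomorphic, which makes it independent of the choice of $N_G$), the equalities $d(G)=d(H)$ and $\mathrm{scr}(G)=\mathrm{scr}(H)$ give $G\cong H$.

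The genuinely hard step, and the main obstacle, is the topological classification of the cores: I must show that two separable non-metrizable punctiform $k_\w$-groups $N,N'$ with $\mathrm{scr}(N)=\mathrm{scr}(N')$ are homeomorphic. At this point the group structure has already done its work — it produced the open $k_\w$-subgroup and the coset homogeneity — and the remaining assertion is purely topological, about the homeomorphism type of a zero-dimensional $k_\w$-space. By the countable sum theorem for covering dimension, punctiformity upgrades to $\dim N=0$, so no dimension-type invariant survives and $\mathrm{scr}$ should be the \emph{only} invariant, in contrast with the locally convex case (Theorem~\ref{n15.3}), where two distinct types of rank $\w_1$ coexist. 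I would prove the classification by representing each core, up to homeomorphism, through its defining tower $K_1\subset K_2\subset\cdots$ of zero-dimensional metrizable compacta (equivalently, realizing it as a free abelian topological group $A(X)$ over a suitable zero-dimensional compactum $X$), and then running a back-and-forth/absorption argument between the two towers that matches compacta of equal Cantor--Bendixson type. The rank $\mathrm{scr}$ records precisely the supremum of the scatteredness heights occurring in such a tower, with the dichotomy $\mathrm{scr}=\w_1$ (some compactum contains a Cantor set) versus $\mathrm{scr}<\w_1$ (all compacta countable and scattered) dictating which characterization theorem for $k_\w$-spaces is applied. Verifying that $\mathrm{scr}$ is a complete invariant — i.e.\ that equal ranks really yield a homeomorphism of the towers, uniformly across the countable-ordinal and the $\w_1$ regimes — is where the real difficulty lies and where the infinite-dimensional topology of zero-dimensional $k_\w$-groups must be brought to bear.
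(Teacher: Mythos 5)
Your ``only if'' direction and the reduction to an open $k_\omega$-core are fine, but the theorem is not proved: the step you yourself flag as ``the genuinely hard step'' --- that two separable non-metrizable punctiform sequential $k^*$-metrizable $k_\omega$-groups with equal compact scatteredness rank are homeomorphic --- is exactly the content of the theorem, and your proposal only gestures at a back-and-forth on defining towers without carrying it out. Worse, the framing of that step is wrong: you assert that ``the group structure has already done its work'' and that what remains is ``purely topological, about the homeomorphism type of a zero-dimensional $k_\omega$-space.'' That is false. The rank $\mathrm{scr}$ is \emph{not} a complete invariant for zero-dimensional separable non-metrizable $k_\omega$-spaces: $S_\omega$ and the topological sum $S_\omega\oplus S_\omega$ are both such spaces with $\mathrm{scr}=2$, yet they are not homeomorphic (one versus two non-isolated points). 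Homogeneity, and in fact the full group structure, is indispensable in classifying the cores, so a tower-matching argument that discards it cannot succeed as described.

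The paper's own proof takes a different and much shorter route: it first shows (Theorem~\ref{n14.3}) that every sequential $k^*$-metrizable group has countable $\cs^*$-character, and then quotes the classification theorem of Banakh and Zdomskyy \cite{BZ}, which establishes precisely this statement for sequential punctiform groups with countable $\cs^*$-character. If you want a self-contained argument along your lines, the missing ingredient is the classification of the $k_\omega$-cores, which in \cite{BZ} is obtained using the group structure throughout (absorption and homogeneity arguments for $k_\omega$-group towers), not by a purely topological comparison of towers of zero-dimensional compacta.
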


This theorem was proved in \cite{BZ} for sequential groups with countable $\cs^*$-character. By Theorem~\ref{n14.3} each sequential $k^*$-metrizable group has countable $\cs^*$-character, so the theorem follows.

Also we can classify non-metrizable sequential $k^*$-metrizable locally convex spaces. In this cases all of them are homeomorphic either to $\IR^\infty=\underset{\longrightarrow}{\lim}\,\IR^n$, the strict inductive limit of Euclidean spaces, or to the product $\IR^\infty\times Q$ of $\IR^\infty$ and the Hilbert cube $Q=[0,1]^\w$.

\begin{theorem}\label{n15.3} Each non-metrizable sequential $k^*$-metrizable locally convex
space is homeomorphic
to $\mathbb{R}^\infty$ or $\mathbb{R}^\infty\times Q$.
\end{theorem}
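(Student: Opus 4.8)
The plan is to reduce the statement to a purely topological recognition problem for $k_\omega$-spaces and then feed it into the characterization theorems of infinite-dimensional topology. First I would regard the locally convex space $X$ as an abelian topological group under addition and note that it is connected, indeed path-connected, since it is convex. Assuming $X$ non-metrizable, sequential and $k^*$-metrizable, Theorem~\ref{n14.3} says that $X$ is either metrizable or contains an open $k_\omega$-subgroup $H$; non-metrizability kills the first alternative. But an open subgroup is automatically closed, so $H$ is a nonempty clopen subset of the connected space $X$, forcing $H=X$. Hence $X$ is a non-metrizable $k_\omega$-space. Since $X$ is $k^*$-metrizable, all its compacta are metrizable, so $X$ is a submetrizable $k_\omega$-space, and, being convex, it is an absolute retract. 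It is moreover infinite-dimensional, because a finite-dimensional Hausdorff topological vector space is $\IR^n$ and hence metrizable.

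Next I would fix a $k_\omega$-decomposition $X=\varinjlim K_n$ into an increasing tower of metrizable compacta and upgrade it to one consisting of compact convex sets, replacing each $K_n$ by the closed convex symmetric hull of $K_0\cup\dots\cup K_n$. The delicate point here is that these hulls remain compact; this rests on completeness properties of locally convex $k_\omega$-spaces together with the standard fact that in a complete locally convex space the closed convex hull of a compact set is compact. Each upgraded $K_n$ is then a metrizable compact convex subset of a locally convex space, so by Keller's theorem it is either a finite-dimensional cell or homeomorphic to the Hilbert cube $Q$. Observe also that $X$ is strongly countable-dimensional if and only if every $K_n$ is finite-dimensional, because $X=\bigcup_n K_n$ is a countable union of its (closed) compacta.

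The classification now splits according to this dimension dichotomy, and in each case I would invoke the corresponding topological recognition theorem. If every compact subset of $X$ is finite-dimensional, I would appeal to the characterization of $\IR^\infty=\varinjlim\IR^n$ as the unique non-metrizable $k_\omega$ absolute retract that is strongly countable-dimensional and carries a $k_\omega$-decomposition by finite-dimensional compacta whose successive inclusions are $Z$-embeddings, concluding $X\cong\IR^\infty$. If instead some $K_n$ is infinite-dimensional, then by Keller's theorem $X$ contains a copy of $Q$ and is not strongly countable-dimensional, and I would invoke the parallel characterization of $Q\times\IR^\infty$ to conclude $X\cong Q\times\IR^\infty=[0,1]^\w\times\IR^\infty$. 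Here the local convexity of $X$, which endows $X$ with a homogeneous AR structure tied to the convex towers $K_n$, is precisely what pins the two admissible topological types down and rules out more exotic $k_\omega$-ARs.

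The main obstacle will be verifying the hypotheses of these recognition theorems, that is, promoting the abstract data ``non-metrizable $k_\omega$ + AR + (strong countable-)dimension'' into the precise general-position form the characterizations demand. Concretely, one must arrange the defining tower of compact convex sets so that each $K_n$ is a $Z$-set in $K_{n+1}$ (and in $X$) and so that the attendant absorption, or strong universality, property holds; translation-homogeneity of $X$ and the convex structure of the $K_n$ are the tools that make this achievable, but this normalization is where the genuine work lies. Once the tower is put in this form, the identification of $X$ with $\IR^\infty$ or with $[0,1]^\w\times\IR^\infty$ follows immediately from the cited characterizations.
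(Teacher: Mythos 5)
Your argument is correct in outline but takes a genuinely different route from the paper. The paper disposes of this theorem in one line: by Theorem~\ref{n14.3} every sequential $k^*$-metrizable group has countable $\cs^*$-character, and the classification of non-metrizable sequential locally convex spaces with countable $\cs^*$-character as $\IR^\infty$ or $\IR^\infty\times Q$ is then quoted directly from the Banakh--Zdomskyy paper \cite{BZ}. You instead use only the structural clause of Theorem~\ref{n14.3} (existence of an open $k_\w$-subgroup), kill the metrizable alternative, and exploit connectedness to conclude that the whole space is a submetrizable $k_\w$ convex AR; from there you run the infinite-dimensional-topology machinery (Krein's theorem to get a tower of compact convex sets, Keller's theorem, and the recognition theorems for $\IR^\infty$ and $Q\times\IR^\infty$). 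In effect you are unpacking the proof that the paper outsources to \cite{BZ}; what your route buys is self-containedness and a clear view of where local convexity is actually used, at the cost that the genuinely hard step --- normalizing the tower so that each $K_n$ is a $Z$-set in $K_{n+1}$ with the required absorption property --- is only announced, not carried out. Two points there deserve more care than your sketch gives them: (i) a compact convex subset of a larger compact convex set need \emph{not} be a $Z$-set in it, so one must pass to a suitably re-chosen cofinal tower rather than merely fattening the given one; and (ii) the compactness of closed convex hulls rests on completeness of the space, which for a Hausdorff $k_\w$ locally convex space should be justified (e.g., via Ra\u\i kov completeness of $k_\w$-groups). Neither issue is fatal, but both are exactly the places where the cited classification does its work.
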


This theorem follows from the corresponding classification of sequential locally convex spaces with countable $\cs^*$-character proved in \cite{BZ}.

\end{document}